\documentclass[11pt]{amsart}
\usepackage{amsmath,amssymb,verbatim,eucal, amscd,color}
\usepackage{graphicx}
\usepackage{colordvi}
\usepackage{mathdots}
\usepackage{xypic}
\usepackage[colorlinks=true, pdfstartview=FitV,
linkcolor=blue, citecolor=blue, urlcolor=red]{hyperref}
\usepackage{amsfonts,latexsym}

\usepackage{geometry,tikz,amsmath}

\usetikzlibrary{fit,matrix}
 
\newcommand*{\Scale}[2][4]{\scalebox{#1}{$#2$}}%

\usepackage{yhmath}
\usepackage{accents}

\input xy

\numberwithin{equation}{section}

\theoremstyle{plain}
				
\newtheorem{theorem}{Theorem}[section]				
\newtheorem{proposition}[theorem]{Proposition}		
\newtheorem{corollary}[theorem]{Corollary}
\newtheorem{lemma}[theorem]{Lemma}

\newtheorem{question}[theorem]{Question}

\theoremstyle{definition}

\newtheorem{definition}[theorem]{Definition}
\newtheorem{remark}[theorem]{Remark}

\newcommand\frg{\mathfrak{g}}





\newcommand{\CBbb}{\mathbb C}

\newcommand{\HBbb}{\mathbb H}

\newcommand{\PBbb}{\mathbb P}

\newcommand{\ZBbb}{\mathbb Z}



\newcommand{\Acal}{\mathcal A}

\newcommand{\Ccal}{\mathcal C}
\newcommand{\Dcal}{\mathcal D}
\newcommand{\Ecal}{\mathcal E}

\newcommand{\Gcal}{\mathcal G}
\newcommand{\Hcal}{\mathcal H}

\newcommand{\Lcal}{\mathcal L}
\newcommand{\Mcal}{\mathcal M}

\newcommand{\Ocal}{\mathcal O}
\newcommand{\Pcal}{\mathcal P}
\newcommand{\Qcal}{\mathcal Q}

\newcommand{\Scal}{\mathcal S}
\newcommand{\Tcal}{\mathcal T}

\newcommand{\Vcal}{\mathcal V}

\newcommand{\Xcal}{\mathcal X}
\newcommand{\Ycal}{\mathcal Y}



\newcommand{\Xfrak}{\mathfrak X}

\newcommand{\gfrak}{\mathfrak g}

\newcommand{\sfrak}{\mathfrak s}



\newcommand{\SL}{\mathsf{SL}}

\newcommand{\GL}{\mathsf{GL}}

\newcommand{\Psf}{\mathsf{P}}
\newcommand{\SO}{\mathsf{SO}}

\newcommand{\Spin}{\mathsf{Spin}}

\newcommand{\slfrak}{\mathfrak{sl}}

\newcommand{\sofrak}{\mathfrak{so}}

\newcommand{\SC}{\mathsf{SC}}

\newcommand{\OG}{\mathsf{OG}}



\DeclareMathOperator{\End}{End}

\DeclareMathOperator{\id}{id}

\DeclareMathOperator{\rank}{rank}

\DeclareMathOperator{\Ad}{Ad}

\DeclareMathOperator{\Div}{div}

\DeclareMathOperator{\Nm}{Nm}
\DeclareMathOperator{\Det}{Det}
\DeclareMathOperator{\pr}{pr}



\newcommand{\lra}{\longrightarrow}

\newcommand{\Pic}{{\rm Pic}}

\newcommand{\Th}{{\rm Th}}

\newcommand{\isorightarrow}{\xrightarrow{
   \,\smash{\raisebox{-0.5ex}{\ensuremath{\sim}}}\,}}

\newcommand{\llrrbracket}[1]{
  \left[\mkern-2mu\left[#1\right]\mkern-2mu\right]}
\newcommand{\llrrparen}[1]{
  \left(\mkern-3mu\left(#1\right)\mkern-3mu\right)}

\makeatletter
\newcommand*\bigcdot{\mathpalette\bigcdot@{.5}}
\newcommand*\bigcdot@[2]{\mathbin{\vcenter{\hbox{\scalebox{#2}{$\m@th#1\bullet$}}}}}
\makeatother

\makeatletter
\newcommand{\thickcolon}{\mathpalette\thick@colon\relax}
\newcommand{\thick@colon}[2]{%
  \mspace{1mu}%
  \vbox{%
    \hbox{$\m@th#1\bigcdot$}
    \nointerlineskip
    \kern.15ex
    \hbox{$\m@th#1\bigcdot$}
    \kern-.55ex
  }%
  \mspace{1mu}%
}
\makeatother

\newcommand{\normalorder}[1]{\thickcolon\mathop{#1}\thickcolon}



\newcommand\MSO {\mathcal{M}_{\SO(m)}}

\newcommand\Res{\operatorname{Res}}

\AtBeginDocument{%
  \def\MR#1{}
  }

\begin{document}
\title[Strange duality and odd orthogonal bundles on curves]{Generalized theta functions, strange duality,  and odd orthogonal bundles on curves}
\author{Swarnava Mukhopadhyay}
\author{Richard Wentworth}
 \thanks{S.M. was supported in part by a Simons Travel Grant and by  NSF grant DMS-1361159  (PI: Patrick Brosnan). R.W. was supported in part by NSF  grant DMS-1406513.
  The authors also acknowledge support from NSF grants DMS-1107452, -1107263, -1107367 ``RNMS: GEometric structures And Representation varieties'' (the GEAR Network).
  } 
\address{School of Mathematics, Tata Institute of Fundamental Research, Colaba, Mumbai 400005}
\email{swarnava@math.tifr.res.in}
\address{Department of Mathematics\\ University of Maryland
\\ College Park, MD 20742}

\email{raw@umd.edu}
\subjclass[2010]{Primary  14H60, Secondary 17B67,  32G34, 81T40}
\begin{abstract} This paper studies  spaces of generalized theta functions for odd orthogonal bundles with nontrivial Stiefel-Whitney class and  the associated space of twisted spin  bundles. In particular, we prove a Verlinde type formula and a dimension equality that was conjectured by Oxbury-Wilson.
Modifying Hitchin's argument, we also show that the bundle  of generalized theta functions for twisted spin bundles over the moduli space of curves admits a  flat projective connection. We furthermore address the issue of strange duality for odd orthogonal bundles, and we demonstrate that the naive conjecture fails in general. A consequence of this is the reducibility of the projective representations of spin  mapping class groups arising from the Hitchin connection for these moduli spaces. Finally, we answer a question of Nakanishi-Tsuchiya about rank-level duality for conformal blocks on the pointed projective line with spin weights. 
\end{abstract}

\maketitle

\allowdisplaybreaks


\section{Introduction}
Let $C$ be a smooth projective curve of genus $g\geq 2$,  and choose integers $n\geq 2$, $\ell\geq 1$.
Let $M_{\SL(n)}$ denote the coarse moduli space of semistable vector bundles of rank $n$ and trivial determinant on $C$, and let $\mathcal{L}$ be the ample generator of the Picard group  $\Pic(M_{\SL(n)})\simeq\ZBbb$. Similarly,  let $M_{\GL(\ell)}$ denote the moduli space of semistable vector bundles of rank $\ell$ and degree $\ell(g-1)$, and consider the locus $\Theta_\ell \subset M_{\GL(\ell)}$ of points $[\Ecal]\in M_{\GL(\ell)}$ such that $H^0(C,\Ecal)\neq 0$. It turns out that $\Theta_\ell$ is a Cartier divisor in $M_{\GL(\ell)}$, and we use the same notation for the associated line bundle. Tensor product defines a map:
$$s:M_{\SL(n)} \times M_{\GL(\ell)} \longrightarrow M_{\GL(n\ell)}\ , $$ and by the ``see-saw" principle it is easy to see that $s^*\Theta_{n\ell}\simeq \mathcal{L}^{\otimes \ell}\boxtimes \Theta_\ell^{\otimes n}$. The pull-back of the defining section of $\Theta_{n\ell}$ gives a map, well-defined up to a multiplicative constant,
$$s_{n\ell}:H^0(M_{\SL(n)},\mathcal{L}^{\otimes \ell})^*\longrightarrow H^0(M_{\GL(\ell)},\Theta_\ell^{\otimes n})\ ,$$ 
 known as the \emph{strange duality map}. It was conjectured to be an isomorphism (cf.\ Donagi-Tu \cite{DonagiTu:94} and Beauville \cite{Beauville:95}), and this conjecture was confirmed independently by  Belkale \cite{Belkale:08} and by Marian-Oprea \cite{MarianOprea:07} (cf.\ Beauville-Narasimhan-Ramanan \cite{BNR} for $\ell=1$). The analogous strange duality for symplectic bundles was conjectured by Beauville \cite{BORTHO} and proven by  Abe \cite{Abe:08} (see also result of Belkale \cite{Belkale:12}). Strange duality for maximal subgroups of ${\mathsf E}_8$ has been considered independently by Boysal-Pauly \cite{BoysalPauly:10} and by the first author \cite{BoysalPauly:10, Mukhopadhyay:15}. 
However, a conjectural description of  strange duality for other dual pairs, e.g.\ orthogonal bundles, has as yet not been formulated in the literature. 

An approach to strange duality questions, and in fact the original motivation,  comes from the study of the space $\mathcal{V}^{*}_{\vec\lambda}(\Xfrak,\gfrak,\ell)$ of  \emph{conformal blocks} 
(cf.\ Tsuchiya-Ueno-Yamada \cite{TUY:89} and  Definition \ref{definitionconformalblocks} below). These are dual spaces to  quotients of tensor products of level $\ell$ integrable highest weight modules of the affine Kac-Moody algebra $\widehat\gfrak$ associated to a simple Lie algebra $\gfrak$, and with weights $\vec\lambda=(\lambda_1,\ldots, \lambda_n)$ attached to the curve $\Xfrak= (C, p_1,\ldots, p_n)$  with marked points $p_i$. Isomorphisms between spaces of conformal blocks can sometimes arise from \emph{conformal embeddings} of affine Lie algebras (cf.\ Kac--Wakimoto \cite{KacWakimoto:88} and Definition \ref{def:conformal-embedding} below), and this phenomenon is known in the conformal field theory literature as \emph{rank-level duality}
  (cf.\ Naculich--Schnitzer \cite{NaculichSchnitzer:90} and Nakanishi--Tsuchiya \cite{ NakanishiTsuchiya:92}). By a factorization or sewing procedure (see Sections \ref{sec:conformal-block} and \ref{factorizationlemma}), one can often reduce strange duality questions for curves of positive genus to rank-level duality on $\mathbb{P}^1$ with marked points. Indeed, all known strange dualities can be proved using this approach. In \cite{Mukhopadhyay:12}, the first author proved a rank-level duality for $\gfrak=\sofrak(2r+1)$ conformal blocks on $\mathbb{P}^1$ with marked points and weights associated to representations of the group $\SO(2r+1)$. One would naturally like to investigate whether the result can be generalized to curves of positive genus to give a strange duality for orthogonal bundles. This question forms the starting point of the present work.

As we shall see below, any generalization of rank-level or strange dualities for orthogonal groups is complicated by the existence of spin representations (in the former case) and the fundamental group (in the latter).   Spin weights cause difficulty in the branching rules for highest weight representations under embeddings.
This issue was already raised in the discussion in \cite{NakanishiTsuchiya:92}, and for this reason   only vector representations were considered in \cite{Mukhopadhyay:12}. On the geometric side, since $\SO(m)$ is not simply connected the moduli spaces for orthogonal groups will be disconnected, and any reasonable approach to strange duality  must  take into account all components.  It was this observation that led to the conjectural Verlinde type formula of Oxbury-Wilson \cite{OxburyWilson:96}, which is proved below.

In this paper, we discuss these issues for the conformal embeddings of the odd orthogonal algebras $\sofrak(2r+1)$.
The next subsections summarize the  results we have obtained.

\subsection{Twisted moduli spaces and uniformization} For a complex reductive group $G$, let
 $\mathcal{M}_G$ denote the moduli stack  of principal $G$-bundles on $C$. 
Consider the natural map $\Spin(m) \times \Spin(n)\rightarrow \Spin(mn)$ induced by tensor product of vector spaces of dimensions $m$ and $n$, each endowed with a symmetric nondegenerate bilinear form. This  map induces one between the corresponding moduli stacks $\mathcal{M}_{\Spin(m)}\times \mathcal{M}_{\Spin(n)}\rightarrow \mathcal{M}_{\Spin(mn)}$. 
If we pull back any section of $H^0(\mathcal{M}_{\Spin(mn)},\mathcal{P})$, we get a map 
$$H^0(\mathcal{M}_{\Spin(m)}, \mathcal{P}_1^{\otimes n})^*\longrightarrow H^0(\mathcal{M}_{\Spin(n)},\mathcal{P}_2^{\otimes m})\ ,$$
 where $\mathcal{P}$, $\mathcal{P}_1$ and $\mathcal{P}_2$ are the ample generators of the respective Picard groups of the moduli stacks, which are given by Pfaffian line bundles. By the Verlinde formula (cf.\ \cite[Cor.\ 9.8]{Beauville:93}), it is easy to find $m$ and $n$ for which
 \begin{equation} \label{eqn:spin-dim}
 \dim_\CBbb H^0(\mathcal{M}_{\Spin({m})}, \mathcal{P}_1^{\otimes n})\neq \dim_\CBbb H^0(\mathcal{M}_{\Spin({n})}, \mathcal{P}_2^{\otimes m})\ ,
 \end{equation}
 and hence there can be no obvious strange duality for spin bundles. Nevertheless, following suggestions of Oxbury-Wilson \cite{OxburyWilson:96}, we can attempt to rectify this situation by considering orthogonal bundles that do not lift to spin.

Fix $p\in C$,  and let  $\mathcal{M}_{\Spin(m)}^{-}$, $m\geq 5$, denote the moduli stack of special Clifford bundles whose spinor norm is $\mathcal{O}_C(p)$ (cf.\ Section \ref{twistedmoduli} and the discussion around eq.\ \eqref{eqn:projection}). We refer to these objects as \emph{twisted spin bundles}: their associated orthogonal bundles have nontrivial Stiefel-Whitney class.
A uniformization theorem for these moduli stacks was proved in Beauville-Laszlo-Sorger \cite{BLS:98}, and there is again a Pfaffian line bundle $\Pcal\to \mathcal{M}_{\Spin(m)}^{-}$  which generates the Picard group. 
Now if $G$ is simply connected and $\Lcal\to \Mcal_G$ is the ample generator of $\Pic(\Mcal_G)$, then $H^0(\mathcal{M}_G,\mathcal{L}^{\otimes\ell})$ is canonically identified with the space of conformal blocks $\mathcal{V}^{*}_{\omega_0}(\mathfrak{X},\frg,\ell)$.
We prove the analog of this result in the twisted case.
\begin{theorem}\label{twistedconformal}
The space $H^0(\mathcal{M}^{-}_{\Spin(m)},\mathcal{P}^{\otimes \ell})$ is naturally isomorphic to the space of conformal blocks $\mathcal{V}^{*}_{\ell\omega_1}(\mathfrak{X},\mathfrak{so}(m),\ell)$. 
\end{theorem}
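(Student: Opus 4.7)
\noindent\emph{Proof plan.}

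The plan is to follow the template of Kumar-Narasimhan-Ramanathan, Faltings, and Beauville-Laszlo for the classical (simply connected) identification $H^0(\mathcal{M}_G,\mathcal{L}^{\otimes\ell}) \cong \mathcal{V}^{*}_{\omega_0}(\mathfrak{X},\mathfrak{g},\ell)$, but adapted to the twisted setting via the uniformization theorem of Beauville-Laszlo-Sorger \cite{BLS:98} for non-simply-connected orthogonal groups. The essential new input is an identification of the ``twisted vacuum'' over the relevant component of the affine Grassmannian with the integrable level-$\ell$ module of $\widehat{\mathfrak{so}(m)}$ of highest weight $\ell\omega_1$.

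First, I would invoke the uniformization of \cite{BLS:98}, which presents $\mathcal{M}^{-}_{\Spin(m)}$ as a stack-theoretic double-coset quotient based on the $\CBbb((t))$-points of $G=\Spin(m)$ in the formal disc around $p$. Explicitly, this realizes $\mathcal{M}^{-}_{\Spin(m)}$ as the quotient of the nontrivial component $\Qcal^{-}$ of the affine Grassmannian of $SO(m)$, which is the $L^+G$-orbit of $t^{\mu}$ for any lift $\mu$ of the nontrivial class in $\pi_1(SO(m))=\ZBbb/2$, by the ind-group $L_{X\setminus p}\Spin(m)$ of loops extending algebraically to $C\setminus\{p\}$. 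The spinor-norm condition $\mathcal{O}_C(p)$ is exactly what pins down this component, and $\mu$ may be taken to be the fundamental coweight $\omega_1^{\vee}$, which represents the nontrivial class in the quotient of the coweight lattice by the coroot lattice for type $B_r$.

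Next, the Pfaffian line bundle $\mathcal{P}^{\otimes\ell}$ pulls back to an $L^+G$-equivariant line bundle on $\Qcal^{-}$ whose global sections form the restricted dual of an integrable highest weight module for $\widehat{\mathfrak{so}(m)}$ at level $\ell$. A standard translation-of-basepoint argument using the Heisenberg cocycle of the central extension (essentially, conjugation of the basic representation $\Hcal_{\ell\omega_0}$ by $t^{\omega_1^{\vee}}$) identifies this module as $\Hcal_{\ell\omega_1}$; the inequality $\langle\omega_1^{\vee},\theta\rangle\leq 1$, where $\theta$ is the highest root, ensures that $\ell\omega_1$ remains in the level-$\ell$ alcove and that the resulting module is integrable. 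Taking $L_{X\setminus p}\Spin(m)$-invariants, which by definition is the space of conformal blocks with a single insertion at $p$ of weight $\ell\omega_1$, yields the claimed isomorphism with $\mathcal{V}^{*}_{\ell\omega_1}(\mathfrak{X},\mathfrak{so}(m),\ell)$.

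I expect the main obstacle to be a careful bookkeeping of the Pfaffian: one must verify that under the twisted uniformization $\mathcal{P}$ descends to precisely the line bundle on $\Qcal^{-}$ whose level-$\ell$ sections pair with $\ell\omega_1$ and not with some translate by a character of the centralizer of $\omega_1^{\vee}$, and that no anomaly correction enters because $\omega_1$ is of level $1$ rather than being a multiple of the minimal coweight of $\mathfrak{sl}$-type groups. Once this central-extension computation is settled, descent of sections to the quotient stack, together with the projection-formula and acyclicity arguments from the untwisted Faltings-Beauville-Laszlo-Sorger proof, goes through essentially verbatim.
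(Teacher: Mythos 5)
Your proposal matches the paper's proof of Theorem \ref{identification}: the paper invokes the Beauville--Laszlo--Sorger twisted uniformization $\Mcal^{-}_{\Spin(m)}=(\zeta^{-1}L_C(\Spin(m))\zeta)\backslash\Qcal_{\Spin(m)}$ with $\zeta=z^{\omega_1^\vee}=\operatorname{diag}(z,1,\ldots,1,z^{-1})$, identifies the pullback of the Pfaffian as the fundamental line bundle $\Lcal_\chi$ on the affine Grassmannian (Proposition \ref{pullback}), and then carries out precisely the translation-of-basepoint computation you describe (Lemma \ref{adjoint}): precomposing the level-$\ell$ vacuum module by $\operatorname{Ad}(\zeta)$ yields $\Hcal_{\ell\omega_1}(\sofrak(m),\ell)$. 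The paper also records a second, geometric proof in Section \ref{sec:abe} via the Hecke/$\iota$-transform and Borel--Weil on the isotropic Grassmannian $\OG$, but your plan follows the first, uniformization-theoretic route.
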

\noindent In particular, from the Verlinde formula and results in \cite{Mukhopadhyay:12}, we obtain an expression for the dimension  of $H^0(\mathcal{M}^{-}_{\Spin(m)},\mathcal{P}^{\otimes\ell})$ that was first conjectured to hold in \cite{OxburyWilson:96} (see Theorem \ref{oxbury} below).

 Next, we observe the following. Let 
\begin{equation} \label{eqn:mr}
\mathcal{M}_{2r+1}=\mathcal{M}_{\Spin(2r+1)}\sqcup \mathcal{M}^{-}_{\Spin(2r+1)}\ ,
\end{equation}
and denote also by $\Pcal$ the bundle which restricts to the Pfaffian on each component.
Then we prove the following equality.
\begin{corollary} \label{cor:spin-dimension}
$\dim_{\mathbb{C}}H^0(\mathcal{M}_{2r+1},\mathcal{P}^{\otimes (2s+1)})=\dim_{\mathbb{C}}H^0(\mathcal{M}_{2s+1},\mathcal{P}^{\otimes(2r+1)})$.
\end{corollary}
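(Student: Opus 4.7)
The strategy is to rewrite both sides as sums of dimensions of conformal blocks and then appeal to odd-orthogonal rank-level duality.

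First, the Beauville-Laszlo-Sorger uniformization for the simply connected group $\Spin(2r+1)$ identifies $H^0(\Mcal_{\Spin(2r+1)},\Pcal^{\otimes(2s+1)})\simeq\Vcal^*_{\omega_0}(\mathfrak{X},\sofrak(2r+1),2s+1)$ for any pointed curve $\mathfrak{X}=(C,p)$. Combined with Theorem~\ref{twistedconformal} on the twisted component and the decomposition \eqref{eqn:mr}, this yields
\[
\dim H^0(\Mcal_{2r+1},\Pcal^{\otimes(2s+1)}) = \dim\Vcal^*_{\omega_0}(\mathfrak{X},\sofrak(2r+1),2s+1) + \dim\Vcal^*_{(2s+1)\omega_1}(\mathfrak{X},\sofrak(2r+1),2s+1),
\]
and symmetrically for $\dim H^0(\Mcal_{2s+1},\Pcal^{\otimes(2r+1)})$ after exchanging $r$ and $s$.

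Second, the Verlinde formula rewrites each of these dimensions as a sum over level-$\ell$ integrable weights weighted by modular $S$-matrix entries, reducing the corollary to an identity of the form
\[
\sum_\mu \bigl(S^{(2r+1,2s+1)}_{\mu,0}\bigr)^{1-2g}\bigl(S^{(2r+1,2s+1)}_{\mu,0}+S^{(2r+1,2s+1)}_{\mu,(2s+1)\omega_1}\bigr) = \sum_\nu \bigl(S^{(2s+1,2r+1)}_{\nu,0}\bigr)^{1-2g}\bigl(S^{(2s+1,2r+1)}_{\nu,0}+S^{(2s+1,2r+1)}_{\nu,(2r+1)\omega_1}\bigr),
\]
where $S^{(m,\ell)}$ denotes the modular $S$-matrix of $\sofrak(m)$ at level $\ell$.

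Third, this identity follows from rank-level duality for the pair $(\sofrak(2r+1)_{2s+1},\sofrak(2s+1)_{2r+1})$, originating from the conformal embedding $\sofrak(2r+1)\oplus\sofrak(2s+1)\hookrightarrow\sofrak((2r+1)(2s+1))$ at level one. The embedding produces a canonical bijection of vector-type integrable weights on the two sides that pairs $\omega_0\leftrightarrow\omega_0$ and $(2s+1)\omega_1\leftrightarrow(2r+1)\omega_1$, under which the relevant modular $S$-matrix entries match; this is the odd-orthogonal $S$-matrix duality exploited in \cite{Mukhopadhyay:12}. Substituting into the Verlinde sums gives the equality. The main obstacle is isolating the vector-weight sector cleanly: spin-type integrable weights participate in the full rank-level duality but do not intervene here, as both $\omega_0$ and $\ell\omega_1$ are vector weights, so the bijection restricts to the sector that actually appears; the remaining technical points are tracking sign and coset correction factors between the two $S$-matrices, and checking that the identification of weights is compatible with their contribution to the Verlinde sum.
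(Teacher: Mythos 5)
Your first two steps match the paper: uniformization (Theorem~\ref{twistedconformal} together with the classical BLS identification on the untwisted component) gives the conformal-block decomposition, and the Verlinde formula turns the dimensions into sums over $P_\ell(\sofrak(2r+1))$ of $S$-matrix data. The gap is in your third step, specifically in how you isolate the vector-weight sector. You write that the spin-type integrable weights ``do not intervene here, as both $\omega_0$ and $\ell\omega_1$ are vector weights, so the bijection restricts to the sector that actually appears.'' This is not a valid reason. In the Verlinde sum the index $\mu$ runs over \emph{all} of $P_\ell(\sofrak(2r+1))$, including the spin-type weights; the fact that the two boundary weights $\omega_0$ and $\ell\omega_1$ are vector weights does nothing by itself to restrict the summation to vector $\mu$. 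The Oxbury--Wilson quantity $N^0_g(\sofrak(2r+1),\ell)$ is a sum only over $P_\ell(\SO(2r+1))$, and the passage from the full Verlinde sum to $2N^0_g$ requires a genuine computation.

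The missing ingredient is the character/trace identity from \cite[Lemmas 10.6 and 10.7]{Mukhopadhyay:12}, namely
\[
\operatorname{Tr}_{V_{\ell\omega_1}}\!\left(\exp\Bigl(2\pi\sqrt{-1}\,\tfrac{\mu+\rho}{\ell+2r-1}\Bigr)\right)
= \frac{S_{\mu,\ell\omega_1}}{S_{\mu,\omega_0}} =
\begin{cases}
+1 & \mu\in P_\ell(\SO(2r+1)),\\
-1 & \text{otherwise,}
\end{cases}
\]
which is precisely the statement that the diagram automorphism $\sigma$ (with $\sigma(\omega_0)=\ell\omega_1$) acts on $S$-matrix rows by a sign distinguishing vector from spin weights. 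It is this sign that causes the spin-weight contributions to cancel between the $\omega_0$ and the $\ell\omega_1$ Verlinde sums, and the vector-weight contributions to double, giving $\dim H^0(\Mcal_{2r+1},\Pcal^{\otimes\ell})=2N^0_g(\sofrak(2r+1),\ell)$ (Theorem~\ref{oxbury}). Only after this reduction does the identity you are invoking become a sum over vector-type weights, where the Oxbury--Wilson symmetry $N^0_g(\sofrak(2r+1),2s+1)=N^0_g(\sofrak(2s+1),2r+1)$ (Lemma~\ref{lem:oxbury}, which you are implicitly using when you speak of the $S$-matrix entries matching under the bijection $\Ycal_{r,s}\leftrightarrow\Ycal_{s,r}$) applies. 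Incorporate the trace computation explicitly and your argument lines up with the paper's.
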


\subsection{Hecke transformations} Let $\mathcal{M}^{par}_{\Spin(m)}$ be the moduli stack of pairs $(S,\Psf)$, where $S\to C$ is a $\Spin(m)$ bundle and $\Psf$ is a maximal parabolic subgroup of the fiber $S_{p}$ preserving an isotropic line in the associated orthogonal bundle. 
A theorem of Laszlo-Sorger \cite{LaszloSorger:97} states that $H^0(\mathcal{M}^{par}_{\Spin(m)},\mathcal{P}(\ell))$ is naturally isomorphic to $\mathcal{V}^{*}_{\ell\omega_1}(\mathfrak{X},\mathfrak{so}(m),\ell)$, for a suitable choice of line bundle $\Pcal(\ell)\to \mathcal{M}^{par}_{\Spin(m)}$.
Theorem \ref{twistedconformal}  raises the question of whether  $\mathcal{M}_{\Spin(m)}$ and $\mathcal{M}^{-}_{\Spin(m)}$ are related by a Hecke type elementary transformation.

Recall that an \emph{oriented orthogonal bundle} on $C$ is a pair $(E,q)$,
where $E\to C$ is a vector bundle with trivial determinant and a nondegenerate quadratic form $q: E\otimes E\to \Ocal_C$ with obvious compatibility of trivialization with $\det E$ and $\det q$. 
In \cite{Abe:13}, T. Abe defines a transformation yielding a new orthogonal bundle  $E^\iota$ from an orthogonal bundle $E$ equipped with an isotropic line in the fiber $E_p$. Below we observe that the bundles $E^\iota$ and $E$ have opposite Stiefel-Whitney classes, meaning that the $\iota$-transform switches components of $\mathcal{M}_{\SO(m)}$. We then extend the $\iota$-transform to a Hecke type elementary transformation on Clifford bundles (see \eqref{eqn:hecke}). This enables us to give an alternative proof of Theorem \ref{twistedconformal}. 
The advantage of this identification will be seen in Theorem \ref{projflat} below.
The details of this construction are contained in Section \ref{sec:parabolic}.

\subsection{Hitchin connection} 
The locally free sheaf of conformal blocks associated to a family of smooth projective curves $\pi: \mathcal{C}\rightarrow B$ and any simple Lie algebra $\frg$ carries a flat projective connection known as the TUY connection (or the KZ connection in genus zero). The identification in Theorem \ref{twistedconformal} motivates a geometric description of this connection for twisted spin bundles.
Indeed, Hitchin \cite{Hitchin:90}  introduced a flat projective connection on  spaces of generalized theta functions  as the underlying curve $C$ varies over the Teichm\"uller  space  of  Riemann surfaces (see also \cite{Axelrod:91,VanGeemenDeJong:98,Andersen:12}). 
In  \cite{Laszlo}, Laszlo showed that with this  identification, and over the pointed Teichm\"uller space $\Tcal_{g,1}$,  the Hitchin connection coincides with the TUY connection on the space of conformal blocks. This statement also generalizes to the case of twisted spin bundles. More precisely, we prove the following.
\begin{theorem} \label{hitchinlaszlo}
As the pointed curve $(C,p)$ varies in $\Tcal_{g,1}$, the vector bundle with fiber\break  $H^0(\mathcal{M}^{-}_{\Spin(m)},\mathcal{P}^{\otimes\ell})$ is endowed with a flat projective connection which we also call the Hitchin connection. Under the identification of $H^0(\mathcal{M}^{-}_{\Spin(m)},\mathcal{P}^{\otimes\ell})$ with $\mathcal{V}^{*}_{\ell\omega_1}(\mathfrak{X},\frg,\ell)$, the Hitchin connection coincides with the TUY connection. 
\end{theorem}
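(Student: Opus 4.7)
\medskip

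\noindent\textbf{Proof plan.} The plan is to adapt Hitchin's original construction in \cite{Hitchin:90} to the twisted moduli stack $\Mcal_{\Spin(m)}^{-}$, and then compare with TUY using the Hecke-theoretic identification developed in Section \ref{sec:parabolic}. First I would work over a versal family of pointed curves $\pi: \Ccal \to B$ with a section $\sigma: B \to \Ccal$, and construct the relative twisted moduli stack $\Mcal_{\Spin(m)}^{-}(\Ccal/B) \to B$ together with the relative Pfaffian line bundle $\Pcal \to \Mcal_{\Spin(m)}^{-}(\Ccal/B)$. Smoothness of the twisted moduli stack (which, at stable points, has tangent space $H^1(C,\ad S)$ just as in the simply connected case) and flatness of $\pi_\ast\Pcal^{\otimes \ell}$ ensure that the pushforward is locally free of the expected rank.

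Next I would produce Hitchin's projective heat operator. The Kodaira--Spencer map $T_B \to H^1(C,\Tcal_C)$, composed with the symbol map $H^1(C,\Tcal_C) \to H^1(C,\mathrm{ad}\,S)\otimes H^1(C,\mathrm{ad}\,S)$ induced by the symmetric Killing form on $\sofrak(m)$, produces a section of $\mathrm{Sym}^2 T_{\Mcal_{\Spin(m)}^{-}}$, which is the principal symbol of a second-order differential operator. Verifying that this symbol is compatible with the curvature of $\Pcal^{\otimes \ell}$ up to projective ambiguity reduces, as in Hitchin's argument, to the fact that the ratio $(\ell + h^\vee)/\ell$ for $\sofrak(m)$ matches the constant appearing in the quadratic trace form used in the Pfaffian construction. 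This ratio calculation is essentially identical to the untwisted case, since the first Chern class of $\Pcal$ on $\Mcal_{\Spin(m)}^-$, computed via the uniformization of Beauville--Laszlo--Sorger \cite{BLS:98}, has the same intersection-theoretic properties as in the untwisted case.

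For the identification with the TUY connection, I would not try to uniformize $\Mcal_{\Spin(m)}^-$ directly by a loop group (the twisting makes this awkward at the base point $p$). Instead I would route through the parabolic moduli stack $\Mcal_{\Spin(m)}^{par}$: by the Hecke/$\iota$-transform construction of Section \ref{sec:parabolic}, sections of $\Pcal^{\otimes\ell}$ on $\Mcal_{\Spin(m)}^-$ correspond canonically (and in families over $B$) to sections of an appropriate line bundle $\Pcal(\ell)$ on $\Mcal_{\Spin(m)}^{par}$, and the latter is uniformized in the sense of Laszlo--Sorger \cite{LaszloSorger:97}. Laszlo's argument in \cite{Laszlo} then transports essentially verbatim: after pulling back to the uniformizing ind-scheme, the Hitchin heat operator is computed infinitesimally using the Sugawara tensor, and the resulting formula matches the Segal--Sugawara operator that defines the TUY connection on $\Vcal^*_{\ell\omega_1}(\Xfrak,\sofrak(m),\ell)$.

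The main obstacle I expect is the bookkeeping around the marked point $p$. Because the $\iota$-transformation and the uniformization are localized at $p$, varying $(C,p)$ over $\Tcal_{g,1}$ introduces an additional contribution at $p$ coming from the insertion of weight $\ell\omega_1$: one must check that the TUY correction term at $p$ (involving $L_{-1}$ and the parabolic weight) is exactly the extra term picked up by the Hitchin heat operator when differentiating the Hecke transform as $p$ moves. This is where the choice of section $\sigma$ in the family $\Ccal/B$ enters, and the matching of local terms at $p$ is the technical heart of the argument; everything else is a structural adaptation of the Hitchin--Laszlo framework.
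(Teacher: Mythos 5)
Your outline captures the high-level structure (heat operator via Kodaira--Spencer, symbol map from the Killing form, comparison with TUY following Laszlo), and your instinct to route the TUY comparison through the parabolic stack and the $\iota$-transform is a legitimate alternative to the paper's direct uniformization of $\Mcal_{\Spin(m)}^-$ via the shifted loop group. But your proposal is missing the actual technical content of the paper's proof, and you have misidentified where the difficulty lies.

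The heart of Hitchin's argument is not the ratio computation for the symbol (which, as you say, is essentially formal once the untwisted case is known). It is the pair of vanishing results: $H^0(\widetilde Y, T\widetilde Y)=\{0\}$, needed so that the heat operator is determined up to scale, and $H^1(\widetilde Y, \mathcal{O}_{\widetilde Y})=\{0\}$, needed for the symbol of the heat operator to lift. Your plan never mentions either. In the untwisted case both are standard and follow from Higgs bundle arguments: the first uses properness of the Hitchin map and connectivity of its fibers, and the second can also be deduced via the Higgs moduli space. In the twisted case neither argument is available off the shelf. For the first vanishing, it is not known whether the fibers of the Hitchin map for twisted $\Spin(m)$-Higgs bundles are connected; the paper sidesteps this by showing (via a $\pi_1$-surjectivity argument, Proposition \ref{prop:flat-trivial} and Corollary \ref{cor:nontrivial}) that the relevant torsion line bundles $L_\chi$ are nontrivial on generic Hitchin fibers after pushing down to $M_{\SO(m)}^{reg}$, where Donagi--Pantev's connectivity result does apply. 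For the second vanishing, the Higgs bundle route is again blocked, so the paper adapts the Kumar--Narasimhan argument directly: it establishes that $M_{\Spin(m)}^-$ is Gorenstein with rational singularities, identifies the dualizing sheaf with the (negative of the) adjoint determinant of cohomology, and then invokes Serre duality plus Grauert--Riemenschneider. None of this is a routine adaptation; it is the point of the section, and your plan skips it entirely while asserting that smoothness and flatness "ensure that the pushforward is locally free of the expected rank" --- which is true but beside the point, since local freeness is not where the argument needs work.

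You also declare that "the main obstacle I expect is the bookkeeping around the marked point $p$." That is not where the paper spends its effort: the comparison with TUY is handled, as you correctly note, essentially verbatim from Laszlo once the identification with conformal blocks (Theorem \ref{twistedconformal}) is in place, and the local weight-$\ell\omega_1$ insertion at $p$ is already absorbed into that identification. The obstacle is upstream, in the existence of the heat operator, and your proposal would stall exactly there.
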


Let $M^{-, reg}_{\Spin(m)}$ denote the moduli space of regularly stable twisted spin bundles (see Section \ref{sec:uniformization}). Then the Pfaffian line bundle descends to $M^{-, reg}_{\Spin(m)}$, and 
\begin{equation} \label{eqn:stack-moduli}
H^0(M^{-, reg}_{\Spin(m)},\mathcal{P}^{\otimes\ell})\simeq H^0(\mathcal{M}^{-}_{\Spin(m)},\mathcal{P}^{\otimes\ell})\ .
\end{equation}We refer the reader to Proposition \ref{prop:stackisspace} for more details. 
Now the essential strategy in the  proof of Theorem \ref{hitchinlaszlo} is the same as in \cite{Hitchin:90}, but there are two key differences. 
These are as follows:
\begin{itemize}
\item The connectivity of the fibers of the Hitchin map from the moduli space $M^\theta_G$ of $G$-Higgs bundles  to the Hitchin base
 is an essential ingredient in Hitchin's proof. 
In the untwisted case,
the connectivity follows, for example,  from a 
description of the fibers  in terms of  spectral data.
It seems not to be known if the fiber of the Hitchin map for twisted Higgs bundles is connected in general.
 We circumvent this issue by reducing to the  $\SO(m)$ moduli space, and then using results of Donagi-Pantev \cite{DonagiPantev:12}. 
\item 
The condition $H^1(M^{-, reg}_{\Spin(m)},\mathcal{P}^{\otimes\ell})=\{0\}$, 
is sufficient to show that the symbol map of the projective heat operator is injective. In the untwisted case,
one can again use Higgs bundles to establish this vanishing
 \cite{Hitchin:90, Laszlo}. For the same reason as above, this argument is unavailable in the twisted case. However, 
 Kumar-Narasimhan proved 
  such vanishing results  directly  without using Higgs bundles. In the present paper, we generalize the proof in \cite{KumarNarasimhan:97} to the twisted setting. 
\end{itemize}
The proof of the second statement in Theorem \ref{hitchinlaszlo} is analogous to that in \cite{Laszlo}. We refer the reader to Section \ref{twistedHiggs} for more details.


\subsection{Level one sections}
Since the strange duality map arises by pulling back a level one section, 
we study these sections   in detail. 
Let 
\begin{equation} \label{eqn:theta}
\Th(C):=\{ \kappa\in \Pic_{g-1}(C)\mid \kappa^{\otimes 2}=\omega_C\}
\end{equation}
denote the set of \emph{theta characteristics} of $C$.
Furthermore, denote by $\Th^+(C)\subset \Th(C)$ (resp.\ $\Th^-(C)\subset \Th(C)$)  the set of \emph{even} (resp.\ \emph{odd})
theta characteristics, i.e.\ those for which $h^0(C,\kappa)$ is even (resp.\ odd). 
We shall prove the following analog  of a  theorem of Belkale  \cite{Belkale:12} and Pauly-Ramanan \cite{PaulyRamanan}.
\begin{theorem}\label{basis} Let $s_{\kappa}$ denote the canonical $($up to scale$)$ section of the Pfaffian line bundle for a theta characteristic $\kappa$ $($see Definition \ref{def:pfaffian-section}$)$. Then the collection $\{s_{\kappa}\mid \kappa\in \Th^-(C)\}$ forms a basis of the space of the level one generalized theta functions $H^0(\mathcal{M}^{-}_{\Spin(2r+1)},\mathcal{P})$. 
\end{theorem}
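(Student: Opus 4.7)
The plan is to reduce the statement to two separate assertions: a dimension count and linear independence. I would first establish the dimension: by Theorem \ref{twistedconformal},
\[
\dim_\CBbb H^0(\Mcal^-_{\Spin(2r+1)},\Pcal) \;=\; \dim_\CBbb \Vcal^*_{\omega_1}(\Xfrak,\sofrak(2r+1),1),
\]
and at level one the Verlinde calculation collapses dramatically since only the vacuum, vector, and spin representations of $\widehat{\sofrak}(2r+1)$ are integrable, and the fusion with $V_{\omega_1}$ is explicit. Combined with the Oxbury--Wilson formula (Theorem \ref{oxbury} in the paper), this evaluates to $2^{g-1}(2^g-1)=|\Th^-(C)|$, matching the cardinality of the proposed basis.

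For linear independence, I would follow the separating-bundles strategy of Beauville \cite{Beauville:95}, Belkale \cite{Belkale:12}, and Pauly--Ramanan \cite{PaulyRamanan}. The crucial geometric input is that $s_\kappa(E)$ is, up to scale, the Pfaffian of a skew pairing on $H^0(C,E\otimes\kappa)$ induced by the orthogonal form on $E$, so $s_\kappa(E)=0$ if and only if $h^0(C,E\otimes\kappa)>0$. A mod $2$ index computation (Mumford's parity) shows that for $E\in\Mcal^-_{\SO(2r+1)}$ and a theta characteristic $\kappa$, one has $h^0(C,E\otimes\kappa)\equiv h^0(C,\kappa)+1\pmod 2$. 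Hence on the twisted component the vanishing locus of $s_\kappa$ is a proper divisor precisely when $\kappa\in\Th^-(C)$, which is exactly the parity regime needed.

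The core of the argument would then be to exhibit, for each $\kappa_0\in\Th^-(C)$, a twisted orthogonal bundle $E_0$ with $h^0(E_0\otimes\kappa_0)=0$ but $h^0(E_0\otimes\kappa)>0$ for every other $\kappa\in\Th^-(C)$; such a family of separating points immediately forces any linear relation among the $s_\kappa$ to be trivial. A natural template is
\[
E_0 \;=\; \ad(V)\;\oplus\;\bigoplus_{i=1}^{r-1}(L_i\oplus L_i^{-1}),
\]
where $V$ is a rank $2$ bundle with $\det V$ of odd degree, so that $\ad(V)$ is a rank-$3$ orthogonal bundle realizing the nontrivial Stiefel--Whitney class, and the $L_i$ are generic degree-zero line bundles. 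The main obstacle, and where most of the work will go, is the combinatorial step of choosing $V$ and the $L_i$ so that $\kappa_0$ is the unique element of $\Th^-(C)$ at which every summand cohomology of $E_0\otimes(\,\cdot\,)$ vanishes---the naive choice kills $s_\kappa$ for too many $\kappa$ at once.

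The fallback I would keep in reserve is a degeneration/factorization argument: specialize $C$ to an irreducible nodal curve obtained by pinching a non-separating cycle, apply the factorization theorem of Section \ref{factorizationlemma} to split $s_\kappa$ across the node into a sum of tensor products of lower-genus Pfaffian sections, and induct on the genus. The base case reduces the assertion to a rank-level/strange-duality statement on $\PBbb^1$ with spin weights, a type of identity already addressed in \cite{Mukhopadhyay:12} and in the final sections of the present paper; tracking the theta characteristics through the sewing procedure circumvents the combinatorial difficulty with explicit separating bundles entirely.
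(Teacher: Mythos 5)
Your dimension count is the same as the paper's: Theorem~\ref{twistedconformal} (= Theorem~\ref{identification}) identifies $H^0(\Mcal^-_{\Spin(2r+1)},\Pcal)$ with the conformal block $\Vcal^*_{\omega_1}(\Xfrak,\sofrak(2r+1),1)$, and a factorization-based Verlinde computation (Lemma~\ref{lem:diagram}, Proposition~\ref{prop:level-one-dimension}) gives the dimension $2^{g-1}(2^g-1)=|\Th^-(C)|$. Your use of Mumford's parity to show that $s_\kappa$ is nonzero on the twisted component exactly when $\kappa$ is odd is also in line with what the paper does (Proposition~\ref{prop:pfaffian-divisor}, which in addition supplies the generic-vanishing input $h^0(C,\Ecal\otimes\kappa)=0$ that mod-$2$ considerations alone do not give).

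For linear independence, however, you diverge from the paper, and this is where the gap is. You propose the ``separating bundles'' method---producing for each $\kappa_0\in\Th^-(C)$ a twisted orthogonal bundle $E_0$ with $s_{\kappa_0}(E_0)\neq0$ while $s_\kappa(E_0)=0$ for all $\kappa\neq\kappa_0$---but, as you yourself admit, you never actually construct such bundles; the $\ad(V)\oplus\bigoplus(L_i\oplus L_i^{-1})$ template is only a candidate, and the combinatorial problem of isolating a single odd theta characteristic is left open. As written this is not a proof, and the difficulty is real: for generic choices, the summands $L_i\otimes\kappa$ simultaneously acquire sections for many $\kappa$, so the naive Jacobian dimension count does not obviously give enough separating points. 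The fallback degeneration argument is likewise only sketched and would require you to control how all $2^{2g}$ Pfaffian sections factor through a node, which is a nontrivial computation in its own right.

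The paper avoids this entirely by exploiting the $J_2(C)$-equivariance of the projection $p:\widetilde{Y}\to Y$. It establishes (via Lemma~\ref{lem:simplyconnected} and the proposition following it) that the line bundles $\Pcal_{\kappa\otimes\alpha}\otimes\Pcal_\kappa^{-1}$ realize distinct torsion characters $L_\chi$, yielding the isotypical decomposition \eqref{eqn:decomposition}
\[
H^0(\widetilde{Y},\Pcal)\;=\;\bigoplus_{\chi\in\widehat{A}}H^0(Y,\Pcal_\kappa\otimes L_\chi)\;=\;\bigoplus_{\kappa'\in\Th(C)}H^0(Y,\Pcal_{\kappa'})\,.
\]
Distinct $\kappa'$ correspond to distinct characters $\chi$, so nonzero vectors drawn from distinct summands are automatically linearly independent---no separating points needed. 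Each odd $\kappa'$ contributes a nonzero $s_{\kappa'}$; the dimension count $2^{g-1}(2^g-1)=|\Th^-(C)|$ then forces each odd summand to be exactly one-dimensional and the even ones to vanish, which is Proposition~\ref{prop:belkale}. In short: your dimension step is fine, but you should replace the separating-bundle argument by the representation-theoretic decomposition under the $2$-torsion Jacobian action, which is both cleaner and actually complete.
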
 

\begin{remark}
In \cite{AndersenMasbaum}, using TQFT methods, Andersen-Masbaum give a  ``brick decomposition'' of the  $\SL(m)$-conformal block bundles under the action of the Heisenberg group. The invariant Pfaffian sections and the decomposition of $H^0(\mathcal{M}_{\Spin(2r+1)}^{-},\mathcal{P})$ (as well as   $H^0(\mathcal{M}_{\Spin(2r+1)},\mathcal{P})$) into Pfaffian sections should be considered as an analog of brick decompositions for these spaces.
\end{remark}

By passing to a local \'etale cover, we can assume the torsor of theta characteristics is trivialized on $\mathcal{C}\rightarrow B$. 
We show the following.
\begin{theorem}\label{projflat}
For each $\kappa\in\Th^-(C)$, the Pfaffian section $s_{\kappa}\in H^0(\Mcal^-_{\Spin(2r+1)})$ is projectively flat with respect to the Hitchin/TUY connection of Theorem \ref{hitchinlaszlo}.
\end{theorem}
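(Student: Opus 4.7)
The plan is to transfer the statement to the conformal block side using Theorems~\ref{twistedconformal} and~\ref{hitchinlaszlo}, and then exploit a Heisenberg-type symmetry inherited from the $2$-torsion of the Jacobian, which acts naturally on both the moduli stack of twisted spin bundles and on the set $\Th^-(C)$. Since $s_\kappa$ is canonically determined by the pair $(C,\kappa)$, the strategy is to show that the Hitchin/TUY connection must send it to a scalar multiple of itself.

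I would first pass to an \'etale cover $\widetilde{B}\to B$ on which the local system $\Jac(\mathcal{C}/B)[2]$ and the finite cover $\Th^-(\mathcal{C}/B)\to B$ are both trivialized, so that the chosen $\kappa$ extends to a flat section. Under Theorems~\ref{twistedconformal} and~\ref{hitchinlaszlo}, the section $s_\kappa$ corresponds to a canonically defined element of the conformal block bundle with fiber $\Vcal^{*}_{\omega_1}(\Xfrak,\sofrak(2r+1),1)$, and projective flatness becomes the statement that its TUY-derivative lies in $\Omega^{1}_{\widetilde{B}}\otimes \mathbb{C}\cdot s_\kappa$.

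Next I would set up the natural action of $\Jac(C)[2]$ on $\Mcal^{-}_{\Spin(2r+1)}$ given on the level of orthogonal bundles by $E\mapsto E\otimes \eta$, and lifted to Clifford bundles via the trivialization $\eta^{\otimes 2}\simeq \mathcal{O}_C$ in the spirit of the Hecke-type constructions of Section~\ref{sec:parabolic}. This lifts to a projective action of a Mumford-style central extension of $\Jac(C)[2]$ on $H^0(\Mcal^{-}_{\Spin(2r+1)},\Pcal)$, which permutes the Pfaffian sections according to $s_\kappa\mapsto s_{\kappa\otimes \eta}$ (up to scalars) and matches the $\Jac(C)[2]$-torsor structure on $\Th^-(C)$. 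The crucial input is that this action commutes with the Hitchin/TUY connection: on the TUY side it is implemented by vertex operators from the center of the affine extension that commute with the Sugawara operators defining the connection; equivalently, on the Hitchin side, the trace form on $\sofrak(2r+1)$ entering the symbol of the heat operator is invariant under tensoring by line bundles of order two.

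Combined with the canonical nature of $s_\kappa$ and the mapping class group equivariance of the Hitchin/TUY connection, this Heisenberg equivariance implies that parallel transport of $s_\kappa$ yields a section obeying the same transformation law under the stabilizer of $\kappa$ in the mapping class group, and the basis statement of Theorem~\ref{basis} then forces it into the line $\mathbb{C}\cdot s_\kappa$. The main obstacle I anticipate is the rigorous verification that the Heisenberg central extension commutes with the Hitchin heat operator, which is analogous to the role the Heisenberg symmetries play in the Andersen-Masbaum brick decomposition in the $\SL$-case \cite{AndersenMasbaum}. An alternative is to work entirely on the conformal block side, where commutation of the Heisenberg-type operators with the Sugawara generators at level one follows automatically from the vertex algebra structure.
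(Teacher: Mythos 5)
Your overall architecture is correct --- a $J_2(C)$-symmetry commuting with the Hitchin/TUY connection is indeed the key input --- but you misidentify the relevant action and, as a result, aim for a more complicated argument than is needed. The action the paper uses is the Galois action of $A\simeq J_2(C)$ (identified with $\ZBbb/2$-bundles via the kernel of $\Spin(2r+1)\to\SO(2r+1)$) on the \'etale cover $p:\widetilde{Y}\to Y$; this action \emph{fixes} the underlying orthogonal bundle and acts \emph{genuinely}, not merely projectively, on $H^0(\widetilde{Y},\Pcal)$, because $\Pcal=p^*\Pcal_\kappa$ is pulled back from the base. There is no Mumford-style central extension to contend with. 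By contrast, the action you describe --- $E\mapsto E\otimes\eta$ on orthogonal bundles --- does not even preserve the $\SO(2r+1)$-condition: for odd rank the determinant changes by $\eta$. Moreover, the Galois action does not \emph{permute} the Pfaffian sections $s_\kappa\mapsto s_{\kappa\otimes\eta}$ as you claim; it scales each $s_\kappa$ by the character $\chi$ attached to $\kappa$, and the eigenspace decomposition \eqref{eqn:decomposition} is exactly the decomposition into the Pfaffian lines (Proposition~\ref{prop:belkale}).

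With the correct action identified, the proof collapses to a few lines. The commutation of the Hitchin connection with the $J_2(C)$-action is one of the stated conclusions of Theorem~\ref{thm:hitchin}, so you do not need to establish it via vertex operators or Sugawara generators. Commutation implies the connection preserves each one-dimensional character eigenspace $H^0(Y,\Pcal_\kappa\otimes L_\chi)$, each spanned by a single Pfaffian section $s_\kappa$ with $\kappa\in\Th^-(C)$, so each $s_\kappa$ is projectively flat. Your closing step --- invoking the stabilizer of $\kappa$ in the mapping class group together with Theorem~\ref{basis} --- is murkier than needed and does not by itself close the gap: parallel transport around an arbitrary loop in $\Tcal_{g,1}$ is not constrained to a mapping class fixing $\kappa$, so that reasoning does not confine the transported section to the line $\CBbb\cdot s_\kappa$; the character-eigenspace decomposition under the honest $A$-action is what does the work.
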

\noindent In the untwisted case this result appears in \cite{Belkale:12}. The proof of Theorem \ref{projflat} uses the fact that the projective heat operator is invariant under the action of the group of two torsion points of the the Jacobian. Once the existence of the Hitchin connection is established, the rest of the proof is same as that in \cite{Belkale:12}. 

\subsection{Rank-level duality for genus zero} For $r,s\geq 2$, let $d=2rs+r+s$ (this notation will be used throughout the paper). 
The embedding 
\begin{equation} \label{eqn:so-embedding}
\sofrak(2r+1)\oplus \sofrak(2s+1)\lra \sofrak(2d+1)
\end{equation}
extends to an embedding of affine Lie algebras. 
For integrable weights $\vec\lambda$, $\vec\mu$, and $\vec\Lambda$ of $\widehat \sofrak(2r+1)$ at level $2s+1$, 
$\widehat \sofrak(2s+1)$ at level $2r+1$, and $\widehat \sofrak(2d+1)$ at level $1$, respectively, 
suppose that the pair $(\vec\lambda, \vec\mu)$ appears in the affine branching of $\vec\Lambda$.
This in turn gives rise to maps on dual conformal blocks
$$
\mathcal{V}_{\vec{\lambda}}(\mathfrak{X},\mathfrak{so}(2r+1),2s+1))\rightarrow \mathcal{V}^{*}_{\vec{\mu}}(\mathfrak{X},\mathfrak{so}(2s+1),2r+1)\otimes \mathcal{V}_{\vec{\Lambda}}(\mathfrak{X},\mathfrak{so}(2d+1),1)\ .
$$
We note that in case $\vec\Lambda=(\omega_{\varepsilon_1}, \ldots, \omega_{\varepsilon_{n-2}}, \omega_d,\omega_d)$, with $\varepsilon_i\in\{0,1\}$, then 
$$\dim_\CBbb \mathcal{V}_{\vec{\Lambda}}(\mathfrak{X},\mathfrak{so}(2d+1),1)=1\ , $$
and we have a rank-level duality map, 
\begin{equation}\label{eqn:injectivity}
\mathcal{V}_{\vec{\lambda}}(\mathfrak{X},\mathfrak{so}(2r+1),2s+1)\lra \mathcal{V}^{*}_{\vec{\mu}}(\mathfrak{X},\mathfrak{so}(2s+1),2r+1)\ ,
\end{equation}
which is well-defined up to a nonzero multiplicative constant.   Recall that  $\widehat\sofrak(2r+1)$ has a \emph{diagram automorphism}  $\sigma$ which interchanges the nodes of the extended Dynkin diagram associated to the weights $\omega_0$ and $\omega_1$ (cf.\ \eqref{daiso}).
In Section \ref{ranklevelinjectivity}, we prove the following.
\begin{theorem}\label{mainjune16}
Let $C=\PBbb^1$.
Let $\vec\lambda=(\lambda_1,\ldots, \lambda_{n-2}, \lambda_{n-1}, \lambda_{n})$, where $\lambda_i$ is the highest weight of a representation of the group $\SO(2r+1)$ for $i\leq n-3$,  $\lambda_{n-1}, \lambda_{n}$ are spin representations that are not fixed by the diagram automorphism $\sigma$, and  $\vec\mu, \vec\Lambda$ are as above. Then
the rank-level duality map defined in \eqref{eqn:injectivity} is injective.
\end{theorem}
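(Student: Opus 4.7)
The plan is to prove injectivity by induction on the number $n$ of marked points, using the factorization (sewing) theorem for conformal blocks. The base case handles small $n$, where only the two spin weights $\lambda_{n-1},\lambda_n$ appear, possibly together with one auxiliary non-spin weight. In this few-point setting on $\mathbb{P}^1$, each conformal block reduces to a space of coinvariants in a finite tensor product of representations, and the rank-level duality map is determined by the branching of the level-one spin representation $\omega_d$ of $\sofrak(2d+1)$ under the conformal embedding \eqref{eqn:so-embedding}. Injectivity in the base case can be verified by an explicit computation on highest-weight vectors, exploiting the hypothesis that $\lambda_{n-1},\lambda_n$ are not fixed by the diagram automorphism $\sigma$.

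For the inductive step, I would degenerate $(\mathbb{P}^1,p_1,\ldots,p_n)$ to a nodal curve obtained by separating the two spin-weight points $p_{n-1},p_n$ from the remaining $n-2$ points onto two copies of $\mathbb{P}^1$ meeting transversely at a node. The factorization theorem splits each of the three conformal blocks appearing in \eqref{eqn:injectivity} as a direct sum over integrable intermediate weights at the node, and the rank-level duality map respects this splitting in a way compatible with the affine branching under the conformal embedding. A key observation is that, on the $\sofrak(2d+1)$-side, the intermediate weight at the node lies in the level-one fusion of the $\omega_{\varepsilon_i}\in\{\omega_0,\omega_1\}$ and hence is itself non-spin; by affine branching, the intermediate weights on the $\sofrak(2r+1)$- and $\sofrak(2s+1)$-sides are therefore also non-spin.

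With this setup, each summand of the factorization decomposes into two smaller rank-level duality problems: one on the component carrying the $n-2$ non-spin weights together with a non-spin intermediate weight, which falls under the main theorem of \cite{Mukhopadhyay:12}; and one on the three-pointed component with the two spin weights and a non-spin intermediate weight, which is the base case. Assembling the injections on each summand then yields the inductive step.

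The main obstacle will be the three-point base case, where one must verify that the pairing induced by the (one-dimensional) invariant section in $\mathcal{V}_{\vec\Lambda}(\mathfrak{X},\sofrak(2d+1),1)$ restricts to a nondegenerate pairing on the spin-weight components. This reduces to a concrete decomposition of a highest-weight vector of $\omega_d$ under $\sofrak(2r+1)\oplus\sofrak(2s+1)$, and it is precisely here that the non-$\sigma$-invariance of $\lambda_{n-1},\lambda_n$ is needed to rule out accidental vanishing of the pairing. A subsidiary technical point is the careful tracking of multiplicities in the affine branching of spin weights, so that the decomposition of the rank-level duality map along the factorization is genuinely an injection on each summand rather than merely a sum of injections sharing a common kernel.
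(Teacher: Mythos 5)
Your strategy---factorize along a node separating the two spin-weight points, apply the $\SO$-weight rank-level duality of \cite{Mukhopadhyay:12} to the component with the non-spin weights, and handle the three-pointed component carrying the two spin weights by explicit Clifford-algebra computations on highest-weight vectors---is precisely the paper's proof, which delegates the three-point base case to Theorem~\ref{braidreducibility} and assembles the pieces via Lemma~\ref{geometricinput}. One small correction: the non-$\sigma$-invariance hypothesis on $\lambda_{n-1},\lambda_n$ enters chiefly to guarantee block-diagonality of the factored rank-level map (through the vanishing of conformal blocks carrying $\sigma$-twisted intermediate weights, which is what lets Lemma~\ref{geometricinput} apply), not to preclude vanishing of the base-case pairing as you suggest---the minimal-case pairings in Theorem~\ref{braidreducibility} are shown nonzero by direct computation regardless.
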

\noindent
This answers a question of Nakanishi and Tsuchiya (cf.\  \cite[Sec.\ 6]{NakanishiTsuchiya:92}).
It is important to note that the dimensions of the left and right hand sides of \eqref{eqn:injectivity} are not  equal in general: 
some explicit examples are given in Section \ref{failure} below.
This fact is in stark contrast with the case of $\slfrak(m)$ conformal blocks and demonstrates the subtlety of rank-level duality.

\begin{remark}
If $\lambda  \in P_{2s+1}(\mathfrak{so}(2r+1))$, $ \mu \in P_{2r+1}(\mathfrak{so}(2s+1))$, are such that $\sigma(\lambda)\neq \lambda$ and $(\lambda, \mu)$ appears in the branching of $\omega_d$, then $\sigma (\mu)=\mu$. 
\end{remark}

Let $X_n=\{(z_1,\dots,z_n)\mid z_i\in \PBbb^1\ ,\  z_i\neq z_j\}$ denote the configuration space of points on $\PBbb^1$, and let $P_n=\pi_1(X_n)$. The conformal blocks form a vector bundle over $X_n$ with a flat connection $\nabla_{KZ}$, and one can define the rank-level duality map as a map of vector bundles over $X_n$. Moreover, the rank-level duality map commutes with $\nabla_{KZ}$. 

As a corollary of Theorem \ref{mainjune16}, by factorizing two Spin weights at a time, we also obtain a result asserted in \cite{NakanishiTsuchiya:92}.
\begin{corollary} Let $C=\PBbb^1$ and $m$ be a positive integer.
	The  representations of the pure braid group $P_n$ associated to the conformal block bundles $\mathcal{V}_{\vec{\lambda}}(\mathfrak{X},\mathfrak{so}(2r+1),2s+1)$ with spin weights are reducible in general. More precisely, this occurs if $\vec{\lambda}$ is of the form $(\lambda_1,\ldots, \lambda_{n-2m},\mu_{1},\dots,\mu_{2m})$, where $\lambda_1,\ldots,\lambda_{n-2}$ are $\SO$-weights and $\mu_{1},\dots, \mu_{2m}$ are weights of $\operatorname{Spin}(2r+1)$ that are fixed by the Dynkin automorphism $\sigma$. 
\end{corollary}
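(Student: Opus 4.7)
The plan is to exploit the rank-level duality map of Theorem \ref{mainjune16} (with the roles of $r$ and $s$ swapped) to produce a nonzero proper flat subbundle of the KZ local system on $X_n$.

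First, starting from $\vec\lambda=(\lambda_1,\dots,\lambda_{n-2},\lambda_{n-1},\lambda_n)$ with $\lambda_1,\dots,\lambda_{n-2}$ being $\SO(2r+1)$-weights and $\lambda_{n-1},\lambda_n$ spin weights fixed by $\sigma$, I would invoke the converse direction of the Remark preceding the corollary: since $\sigma(\lambda_{n-1})=\lambda_{n-1}$ and $\sigma(\lambda_n)=\lambda_n$, one can choose a compatible $\vec\Lambda=(\omega_{\varepsilon_1},\dots,\omega_{\varepsilon_{n-2}},\omega_d,\omega_d)$ and a weight vector $\vec\mu$ for $\widehat{\sofrak}(2s+1)$ at level $2r+1$, with $\mu_1,\dots,\mu_{n-2}$ genuine $\SO(2s+1)$-weights and $\mu_{n-1},\mu_n$ spin weights \emph{not} fixed by $\sigma$, such that $(\vec\mu,\vec\lambda)$ appears in the affine branching of $\vec\Lambda$ under the conformal embedding \eqref{eqn:so-embedding}.

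Second, applying Theorem \ref{mainjune16} after interchanging the roles of $r$ and $s$ (and exchanging the roles of $\vec\lambda$ and $\vec\mu$) yields an injective rank-level duality map
$$\Psi:\mathcal{V}_{\vec\mu}(\mathfrak{X},\sofrak(2s+1),2r+1)\lra \mathcal{V}^{*}_{\vec\lambda}(\mathfrak{X},\sofrak(2r+1),2s+1)\ .$$
As recalled in the paragraph preceding the corollary, this map is defined fiberwise up to scale over the configuration space $X_n$ and is horizontal with respect to $\nabla_{KZ}$ (this horizontality is the standard genus zero factorization/flatness property of conformal-embedding-induced maps, already invoked in \cite{NakanishiTsuchiya:92} and elsewhere). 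Hence the image of $\Psi$ is a flat subbundle of the conformal block bundle $\mathcal{V}^{*}_{\vec\lambda}\to X_n$, and dually its annihilator is a flat subbundle of $\mathcal{V}_{\vec\lambda}\to X_n$ that is invariant under the monodromy representation of $P_n$.

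Third, to conclude reducibility it remains to verify that this flat subbundle is neither zero nor the whole bundle. Non-vanishing follows by choosing $\vec\mu$ and $\vec\Lambda$ so that both sides are nonzero (which can be arranged by the explicit existence of pairs $(\vec\mu,\vec\lambda)$ in the branching, as is used in the proof of Theorem \ref{mainjune16} itself). Properness follows from the dimensional inequality $\dim_\CBbb\mathcal{V}_{\vec\mu}<\dim_\CBbb\mathcal{V}_{\vec\lambda}$ exhibited by the explicit examples in Section \ref{failure} below, where the rank-level duality map is injective but fails to be surjective. For any such example, the induced $P_n$-representation on $\mathcal{V}_{\vec\lambda}$ is therefore reducible.

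The main obstacle, which is not really an obstacle but a consistency check, is the compatibility between the hypothesis of Theorem \ref{mainjune16} (two spin weights \emph{not} fixed by $\sigma$) and the hypothesis of the corollary (two spin weights fixed by $\sigma$): these are exactly matched by the Remark, which says the branching of $\omega_d$ pairs a $\sigma$-fixed spin weight on one side with a non-$\sigma$-fixed spin weight on the other. The only genuine input beyond Theorem \ref{mainjune16} is thus the existence of explicit dimension-mismatched examples, which is supplied by Section \ref{failure}.
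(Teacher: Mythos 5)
Your proof is correct and follows exactly the argument the paper intends: the corollary is presented as an immediate consequence of Theorem~\ref{mainjune16} together with the flatness of the rank-level duality map and the dimension mismatches from Section~\ref{failure}, which is precisely the chain of reasoning you lay out.

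One small point worth being careful about, which you handle but could make more explicit: the conclusion is that reducibility occurs \emph{for some} $\vec\lambda$ of the stated form (matching the ``in general'' in the corollary), not for all of them. Indeed, the mechanism is that Theorem~\ref{mainjune16} (with $r$ and $s$ swapped so that the non-$\sigma$-fixed spin weights sit on the $\sofrak(2s+1)$ side) produces an injective horizontal map $\Psi:\mathcal{V}_{\vec\mu}(\mathfrak{X},\sofrak(2s+1),2r+1)\to\mathcal{V}^{*}_{\vec\lambda}(\mathfrak{X},\sofrak(2r+1),2s+1)$, and its image is a flat subbundle of the KZ local system; this subbundle is proper only when the ranks disagree. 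Were the ranks equal, the map would in fact be an isomorphism and no reducibility could be extracted this way. The examples in Section~\ref{failure} supply the dimension gap (e.g.\ Example~1 with $\vec\mu=(\omega_1+3\omega_3,\omega_1+3\omega_3,\omega_1,\omega_1)$ for $\sofrak(7)$ at level $5$, where the $\sigma$-fixed spin weights give a $5$-dimensional block admitting a $4$-dimensional flat subbundle), so your invocation of that section is exactly the right finishing move. Your use of the Remark following Theorem~\ref{mainjune16} to pair $\sigma$-fixed spin weights on one side with non-$\sigma$-fixed ones on the other is also the correct consistency bridge.
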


\subsection{Strange duality maps in higher genus} Let  $\mathcal{M}_{2r+1}$ 
be as in \eqref{eqn:mr}.  The equality of dimensions in Corollary \ref{cor:spin-dimension} suggests the possibility of a strange duality isomorphism.  To make this precise, note that 
we  have  the following map:
\begin{equation} \label{eqn:SD}
SD:
 H^0(\mathcal{M}_{2r+1},\mathcal{P}^{\otimes(2s+1)})^* \longrightarrow H^0(\mathcal{M}_{2s+1},\mathcal{P}^{\otimes(2r+1)})\otimes H^0(\mathcal{M}_{2d+1},\mathcal{P})^*\ .
\end{equation} 
Since $\dim_{\mathbb{C}}H^0(\mathcal{M}_{2r+1},\mathcal{P})=2^{2g}$,  and we know that the Pfaffian sections $\{s_{\kappa}\mid \kappa\in \Th(C)\}$ form a basis (Theorem \ref{basis}, \cite{Belkale:12}, \cite{PaulyRamanan}), it is natural to consider $s_\Delta=\sum_{\kappa} s_{\kappa}$, and investigate whether the induced {\em strange duality map} is an isomorphism. Denote this map by
\begin{equation}\label{eqn:SD1}
s_\Delta^\ast:H^0(\mathcal{M}_{2r+1},\mathcal{P}^{\otimes(2s+1)})^*\longrightarrow  H^0(\mathcal{M}_{2s+1},\mathcal{P}^{\otimes(2r+1)})\ .
\end{equation} 
It is easy to arrange that the map \eqref{eqn:SD} be equivariant with respect to  the action of $J_2(C)$ permuting the theta characteristics. 
By taking invariants, 
for every  $\kappa\in \Th(C)$ we get a  map induced by the Pfaffian section $s_{\kappa}$:
\begin{equation} \label{eqn:SK}
s^*_{\kappa}:H^0(\mathcal{M}_{\SO(2r+1)},\mathcal{P}_{\kappa}^{\otimes(2s+1)})^*\longrightarrow H^0(\mathcal{M}_{\SO(2s+1)},\mathcal{P}_{\kappa}^{\otimes(2r+1)})\ .
\end{equation}
A simple argument shows that  $s^*_\kappa$  is an isomorphism for every $\kappa$ if and only if the map $s_\Delta^\ast$  is an isomorphism. We refer the reader to Section \ref{strangedualityj2C} for more details. 
However, the fact, mentioned above, that the rank-level duality map for spin weights fails to be an isomorphism may be taken as an indication that the strange duality map \eqref{eqn:SD1} might not be an isomorphism either. We shall prove  that this is indeed the case. 
\begin{theorem} \label{thm:SD-false}
The strange duality map \eqref{eqn:SD1} $($resp.\ \eqref{eqn:SK}$)$ is not an isomorphism $($resp.\ is not an isomorphism for every $\kappa$$)$. 
\end{theorem}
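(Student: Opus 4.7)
The plan is to deduce Theorem \ref{thm:SD-false} from the already-established failure of rank-level duality for spin weights on $\PBbb^1$ (Section \ref{failure}) by combining a $J_2(C)$-equivariance argument with factorization of conformal blocks.

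First I would establish the equivalence of the two claims in the theorem. The $2$-torsion group $J_2(C)\subset \Pic^0(C)$ acts on $\Mcal_{2r+1}$ by twisting the associated orthogonal bundle of a Clifford bundle by a $2$-torsion line bundle. This action is trivial on $\Mcal_{\SO(2r+1)}$, while on $H^0(\Mcal_{2r+1},\Pcal^{\otimes(2s+1)})$ it decomposes the space into eigenspaces indexed by characters of $J_2(C)$, which are parametrized by theta characteristics $\kappa \in \Th(C)$. The Pfaffian decomposition $s_\Delta=\sum_{\kappa} s_\kappa$ realizes this character decomposition, and both sides of $s_\Delta^{\ast}$ decompose compatibly under $J_2(C)$. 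Hence $s_\Delta^\ast$ is an isomorphism if and only if each $s_\kappa^\ast$ is, and it suffices to exhibit a single $\kappa$ for which $s_\kappa^\ast$ fails.

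Next, using the identification of generalized theta functions with conformal blocks (Theorem \ref{twistedconformal} and the untwisted analog of \cite{LaszloSorger:97}), I would degenerate $C$ to a nodal curve whose dual graph contains a rational component carrying two conjugate spin weights at the nodes, and keep the remaining components generic with untwisted $\SO$-weights. By factorization, both $H^0(\Mcal_{\SO(2r+1)},\Pcal_\kappa^{\otimes(2s+1)})$ and $H^0(\Mcal_{\SO(2s+1)},\Pcal_\kappa^{\otimes(2r+1)})$ decompose as direct sums over intermediate weights, and the strange duality map $s_\kappa^\ast$ respects this decomposition up to nonzero propagation-of-vacua and node-contraction factors. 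The local contribution at the rational component is precisely a rank-level duality map of the form \eqref{eqn:injectivity} with spin weights considered in Theorem \ref{mainjune16}. Invoking the dimension examples in Section \ref{failure}, the domain and codomain of this local map have different dimensions, so this factor is not surjective; hence $s_\kappa^\ast$ on the nodal curve is not surjective, and by semicontinuity of cohomology in a flat family the failure persists on nearby smooth curves.

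The main obstacle, I expect, will be the careful bookkeeping of the $\kappa$-isotypic grading through factorization: one must ensure that the local dimension mismatch on $\PBbb^1$ is not cancelled when summing over factorization channels or when regrouping contributions back into the single $\kappa$-isotypic component on $C$. Keeping the spin weights unpaired under the diagram automorphism $\sigma$ (as in the hypothesis of Theorem \ref{mainjune16}) should be crucial in preventing such cancellations, and the Oxbury--Wilson-type dimension formulas established earlier in the paper (following Theorem \ref{twistedconformal}) provide the quantitative tool needed to verify that the imbalance survives in a definite $\kappa$-component, consistent with the global equality of Corollary \ref{cor:spin-dimension}.
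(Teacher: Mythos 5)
Your proposal correctly captures the $J_2(C)$-equivariance argument showing that the global map $s_\Delta^\ast$ is an isomorphism if and only if each $s_\kappa^\ast$ is (this matches Proposition~\ref{prop:equiv}), and the instinct to use factorization and the genus-zero rank-level duality phenomena is the right starting point. However, the core of your argument does not hold up, for two linked reasons.

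First, the mechanism you propose --- a \emph{dimension mismatch} at a local rational factor that persists globally --- is not the mechanism of failure, and cannot be. By Corollary~\ref{cor:spin-dimension} the source and target of $s_\Delta^\ast$ have the same dimension, so a bare dimension mismatch at one factorization channel must be compensated elsewhere. You acknowledge this danger but then assert that keeping the spin weights ``unpaired under $\sigma$'' prevents cancellation. In fact exactly the opposite happens: the paper reduces (via Propositions~\ref{prop:equivalence} and~\ref{equality}) to a rank-level map on a once-pointed elliptic curve, then degenerates $E$ to a nodal rational curve so that the map decomposes along $\sigma$-orbits of intermediate level weights. For orbits of size two (spin weights $\lambda=Y+\omega_r$ with $Y\in\Ycal_{r,s-1}$, which are precisely those not fixed by $\sigma$), the resulting block $A_\lambda$ in \eqref{sdelliptic} is a $2\times 2$ matrix between spaces of \emph{equal} dimension $2$, with \emph{all four entries nonzero}; the failure is shown by the explicit Clifford-algebra computation of Proposition~\ref{sde}, which exhibits $\det A_\lambda=0$. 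This is a genuine linear-algebra cancellation, invisible to any dimension count of the pieces.

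Second, your step from the nodal curve back to a smooth curve by ``semicontinuity of cohomology in a flat family'' goes in the wrong direction: the rank of a morphism of vector bundles is lower-semicontinuous, so a rank drop at $t=0$ can disappear for generic $t$. The correct tool is the sewing construction and Lemma~\ref{geometricinput} (via Proposition~\ref{keydegen}), which gives a precise block decomposition $\alpha(t)\circ s_\Lambda(t)=\sum_\lambda t^{m_\lambda}s_\lambda(t)\circ\alpha_{\Lambda,\lambda}(t)$ from which one can read off injectivity away from $t=0$. But this lemma requires a (block-)diagonal structure with respect to the factorization channels, and as the paper explicitly warns in the Remark after Proposition~\ref{prop:equivalence}, the factorization with spin weights on $\PBbb^1$ is \emph{not} block-diagonal: for a given $\Lambda$ and $\mu$ there may be more than one $\lambda$ with $(\lambda,\mu)\in B(\Lambda)$. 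This is precisely why the paper first degenerates to $g$ elliptic tails (so that only $\SO$-weights appear at the nodes, ensuring diagonality and applicability of Lemma~\ref{geometricinput}), and only then introduces spin weights at a single node inside each elliptic tail, where the block structure is the $2\times 2$ matrix controlled by Proposition~\ref{sde}. Your proposal skips straight to a $\PBbb^1$ with spin weights, which is exactly the degeneration the paper identifies as problematic. To repair the argument, you would need to retain the genus-one intermediate step and replace the dimension-count with the determinant computation.
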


The analysis passes through the sewing construction and
detailed calculations involving the rank-level maps discussed above. Since the Pfaffian sections are projectively flat, there is a  consequence  for the holonomy representations of spin mapping class groups.

\begin{corollary} \label{thm:MCG}
For some theta characteristic $\kappa$ and any $r,s\geq 2$, 
the Hitchin connection in Theorem \ref{hitchinlaszlo} has reducible holonomy representation.
%
\end{corollary}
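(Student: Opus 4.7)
The plan is to deduce reducibility by combining the projective flatness of the Pfaffian sections (Theorem \ref{projflat}) with the failure of strange duality (Theorem \ref{thm:SD-false}). By Theorem \ref{thm:SD-false}, there are infinitely many pairs $(r,s)$ with $r,s\geq 2$ and a theta characteristic $\kappa$ for which the map $s_\kappa^*$ of \eqref{eqn:SK} is not an isomorphism. I would fix such a pair throughout, work over a suitable étale cover $B$ of $\Tcal_{g,1}$ on which the torsor of theta characteristics is trivialized, and upgrade the fibers
\[
\mathcal{H}_1=H^0(\Mcal_{\SO(2r+1)},\Pcal_\kappa^{\otimes(2s+1)}),\quad \mathcal{H}_2=H^0(\Mcal_{\SO(2s+1)},\Pcal_\kappa^{\otimes(2r+1)})
\]
to vector bundles over $B$ equipped with the Hitchin/TUY projective connection of Theorem \ref{hitchinlaszlo}. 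The pointwise map $s_\kappa^*$ then globalizes to a bundle morphism $\Phi:\mathcal{H}_1^\vee\to\mathcal{H}_2$.

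The main step is to verify that $\Phi$ is projectively flat with respect to the two Hitchin connections, i.e.\ that there is a one-form $\alpha$ on $B$ with $\nabla_2\circ \Phi = \Phi\circ \nabla_1+\alpha\otimes\Phi$. By Theorem \ref{projflat} applied at level one to $\Spin(2d+1)$, the Pfaffian section $s_\kappa$ on $\Mcal_{2d+1}$ is itself projectively flat under the Hitchin connection on the corresponding bundle of generalized theta functions. I would then check that the tensor-product map $\Mcal_{2r+1}\times \Mcal_{2s+1}\to \Mcal_{2d+1}$ intertwines the Hitchin heat operators: the symbol of the heat operator on the target is the Casimir-type element coming from the projection $H^1(C,\mathrm{ad})\otimes H^1(C,\mathrm{ad})\to H^0(C,K^{\otimes 2})$, and the embedding $\sofrak(2r+1)\oplus\sofrak(2s+1)\hookrightarrow\sofrak(2d+1)$ of \eqref{eqn:so-embedding} is conformal, so the difference of symbols on source and target annihilates pulled-back sections. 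This is the twisted-spin analogue of the argument given in \cite{Belkale:12} for the $\SL$-case, and it is available here precisely because Theorem \ref{hitchinlaszlo} identifies the Hitchin connection on $\Mcal^-_{\Spin(m)}$ with the TUY connection. Pulling back the projectively flat section $s_\kappa$ and decomposing via Künneth yields the projective flatness of $\Phi$.

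Once $\Phi$ is projectively flat, its rank is locally constant, so $\ker\Phi\subset \mathcal{H}_1^\vee$ and $\operatorname{im}\Phi\subset\mathcal{H}_2$ are subbundles, each preserved (up to scalar) by the respective projective Hitchin connections. Since $\dim_{\CBbb}\mathcal{H}_1=\dim_{\CBbb}\mathcal{H}_2$ by Corollary \ref{cor:spin-dimension} (after restricting to the $\kappa$-isotypic component for the $J_2(C)$-action), the failure of $\Phi$ to be an isomorphism forces at least one of $\ker\Phi$ or $\operatorname{im}\Phi$ to be a proper, nonzero invariant subbundle. Translating to monodromy, the projective representation of the spin mapping class group $\pi_1(B)$ on whichever of $\mathcal{H}_1^\vee$ or $\mathcal{H}_2$ contains the nontrivial invariant subbundle has a proper nonzero invariant subspace, and is therefore reducible. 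Repeating this for every pair $(r,s)$ supplied by Theorem \ref{thm:SD-false} gives the infinitely many examples.

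The step I expect to be the main obstacle is the compatibility of Hitchin heat operators needed for projective flatness of $\Phi$: one must track carefully the bookkeeping among the Clifford, $\Spin$, and $\SO$ moduli stacks, pass to $J_2(C)$-invariants in order to recover the single-$\kappa$ map $s_\kappa^*$ from the tensor-product pullback of $s_\kappa$, and check that the conformal-embedding property of \eqref{eqn:so-embedding} truly yields the required symbol identity. All other ingredients — existence of the Hitchin connection, projective flatness of $s_\kappa$, the dimension equality, and the failure of strange duality — are supplied by Theorems \ref{hitchinlaszlo}, \ref{projflat}, \ref{thm:SD-false} and Corollary \ref{cor:spin-dimension}.
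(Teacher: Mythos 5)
Your proposal captures the argument the paper has in mind: the strange duality map $s_\kappa^*$ of \eqref{eqn:SK} is projectively flat (because the rank-level duality morphism for a conformal embedding is flat for the TUY connection, $s_\kappa$ is projectively flat by Theorem \ref{projflat}, and Theorem \ref{hitchinlaszlo} identifies TUY with Hitchin), so its kernel and image are subbundles preserved by the projective connections; since $s_\kappa^*$ is nonzero but fails to be an isomorphism (Theorem \ref{thm:SD-false}, which holds for all $r,s\geq 2$ by the elliptic-curve computation in Proposition \ref{sde}), one of the two gives a proper nonzero invariant subbundle, hence reducible holonomy.

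One step of your proposal is not justified, however, and you should drop it: the claim that $\dim\mathcal H_1=\dim\mathcal H_2$ follows "from Corollary \ref{cor:spin-dimension} after restricting to the $\kappa$-isotypic component." Corollary \ref{cor:spin-dimension} equates the dimensions of $H^0(\Mcal_{2r+1},\Pcal^{\otimes(2s+1)})$ and $H^0(\Mcal_{2s+1},\Pcal^{\otimes(2r+1)})$, but the $J_2(C)$-isotypic summands $H^0(\Mcal_{\SO(2r+1)},\Pcal_\kappa^{\otimes(2s+1)})$ are not all of the same dimension (already at level one they are $0$ or $1$ depending on the parity of $\kappa$, cf.\ Proposition \ref{prop:belkale}), and the remark following Corollary \ref{thm:MCG} explicitly warns that no Verlinde formula is available for the $\SO$ spaces, so this per-$\kappa$ dimension equality cannot simply be read off. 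Fortunately the dimension equality is not needed: for any nonzero linear map $\Phi$ that is not an isomorphism, either $\ker\Phi$ is nonzero and proper, or $\Phi$ is injective and $\operatorname{im}\Phi$ is nonzero and proper; in either case one of the two connected bundles carries a proper nonzero flat subbundle. Nonvanishing of $\Phi$ is guaranteed by the computation in Section 10, where all entries of the $2\times 2$ matrix $A_\lambda$ are nonzero even though its determinant vanishes. With that correction the proof is sound.

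As a smaller remark, the "main obstacle" you flag—compatibility of heat operators—is not something you need to re-derive by matching Casimir symbols: it is supplied by the fact (cited in Section \ref{ranklevelinjectivity} to \cite{Belkale:09}) that for a conformal embedding the rank-level duality map of conformal block sheaves is flat for the TUY connection, combined with the TUY $=$ Hitchin identification of Theorem \ref{hitchinlaszlo} and the projective flatness of $s_\kappa$ from Theorem \ref{projflat}.
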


\begin{remark}
The holonomy representations of the Hitchin connection for $\mathcal{M}_{\Spin(2r+1)}$ and $\mathcal{M}_{\Spin(2r+1)}^{-}$ are easily seen to be reducible by noting the difference of dimensions of the Verlinde spaces for $\Spin(2r+1)$ and $\Spin(2s+1)$
(cf.\ \eqref{eqn:spin-dim}).
  However, for the $\SO$ moduli spaces and powers of the Pfaffian line bundle there is no known Verlinde type formula. Hence, simple arguments based on dimension do not work. Questions about irreducibility of mapping class group representations for $\SL(n)$ have been considered in \cite{AndersenFeltsjad}.
\end{remark}


\subsection{Acknowledgments} The authors are grateful to P. Belkale, I. Biswas, P. Brosnan and T. Pantev for useful discussions and suggestions. Additional thanks to  J. Andersen,   S. Bradlow, J. Martens, and L. Schaposnik for their valuable input on aspects of this work. The referee made useful suggestions 
for improvements to the exposition and is gratefully acknowledged.


\section{Conformal blocks and basic properties}
Here we recall some definitions from ~\cite{TUY:89}. Let $\frg$ be a simple complex Lie algebra  with Cartan subalgebra $\mathfrak{h}$. Let $\Delta=\Delta_{+}\sqcup\Delta_{-}$ be a positive/negative decomposition of the set of roots, and
$\frg=\mathfrak{h} \oplus \sum_{\alpha \in \Delta}\frg_{\alpha},$
the decomposition into root spaces $\gfrak_\alpha$.
 Let $(\, ,\, )$ denote the Cartan-Killing form on $\frg$, normalized so that $(\theta, \theta)=2$ for a longest root  $\theta $. We often identify $\mathfrak{h}$ with $\mathfrak{h}^*$ using  $(\, ,\, )$. 

\subsection{Affine Lie algebras} \label{sec:affine}
 The \emph{affine Lie algebra} $\widehat{\frg}$ is defined as a central extension of the loop algebra $\frg\otimes \mathbb{C}\llrrparen{\xi}$. As a vector space 
$\widehat{\frg}:=   \frg\otimes \mathbb{C}\llrrparen{\xi} \oplus \mathbb{C}\cdot c,$ where $c$ is central, and the Lie bracket is determined by
$$[X\otimes f(\xi), Y\otimes g(\xi)]=[X,Y]\otimes f(\xi)g(\xi) + (X,Y)\Res_{\xi=0}(gdf)\cdot c\ ,$$ where $X,Y \in \frg$ and $f(\xi),g(\xi) \in \mathbb{C}\llrrparen{\xi}$. 
Set $X(n)=X\otimes \xi^n$ and $X=X(0)$ for any $X \in \frg $ and $n \in \mathbb{Z}$. 

 The theory of highest weight integrable irreducible modules for $\widehat{\frg}$ runs parallel to that of finite dimensional irreducible modules for $\frg$. Let us briefly recall the details for completeness. The finite dimensional irreducible $\gfrak$-modules  are parametrized by the set of dominant integral weights $P_{+}(\gfrak) \subset \mathfrak{h}^*$. For each $\lambda \in P_{+}(\frg)$, let $V_{\lambda}$ denote the irreducible $\gfrak$-module with highest weight $\lambda$. 
 Fix a positive integer $\ell$,  called the \emph{level}. The set of \emph{dominant integral weights of level $\ell$} is defined by:
\begin{equation*} \label{eqn:integral-weights}
P_{\ell}(\frg):=  \{ \lambda \in P_{+}(\gfrak) \mid (\lambda, \theta) \leq \ell\}\ .
\end{equation*}
For each $\lambda \in P_{\ell}(\frg)$, there is a unique irreducible {\em integrable  highest weight $\widehat{\frg}$-module} $\mathcal{H}_{\lambda}(\frg,\ell)$ which satisfies the following properties: 
\begin{enumerate}
\item  $\mathcal{H}_{\lambda}(\frg,\ell)$ is generated by $V_{\lambda}$ over $\widehat{\frg}$ (cf.\ \cite{KacWakimoto:88});
\item $\mathcal{H}_{\lambda}(\frg,\ell)$ are infinite dimensional;
\item $V_{\lambda} \subset \mathcal{H}_{\lambda}(\frg,\ell)$;
\item The central element $c$ of $\widehat{\frg}$ acts by the scalar $\ell$. 
\end{enumerate}
When there are implicitly understood, we sometimes omit the notation $\frg$ or $\ell$ from  $\mathcal{H}_{\lambda}(\frg,\ell)$. 

We will also need the following quantity.
 For any $\lambda \in P_{\ell}(\gfrak)$,  define the {\em trace anomaly} 
 \begin{equation} \label{eqn:trace-anomaly}
 \Delta_{\lambda}(\gfrak,\ell):=\frac{(\lambda, \lambda+2\rho)}{2(g^\vee+\ell)}\ ,
 \end{equation}
  where $\rho$ is the half sum of positive roots, and $g^\vee$ is the dual Coxeter number of $\gfrak$. 
  
 Let  $\widehat{\mathcal{L}}(\mathfrak{g})$ denote the untwisted affine Kac-Moody Lie algebra of $\mathfrak{g}$ 
(see  \cite[Sec.\ 7.2]{KacBook}). Explicitly, it can be defined as 
$\widehat{\mathcal{L}}\frg=\widehat{\frg}\oplus\mathbb{C}d,$ where $d$ is a derivation that commutes with $c$ and acts on $\mathfrak{g}\otimes\mathbb{C}((\xi))$ by the formula $\displaystyle d=\xi\frac{d}{d\xi}$. 
Clearly, $\widehat{\frg}$ is a Lie subalgebra of $\widehat{\mathcal{L}}\frg$. Let $\Lambda_0,\Lambda_1,\dots,\Lambda_m$ (resp.\  $\omega_1,\dots, \omega_m$) denote the affine fundamental weights of $\widehat{\mathcal{L}}\frg$ (resp.\ $\frg$), where $m=\rank\gfrak$.  We observe the following:
\begin{itemize}
	\item $\Lambda_i=\omega_i+a_i^{\vee}\Lambda_0$ for $1\leq i\leq m$, where $a_i^{\vee}$ are the dual Coxeter labels (\cite{KacBook}). 
	\item Any $\lambda \in P_{\ell}(\frg)$ corresponds to the weight $\lambda+ \ell \Lambda_0$ of $\widehat{\mathcal{L}}\frg$.
	\item Any highest weight integrable irreducible representation of $\widehat{\mathcal{L}}\frg$ is also irreducible as $\widehat{\frg}$-module.
	\item $\Lambda_0$ restricted to the Cartan subalgebra of $\widehat{\frg}$ is zero. 
\end{itemize} 
Since we are working with $\widehat{\frg}$ in this paper, we will denote by $\omega_i$ the fundamental weight for both $\frg$ and $\widehat{\mathcal{L}}\frg$. For uniformity of notation, we will denote by $\omega_0$-the zero-th fundamental weight $\Lambda_0$ of $\widehat{\mathcal{L}}\frg$. 
  
\subsection{Conformal embeddings}   \label{sec:conformal-embedding}
Let  $\phi : \mathfrak{s} \rightarrow \mathfrak{g}$ an embedding of simple Lie algebras, and let $(\, ,\, )_{\mathfrak{s}}$ and $(\, ,\, )_{\frg}$ be the  Cartan-Killing forms, normalized as above. 
Then the \emph{Dynkin index} of $\phi$ is the unique integer $d_{\phi}$ satisfying 
$(\phi(x), \phi(y))_{\frg}=d_{\phi}\cdot(x,y)_{\mathfrak{s}}$, for all $x, y \in \mathfrak{s}$. More generally, when $\mathfrak{s}=\mathfrak{g}_1\oplus \mathfrak{g}_2$, $\gfrak_i$ simple, we define the \emph{Dynkin multi-index of} $\phi=\phi_1\oplus \phi_2:\mathfrak{g}_1\oplus\mathfrak{g}_2 \rightarrow \mathfrak{g}$ to be $d_{\phi}=(d_{\phi_1}, d_{\phi_2})$.
\begin{definition}\label{def:conformal-embedding}
Let $\phi=(\phi_1,\phi_2): \mathfrak{s}=\mathfrak{g}_1\oplus \mathfrak{g}_2 \rightarrow \frg$ be an embedding of Lie algebras with Dynkin multi-index $d_\phi=(d_{\phi_1},d_{\phi_2})$. Then $\phi$ is said  to be a \emph{conformal embedding}  if 
\begin{equation*}\label{eqn:conformal-embedding}
\frac{d_{\phi_1}\dim{\frg_1}}{g_1^\vee+d_{\phi_1}}+\frac{d_{\phi_2}\dim{\frg_2}}{g_2^\vee+d_{\phi_2}}=\frac{\dim{\frg}}{g^\vee+1}\ ,\end{equation*}
where $g_1^\vee$, $g_2^\vee$, and $g^\vee$ are the dual Coxeter numbers of $\frg_1$, $\frg_2$, and $\frg$, respectively.
\end{definition}

Many familiar and important embeddings  are conformal:  \eqref{eqn:so-embedding} is one family of such examples. For a complete list, see  ~\cite{BaisBouwknegt:87}. 
For the purposes of this paper, the key property of conformal embeddings that we need is the following:
an embedding 
$\phi: \sfrak=\frg_1\oplus \frg_2\rightarrow  \frg$ is  conformal if and only if any irreducible $\widehat{\frg}$-module
$\mathcal{H}_{\Lambda}(\frg, 1)$, $\Lambda\in P_1(\gfrak)$, decomposes into a finite sum of irreducible $\widehat{\mathfrak{s}}$-modules
of the form $\Hcal_{\lambda_1}(\gfrak_1, \ell_1)\otimes \Hcal_{\lambda_2}(\gfrak_2, \ell_2)$, where $\lambda_i\in P_{\ell_i}(\gfrak_i)$, $i=1,2$, and $(\ell_1,\ell_2)=d_\phi$, the Dynkin multi-index.
 See ~\cite{KacWakimoto:88}.

\subsection{Conformal blocks} \label{sec:conformal-block}
 Let  $C$ be  a smooth projective curve with marked points $\vec{p}=(p_1,\ldots, p_n)$
such that $(C, \vec p)$ satisfies the Deligne-Mumford stability conditions. We furthermore assume a choice coordinates and
  formal neighborhoods around the $p_i$, 
which give isomorphisms $\widehat{\mathcal{O}}_{C,P_i}\isorightarrow \mathbb{C}\llrrbracket{\xi_i}$. 
We will use the notation  $\mathfrak{X}=(C;\vec{p})$ to denote this data. 
The \emph{current algebra} is defined to be $\frg(\mathfrak{X}):=  \frg\otimes H^0(C, \mathcal{O}_{C}(*(p_1,\ldots,p_n)))$. By local expansion of functions using the chosen coordinates $\xi_i$, we get an embedding:
$$\frg(\mathfrak{X}) \hookrightarrow \widehat{\frg}_{n}:=  \bigoplus_{i=1}^n\frg\otimes_{\mathbb{C}}\mathbb{C}\llrrparen{\xi_i} \oplus \mathbb{C}\cdot c\ .$$
Consider an $n$-tuple of weights $\vec{\lambda}=(\lambda_1,\ldots, \lambda_n) \in P_{\ell}^n(\frg)$, and set 
$$\mathcal{H}_{\vec{\lambda}}(\gfrak,\ell)=\mathcal{H}_{\lambda_1}(\frg,\ell)\otimes \cdots \otimes \mathcal{H}_{\lambda_n}(\frg,\ell)\ .$$
 The algebra $\widehat{\frg}_n$ (and hence also the current algebra $\frg(\mathfrak{X})$) acts on $\mathcal{H}_{\vec{\lambda}}(\gfrak,\ell)$ componentwise using the embedding above. 
\begin{definition}\label{definitionconformalblocks}
The space of \emph{conformal blocks} is
$$\mathcal{V}^{*}_{\vec{\lambda}}(\mathfrak{X}, \frg,\ell):=  \operatorname{Hom}_{\mathbb{C}}(\mathcal{H}_{\vec{\lambda}}(\gfrak,\ell)/\frg(\mathfrak{X})\mathcal{H}_{\vec{\lambda}}(\gfrak,\ell), \mathbb{C})\ .$$ 
The space of \emph{dual conformal blocks} is 
$\mathcal{V}_{\vec{\lambda}}(\mathfrak{X}, \frg,\ell)=\mathcal{H}_{\vec{\lambda}}(\gfrak,\ell)/\frg(\mathfrak{X})\mathcal{H}_{\vec{\lambda}}(\gfrak,\ell)\ .$
\end{definition}

Conformal blocks are finite dimensional vector spaces, and their dimensions  are given by the \emph{Verlinde formula} \cite{Faltings:94, Teleman, TUY:89}. We now discuss some important properties of the spaces of conformal blocks. 
\begin{itemize}\label{propertiesofconformalblocks}

\item ({\sc Flat projective connection}) Consider a family 
$$\mathcal{F}=(\pi:\mathcal{C}\rightarrow B; \sigma_1,\ldots, \sigma_n; \xi_1,\ldots,\xi_n)$$ of nodal curves on a base $B$ with sections $\sigma_i$ and formal coordinates $\xi_i$. In ~\cite{TUY:89}, a locally free sheaf $\mathcal{V}^{*}_{\vec{\lambda}}(\mathcal{F},\frg,\ell)$ known as the sheaf of conformal blocks is constructed over the base $B$. 
Moreover, if $\mathcal{F}$ is a family of smooth projective curves, then the sheaf $\mathcal{V}^{*}_{\vec{\lambda}}(\mathcal{F},\frg,\ell)$ carries a  flat projective connection known as the \emph{TUY connection}. We refer the reader to ~\cite{TUY:89} for more details. In genus zero, the TUY connection is a flat connection and is also known as {\em KZ connection}.

\item ({\sc Propagation of vacua}) Let $C$ be any curve with $n$-marked points satisfying the Deligne-Mumford stability conditions and $C$ be the same curve with $n+1$ marked points. Assume that the weights attached to the $n$ marked points are $\vec{\lambda}=(\lambda_1,\dots,\lambda_n)$ and we associate the vacuum representation ($\mathcal{H}_{\omega_0}$) at the $(n+1)$-st point. Then there is a canonical isomorphism 
$\mathcal{V}_{\vec{\lambda}}(\mathfrak{X}, \frg,\ell)\simeq \mathcal{V}_{\vec{\lambda}\cup \omega_0}(\mathfrak{X}',\frg,\ell)$, where $\mathfrak{X}$ (resp.\  $\mathfrak{X}'$) denote the data associated to the $n$ (resp.\  $n+1$) pointed curve $C$.

\item ({\sc Gauge symmetry}) Let $f \in H^0(C, \mathcal{O}_C(*(p_1,\dots,p_n)))$ and $\langle \Psi \mid  \in \mathcal{V}_{\vec{\lambda}}(\mathfrak{X},\frg,\ell)$, then $\langle \Psi\mid (X\otimes f)=0$.  More precisely, for any $\mid \phi_1 \otimes \dots \otimes \phi_n\rangle \in \mathcal{H}_{\vec{\lambda}}(\frg,\ell)$, 
$$\sum_{i=1}^n \langle\Psi \mid \phi_1\otimes \dots \otimes (X\otimes f(\xi_i))\phi_i\otimes \dots \otimes \phi_n\rangle=0\ .$$
\end{itemize}

Let $\mathcal{X}\to \operatorname{Spec}\mathbb{C}\llrrbracket{t}$ be a family of curves of genus $g$ with $n$ marked points with chosen coordinates such that the special fiber $\mathcal{X}_0$ is a curve $X_0$ over $\mathbb{C}$ with exactly one node,  and the generic fiber $\mathcal{X}_t$ is a smooth curve. Let $\widetilde{X}_0$ be the normalization of $X_0$. For $\lambda \in P_{\ell}(\frg)$, the following isomorphism is constructed in \cite{TUY:89}:

$$\oplus \iota_{\lambda}: \bigoplus_{\lambda \in P_{\ell}(\frg)} \mathcal{V}_{\vec{\lambda},\lambda,\lambda^{\dagger}}^{\ast}(\widetilde{\mathfrak{X}}_0, \frg,\ell)\rightarrow \mathcal{V}_{\vec{\lambda}}^{\ast}(\mathcal{X}_0, \frg,\ell)\ ,$$
 where $\widetilde{\mathfrak{X}}_0$ is the data associated to the $(n+2)$ points of the smooth pointed curve $\widetilde{X}_0$ with chosen coordinates and $\lambda^{\dagger}$ is the highest weight of the contragredient representation of $V_{\lambda}$. This is commonly referred to as {\em factorization of conformal blocks}. 

In the same paper \cite{TUY:89}, a sheaf theoretic version of the above isomorphism was also constructed which is commonly referred to as the {\em sewing construction}. This provides for each $\lambda \in P_{\ell}(\frg)$, a map of $\mathbb{C}\llrrbracket{t}$-modules: 
$s_{\lambda}(t) : \mathcal{V}_{ \vec{\lambda},\lambda,\lambda^{\dagger}}^{*}(\widetilde{\mathfrak{X}}_0, \frg,\ell)\otimes \mathbb{C}\llrrbracket{t}\rightarrow \mathcal{V}_{\vec{\lambda}}^{*}(\mathcal{X}, \frg,\ell)$. 
Then $s_{\lambda}(t)$ extends the map $\iota_{\lambda}$ in families such that $\oplus_{\lambda \in P_{\ell}(\frg)} s_{\lambda}(t)$, is an isomorphism of locally free sheaves over $\operatorname{Spec}\mathbb{C}[[t]]$. We refer the reader to \cite{Mukhopadhyay:12, TUY:89} for exact details. 

\section{Twisted moduli stacks}\label{twistedmoduli}

\subsection{Uniformization} \label{sec:uniformization}
In this section we recall the construction of the twisted moduli stacks for 
spin bundles as in \cite{BLS:98} (see also  \cite{Oxbury:99, OxburyWilson:96}).
First, let us fix some notation.

\begin{definition} \label{def:stacks}
Let $G$ be a connected complex reductive Lie group.  Then
\begin{enumerate}
\item  $\Mcal_G:=  $ the moduli stack of $G$-bundles on $C$;
 \item $M_G:=  $ the Ramanathan coarse moduli space of $S$-equivalence classes of semistable $G$-bundles on $C$;
 \item a $G$-bundle is \emph{regularly stable} if it is stable and its automorphism group is equal to  the center $Z(G)$. We denote by $M_G^{reg}\subset M_G$ the moduli space of regularly stable bundles.
 \end{enumerate}
\end{definition}

 Recall the exact sequence
$
1\to \ZBbb/2\to \Spin(m) \to \SO(m)\to 1
$.
Identify $\ZBbb/2$ with the subgroup  $\{\pm 1\}\subset \CBbb^\times$,
and define the \emph{special Clifford group}
\begin{equation} \label{eqn:spinc}
\SC(m) :=   \Spin(m)\times_{\ZBbb/2} \CBbb^\times\ .
\end{equation}
The \emph{spinor norm} is the group homomorphism
$\Nm : \SC(m)\lra \CBbb^\times\ ,$ 
which induces a morphism of stacks $\mathcal{M}_{\SC(m)}\rightarrow \mathcal{M}_{\CBbb^\times}$. We will denote this stack morphism also by $\Nm$.

 Let $p$ be a fixed point of the curve $C$. Throughout the paper we will denote the punctured curve by $C^\ast=C-\{p\}$. Consider bundles $\mathcal{O}_C(dp)$, where $d\in \ZBbb$. 
 Then the preimage by $\Nm$ of the class of
 $[\Ocal_C(dp)]\in \mathcal{M}_{\CBbb^\times} $ depends only on the parity of $d$ (cf.\ \cite[Prop.\ 3.4]{Oxbury:99}).
 We will denote by $\Mcal^\pm_{\SC(m)}$ the inverse images of the Jacobian $J(X)$ and $\Pic_1(C)$, respectively.
 Let
 $\Mcal^\pm_{\Spin(m)}$ be  the inverse images of the points $\mathcal{O}_C(dp)$, for $d=0,1$, respectively. 
Therefore, while by definition $\Mcal^+_{\Spin(m)}=\Mcal_{\Spin(m)}$, the space $\Mcal^-_{\Spin(m)}$ is a ``twisted'' component that does not correspond to a stack of $G$-bundles for any complex reductive $G$.

The components  $\Mcal^\pm_{\SO(m)}$ of $\Mcal_{\SO(m)}$ are labeled by $\delta\in \pi_1(\SO(m))\simeq \ZBbb/2$ (cf.\ \cite[Prop.\ 1.3]{BLS:98}).
The map $\SC(m) \rightarrow \SO(m)$, coming from the projection of \eqref{eqn:spinc} on the first factor, induces a morphism of stacks 
\begin{equation} \label{eqn:projection}
p:\Mcal^\pm_{\Spin(m)} \lra \Mcal^\pm_{\SO(m)}\ .
\end{equation}


\begin{definition} \label{def:loop-group}
For $G$ as in Definition \ref{def:stacks}, let
\begin{enumerate}
\item $LG:=  G\llrrparen{\xi}$ be the algebraic loop group of $G$;
\item $L^+G:=  G\llrrbracket{\xi}$ be the group of positive loops;
\item $\Qcal_G :=  LG/L^+G$ be the \emph{affine Grassmannian};
\item $L_CG:=  G(\Ocal_{C^\ast})\hookrightarrow LG$.
\end{enumerate}
\end{definition}

The following result, proved in \cite{BLS:98},  gives a uniformization for the twisted moduli stacks and determines their Picard groups. We only state it in the case  $G=\Spin(m)$.

\begin{proposition}\label{uniformlyBLS}
Let $\delta \in\{\pm 1\}=\pi_1(\SO(m))$ and $ \zeta \in (L\SO(m))^{\delta}(\mathbb{C})$. Then 
$$\Mcal^\delta_{\Spin(m)}=(\zeta^{-1}\cdot L_C(\Spin(m))\cdot\zeta)\backslash\Qcal_{\Spin(m)}\ ,$$  where $\Qcal_{\Spin(m)}$ is the affine Grassmannian of  $\Spin(m)$. The torsion subgroup of $\Pic(\Mcal^\pm_{\Spin(m)})$ is trivial, and in fact,  
$\Pic(\Mcal^\pm_{\Spin(m)})\simeq\ZBbb$.
\end{proposition}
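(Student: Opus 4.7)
The plan is to follow the loop-group framework of Beauville--Laszlo--Sorger, which handles non-simply connected groups and twisted components via a conjugation trick. The argument splits naturally into two parts: first the uniformization assertion, and then the Picard group computation.

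For the uniformization, I would start from the Beauville--Laszlo descent: giving a $G$-bundle on $C$ is equivalent to giving a transition element in $LG$, once one has trivializations on both $C^\ast$ and the formal disk at $p$. For simply connected $G$, Harder's theorem guarantees such trivializations always exist, yielding $\Mcal_G = L_CG \backslash \Qcal_G$. Since $\Spin(m)$ is simply connected, this handles the case $\delta=+1$ immediately. For $\delta=-1$, I would argue as follows: a bundle in $\Mcal^{-}_{\Spin(m)}$ corresponds, via the special Clifford group together with a trivialization of the reference line bundle $\mathcal{O}_C(p)$ on $C^\ast$, to a cocycle in $L\SC(m)$ whose image in $L\SO(m)$ lies in the component of $\zeta$. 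Left-translating by the fixed $\zeta$ reduces this to a $\Spin(m)$-cocycle; the gauge symmetry group $L_C\Spin(m)$ is then replaced by its $\zeta$-conjugate. Although $\zeta$ itself need not lift to $L\Spin(m)$, the conjugation action on $L_C\Spin(m)$ is nonetheless well-defined because any two local lifts differ by the central $\ZBbb/2$. This identifies $\Mcal^{-}_{\Spin(m)}$ with $(\zeta^{-1} L_C\Spin(m) \zeta) \backslash \Qcal_{\Spin(m)}$.

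For the Picard group, I would use the basic central extension of $L\Spin(m)$ by $\CBbb^\ast$, which produces an ample generator $\Lcal$ of $\Pic(\Qcal_{\Spin(m)}) \simeq \ZBbb$. The task is to descend $\Lcal$ through the quotient above. By a principle going back to Kumar--Narasimhan and Laszlo--Sorger, such a descent exists iff the restriction of the central extension to the conjugate subgroup $\zeta^{-1} L_C\Spin(m) \zeta$ admits a splitting, which is moreover unique because $\Spin(m)$ is simple and simply connected, and so $L_C\Spin(m)$ has only the trivial character. The delicate point I would expect to dominate the work is showing that the pullback map $\Pic(\Mcal^{\delta}_{\Spin(m)}) \to \Pic(\Qcal_{\Spin(m)})$ is both injective and exhaustive in the twisted setting, ruling out torsion and extra generators. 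I would handle this by analyzing the character group and first cohomology of the $\zeta$-conjugate gauge group, noting that conjugation by $\zeta$ does not alter these invariants since the conjugation is by an element of the ambient $L\SO(m)$ and the group $\Spin(m)$ is semisimple. After this, the same descent/Leray argument as in the untwisted case of \cite{BLS:98} yields $\Pic(\Mcal^{\pm}_{\Spin(m)}) \simeq \ZBbb$ generated by $\Pcal$, with no torsion.
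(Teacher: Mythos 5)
The paper does not reprove this proposition: it records it as a known result of Beauville--Laszlo--Sorger, citing \cite{BLS:98} rather than supplying an argument. Your sketch accurately reconstructs the loop-group uniformization (descent plus conjugation by $\zeta\in L\SO(m)^{\delta}$, with the conjugation action well defined on $L_C\Spin(m)$ since local lifts of $\zeta$ differ by the central $\ZBbb/2$) and the central-extension descent argument for $\Pic$ from that reference, so the approach matches the cited proof.
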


As we have done with stacks, we may also define the  coarse moduli spaces $M_{\SC(m)}^-$ and $M_{\Spin(m)}^-$ of semistable twisted bundles on $C$.

\subsection{Pfaffian divisors} \label{sec:divisor} 
The set $\Th(C)$ of theta characteristics forms a torsor for the 2-torsion points $J_2(C)$ of the Jacobian of $C$. Note the  cardinalities \cite[Sec.\ 4]{mumford}: $|J_2(C)|=|\Th(C)|=2^{2g}$, $|\Th^\pm(C)|=2^{g-1}(2^g\pm 1)$. 
Recall from the introduction that by an oriented orthogonal bundle on $C$  we mean a pair $(E, q)$ consisting of a bundle $E\to C$ with trivial determinant, and a nondegenerate quadratic form $q:E\otimes E\to \Ocal_C$. Then $q$ induces a nondegenerate quadratic form $\hat q: (E\otimes \kappa)\otimes (E\otimes \kappa)\to \omega_C$.
We recall the following from \cite{LaszloSorger:97}.
\begin{proposition}\label{LSpfaff}
Let $B$ be a locally noetherian scheme, $\pi: C\times B\to B$, $\pr: C\times B\to C$, the projections,
 and $(\Ecal,\hat q)\to C\times B$  a vector bundle equipped with an $\omega_C$-valued nondegenerate quadratic form $\hat q$. Then the choice of a theta characteristic $\kappa\to C$ gives a canonical square root $\mathcal{P}_{\mathcal{E},\hat q, \kappa}$ of the determinant of cohomology $\mathcal{D}_{\Ecal}=\left[{\rm Det}R\pi_\ast (\Ecal\otimes \pr^\ast\kappa)\right]^\ast$. Moreover, if $f:B'\rightarrow B$ is a morphism of locally noetherian schemes, then the Pfaffian functor commutes with base change, i.e.\
$f^*{\mathcal{P}_{\mathcal{E},\hat q}}=\mathcal{P}_{f^*{\mathcal{E}},f^*{\hat q}}$.
\end{proposition}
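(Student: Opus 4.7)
The plan is to reduce the statement to the general principle that a perfect complex on $B$ equipped with a \emph{symmetric} self-duality admits a canonical Pfaffian line bundle whose square is the determinant. First, set $\mathcal F:=\mathcal E\otimes\pr^\ast\kappa$; the identification $\kappa^{\otimes 2}\simeq\omega_C$ combined with $\hat q$ yields a non-degenerate symmetric bilinear form $\mathcal F\otimes\mathcal F\to\pr^\ast\omega_C$, equivalently a symmetric isomorphism $\mathcal F\isorightarrow\mathcal F^\vee\otimes\pr^\ast\omega_C$. Applying relative Grothendieck duality along the smooth proper morphism $\pi\colon C\times B\to B$ (whose relative dualizing sheaf is $\pr^\ast\omega_C$) converts this into a canonical symmetric quasi-isomorphism
\[
\sigma\colon R\pi_\ast\mathcal F\ \isorightarrow\ R\shom_{\Ocal_B}\bigl(R\pi_\ast\mathcal F,\Ocal_B\bigr)[-1].
\]
It is crucial here that $\sigma$ is symmetric rather than alternating; this is where the symmetry of $\hat q$ combines with the sign of the Grothendieck duality trace in relative dimension one.

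Next, I would construct the square root locally on $B$. Since $\pi$ is proper of relative dimension one and $\mathcal F$ is $B$-flat, Zariski-locally there is a two-term resolution $R\pi_\ast\mathcal F\simeq[K^0\xrightarrow{d}K^1]$ by finite locally free $\Ocal_B$-modules. The symmetric duality $\sigma$ can be represented, after homotopy, by an isomorphism $\varphi\colon K^1\isorightarrow (K^0)^\vee$ for which $\psi:=\varphi\circ d\colon K^0\to(K^0)^\vee$ is a symmetric map of locally free modules. I would then define $\mathcal P$ locally as $\det K^0\otimes(\det K^1)^{-1}$, so that $\operatorname{Pf}(\psi)\in\mathcal P$ is a canonical element with $\operatorname{Pf}(\psi)^{\otimes 2}=\det(\psi)$; the latter equals, by construction, the canonical section of $\mathcal D_{\mathcal E}^{\otimes 2}$ determined by $\det(\sigma)$. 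This exhibits $\mathcal P$ as a square root of $\mathcal D_{\mathcal E}$ locally on $B$.

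The remaining steps are to check that this construction is independent, up to canonical isomorphism, of the resolution $[K^0\to K^1]$, so that the local pieces glue to a global line bundle $\mathcal P_{\mathcal E,\hat q,\kappa}$, and then to verify base change. Independence of resolution reduces to the invariance of the Pfaffian under symmetric stabilization of the complex by acyclic pieces of the form $[L\xrightarrow{\id\oplus 0}L\oplus L^\vee]$, which is a standard, if somewhat delicate, verification. Base change compatibility is essentially automatic once the construction is intrinsic: pulling back a two-term locally free resolution remains a resolution of the pulled-back pushforward (by cohomology and base change, using perfectness of $R\pi_\ast\mathcal F$), and Pfaffians of symmetric matrices commute with ring maps. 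I expect the main obstacle to be the careful bookkeeping of signs, namely guaranteeing that the Pfaffian is canonically defined and not merely well-defined up to $\pm 1$; the rigidification needed for this uses the choice of $\kappa$ itself rather than merely an abstract square root of $\omega_C$.
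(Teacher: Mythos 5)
Your strategy --- relative Serre duality to produce a self-duality of $R\pi_\ast\mathcal F$ in the derived category, local two-term resolution $K^0\xrightarrow{d}K^1$ with $K^1\simeq(K^0)^\vee$, Pfaffian of the composite $\psi\colon K^0\to(K^0)^\vee$, gluing and base change --- is precisely the construction in Laszlo--Sorger, which the paper cites rather than proves. But there is a genuine error at the heart of it: you assert that $\psi$ is a \emph{symmetric} map of locally free modules and then write $\operatorname{Pf}(\psi)$. These two claims are incompatible. The Pfaffian is only defined for \emph{alternating} matrices; the determinant of a symmetric form is its discriminant, which is not a square and has no canonical square root. The correct and essential fact is that $\psi$ is alternating, $\psi^\vee=-\psi$. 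The symmetry of $\hat q$ does induce a symmetric pairing $R\pi_\ast\mathcal F\otimes^{\mathbb L}R\pi_\ast\mathcal F\to\Ocal_B[-1]$ (the Koszul sign on $H^0\otimes H^1$ is $(-1)^{0\cdot 1}=+1$), but once you unwind the chain-map condition for a local representative, that symmetry together with the differential forces $\psi$ to be alternating: writing the degree-one components of the pairing as $a\colon K^0\otimes K^1\to\Ocal_B$ and $b\colon K^1\otimes K^0\to\Ocal_B$, closedness gives $a(x,dy)+b(dx,y)=0$ while symmetry gives $b(dx,y)=a(y,dx)$, hence $a(y,dx)=-a(x,dy)$, which is exactly the alternating condition on $\psi(x)(y):=a(y,dx)$. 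You have dropped this sign.

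This is not a cosmetic slip. The alternating nature of $\psi$ is the content of Mumford's theorem that $h^0(\Ecal\otimes\kappa)\bmod 2$ is locally constant in families: the kernel of an alternating map $K^0\to (K^0)^\vee$ on a fixed locally free sheaf has locally constant parity of rank, whereas for a symmetric map it does not, and the parity theorem would simply be false under your claim. Moreover, once $\psi$ is alternating, $\operatorname{Pf}(\psi)$ is canonical with no $\pm1$ ambiguity whatsoever, so your closing concern about ``rigidifying'' the sign via $\kappa$ is also off target; the role of choosing $\kappa$ as a specific line bundle, rather than an isomorphism class of square root of $\omega_C$, is to make $\hat q$ itself (and hence $\psi$ and the whole complex) canonical, not to select a square root of $\det\psi$. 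Your remaining steps --- independence of resolution via stabilization by hyperbolic summands, and base change from perfection of $R\pi_\ast\mathcal F$ --- are correct as stated and are handled in essentially the way you describe in the reference.
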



Next,
we recall the definition of the \emph{Pfaffian divisor}, following \cite{BLS:98, LaszloSorger:97}. Let  $(\mathcal{E}, q)\to C\times \MSO$ be  the universal quadratic bundle.  For $\kappa\in \Th(C)$, consider the substack defined by:
$\Theta_{\kappa}:=  \Div(R\pi_\ast(\mathcal{E}\otimes pr^*{\kappa}))$.
 It is shown in \cite[eq.\ (7.10)]{LaszloSorger:97} that $\Theta_{\kappa}$ is a divisor on $\MSO^+$ if and only if either $m$ or $\kappa$ is even. We postpone the proof of the following proposition to Section \ref{sec:det-pfaffian}.
\begin{proposition} \label{prop:pfaffian-divisor}
The substack $\Theta_{\kappa}$ is a divisor on $\MSO^{-}$ if and only if both $m$ and $\kappa$ are odd.
\end{proposition}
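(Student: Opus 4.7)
The approach is to adapt the parity-of-sections argument underlying Laszlo-Sorger's treatment of $\MSO^+$ to the twisted component. Since $\kappa^{\otimes 2}\simeq \omega_C$, the universal twist $\Ecal\otimes \pr^{\ast}\kappa$ on $C\times \MSO^-$ carries a fiberwise $\omega_C$-valued nondegenerate symmetric pairing induced by $q$. By the Mumford-Atiyah theorem on the parity of $h^0$ in families of vector bundles equipped with such a pairing, the function $E\mapsto h^0(C,E\otimes\kappa)\bmod 2$ is locally constant on $\MSO^-$; since $\MSO^-$ is connected (it is a single component of $\Mcal_{\SO(m)}$), this parity is a well-defined element of $\ZBbb/2$. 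Because $\chi(E\otimes\kappa)=0$, the canonical Knudsen-Mumford section of $\Det R\pi_\ast(\Ecal\otimes \pr^{\ast}\kappa)$ whose vanishing defines $\Theta_\kappa$ is identically zero precisely when this parity is odd. Hence $\Theta_\kappa$ is a divisor on $\MSO^-$ if and only if the parity is even.

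To evaluate the parity it suffices to exhibit one explicit representative in $\MSO^-$. Set
\[
E_0 := \Ocal_C(p)\oplus \Ocal_C(-p)\oplus \Ocal_C^{\oplus(m-2)},
\]
endowed with the orthogonal form that pairs the first two summands hyperbolically and the trivial summands by the standard form. Then $\det E_0 = \Ocal_C$. The class $w_2(E_0)\in H^2(C,\ZBbb/2)\simeq \ZBbb/2$ coincides with the obstruction to lifting the $\SO(2)$-subbundle $\Ocal_C(p)\oplus\Ocal_C(-p)$ through $\Spin(2)\to \SO(2)$. Identifying the latter map with the squaring map $\CBbb^\times\to\CBbb^\times$, this obstruction is $\deg\Ocal_C(p)\bmod 2 = 1$, so $E_0\in\MSO^-$.

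A direct calculation yields
\[
h^0(E_0\otimes\kappa) = h^0(\kappa(p)) + h^0(\kappa(-p)) + (m-2)\,h^0(\kappa).
\]
Serre duality gives $h^1(\kappa(p))=h^0(\kappa(-p))$, and Riemann-Roch gives $\chi(\kappa(p))=1$, hence $h^0(\kappa(p))-h^0(\kappa(-p))=1$. Therefore $h^0(\kappa(p))+h^0(\kappa(-p))$ is odd, and the parity of $h^0(E_0\otimes\kappa)$ equals $1 + m\cdot h^0(\kappa)\bmod 2$. This is zero precisely when $m$ and $h^0(\kappa)$ are both odd, i.e.\ when $m$ and $\kappa$ are both odd, proving the claim.

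The main technical point is justifying that the Mumford-type parity invariance really does apply uniformly over the entire stack $\MSO^-$ and that the integer $h^0\bmod 2$ controls the vanishing of the Knudsen-Mumford section defining $\Theta_\kappa$; this is supplied by the existence of the Pfaffian line bundle of Proposition~\ref{LSpfaff}, whose squared canonical section is (up to sign) the determinant-of-cohomology section, with the parity of $h^0$ encoding whether the Pfaffian section itself is identically zero. Everything else is standard: the connectedness of $\MSO^-$ from the uniformization of \cite{BLS:98}, the cohomological computation on a rank-one summand, and the identification of $w_2$ for the hyperbolic summand.
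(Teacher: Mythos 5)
Your parity argument (Mumford--Atiyah invariance of $h^0\bmod 2$ for $\omega_C$-valued orthogonal families, plus connectedness of $\Mcal^-_{\SO(m)}$) correctly reproduces the content of the formula
$w_2(\Ecal)\equiv h^0(C,\Ecal\otimes\kappa)+m\,h^0(C,\kappa)\pmod 2$
that the paper cites from Oxbury, and your evaluation at $E_0=\Ocal_C(p)\oplus\Ocal_C(-p)\oplus\Ocal_C^{\oplus(m-2)}$ is a correct, self-contained verification of that parity on the nontrivial component. That much matches the paper's first step exactly. The gap is in the sentence asserting that the Knudsen--Mumford section ``is identically zero precisely when the parity is odd,'' from which you conclude the biconditional. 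Parity odd does force the section to vanish identically (since $h^0\ge 1$ everywhere), but parity even does \emph{not} by itself prevent the section from vanishing identically: it only says $h^0$ is even at every point, which is consistent with $h^0\ge 2$ everywhere. And even if the section is not identically zero, one should still produce a point where it actually vanishes so that $\Theta_\kappa$ is a genuine (nonempty) divisor. Your computation at $E_0$ tells you $h^0(E_0\otimes\kappa)$ is even; it does not tell you it is $0$, nor does it exhibit any $E\in\Mcal^-_{\SO(m)}$ with $h^0(E\otimes\kappa)>0$.

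The paper closes exactly this gap in two moves that have no analogue in your write-up. First, it invokes \cite[Prop.~4.6]{Oxbury:99} for the statement that $h^0(C,\Ecal\otimes\kappa)=0$ for \emph{generic} $\Ecal\in\Mcal^-_{\SO(m)}$ when $m$ and $\kappa$ are both odd; this rules out the ``identically zero section'' scenario. Second, to show $\Theta_\kappa\neq\emptyset$, it builds an explicit twisted orthogonal bundle with $h^0>0$: starting from
$\Ecal_0=(\kappa_0\otimes\kappa^{-1})^{\oplus r}\oplus\Ocal_C\oplus(\kappa\otimes\kappa_0^{-1})^{\oplus r}$
(with $\kappa_0$ a theta characteristic having $h^0(\kappa_0)\ge 1$), it checks $h^0(C,\Ecal_0\otimes\kappa)\ge m-1\ge 2$, performs an elementary transformation $\Ecal_0\rightsquigarrow\Ecal_0^\flat$ which drops $h^0$ by at most one, and then applies the $\iota$-transform (which flips $w_2$ and contains $\Ecal_0^\flat$ as a subsheaf) to land in $\Mcal^-_{\SO(m)}$ with $h^0>0$. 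This use of the $\iota$-transform of Section~\ref{sec:abe} is the genuinely new ingredient, and it is precisely what your argument is missing. Without it (or some substitute establishing both generic vanishing and non-emptiness of $\Theta_\kappa$ in the both-odd case), the ``if'' direction of the proposition remains unproved.
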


\begin{definition} \label{def:pfaffian-section}
It follows from the above that there is a nonzero section  $s_{\kappa}$ (canonical up to scale)  of $\mathcal{P}_{\kappa}\to\Mcal_{\SO(2r+1)}$, supported on $\Mcal_{\SO(2r+1)}^+$ (resp.\ $\Mcal_{\SO(2r+1)}^-$) if $\kappa$ is even (resp.\ odd). We call  $s_{\kappa}$ the 
 \emph{Pfaffian section}.
 \end{definition} 

Recall the projection \eqref{eqn:projection}.
For  $\kappa,\kappa'\in\Th(C)$, the line bundles $p^*\mathcal{P}_{\kappa}$, $p^{*}\mathcal{P}_{\kappa'}$ are isomorphic. 
We therefore set $\mathcal{P}=p^*\mathcal{P}_{\kappa}$, which is well-defined up to this isomorphism. On each component $\mathcal{M}^\pm_{\Spin(m)}$,  $\mathcal{P}$ is the ample generator of  $\Pic(\Mcal^\pm_{\Spin(m)})$ \cite{BLS:98}. 

Let $A$ be the group of principal $\ZBbb/2$-bundles on $C$, where $\ZBbb/2$ is identified with the kernel of the map $\Spin(m)\rightarrow \SO(m)$. Then $A\simeq J_2(X)$. Let $\widehat{A}$ denote the set of characters of $A$. Let $Y=M_{\SO(m)}^{-,reg}$ (the notion of regularly stable extends directly to the twisted setting), and $\widetilde{Y}=p^{-1}(Y)$. Here $p: {M}^{-}_{\Spin(m)}\rightarrow {M}^{-}_{\SO(m)}$ is the projection map. 
By \cite[Prop.\ 13.5]{BLS:98}, the Galois covering $p$ is \'etale over $M_{\SO(m)}^{-,reg}$.   Since $M_{\SO(m)}^{-}-Y$ has codimension $\geq 2$ (see \cite[Thm.\  II.6]{Faltings:93}, \cite[Appendix]{Laszlo} and recall $m\geq 5$), and $p$ is finite and dominant, we conclude that  $M_{\Spin(m)}^{-}-\widetilde{Y}$ has codimension  $\geq 2$ as well. The moduli spaces $M^{-}_{\SO(m)}$ and $M^{-}_{\Spin(m)}$ can be constructed as GIT quotients of a smooth scheme by a reductive group (\cite[Lemma 7.3]{BLS:98}), hence they are normal. Therefore, by normality of the moduli spaces $M^{-}_{\SO(m)}$ and $M^{-}_{\Spin(m)}$, we get
$H^0(Y,\mathcal{O}_{Y})=H^0(\widetilde{Y},\mathcal{O}_{\widetilde{Y}})=\mathbb{C}$.
There is a decomposition of sheaves $p_{*}\mathcal{O}_{\widetilde{Y}}=\oplus_{\chi \in \widehat{A}} L_{\chi},$ where as a presheaf  
$L_{\chi}(U)=\{s \in \Ocal_{\widetilde Y}(p^{-1}(U))\mid gs=\chi(g)s, \forall g \in A\}$.
\begin{proposition} We have the following properties: 
\begin{enumerate}
\item $H^0(Y,L_{\chi})=\begin{cases}  \CBbb & \chi=1\\ 0 &\chi\neq 1\end{cases}$;
\item  for any $\chi$, $p^*L_{\chi}=\Ocal_{\widetilde{Y}}$;
\item $L_{\chi}\otimes L_{\chi'}=L_{\chi \chi'}$;
\item $L_{\chi}\simeq L_{\chi'} \iff \chi=\chi'$. 
\end{enumerate}
\end{proposition}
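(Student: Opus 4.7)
The plan is to exploit the étale Galois structure of $p:\widetilde Y\to Y$ with group $A$. By Proposition~13.5 of \cite{BLS:98} cited above, $p$ is finite étale, so $p_\ast\Ocal_{\widetilde Y}$ is a locally free $\Ocal_Y$-algebra of rank $|A|$ carrying the Galois $A$-action, and the isotypic decomposition $p_\ast\Ocal_{\widetilde Y}=\bigoplus_\chi L_\chi$ splits it into rank one summands indexed by $\widehat A$. All four statements will fall out of this decomposition together with the standard identification for Galois étale covers
$$
p^\ast p_\ast\Ocal_{\widetilde Y}\;\simeq\;\Ocal_{\widetilde Y\times_Y\widetilde Y}\;\simeq\;\bigoplus_{g\in A}\Ocal_{\widetilde Y},
$$
the first isomorphism from flat base change, the second from $\widetilde Y\times_Y\widetilde Y\simeq\widetilde Y\times A$ (the defining property of a Galois cover), with $A$ acting by permuting the indices $g$.

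For (1), we push the equality $H^0(\widetilde Y,\Ocal_{\widetilde Y})=\CBbb$ (established in the paragraph preceding the statement) to $Y$, giving $H^0(Y,p_\ast\Ocal_{\widetilde Y})=\CBbb$ with trivial $A$-action. Taking $\chi$-isotypic components of both sides yields $H^0(Y,L_\chi)=\CBbb$ if $\chi=1$ and $0$ otherwise.

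For (2), pulling back $p_\ast\Ocal_{\widetilde Y}=\bigoplus_\chi L_\chi$ along $p$ and matching with the displayed isomorphism above, each $p^\ast L_\chi$ is identified with the $\chi$-eigenline inside the trivial $\Ocal_{\widetilde Y}$-module $\bigoplus_{g\in A}\Ocal_{\widetilde Y}$, explicitly spanned by the idempotent $\sum_{g\in A}\chi(g)^{-1}e_g$. This line is free of rank one, so $p^\ast L_\chi\simeq\Ocal_{\widetilde Y}$. For (3), the algebra multiplication $p_\ast\Ocal_{\widetilde Y}\otimes p_\ast\Ocal_{\widetilde Y}\to p_\ast\Ocal_{\widetilde Y}$ restricts to an $\Ocal_Y$-linear map $L_\chi\otimes L_{\chi'}\to p_\ast\Ocal_{\widetilde Y}$ which is $A$-equivariant with the tensor product carrying the character $\chi\chi'$, hence lands in $L_{\chi\chi'}$. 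Pulling back to $\widetilde Y$ via (2), both source and target become $\Ocal_{\widetilde Y}$ and the map becomes multiplication of the two distinguished idempotent generators identified above, which is a unit; faithful flatness of $p$ then upgrades injectivity on $\widetilde Y$ to an isomorphism on $Y$. Finally, (4) is immediate from (1) and (3): an isomorphism $L_\chi\simeq L_{\chi'}$ forces $L_{\chi(\chi')^{-1}}\simeq\Ocal_Y$ to have a nonzero global section, and by (1) this is only possible when $\chi=\chi'$.

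The only mild subtlety is the bookkeeping of the $A$-action on the pullback $p^\ast p_\ast\Ocal_{\widetilde Y}$ and the verification that the eigenline decomposition matches $\bigoplus_\chi p^\ast L_\chi$; once this normalization is fixed, everything else is formal. Since the rest of the argument consists of standard manipulations of an étale Galois cover and its pushforward of structure sheaf, no deeper input from the geometry of twisted spin bundles is needed.
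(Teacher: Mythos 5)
The paper states this proposition with no proof at all; it records a standard fact about finite \'etale Galois covers and moves directly on to Lemma \ref{lem:simplyconnected}, so there is no authorial argument to compare against. Your proof is correct and is essentially the canonical one: from the isotypic decomposition $p_*\Ocal_{\widetilde Y}=\bigoplus_{\chi}L_\chi$, the Galois identification $p^*p_*\Ocal_{\widetilde Y}\simeq\bigoplus_{g\in A}\Ocal_{\widetilde Y}$ with permutation action of $A$, and the $A$-equivariance of the algebra multiplication, all four items follow; for item (1) you also invoke $H^0(\widetilde Y,\Ocal_{\widetilde Y})=\CBbb$, which the paper has just established from normality and the codimension bound. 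One small terminological slip: the generators $\sum_{g\in A}\chi(g)^{-1}e_g$ of the character eigenlines inside $\bigoplus_{g\in A}\Ocal_{\widetilde Y}$ are \emph{not} idempotents except when $\chi=1$ (the idempotents here are the $e_g$ themselves). What you actually use, and it is correct, is the multiplicativity $\bigl(\sum_g\chi(g)^{-1}e_g\bigr)\bigl(\sum_h\chi'(h)^{-1}e_h\bigr)=\sum_g(\chi\chi')(g)^{-1}e_g$, so the pulled-back map $p^*L_\chi\otimes p^*L_{\chi'}\to p^*L_{\chi\chi'}$ sends generator to generator, hence is an isomorphism, and descent along the faithfully flat $p$ gives the isomorphism on $Y$.
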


It is well-known \cite{BLS:98} that $Y$ is smooth, and since the map $p:\widetilde{Y} \rightarrow Y$ is Galois and \'etale, this implies that $\widetilde{Y}$ is also smooth and is contained in $M^{-,reg}_{\Spin(m)}$. 
  We will also need the following fact.

\begin{lemma} \label{lem:simplyconnected}
 $\pi_1(\widetilde{Y})=\{1\}$.
\end{lemma}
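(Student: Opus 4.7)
The plan is to reduce, via a standard codimension-$2$ argument, to showing that $\pi_1(M^{-,reg}_{\Spin(m)})=\{1\}$, and then to derive the latter from the uniformization theorem.

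First, $\widetilde Y=p^{-1}(Y)$ is open in $M^-_{\Spin(m)}$ and, as stated above, lies inside the smooth variety $M^{-,reg}_{\Spin(m)}$. The preceding discussion gives codimension $\geq 2$ for the complement $M^{-}_{\Spin(m)}\setminus\widetilde Y$, hence the complement of $\widetilde Y$ inside $M^{-,reg}_{\Spin(m)}$ has complex codimension $\geq 2$, i.e.\ real codimension $\geq 4$. Since removing a closed subset of real codimension $\geq 3$ from a smooth manifold does not affect the fundamental group, this yields $\pi_1(\widetilde Y)\simeq \pi_1(M^{-,reg}_{\Spin(m)})$.

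It therefore suffices to prove $\pi_1(M^{-,reg}_{\Spin(m)})=\{1\}$. Apply Proposition \ref{uniformlyBLS} to write $\mathcal{M}^-_{\Spin(m)}=\Gamma\backslash \Qcal_{\Spin(m)}$ with $\Gamma=\zeta^{-1}L_C(\Spin(m))\zeta$, and let $V\subset\Qcal_{\Spin(m)}$ be the $\Gamma$-invariant open subset mapping to $M^{-,reg}_{\Spin(m)}$. The complement $\Qcal_{\Spin(m)}\setminus V$ has ind-codimension $\geq 2$, since the non-regularly-stable locus in $M^{-}_{\Spin(m)}$ is of codimension $\geq 2$ for $g\geq 2$. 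Every regularly stable point has stabilizer equal to the center $Z(\Spin(m))=\mathbb{Z}/2$, so $\Gamma/Z(\Spin(m))$ acts freely on $V$ with $V\to M^{-,reg}_{\Spin(m)}$ a principal bundle for this (ind-)group. The affine Grassmannian $\Qcal_{\Spin(m)}$ is simply connected: since $\Spin(m)$ is simply connected simple, its Bruhat decomposition consists solely of even-dimensional Schubert cells. Combined with the codimension estimate above, this forces $V$ to be simply connected as well. Moreover $L_C(\Spin(m))$ is connected, hence so are $\Gamma$ and $\Gamma/Z(\Spin(m))$. The long exact sequence of homotopy groups for the principal bundle $V\to M^{-,reg}_{\Spin(m)}$ then collapses to give $\pi_1(M^{-,reg}_{\Spin(m)})=\{1\}$.

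The chief technical obstacle is that $\Qcal_{\Spin(m)}$ and $\Gamma$ are infinite-dimensional ind-objects, so the homotopy long exact sequence must be interpreted with care. As in Beauville-Laszlo-Sorger \cite{BLS:98}, this is handled by exhausting the affine Grassmannian by its finite-dimensional closed Schubert subvarieties and passing to an appropriate limit of the resulting homotopy groups.
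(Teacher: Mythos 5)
Your argument takes a genuinely different route from the paper, and it is worth comparing the two before pointing out where yours has gaps.

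The paper (following Atiyah--Bott \cite[Thm.\ 9.12]{AtiyahBott:82}) works differentially: it fixes a smooth principal $\SC(m)$-bundle $P$ with a compact reduction $P_K$, identifies holomorphic structures with the affine (hence contractible) space of connections $\Acal(P_K)$, observes that the non-regularly-stable locus has complex codimension $\geq 2$ there, and reads off $\pi_1(M^{-,reg}_{\SC(m)})\simeq \pi_0(\overline\Gcal(P))\simeq H^1(C,\ZBbb)$ from the principal $\overline\Gcal(P)$-bundle structure in the Banach-manifold category. It then uses the fibration \eqref{eqn:fixed-norm} over $\Pic_1(C)$ to kill this $\ZBbb^{2g}$ and conclude $\pi_1(M^{-,reg}_{\Spin(m)})=\{1\}$. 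You instead try to read off the fundamental group purely algebraically from the uniformization $\Mcal^-_{\Spin(m)}=\Gamma\backslash\Qcal_{\Spin(m)}$, using simple connectivity of the affine Grassmannian. This is a legitimate alternative strategy in spirit, but as written it has three genuine gaps, each of which the gauge-theoretic route sidesteps.

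First, your deduction hinges on the assertion ``$L_C(\Spin(m))$ is connected, hence so is $\Gamma/Z$.'' This is not a formal consequence of simple connectedness of $\Spin(m)$, and you neither prove it nor cite a reference that does. Connectivity of $G(\Ocal_{C^\ast})$ for $G$ simple simply connected over a smooth affine curve is a substantive algebraic statement: for $G=\SL_n$ it is equivalent to $SK_1(\Ocal_{C^\ast})=0$, and for general $G$ it amounts to $G(\Ocal_{C^\ast})$ being generated by its root subgroups. These statements are true over a smooth affine curve over $\CBbb$, but they require a real argument (or a precise citation), and without them the term $\pi_0(\Gamma/Z)$ in your long exact sequence is not under control. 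In the paper's proof the corresponding term $\pi_0(\overline\Gcal(P))$ is \emph{not} trivial --- it is $H^1(C,\ZBbb)$ --- and the triviality of $\pi_1(M^{-,reg}_{\Spin(m)})$ is instead extracted from the norm fibration. So the fact your argument leans on is genuinely different from, and in a sense stronger than, what the paper uses, which makes the lack of justification more serious.

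Second, the passage ``$V\to M^{-,reg}_{\Spin(m)}$ is a principal $\Gamma/Z$-bundle, hence there is a homotopy long exact sequence'' is not automatic in the ind-scheme setting. In the Atiyah--Bott picture, $\Acal^{reg}(P_K)\to M^{-,reg}_{\SC(m)}$ is a principal bundle of Banach manifolds, where local triviality and the homotopy sequence are standard. For the ind-scheme quotient $V\to V/(\Gamma/Z)$ you must either establish local triviality in a topology where the homotopy sequence applies, or run the exhaustion-by-Schubert-varieties argument to completion. You flag this as ``the chief technical obstacle'' and gesture at \cite{BLS:98}, but \cite{BLS:98} proves uniformization and computes Picard groups; it does not furnish the homotopy long exact sequence you need, so the gesture does not close the gap.

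Third, the codimension claim ``$\Qcal_{\Spin(m)}\setminus V$ has ind-codimension $\geq 2$, since the non-regularly-stable locus in $M^{-}_{\Spin(m)}$ is of codimension $\geq 2$'' inverts the usual direction of a codimension transfer across a quotient map. It is true that, because $\Qcal\to\Mcal^-_{\Spin(m)}$ is a smooth surjection of stacks, codimension can be compared, but over the non-regularly-stable locus the stabilizers jump, and the assertion needs to be argued, not just stated as an implication. The paper avoids this entirely by quoting the codimension estimate directly for $\Acal^{reg}(P_K)\subset\Acal(P_K)$ from \cite[Sec.\ 10]{AtiyahBott:82}.

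In short: the skeleton of your argument (uniformization, simple connectivity of $\Qcal$, codimension two, homotopy sequence) is a coherent alternative plan, but each of the three load-bearing steps above is left as an assertion, and at least the connectivity of $L_C(\Spin(m))$ is a nontrivial input that you cannot simply invoke. The paper's gauge-theoretic route is precisely engineered to avoid all three difficulties, and it additionally produces the computation $\pi_1(M^{-,reg}_{\SC(m)})\simeq H^1(C,\ZBbb)$, after which the triviality for $\Spin(m)$ falls out of the fibration over $\Pic_1(C)$.
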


\begin{proof} The proof is essentially the same as in Atiyah-Bott \cite[Thm.\ 9.12]{AtiyahBott:82}.
Let $K\subset \SC(m)$ be a maximal compact subgroup.
Fix a topologically nontrivial smooth principal $\SC(m)$-bundle $P\to C$, and let $P_K$ be a  reduction to $K$.  Let $\Acal(P_K)$ be the space of connections $P_K$. Then $\Acal(P_K)$ can be identified with the space of holomorphic structures on $P$, i.e.\ holomorphic principal $\SC(m)$-bundles. 
Let $\Gcal(P)$ denote the group of $\SC(m)$ gauge transformations, and $\overline\Gcal(P)$ the quotient of $\Gcal(P)$ by the constant central gauge transformations (recall that  $Z(\SC(m))=\CBbb^\times$ for $m$ odd, and $Z(\SC(m))=\CBbb^\times\times \ZBbb/2$ for $m$ even).  By a standard argument, 
$
\pi_0(\Gcal(P))\simeq 
H^1(C, \pi_1(\SC(m)))
$.
Since $\pi_1(\SC(m))=\ZBbb$, 
we conclude that
$
\pi_0(\Gcal(P))\simeq H^1(C, \ZBbb)
$.
From the  fibration 
$Z(\SC(m))\to \Gcal(P)\to \overline\Gcal(P)$,
 we find $
\pi_0(\overline\Gcal(P))\simeq H^1(C, \ZBbb)
$, as well.
From \cite[Sec.\ 10]{AtiyahBott:82}, the complement
of the  stable points  $\Acal^{s}(P_K)\subset\Acal(P_K)$ has
 complex codimension at least $2$. 
As noted by Faltings (see the comment in the proof of part (ii) of Theorem II.6 in \cite{Faltings:93}), the same proof applies to show that the set of regularly stable points $\Acal^{reg}(P_K)\subset\Acal(P_K)$ has
 complex codimension at least $2$. 
 Since $\Acal(P_K)$ is smooth and  contractible, this implies in particular  that $\Acal^{reg}(P_K)$ is simply connected. Note also that  $\overline\Gcal(P)$ acts freely on $\Acal^{reg}(P_K)$ with quotient $M^{-, reg}_{\SC(m)}$.
 It follows that
\begin{equation}\label{eqn:pi1}
\pi_1\bigl(M^{-, reg}_{\SC(m)}\bigr) 
= \pi_1(\Acal^{reg}(P_K)/\overline \Gcal(P)) \simeq\pi_0(\overline \Gcal(P))
\simeq H^1(C, \ZBbb)\ .
\end{equation}
Now consider the fibration:
\begin{equation} 
\begin{split} \label{eqn:fixed-norm}
\Scale[.9]{
\xymatrix{
M^{-, reg}_{\Spin(m)} \ar[r] &M^{-, reg}_{\SC(m)} \ar[d]^{\Nm} \\
& \Pic_1(C) 
}}
\end{split}
\end{equation}
By the associated exact sequence of fundamental groups,
$$
1\lra \pi_1\bigl(M^{-, reg}_{\Spin(m)}\bigr)\lra \pi_1\bigl(M^{-, reg}_{\SC(m)}\bigr)\lra \pi_1(\Pic_1(C))\lra 1\ ,
$$
and \eqref{eqn:pi1}, 
we see immediately that $\pi_1\bigl(M^{-,reg}_{\Spin(m)}\bigr)=\{1\}$.
Now both $\widetilde Y$ and $ M^{-, reg}_{\Spin(m)}$ are smooth with complement of codimension $\geq 2$. Therefore,
 $\pi_1(\widetilde Y)\simeq \pi_1\bigl(M^{-,reg}_{\Spin(m)}\bigr)=\{1\}$.
\end{proof}

\begin{proposition}
Given $\kappa\in \Th(C)$ and $\alpha\in J_2(C)$,  then 
$\Pcal_{\kappa \otimes \alpha}\otimes \Pcal_{\kappa}^{\otimes(-1)}$ is isomorphic to a unique $L_{\chi}$, where $\chi \in \widehat{A}$. 

\end{proposition}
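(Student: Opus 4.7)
The plan is to apply Galois descent along the étale cover $p\colon \widetilde{Y}\to Y$ with group $A$ to the line bundle
\[
\Ncal := \Pcal_{\kappa\otimes\alpha}\otimes \Pcal_{\kappa}^{\otimes(-1)}\ .
\]
The first step is to observe that, by the very definition preceding the proposition, the line bundle $\Pcal=p^*\Pcal_\kappa$ on $\widetilde{Y}$ is independent of the choice of theta characteristic $\kappa$; in particular, there are canonical identifications $p^*\Pcal_\kappa = \Pcal = p^*\Pcal_{\kappa\otimes\alpha}$, so that $p^*\Ncal$ is canonically trivial on $\widetilde{Y}$. (To view $\Pcal_\kappa$ and $\Pcal_{\kappa\otimes\alpha}$ as honest line bundles on $Y$ rather than only on the stack $\Mcal^-_{\SO(m)}$, I would use that $m=2r+1$ is odd, so that a regularly stable $\SO(m)$-bundle has trivial stabilizer and the Pfaffian bundles on the stack descend to the regularly stable coarse moduli space $Y$.)

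The next step is the standard classification of line bundles on $Y$ that become trivial on $\widetilde{Y}$. Since $p$ is Galois étale with group $A$, any line bundle $L$ on $Y$ with $p^*L\cong \Ocal_{\widetilde Y}$ is recovered from the induced $A$-action on the sections of $p^*L$, which determines a character $\chi\in\widehat A$; concretely, $L$ is canonically isomorphic to the $\chi$-isotypic summand of $p_*\Ocal_{\widetilde Y}$, which is precisely $L_\chi$. Applied to $\Ncal$, this yields an isomorphism $\Ncal\cong L_\chi$ for some $\chi\in\widehat A$, and uniqueness of $\chi$ is then immediate from item $(4)$ of the previous proposition.

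The main subtlety I anticipate is packaging the descent cleanly. One must verify that the Pfaffian line bundles do descend from $\Mcal^-_{\SO(m)}$ to $Y$, and then read off the character $\chi$ canonically from the trivialization $p^*\Ncal\cong \Ocal_{\widetilde Y}$, i.e.\ check that the $A$-action coming from deck transformations on $p^*\Ncal$ agrees with the one built into the definition of $L_\chi$. Once this descent picture is in place, the rest of the argument is formal and the statement follows.
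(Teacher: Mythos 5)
Your proof is correct but takes a genuinely different route from the paper's. The paper invokes Prop.~5.2 of Beauville--Laszlo--Sorger to identify $\Pcal_{\kappa\otimes\alpha}\otimes\Pcal_\kappa^{\otimes(-1)}$ explicitly with $\lambda(W(\alpha))$, where $W$ is the Weil pairing on $J_2(C)$; this exhibits the bundle as torsion, and the paper then uses the computation $\pi_1(\widetilde Y)=\{1\}$ to conclude that every torsion line bundle on $Y$ is of the form $L_\chi$, with uniqueness from injectivity of $\lambda$. You instead observe directly that $p^*\bigl(\Pcal_{\kappa\otimes\alpha}\otimes\Pcal_\kappa^{\otimes(-1)}\bigr)$ is trivial on $\widetilde Y$, since every Pfaffian bundle pulls back to the same ample generator $\Pcal$, and then apply Galois descent along the \'etale $A$-cover $p\colon\widetilde Y\to Y$; uniqueness then follows from item~(4) of the preceding proposition. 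Your route is more elementary: it avoids both the Beauville--Laszlo--Sorger identification and the fundamental group computation, needing beyond the definitions only $H^0(\widetilde Y,\Ocal_{\widetilde Y})=\CBbb$ (established just before the proposition) so that the descent datum lands in $H^1(A,\CBbb^\times)=\widehat A$. What you give up is the explicit character $\chi=W(\alpha,\cdot)$, which the paper's argument records and which encodes useful information. One small precision: the paper asserts only that $p^*\Pcal_\kappa$ and $p^*\Pcal_{\kappa\otimes\alpha}$ are \emph{isomorphic}, not canonically identified, but since your argument uses only the isomorphism type this is harmless, and your remark on descent of the Pfaffian bundles to the coarse space $Y$ (using triviality of $Z(\SO(2r+1))$) is the right one.
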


\begin{proof}
By the proof of \cite[Prop.\ 5.2]{BLS:98}, there is an injective homomorphism $\lambda:\widehat{A}\rightarrow \Pic(\mathcal{M}_{\SO(m)}^{\delta})$, and $\mathcal{P}_{\kappa\otimes \alpha}\otimes \mathcal{P}_{\kappa}^{\otimes(-1)}$ equals $\lambda(W(\alpha))$,
where $W$ is the Weil pairing on $J_2(C)\otimes J_2(C) \rightarrow \mu_2=\{1,-1\}$.
Now if $\alpha \neq \alpha'$, we get $\lambda(W(\alpha))\neq \lambda(W(\alpha'))$. This proves the uniqueness. 
By Lemma \ref{lem:simplyconnected} we get that $\pi_1(Y)$ is isomorphic to $J_2$ and all torsion line bundles on $Y$ are of the form $L_{\chi}$ for some $\chi \in \widehat{A}$. We know that $\Pcal_{\kappa\otimes \alpha}\otimes \Pcal_{\kappa}^{\otimes(-1)}$ is torsion and hence $\Pcal_{\kappa\otimes \alpha}\otimes \Pcal_{\kappa}^{\otimes(-1)}$ is isomorphic to some $L_{\chi}$.

\end{proof}

Using the above, we have the following decomposition of $A$-modules:
\begin{equation} \label{eqn:decomposition}
H^0(\widetilde{Y},\Pcal)=\bigoplus_{\chi \in \widehat{A}} H^0(Y,\Pcal_{\kappa}\otimes L_{\chi})\ .
\end{equation}

In the next section we will prove the following.
\begin{proposition}\label{prop:belkale} Suppose $m$ is odd. Then, 
\begin{enumerate}
\item $\dim_\CBbb H^0(\widetilde{Y},\Pcal)=2^{g-1}(2^g-1)$;
\item each $ H^0(Y,\Pcal_{\kappa})$, $\kappa$ odd,  is $1$-dimensional and is spanned by the Pfaffian section $s_\kappa$;
\item the set $\{s_{\kappa}\mid \kappa\in\Th^-(C) \}$, is a basis for $H^0(\widetilde{Y},\Pcal)$.
\end{enumerate}
\end{proposition}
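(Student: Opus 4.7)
The plan is to combine the global dimension count for $H^0(\mathcal{M}^-_{\Spin(2r+1)},\Pcal)$, obtained from Theorem \ref{twistedconformal}, with the $\widehat{A}$-decomposition in \eqref{eqn:decomposition} refined by theta characteristics, and then bound each summand from below by a Pfaffian section.

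First I would pass from $\widetilde Y$ back to the stack. Since $\widetilde{Y}\subset M^{-,reg}_{\Spin(2r+1)}$ has complement of codimension $\geq 2$ and all spaces in sight are normal, pushforward gives $H^0(\widetilde{Y},\Pcal)\simeq H^0(M^{-,reg}_{\Spin(2r+1)},\Pcal)$, and via \eqref{eqn:stack-moduli} this coincides with $H^0(\mathcal{M}^{-}_{\Spin(2r+1)},\Pcal)$. By Theorem \ref{twistedconformal}, the latter is the space of conformal blocks $\mathcal{V}^{*}_{\omega_1}(\mathfrak{X},\sofrak(2r+1),1)$, whose dimension by the Verlinde formula (the $\ell=1$ case of Theorem \ref{oxbury}) equals $|\Th^-(C)|=2^{g-1}(2^g-1)$. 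This proves (1).

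Next I would rewrite the decomposition \eqref{eqn:decomposition} in terms of theta characteristics. By the previous proposition, for each $\alpha\in J_2(C)$ there is a unique $\chi_\alpha\in\widehat{A}$ with $\Pcal_{\kappa\otimes\alpha}\otimes\Pcal_\kappa^{\otimes(-1)}\simeq L_{\chi_\alpha}$, and the correspondence $\alpha\mapsto\chi_\alpha$ is bijective (via the Weil pairing). Tensoring with $\Pcal_\kappa$ and substituting into \eqref{eqn:decomposition} yields
\[
H^0(\widetilde{Y},\Pcal)\;\simeq\;\bigoplus_{\kappa'\in\Th(C)} H^0\bigl(Y,\Pcal_{\kappa'}\bigr)\ .
\]
For each $\kappa'\in\Th^-(C)$, Proposition \ref{prop:pfaffian-divisor} (with $m=2r+1$ odd and $\kappa'$ odd) provides a nonzero Pfaffian section $s_{\kappa'}\in H^0(\mathcal{M}^{-}_{\SO(2r+1)},\Pcal_{\kappa'})$, whose restriction to the dense open $Y$ remains nonzero. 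Hence $\dim_\CBbb H^0(Y,\Pcal_{\kappa'})\geq 1$ for every odd $\kappa'$.

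The final step is a dimension squeeze. Summing over $\Th(C)$,
\[
2^{g-1}(2^g-1)\;=\;\dim_\CBbb H^0(\widetilde Y,\Pcal)\;=\;\sum_{\kappa'\in\Th(C)}\dim_\CBbb H^0(Y,\Pcal_{\kappa'})\;\geq\; |\Th^-(C)|\;=\;2^{g-1}(2^g-1)\ ,
\]
so all inequalities are equalities. This forces $\dim_\CBbb H^0(Y,\Pcal_{\kappa'})=1$ for every odd $\kappa'$ (with $s_{\kappa'}$ a basis) and $\dim_\CBbb H^0(Y,\Pcal_{\kappa'})=0$ for every even $\kappa'$, proving (2). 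Statement (3) then follows immediately, since each $s_\kappa$ with $\kappa\in\Th^-(C)$ spans the corresponding summand $H^0(Y,\Pcal_\kappa)$ of the direct sum above.

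The main obstacle is Step 1: having a Verlinde-type dimension formula available for the twisted component $\mathcal{M}^-_{\Spin(2r+1)}$. This is exactly what the conformal-blocks identification in Theorem \ref{twistedconformal}, combined with the explicit calculation from \cite{Mukhopadhyay:12} giving Theorem \ref{oxbury}, accomplishes. Once the total dimension $2^{g-1}(2^g-1)$ is in hand, the argument reduces to a clean counting matched against the number of odd theta characteristics.
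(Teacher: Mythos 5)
Your proposal follows the paper's argument in its essential structure: pass to the stack via the codimension-$\geq 2$ / normality argument, identify $H^0(\Mcal^-_{\Spin(2r+1)},\Pcal)$ with the level-one conformal block $\mathcal{V}^*_{\omega_1}(\Xfrak,\sofrak(2r+1),1)$ by Theorem \ref{identification}, establish $\dim = 2^{g-1}(2^g-1)$, decompose $H^0(\widetilde Y,\Pcal)$ into character summands indexed by $\Th(C)$ via \eqref{eqn:decomposition}, and squeeze against the lower bound supplied by the Pfaffian sections of Proposition \ref{prop:pfaffian-divisor}. The squeeze step is exactly what the paper leaves implicit, and you spell it out correctly.

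One small imprecision worth flagging: you attribute the dimension count $\dim\mathcal{V}^*_{\omega_1}(\Xfrak,\sofrak(2r+1),1)=2^{g-1}(2^g-1)$ to ``the $\ell=1$ case of Theorem \ref{oxbury}.'' But Theorem \ref{oxbury} computes the \emph{combined} dimension $\dim H^0(\Mcal_{2r+1},\Pcal^{\otimes\ell})$ over both components, which for $\ell=1$ equals $2^{2g}$, not $2^{g-1}(2^g-1)$. To extract the $\Mcal^-$ dimension from Theorem \ref{oxbury} you would also need the known $+$-component count $2^{g-1}(2^g+1)$ and subtract. The paper instead obtains $N_g(\omega_1,1)=2^{g-1}(2^g-1)$ directly from the factorization recursion $N_g(\omega_1,1)=2N_{g-1}(\omega_1,1)+N_{g-1}(\omega_r,\omega_r,1)$ together with Proposition \ref{prop:level-one-dimension}, which is both self-contained and cleaner at this point in the logical development (Theorem \ref{oxbury} appears only afterwards). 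This is a citation issue rather than a mathematical gap, but it is worth being precise about which Verlinde computation you are invoking.
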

\noindent This result should be compared with \cite[Props.\ 2.3 and 2.4]{Belkale:12} in  the even case.

%

\section{Uniformization}

\subsection{Conformal blocks via uniformization}\label{sec:confuniform}
The main result in this section is the identification of generalized theta functions on $\mathcal{M}_{\Spin(m)}$ at any level with the space of conformal blocks. Let ${V}$ be a countable dimensional complex vector space. We define the tautological line bundle ${L}_{{V}}$ on $\mathbb{P}(V)$ as the Zariski closed subset: $L_V:=\{(x,v)\in \mathbb{P}(V)\times V \mid v\in x\}$. The space $\mathbb{P}(V)\times V$ has a natural ind-scheme structure. 
We induce a ind-scheme structure on $L_{V}$ such that $L_V\rightarrow \mathbb{P}(V)\times V$ is a closed immersion. Hence by projecting onto the first factor gives an algebraic line bundle $L_V$ on $\mathbb{P}(V)$. We denote its dual by $L^*_V$.

For a simply connected complex Lie group $G$, we denote by $\mathcal{Q}_G$ the affine Grassmannian associated to $G$ with Lie algebra $\mathfrak{g}$. The affine Grassmannian is a ind-projective scheme \cite{LaszloSorger:97, kumarbook, Mathieu}. Let $\omega_0$ be the affine fundamental weight of affine Kac-Moody algebra associated to $G$ and for any positive integer $\ell$ consider $\mathcal{H}_{\omega_0}(\mathfrak{g},\ell)$ be the corresponding irreducible, integrable representation with highest weight $\omega_0$ at level $\ell$. Let $v_0(\ell)$ be a highest weight vector of $\mathcal{H}_{\omega_0}(\mathfrak{g},\ell)$. The natural map $i: \mathcal{Q}_{G}\rightarrow \mathbb{P}(\mathcal{H}_{\omega_0}(\mathfrak{g},\ell)); \ g[L^+G]\rightarrow gv_0(\ell)$ is an embedding of ind-schemes (see Appendix C in \cite{KumVBACnew} or \cite{kumarbook}).  For any $\ell >0$, we define $\mathcal{L}(\ell{\chi})$ to be the pull back   $L^*_{\mathcal{H}_{\omega_0}(\mathfrak{g},\ell)}$ under the map $i$. We can extend the definition to non-negative integers by taking the dual bundles. By \cite[Sec.\ 2.7]{sl}, it is easy to see that $\mathcal{L}_{\ell \chi}=\mathcal{L}_{\chi}^{\otimes \ell}$. The line bundle $\mathcal{L}_{\chi}$ can also be realized in a {\em line bundle associated to a character} as follows (see \cite[Lemma 4.1]{LaszloSorger:97}):
Consider the representation $\mathcal{H}_{\omega_0}(\mathfrak{g},1)$ of the affine Lie algebra $\widehat{\frg}$. This representation can be (\cite{Faltings:94}) integrated to a projective representation of the loop group $\operatorname{L}G$. We pullback the exact sequence to $\operatorname{L}G$
$$0\longrightarrow \mathbb{G}_m\longrightarrow  GL(\mathcal{H}_{\omega_0}(\mathfrak{g},1))\longrightarrow PGL(\mathcal{H}_{\omega_0}(\mathfrak{g},1))\longrightarrow 1,$$ defines a central extension which splits (\cite[Lemma 4.9]{LaszloSorger:97}) over $\operatorname{L}^{+}G$:  
$$0\longrightarrow \mathbb{G}_m \longrightarrow \widehat{\operatorname{L}G }\stackrel{p}{\longrightarrow} \operatorname{L}G\longrightarrow 1.$$ Let $\widehat{\operatorname{L}^{+}G}$ be the inverse image of $\operatorname{L}^{+}G$ via the map $p$, then $\widehat{\operatorname{L}^{+}G}\simeq \operatorname{L}G\times \mathbb{G}_m$. The  projection onto the second factor defines a character $\chi_{0}$ of  $\widehat{\operatorname{L}^{+}G}$. Then $\mathcal{L}_{\chi}$ can be described (\cite[Lemma 4.11]{LaszloSorger:97}) as the line bundle on the $\mathcal{Q}_{G}= \widehat{\operatorname{L}G}/ \widehat{\operatorname{L}^{+}G}$ associated to the  character $\chi_{0}^{-1}$. We will henceforth refer to $\mathcal{L}_{\chi}$ as the line bundle associate to the character $\chi$ of the affine fundamental weight $\omega_0$. 

We have the following proposition.

\begin{proposition}\label{pullback}Let $\pi:\Qcal_{\Spin(m)}\rightarrow \mathcal{M}_{\Spin(m)}^{-}$ be the projection from Proposition \ref{uniformlyBLS}, and $\chi$ be the character corresponding to the affine fundamental weight $\omega_0$. Then we have:
$\pi^{*}\mathcal{P}=\mathcal{L}_{\chi}$.
\end{proposition}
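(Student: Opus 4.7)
The plan is to adapt the strategy of Laszlo--Sorger \cite{LaszloSorger:97} (used to identify generalized theta functions with basic representation line bundles in the untwisted case) to the twisted component $\Mcal^-_{\Spin(m)}$. I would proceed in three steps.

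First, since $\Spin(m)$ is simple and simply connected, $\Pic(\Qcal_{\Spin(m)}) \cong \ZBbb$, generated by $\Lcal_\chi$. Hence $\pi^{\ast}\Pcal \cong \Lcal_\chi^{\otimes d}$ for some $d\in \ZBbb$, and ampleness of $\Pcal$ on $\Mcal^-_{\Spin(m)}$ (Proposition \ref{uniformlyBLS} and the remarks thereafter) forces $d\geq 1$.

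Second, to pin down $d$, I would use the Pfaffian characterization together with the standard Dynkin index calculation. By Proposition \ref{LSpfaff}, $\Pcal^{\otimes 2}$ is (canonically) isomorphic to the inverse determinant of cohomology $\Dcal_\Ecal = [\Det R\pi_\ast(\Ecal\otimes\pr^\ast\kappa)]^\ast$ of the universal orthogonal bundle twisted by a theta characteristic $\kappa$. Under the uniformization map, the pullback of the determinant-of-cohomology line bundle associated to a representation $\rho$ of $\Spin(m)$ is $\Lcal_\chi^{\otimes d_\rho}$, where $d_\rho$ is the Dynkin index of $\rho$ (cf.\ \cite{LaszloSorger:97, KumarNarasimhan:97}). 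The standard vector representation of $\Spin(m)$ has Dynkin index $2$, so $\pi^{\ast}\Pcal^{\otimes 2}\cong \Lcal_\chi^{\otimes 2}$, and the torsion-freeness of $\Pic(\Qcal_{\Spin(m)})$ (Proposition \ref{uniformlyBLS}) then forces $d=1$.

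Third, I would verify that the twisted uniformization does not affect the identification. The only difference from the untwisted case is that $L_C(\Spin(m))$ acts on $\Qcal_{\Spin(m)}$ via the $\zeta$-conjugated action. But $\Lcal_\chi$ is an intrinsic line bundle on $\Qcal_{\Spin(m)}$, and conjugation by $\zeta$ merely modifies its equivariant structure without changing its isomorphism class in $\Pic(\Qcal_{\Spin(m)})$; similarly, the Pfaffian square root is compatible with base change by Proposition \ref{LSpfaff}, so the computation above applies verbatim after choosing a lift of a family of twisted spin bundles to $\Qcal_{\Spin(m)}$. The main obstacle is a minor one: one must check that the universal orthogonal bundle on $\Mcal^-_{\Spin(m)}\times C$ does pull back to the bundle associated to the basic representation on $\Qcal_{\Spin(m)}\times C$, which is a direct consequence of how the uniformization is set up in \cite{BLS:98}. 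Once this is in place, the Dynkin-index computation completes the argument.
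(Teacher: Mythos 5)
Your proposal is correct and follows essentially the same route as the paper's proof: both pass to the determinant of cohomology via $\Pcal^{\otimes 2}$, use the Dynkin index $2$ of the standard representation $\mathfrak{so}(m)\hookrightarrow\mathfrak{sl}(m)$ (via the KNR pull-back formula for determinant line bundles on the affine Grassmannian) to conclude $\pi^\ast\Pcal^{\otimes 2}\simeq\Lcal_{2\chi}$, and then invoke torsion-freeness to halve. Your explicit remark in the third step that the $\zeta$-conjugation only changes the equivariant structure and not the isomorphism class of $\Lcal_\chi$ on $\Qcal_{\Spin(m)}$ is a point the paper leaves implicit, but it is the same argument.
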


\begin{proof}
Consider the map $\Spin(m)\rightarrow\SL(m)$ that comes from the standard embedding. This induces a map between the affine Grassmannians $\mathcal{Q}_{\Spin(m)}\rightarrow \mathcal{Q}_{\SL(m)}$. Let $\mathcal{L}_{\chi}^0$ denote the pull-back of the determinant of cohomology line bundle on $\mathcal{M}_{\SL(m)}$ to the affine Grassmannian $\mathcal{Q}_{\SL(m)}$. By a result in \cite{KNR:94}, we know that the pull-back of $\mathcal{L}^0_{\chi}$  is $\mathcal{L}_{2\chi}$, where $\chi$ is the character and $2$ is the Dynkin index of the embedding $\mathfrak{so}(m)\rightarrow \mathfrak{sl}(m)$. Now the pull-back of the determinant of cohomology   to $\mathcal{Q}_{\SL(m)}$ is $\mathcal{L}^0_{\chi}$.  Since the Picard group of $\mathcal{M}^{-}_{\Spin(m)}$ is torsion-free, we see that $\mathcal{P}$ pulls back to $\mathcal{L}_{\chi}$ on $\Qcal_{\Spin(m)}$. 
\end{proof}

Let $V$ be a vector space of dimension $2m$ (resp.\  $2m+1$) endowed with a symmetric nondegenerate bilinear form $(\, , )$. Let $e_1,\ldots ,e_{2m}$ (resp.\  $e_{2m+1}$) be a basis of $V$ such that $(e_{i},e_{2m+1-j})=\delta_{ij}$ (resp.\  $(e_i,e_{2m+2-j})=\delta_{ij}$. The elements $H_i=E_{i,i}-E_{2m-i,2m-i}$ (resp.\  $H_i=E_{i,i}-E_{2m+1-i,2m+1-i}$) span a basis of the Cartan subalgebra of $\mathfrak{so}(2m)$ (resp.\  $\mathfrak{so}(2m+1)$. The normalized Cartan-Killing form is given by $(A,B)=\frac{1}{2}\operatorname{Tr}(AB)$. Let $L_i$ be the dual of $H_i$  where $L_i(H_j)=\delta_{ij}$ and $\omega_i=\sum_{a=1}^iL_a$ for $1\leq i \leq m-1$ be the first $m-1$ fundamental weights of both $\mathfrak{so}(2m)$ and $\mathfrak{so}(2m+1)$. 

For $\zeta \in L\SO(m)$, following \cite{Faltings:94}, we define an automorphism $\operatorname{Ad}(\zeta)$ of $\widehat{\mathfrak{so}}(m)$ by the following formula. Let $A(z)$ be an element of $\widehat{\mathfrak{so}}(m)$.
\begin{equation}\label{adjointformula}
\operatorname{Ad}(\zeta)(A(z), s):=  (\operatorname{Ad}(\zeta)A(z), s + \operatorname{Res}_{z=0}\tfrac{1}{2}\operatorname{Tr}(\zeta^{-1}(d\zeta/dt)A(z))\ .
\end{equation}

Let
\begin{equation}\label{zeta}
\zeta=\Scale[0.9]{\left(\begin{matrix}
z&&&&\\
&1&&&\\
&&\ddots &&\\
&&&1&\\
  &&&&z^{-1}
\end{matrix}\right)}\ ,
\end{equation}
regarded as an element of $L\SO(m)$. 

\begin{lemma}\label{adjoint}
Let $\pi:\widehat{\mathfrak{so}}(m) \rightarrow \operatorname{End}(\mathcal{H}_{\omega_0}(\sofrak(m),\ell))$ be an integrable representation of $\widehat{\mathfrak{so}}(m)$ and $\operatorname{Ad}(\zeta):\widehat{\mathfrak{so}}(m)\rightarrow \widehat{\mathfrak{so}}(m)$ is the  automorphism defined by formula \ref{adjointformula}, for $\zeta$ as above. Then the representation 
$\tilde{\pi}:\widehat{\mathfrak{so}}(m)\rightarrow \operatorname{End}(\mathcal{H}_{\omega_0}(\sofrak(m),\ell))$ defined by $\pi\circ \operatorname{Ad}(\zeta)$ is isomorphic to $\mathcal{H}_{\ell \omega_1}(\sofrak(m),\ell)$. 
\end{lemma}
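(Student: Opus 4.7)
My plan is to exhibit the vacuum vector $v_{0}\in \mathcal{H}_{0}(\sofrak(m),\ell)$ as a highest weight vector for the twisted representation $\tilde\pi = \pi\circ\operatorname{Ad}(\zeta)$ of weight $\ell\omega_{1}$ at level $\ell$, and then invoke the uniqueness of irreducible integrable highest weight modules of $\widehat{\sofrak}(m)$ to conclude.

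First I would compute $\operatorname{Ad}(\zeta)$ on the natural generators of $\widehat{\sofrak}(m)$. Viewing $\zeta$ as the cocharacter $z^{H_{1}}$, pointwise conjugation multiplies a root vector $E_{\alpha}$ by $z^{\alpha(H_{1})}=z^{c_{1}(\alpha)}$ and fixes the Cartan. Since $\zeta^{-1}d\zeta/dz=H_{1}/z$, formula \eqref{adjointformula} specializes to
\begin{align*}
\operatorname{Ad}(\zeta)(E_{\alpha}(n)) &= E_{\alpha}(n+c_{1}(\alpha))\ ,\\
\operatorname{Ad}(\zeta)(H_{i}(n)) &= H_{i}(n) + \delta_{n,0}(H_{i},H_{1})\cdot c\ ,
\end{align*}
where $c_{1}(\alpha)$ denotes the coefficient of $L_{1}$ in $\alpha$; the central correction for root vectors vanishes because $\operatorname{Tr}(H_{1}E_{\alpha})=0$.

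Next I would check that $v_{0}$ is a highest weight vector for $\tilde\pi$. The affine positive part $\mathfrak{n}_{+}^{\operatorname{aff}}\subset \widehat{\sofrak}(m)$ is spanned by $E_{\alpha}(n)$ for $\alpha\in\Delta_{+}$ and $n\geq 0$, by $E_{-\alpha}(n)$ for $\alpha\in\Delta_{+}$ and $n\geq 1$, and by $H_{i}(n)$ for $n\geq 1$. The key arithmetic input is that every root of $\sofrak(m)$ has $L_{1}$-coefficient in $\{-1,0,1\}$ with $c_{1}(\alpha)\geq 0$ on $\Delta_{+}$, so $\operatorname{Ad}(\zeta)$ sends each such generator into the annihilator of $v_{0}$ under $\pi$ (using also that $V_{0}=\mathbb{C}$ is trivial, so $\pi(X(0))v_{0}=0$ for all $X\in\sofrak(m)$). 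For the Cartan, the displayed formulas give $\tilde\pi(H_{1}(0))v_{0}=\pi(H_{1}(0)+c)v_{0}=\ell v_{0}$ and $\tilde\pi(H_{i}(0))v_{0}=0$ for $i\neq 1$, so $v_{0}$ has weight $\ell L_{1}=\ell\omega_{1}$; this weight lies in $P_{\ell}(\sofrak(m))$ since $(\ell\omega_{1},\theta)=\ell$ for $\theta=L_{1}+L_{2}$.

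Finally, since $\operatorname{Ad}(\zeta)$ is a Lie algebra automorphism of $\widehat{\sofrak}(m)$, the representation $\tilde\pi$ is irreducible and integrable (it is carried on the same underlying vector space as $\pi$, with the action precomposed by an automorphism, so its invariant subspaces and integrability properties coincide with those of $\pi$). By the uniqueness of irreducible integrable highest weight $\widehat{\sofrak}(m)$-modules at a given level and highest weight, one concludes $\tilde\pi\simeq \mathcal{H}_{\ell\omega_{1}}(\sofrak(m),\ell)$. The only real subtlety is bookkeeping the central extension term in \eqref{adjointformula}; it is precisely this term, evaluated on $H_{1}(0)$, that shifts the highest weight from $0$ to $\ell\omega_{1}$.
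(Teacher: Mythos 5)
Your proof is correct and follows essentially the same route as the paper: show that $\operatorname{Ad}(\zeta)$ preserves the affine positive part so that the vacuum vector $v_0$ remains a highest weight vector for $\tilde\pi$, compute that its new weight is $\ell\omega_1$ via the central correction in \eqref{adjointformula}, and invoke irreducibility and uniqueness of highest weight modules. One small remark: your formula $\operatorname{Ad}(\zeta)(E_\alpha(n)) = E_\alpha(n + \alpha(H_1))$ is the more precise statement of the shift; the paper writes $\omega_1(H_\alpha)$, which equals $\alpha(H_1)$ for long roots but differs for the short roots of $\sofrak(2r+1)$ (giving $2$ rather than $1$) — fortunately this discrepancy does not affect the argument since both quantities are nonnegative on positive roots, which is all that is needed to conclude that the affine positive part lands in the annihilator of $v_0$.
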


\begin{proof}Since $\mathcal{H}_{\omega_0}(\sofrak(m),\ell)$ is irreducible under the representation $\pi$, this implies that the representation $\tilde{\pi}$ is also irreducible. Let $A(z)=\sum A_i\otimes z^i$, then by a direct computation we can check that 
\begin{equation}\label{obvious}
\operatorname{Res}_{z=0}\tfrac{1}{2}\operatorname{Tr}(\zeta^{-1}(d\zeta/dt)A(z))=\omega_1(H_0)\ ,
\end{equation}
 where $H_0$ is the diagonal part of $A_0$. From a direct calculation, we can check that if $X_{\alpha}$ is a generator of the root space of $\alpha$, then 
$\operatorname{Ad}(\zeta)X_{\alpha}(n)=X_{\alpha}(n+ \omega_1(H_{\alpha}))$, where $H_{\alpha}$ is the coroot of $\alpha$. In particular, this shows that positive nilpotent part $\widehat{\mathfrak{n}}_{+}$ of $\widehat{\mathfrak{so}}(m)$ is preserved under the automorphism $\operatorname{Ad}(\zeta)$. This implies that if $v_0\in \mathcal{H}_{\omega_0}(\sofrak(m),\ell)$ is the highest weight vector for the representation $\pi$, then $v_0$ is also the highest weight vector for the representation $\tilde{\pi}$. Thus, it remains to determine the weight of the vector $v_0$ under the representation $\tilde{\pi}$. By \eqref{obvious}, we get for $H$ in the Cartan subalgebra of $\mathfrak{so}(m)$,
$\tilde{\pi}(H,s)v_0=\ell(s+ \omega_1(H))v_0$.
This completes the proof.
\end{proof}

\begin{theorem}\label{identification}
There is a canonical isomorphism:
$H^0(\mathcal{M}_{\Spin(m)}^{-},\mathcal{P}^{\otimes\ell})\simeq\mathcal{V}^{*}_{\ell\omega_1}(\mathfrak{X},\mathfrak{so}(m),\ell)$.
\end{theorem}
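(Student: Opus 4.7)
The strategy is the standard uniformization approach of Beauville--Laszlo--Sorger \cite{BLS:98} and Faltings, adapted to the twisted component. First I would apply Proposition \ref{uniformlyBLS}, which realizes $\mathcal{M}^{-}_{\Spin(m)}$ as the stack quotient $[\zeta^{-1} L_C\Spin(m)\zeta\backslash\mathcal{Q}_{\Spin(m)}]$ for $\zeta$ as in \eqref{zeta}. By faithfully flat descent, sections of $\mathcal{P}^{\otimes\ell}$ on $\mathcal{M}^{-}_{\Spin(m)}$ are identified with those sections of $\pi^*\mathcal{P}^{\otimes\ell}$ on $\mathcal{Q}_{\Spin(m)}$ that are equivariant under the conjugated loop group. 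Proposition \ref{pullback} identifies $\pi^*\mathcal{P}^{\otimes\ell}$ with $\mathcal{L}_{\ell\chi}$, so the problem reduces to computing a space of equivariant sections of a homogeneous line bundle on the affine Grassmannian.

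Next I would invoke the Borel--Weil theorem of Kumar and Mathieu for affine Kac--Moody groups, which provides a canonical isomorphism
\[
H^0(\mathcal{Q}_{\Spin(m)}, \mathcal{L}_{\ell\chi})\;\simeq\;\mathcal{H}_0(\sofrak(m),\ell)^*
\]
as representations of the central extension of $L\Spin(m)$. Under this identification, invariance under the Lie algebra of $\zeta^{-1}L_C\Spin(m)\zeta$ corresponds to vanishing on $\operatorname{Ad}(\zeta^{-1})(\sofrak(m)(\mathfrak{X}))\cdot\mathcal{H}_0(\sofrak(m),\ell)$, where the action is taken with respect to the cocycle correction coming from \eqref{adjointformula}. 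The group-to-algebra passage is the usual one: since $L_C\Spin(m)$ is ind-connected on each component and the Picard group of $\mathcal{Q}_{\Spin(m)}$ is torsion-free, equivariance for sections of $\mathcal{L}_{\ell\chi}$ is detected at the level of the Lie algebra.

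The next step is to apply Lemma \ref{adjoint}: the twisted action $\pi\circ\operatorname{Ad}(\zeta)$ on $\mathcal{H}_0(\sofrak(m),\ell)$ is an integrable highest weight representation of the affine Lie algebra at level $\ell$, whose highest weight is $\ell\omega_1$, and so it is canonically isomorphic to $\mathcal{H}_{\ell\omega_1}(\sofrak(m),\ell)$. Under this intertwiner the image of $\operatorname{Ad}(\zeta^{-1})(\sofrak(m)(\mathfrak{X}))$ acting on $\mathcal{H}_0(\sofrak(m),\ell)$ becomes precisely the action of the current algebra $\sofrak(m)(\mathfrak{X})$ on $\mathcal{H}_{\ell\omega_1}(\sofrak(m),\ell)$. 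Combining these identifications yields
\[
H^0(\mathcal{M}^{-}_{\Spin(m)},\mathcal{P}^{\otimes\ell})\;\simeq\;\bigl[\mathcal{H}_{\ell\omega_1}(\sofrak(m),\ell)\big/\sofrak(m)(\mathfrak{X})\cdot\mathcal{H}_{\ell\omega_1}(\sofrak(m),\ell)\bigr]^{*},
\]
which is $\mathcal{V}^{*}_{\ell\omega_1}(\mathfrak{X},\sofrak(m),\ell)$ by Definition \ref{definitionconformalblocks}.

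The main obstacle I would expect is the bookkeeping in the middle step: one must verify that the cocycle correction in \eqref{adjointformula} which appears in the definition of $\operatorname{Ad}(\zeta)$ matches the central charge action on $\mathcal{L}_{\ell\chi}$ coming from the universal central extension, so that the residue term does not contribute spuriously when the gauge-invariance condition is translated across the Kumar--Mathieu isomorphism. Once this cocycle matching is done, together with the verification that the twisted uniformization is well-behaved enough for descent to work for line bundles on the stack, the rest is formal and follows Faltings' argument in the untwisted case. Naturality of all constructions in the pointed curve $\mathfrak{X}$ gives the isomorphism canonically.
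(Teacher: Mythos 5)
Your proof proposal follows exactly the paper's argument: twisted uniformization via Proposition \ref{uniformlyBLS}, identification of $\pi^*\Pcal^{\otimes\ell}$ with $\Lcal_{\ell\chi}$ via Proposition \ref{pullback}, the Kac--Moody Borel--Weil theorem to pass to dual vectors in $\Hcal_0(\sofrak(m),\ell)$, and Lemma \ref{adjoint} to convert the $\zeta$-conjugated invariance condition into the $\ell\omega_1$-module. The only cosmetic difference is that you cite Kumar--Mathieu where the paper cites \cite[Prop.~7.4]{BeauvilleLaszlo:94} and \cite{KNR:94}, but these supply the same identification, so the proofs are essentially the same.
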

\begin{proof}
The essential idea of the proof is the same as in \cite[Thm.\ 9.1]{BeauvilleLaszlo:94}. Let $\delta$ be the generator of $\pi_1(\SO(m))$. It is easy to check that the element $\zeta$ defined in \eqref{zeta} lies in the component $L\SO(m)^{\delta}$. By Proposition \ref{uniformlyBLS}, we get 
$$\mathcal{M}^{-}_{\Spin(m)}=(\zeta^{-1}L_C(\Spin(m))\zeta)\backslash \mathcal{Q}_{\Spin(m)}\ .$$ By Proposition \ref{pullback}, the line bundle $\mathcal{L}_{\chi}$ has a $\zeta^{-1}L_C(\Spin(m))\zeta$ linearization. In particular, the map $A(\alpha)\rightarrow \zeta^{-1}A(\alpha)\zeta$
 extends to the Kac-Moody group $\widehat{L\Spin}(m)$. Now it is easy to check that this map is, on the level of the Lie algebra, given by the following:
$\operatorname{Ad}(\zeta^{-1})\circ \iota$, where $\iota$ is the canonical embedding of $\mathfrak{so}(m)$ into $\widehat{\mathfrak{so}}(m)$. 
By \cite[Prop.\ 7.4]{BeauvilleLaszlo:94} and \cite{KNR:94}, the space of global sections $H^0(\mathcal{M}_{\Spin(m)}^{-},\mathcal{P}^{\otimes\ell})$ is canonically isomorphic to the space of linear forms on $\mathcal{H}_{\omega_0}(\sofrak(m),\ell)$ that vanish on the image $\zeta^{-1}{L}_C(\Spin(m))\zeta$. By Lemma \ref{adjoint}, this is same as the ${L}_C(\Spin(m))$-invariant sections $\mathcal{H}_{\ell\omega_1}(\sofrak(m),\ell)$. This, by definition, is the space of conformal blocks $\mathcal{V}^{*}_{\ell\omega_1}(\mathfrak{X},\mathfrak{so}(m),\ell)$.
\end{proof}

The following proposition compares global section of the line bundles on the moduli stack to the stack of the corresponding regularly stable moduli space. 
\begin{proposition} \label{prop:stackisspace}Let $g\geq 2$, then there is a canonical isomorphism between $$H^0(M^{-,reg}_{\operatorname{Spin}(m)},\mathcal{P}^{\otimes \ell})\simeq \mathcal{V}_{\ell\omega_1}^{\ast}(\mathfrak{X},\mathfrak{so}(m),\ell)$$
\end{proposition}
\begin{proof}Applying Theorem \ref{identification}, we are reduced to show that there are canonical identifications  $H^0(M^{-,reg}_{\operatorname{Spin}(m)},\mathcal{P}^{\otimes \ell})\simeq H^0(\mathcal{M}^{-}_{\operatorname{Spin}(m)},\mathcal{P}^{\otimes \ell})$. The codimension of the complement of the stack $\mathcal{M}^{-,reg}_{\operatorname{Spin}(m)}$ in $\mathcal{M}^{-}_{\operatorname{Spin}(m)}$ is at least $2$ (\cite[Theorem II.6]{Faltings:93}). Hence by Hartog's theorem $H^0(\mathcal{M}^{-}_{\operatorname{Spin}(m)},\mathcal{P}^{\otimes \ell})\simeq H^0(\mathcal{M}^{-,reg}_{\operatorname{Spin}(m)},\mathcal{P}^{\otimes \ell})$. Now we observe that $\mathcal{M}^{-,reg}_{\operatorname{Spin}(m)}$ (respectively ${M}^{-,reg}_{\operatorname{Spin}(m)}$) has a presentation (\cite{BLS:98}) of the form as a quotient stack $[R/\Gamma]$( respectively GIT quotient $R/\Gamma$), where $R$ is a smooth scheme and $\Gamma$ is a reductive group.  Applying Proposition 4.1 in \cite{KNR:94}, we get that $H^0(\mathcal{M}^{-,reg}_{\operatorname{Spin}(m)},\mathcal{P}^{\otimes \ell})$ can be identified with $\Gamma$-variant section of $\mathcal{P}^{\otimes \ell}$ on the scheme $R$. The later is canonically identified with $H^0(\mathcal{M}^{-,reg}_{\operatorname{Spin}(m)},\mathcal{P}^{\otimes \ell})$. This completes the proof.
	\end{proof}

For a genus $g$ curve with marked points $\Xfrak$, 
let us denote:
 $N_g(\frg,\vec{\lambda},\ell):=  \dim_\CBbb \mathcal{V}_{\vec{\lambda}}(\mathfrak{X}, \frg,\ell)$.
 We sometimes omit the notation of the Lie algebra when  it is clear. In the following,  $\frg=\mathfrak{so}(2r+1)$, and we want to compute $N_g(\omega_1,1)$ and $N_g(\vec{\omega}_r,1)$, where $\vec{\omega}_r$ is an $n$-tuple of $\omega_r$'s for $n$  even. Let $\sigma$ denote  the diagram automorphism $\omega_0\leftrightarrow \omega_1$. 

\begin{lemma}\label{lem:diagram}
Let $\sigma$ denote the Dynkin diagram automorphism that switches the $0$-th node with the first node of the affine Dynkin diagram. Then 
$N_g(\vec{\sigma}\vec{\lambda},\ell)=N_g(\vec{\lambda},\ell)$, where 
$\vec{\sigma}\vec{\lambda}=(\sigma_1\lambda_1,\ldots,\sigma_n\lambda_n)$, $\sigma_i=$  either $\sigma$ or $1$,  and $\sigma_1\cdots \sigma_n=1$.

\begin{proof}
The proof of the above follows from  factorization (cf.\ Section \ref{factorizationlemma}), the genus $0$ \cite{FS} result, and the fact that $\sigma$ induces a permutation of $P_{\ell}(\sofrak(2r+1))$.
\end{proof}
\end{lemma}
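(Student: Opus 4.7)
The plan is to induct on the genus $g$, taking the cited genus zero result \cite{FS} as the base case and using the factorization rule for conformal blocks (recalled in Section \ref{factorizationlemma}) to pass from genus $g-1$ to $g$.

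For the base case $g=0$, the equality $N_0(\vec\sigma\vec\lambda,\ell)=N_0(\vec\lambda,\ell)$ under the parity constraint $\sigma_1\cdots\sigma_n=\id$ is exactly what \cite{FS} provides. Conceptually, $\sigma$ is the order-two simple current of $\widehat{\sofrak}(2r+1)$ at level $\ell$, and the simple-current selection rule allows one to transfer a $\sigma$-twist between any two weights in a three-point conformal block without altering its dimension. Iterated fusion extends this to $n$-point blocks, so pairs of $\sigma$-twists can be brought together and annihilated using $\sigma^2=\id$, which is precisely why the parity condition enters.

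For the inductive step, assume the conclusion is known in genus $g-1$ with any number of marked points. Degenerating $C$ along a non-separating loop and applying factorization gives
$$N_g(\vec\sigma\vec\lambda,\ell) \;=\; \sum_{\mu\in P_\ell(\sofrak(2r+1))} N_{g-1}\bigl(\vec\sigma\vec\lambda,\mu,\mu^\dagger,\ell\bigr),$$
where the two new nodal marked points carry untwisted weights. Extending $\vec\sigma$ by the identity on these two slots yields a sign sequence whose total product is still $\id$, so the inductive hypothesis applies termwise and gives
$$N_g(\vec\sigma\vec\lambda,\ell) \;=\; \sum_{\mu} N_{g-1}(\vec\lambda,\mu,\mu^\dagger,\ell) \;=\; N_g(\vec\lambda,\ell).$$

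All the substance therefore lies in the genus zero base case, for which we rely on \cite{FS}; the main obstacle, such as there is one, is ensuring that the reference does cover the general $n$-point statement and that $\sigma$ is indeed realized as a well-defined involution on $P_\ell(\sofrak(2r+1))$ (a fact we already have). The induction itself is pure bookkeeping, relying only on the observation that an untwisted pair $(\mu,\mu^\dagger)$ introduced at a factorization node preserves the parity of $\vec\sigma$.
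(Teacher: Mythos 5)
Your proof is correct and reconstructs the paper's intended argument: induct on genus via factorization along a non-separating loop, placing the identity twist on the two new nodal slots so the parity $\sigma_1\cdots\sigma_n=1$ is preserved termwise, and reduce to the genus-zero base case from \cite{FS}. The third ingredient the paper lists --- that $\sigma$ permutes $P_\ell(\sofrak(2r+1))$ --- is the one you flag at the end as a possible gap; it enters when passing from the three-point to the general $n$-point genus-zero statement (re-indexing the nodal sum by $\mu\mapsto\sigma\mu$ when a nontrivial twist lands on a nodal slot), or is already subsumed in \cite{FS} if that reference treats $n$-point blocks directly, so your argument is complete either way.
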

\begin{proposition} \label{prop:level-one-dimension}
For $n>0$, let $\vec{\omega}_r^{(n)}$ denote a $2n$-tuple of $\omega_r$'s. Then
$N_g(\vec{\omega}_r^{(n)},1)=2^{2g+n-1}$.
\end{proposition}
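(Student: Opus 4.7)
The plan is to prove the formula by a double induction on the genus $g$ and the number $n$ of paired spin insertions, using factorization of conformal blocks, propagation of vacua, and Lemma~\ref{lem:diagram}. The essential structural input is that $P_1(\mathfrak{so}(2r+1)) = \{\omega_0, \omega_1, \omega_r\}$ (each weight is self-dual), together with the level one Ising-type fusion rules $\omega_1\star\omega_1=\omega_0$, $\omega_1\star\omega_r=\omega_r$, and $\omega_r\star\omega_r=\omega_0+\omega_1$ for $\widehat{\mathfrak{so}}(2r+1)_1$.

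First I would establish the base case $g=0$ by induction on $n$. For $n=1$, self-duality of $\omega_r$ gives $N_0(\omega_r,\omega_r,1)=1$. For the inductive step, factorize the $(2n+2)$-point genus zero block along a separating node splitting off two spin insertions; the fusion rule for $\omega_r\star\omega_r$ together with propagation of vacua yields
$$N_0(\vec{\omega}_r^{(n+1)},1)\;=\;N_0(\vec{\omega}_r^{(n)},1)+N_0(\vec{\omega}_r^{(n)},\omega_1,1)\ .$$
To handle the second term, factorize again by splitting off the $\omega_1$ together with one of the $\omega_r$'s. The rule $\omega_1\star\omega_r=\omega_r$ forces the channel weight to be $\omega_r$, and self-duality then gives $N_0(\vec{\omega}_r^{(n)},\omega_1,1)=N_0(\vec{\omega}_r^{(n)},1)$. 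Hence $N_0(\vec{\omega}_r^{(n+1)},1)=2\,N_0(\vec{\omega}_r^{(n)},1)$, so $N_0(\vec{\omega}_r^{(n)},1)=2^{n-1}$.

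For the inductive step in $g\geq 1$, I would factorize at a non-separating node to obtain
$$N_g(\vec{\omega}_r^{(n)},1)\;=\;\sum_{\mu\in P_1(\mathfrak{so}(2r+1))}N_{g-1}(\vec{\omega}_r^{(n)},\mu,\mu^\dagger,1)\ .$$
Propagation of vacua handles $\mu=\omega_0$, contributing $N_{g-1}(\vec{\omega}_r^{(n)},1)$. For $\mu=\omega_1$, the diagram automorphism $\sigma$ fixes $\omega_r$ and exchanges $\omega_0\leftrightarrow\omega_1$, so applying Lemma~\ref{lem:diagram} to the two $\omega_1$ entries (an even number of flips, hence $\sigma$-product $=1$) and then propagation again gives the same count $N_{g-1}(\vec{\omega}_r^{(n)},1)$. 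The channel $\mu=\omega_r$ contributes $N_{g-1}(\vec{\omega}_r^{(n+1)},1)$. Altogether
$$N_g(\vec{\omega}_r^{(n)},1)\;=\;2\,N_{g-1}(\vec{\omega}_r^{(n)},1)+N_{g-1}(\vec{\omega}_r^{(n+1)},1)\ ,$$
and the induction hypothesis produces $2\cdot 2^{2(g-1)+n-1}+2^{2(g-1)+n}=2^{2g+n-1}$.

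The single nontrivial ingredient, and thus the main obstacle, is the level one fusion rule $\omega_r\star\omega_r=\omega_0+\omega_1$; once this and the self-duality of the three level one weights are in hand (either by direct Kac--Wakimoto branching computations or by citation of the standard Ising-type fusion ring of $\widehat{\mathfrak{so}}(2r+1)_1$), the remainder of the argument is bookkeeping with factorization, propagation of vacua, and Lemma~\ref{lem:diagram}.
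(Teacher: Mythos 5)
Your genus-induction step is identical to the paper's: factorize at a non-separating node, identify the $(\omega_1,\omega_1)$ channel with the $(\omega_0,\omega_0)$ channel via Lemma~\ref{lem:diagram} (applying $\sigma$ at an even number of slots), and absorb $(\omega_r,\omega_r)$ into $\vec{\omega}_r^{(n+1)}$, yielding the recursion $N_g = 2N_{g-1}(\vec{\omega}_r^{(n)},1)+N_{g-1}(\vec{\omega}_r^{(n+1)},1)$. Where you diverge is the base case $g=0$: the paper simply cites Fakhruddin's genus-zero computation, while you derive $N_0(\vec{\omega}_r^{(n)},1)=2^{n-1}$ by a self-contained secondary induction on $n$ using a separating degeneration and the Ising-type fusion rules $\omega_r\star\omega_r=\omega_0+\omega_1$, $\omega_1\star\omega_r=\omega_r$ for $\widehat{\mathfrak{so}}(2r+1)_1$. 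This is a valid alternative and more elementary in spirit, at the cost of needing to justify the level-one fusion ring (which you acknowledge). One small streamlining: your identity $N_0(\vec{\omega}_r^{(n)},\omega_1,1)=N_0(\vec{\omega}_r^{(n)},1)$ needs no extra factorization --- it follows directly from Lemma~\ref{lem:diagram} by applying $\sigma$ at the $\omega_1$ slot and at one $\omega_r$ slot (two flips, product trivial, and $\sigma$ fixes $\omega_r$ while sending $\omega_1\mapsto\omega_0$), followed by propagation of vacua; this removes your dependence on the fusion rule $\omega_1\star\omega_r=\omega_r$ and leaves only $\omega_r\star\omega_r=\omega_0+\omega_1$ as the external input.
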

\begin{proof}
If $g=0$, then the above is a result of N. Fakhruddin \cite{Fakhruddin:12}. We will prove this using factorization (cf.\ Section \ref{factorizationlemma}) and induction on $g$.  Therefore, suppose that the result holds for genus $g-1$ and all $n$.
Then since the level one weights are precisely, $\omega_0$, $\omega_1$, and $\omega_r$, and using
Lemma \ref{lem:diagram}, factorization and induction,
\begin{align*}
N_g(\vec{\omega}_r^{(n)},1)&=N_{g-1}(\omega_0,\omega_0;\vec{\omega}_r^{(n)},1)+N_{g-1}(\omega_1, \omega_1;\vec{\omega}_r^{(n)},1)
+N_{g-1}(\omega_r,\omega_r;\vec{\omega}_r^{(n)},1)\\
&=2N_{g-1}(\vec{\omega}_r^{(n)},1) + N_{g-1}(\vec{\omega}_r^{(n+1)},1) \ \ \mbox{(By Lemma \ref{lem:diagram})}\\
&=2\cdot 2^{2(g-1)+n-1} + 2^{2(g-1)+n} =2^{2g+n-1}\ .
\end{align*}
%
%
%
%
%
%
%
%
%
%
\end{proof}
 Now by factorization and Lemma \ref{lem:diagram} we get,
 $N_g(\omega_1,1)=2N_{g-1}(\omega_1,1)+N_{g-1}(\omega_r,\omega_r,1).$ By induction on $g$, the expression for $N_{g-1}(\omega_1,1)$, and the above calculation it follows:
\begin{corollary}
 $N_g(\omega_1,1)=2^{g-1}(2^g-1)$.
\end{corollary}
\subsubsection{Proof of Proposition \ref{prop:belkale}}
 Combining Theorem \ref{identification}, Proposition \ref{prop:stackisspace} and Proposition \ref{prop:level-one-dimension}, along with the decomposition \eqref{eqn:decomposition}, we obtain Proposition \ref{prop:belkale}.  Reformulated  in terms of the stack, we have the following.
\begin{theorem} For any $r\geq 1$, $\dim_\CBbb H^0(\mathcal{M}_{\Spin(2r+1)}^{-},\mathcal{P})=2^{g-1}(2^g-1)=|\Th^-(C)|$. Moreover, the Pfaffian sections $\{s_\kappa\mid \kappa \in \Th^-(C)\}$ give a basis.
\end{theorem}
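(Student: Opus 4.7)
The plan is to deduce the theorem as a direct reformulation of Proposition~\ref{prop:belkale} once the ingredients already developed in the paper are combined. The content splits into two parts: the dimension count and the spanning property of the Pfaffian sections.

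For the dimension, I would first apply Theorem~\ref{identification} (with $\ell=1$) to identify
\[
H^0(\Mcal^-_{\Spin(2r+1)},\Pcal)\isorightarrow \Vcal^*_{\omega_1}(\Xfrak,\sofrak(2r+1),1)\,,
\]
so that it suffices to prove $N_g(\omega_1,1)=2^{g-1}(2^g-1)$. I would compute this by factorization on a non-separating node. Since $P_1(\sofrak(2r+1))=\{\omega_0,\omega_1,\omega_r\}$ and each of these weights is self-dual,
\[
N_g(\omega_1,1)=N_{g-1}(\omega_1,\omega_0,\omega_0;1)+N_{g-1}(\omega_1,\omega_1,\omega_1;1)+N_{g-1}(\omega_1,\omega_r,\omega_r;1)\,.
\]
Propagation of vacua reduces the first term to $N_{g-1}(\omega_1,1)$; Lemma~\ref{lem:diagram} (applying $\sigma$ to two of the $\omega_1$'s) together with propagation reduces the second to $N_{g-1}(\omega_1,1)$ as well; and since $\omega_r$ is fixed by $\sigma$, Lemma~\ref{lem:diagram} together with propagation rewrites the third as $N_{g-1}(\omega_r,\omega_r;1)$. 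Thus
\[
N_g(\omega_1,1)=2N_{g-1}(\omega_1,1)+N_{g-1}(\omega_r,\omega_r;1)\,.
\]
Proposition~\ref{prop:level-one-dimension} (case $n=1$) gives $N_{g-1}(\omega_r,\omega_r;1)=2^{2g-2}$, and an induction on $g$ using the base case yields $N_g(\omega_1,1)=2\cdot 2^{g-2}(2^{g-1}-1)+2^{2g-2}=2^{g-1}(2^g-1)$.

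For the basis statement, I would invoke Proposition~\ref{prop:belkale}, which in the odd case asserts that $\{s_\kappa\mid \kappa\in \Th^-(C)\}$ is a basis of $H^0(\widetilde Y,\Pcal)$. Since the Pfaffian line bundle descends from $\Mcal^-_{\Spin(2r+1)}$ to $M^{-,reg}_{\Spin(2r+1)}$ with isomorphism of sections as in \eqref{eqn:stack-moduli}, and $\widetilde Y\subset M^{-,reg}_{\Spin(2r+1)}$ has complement of codimension $\geq 2$ by the discussion preceding Lemma~\ref{lem:simplyconnected}, the restriction map
\[
H^0(\Mcal^-_{\Spin(2r+1)},\Pcal)\lra H^0(\widetilde Y,\Pcal)
\]
is an isomorphism by normality. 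The Pfaffian sections $s_\kappa$ constructed in Definition~\ref{def:pfaffian-section} pull back to the $s_\kappa$ on $\widetilde Y$, so they give a basis of the stack-level space of global sections.

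The only non-routine step is the dimension induction, where I need to be careful about the base case: the recursion reaches $g=1$ with an extra insertion, so I would actually set up the induction starting from $g=0$ with a sufficient number of marked points (where both $N_0(\omega_1,\omega_r^{(n)};1)$ and $N_0(\omega_r^{(n)};1)$ are controlled by the genus-zero calculation behind Proposition~\ref{prop:level-one-dimension} and Fakhruddin's result), and then propagate back up via the same factorization identity. The verification that $\omega_r$ is $\sigma$-fixed (i.e.\ that the diagram automorphism interchanging $\omega_0$ and $\omega_1$ fixes all other fundamental weights of the affine $B_r$ diagram) should also be recorded, but this is standard and is the only subtlety in applying Lemma~\ref{lem:diagram}.
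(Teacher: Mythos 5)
Your proposal is correct and follows the paper's argument essentially verbatim: identify $H^0(\Mcal^-_{\Spin(2r+1)},\Pcal)$ with $\Vcal^*_{\omega_1}(\Xfrak,\sofrak(2r+1),1)$ via Theorem~\ref{identification}, derive the recursion $N_g(\omega_1,1)=2N_{g-1}(\omega_1,1)+N_{g-1}(\omega_r,\omega_r,1)$ by non-separating factorization plus Lemma~\ref{lem:diagram} and propagation of vacua, feed in $N_{g-1}(\omega_r,\omega_r,1)=2^{2g-2}$ from Proposition~\ref{prop:level-one-dimension}, and then cite Proposition~\ref{prop:belkale} together with \eqref{eqn:stack-moduli} and the codimension-$\geq 2$ comparison $\widetilde Y\subset M^{-,reg}_{\Spin(2r+1)}$ for the basis of Pfaffian sections. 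Your concern about a cascading base case is unnecessary, since the auxiliary term $N_{g-1}(\omega_r,\omega_r,1)$ is resolved at every stage by a direct citation of Proposition~\ref{prop:level-one-dimension} rather than fed back into the recursion, and the induction bottoms out at $N_0(\omega_1,1)=0$.
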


\subsection{Oxbury-Wilson conjecture}
 Let $\mathcal{P}$ be the line bundle which restricts on each component of $\mathcal{M}_{2r+1}$ to the ample generator of the Picard group (cf.\  \eqref{eqn:mr}). 
 We now prove a Verlinde formula for powers of $\mathcal{P}$. 
\begin{theorem}\label{oxbury} 
Let
$$
N_g^0(\sofrak(2r+1),\ell):= 
\left(4(\ell+2r-1)^{{r}}\right)^{g-1}\sum_{\mu \in P_{\ell}(\SO(2r+1))}\prod_{\alpha >0}\biggl( 2 \sin \pi \frac{(\mu+\rho,\alpha)}{\ell+2r-1}\biggr)^{2-2g}\ .
$$
where  $P_{\ell}(\SO(2r+1))$ denotes the set of level $\ell$ weights of $\mathfrak{so}(2r+1)$ that exponentiate to a representation of the group $\SO(2r+1)$. 
Then
\begin{equation} \label{eqn:oxbury}
\dim_\CBbb H^0(\mathcal{M}_{2r+1},\mathcal{P}^{\otimes\ell})=2N_g^0(\sofrak(2r+1),\ell)\ .
\end{equation}
\end{theorem}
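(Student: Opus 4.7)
The plan is to split $\mathcal{M}_{2r+1}$ into its two connected components and identify each cohomology piece with a space of conformal blocks. For the untwisted component, the classical Beauville--Laszlo uniformization gives $H^0(\mathcal{M}_{\Spin(2r+1)},\mathcal{P}^{\otimes\ell})\simeq \mathcal{V}^*_{\omega_0}(\mathfrak{X},\sofrak(2r+1),\ell)$, while Theorem \ref{identification} gives $H^0(\mathcal{M}^-_{\Spin(2r+1)},\mathcal{P}^{\otimes\ell})\simeq \mathcal{V}^*_{\ell\omega_1}(\mathfrak{X},\sofrak(2r+1),\ell)$. The problem therefore reduces to computing $\dim\mathcal{V}^*_{\omega_0}+\dim\mathcal{V}^*_{\ell\omega_1}$ via the Kac--Peterson--Verlinde formula $\dim\mathcal{V}^*_\lambda=\sum_{\mu\in P_\ell(\sofrak(2r+1))}S_{\lambda,\mu}\,S_{0,\mu}^{1-2g}$.

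The main mechanism is that $\ell\omega_1$ is the nontrivial simple current of $\widehat{\sofrak}(2r+1)$ at level $\ell$, corresponding to the outer automorphism $\sigma$ of the affine Dynkin diagram and to $Z(\Spin(2r+1))=\ZBbb/2$. The simple current property of the $S$-matrix yields $S_{\ell\omega_1,\mu}=e^{-2\pi i Q_J(\mu)}\,S_{0,\mu}$ with monodromy charge $Q_J(\mu)\equiv(\omega_1,\mu)\pmod{\ZBbb}$. Since $\omega_1=e_1$ and $\mu=\sum_i a_i\omega_i$ has $e_1$-coefficient $a_1+a_2+\cdots+a_{r-1}+a_r/2$, one finds $Q_J(\mu)\equiv a_r/2\pmod{\ZBbb}$, so $e^{-2\pi iQ_J(\mu)}=(-1)^{a_r}$. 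This sign is $+1$ on weights that exponentiate to $\SO(2r+1)$ (those with $a_r$ even) and $-1$ on the remaining spin weights. Summing the two Verlinde expressions, the spin contributions cancel and the $\SO$-contributions double, producing $\dim\mathcal{V}^*_{\omega_0}+\dim\mathcal{V}^*_{\ell\omega_1}=2\sum_{\mu\in P_\ell(\SO(2r+1))}S_{0,\mu}^{2-2g}$.

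It remains to substitute the Kac--Peterson expression $|S_{0,\mu}|^2=\frac{1}{|P/Q^\vee|(\ell+h^\vee)^r}\prod_{\alpha>0}(2\sin\pi(\mu+\rho,\alpha)/(\ell+h^\vee))^2$. For $B_r$, $h^\vee=2r-1$ and $|P/Q^\vee|=4$; the latter is a direct lattice computation, since the weight lattice $P=\ZBbb^r+\ZBbb\omega_r$ has index $2$ over $\ZBbb^r$, while the coroot lattice $Q^\vee$, generated by $e_i-e_{i+1}$ and $2e_r$, equals $\{x\in\ZBbb^r:\sum x_i\text{ even}\}$ and has index $2$ in $\ZBbb^r$. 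Raising to the power $1-g$ yields the prefactor $(4(\ell+2r-1)^r)^{g-1}$, reproducing exactly the Oxbury--Wilson expression $2N_g^0(\sofrak(2r+1),\ell)$.

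The main obstacle is justifying the simple current identity $S_{\ell\omega_1,\mu}=(-1)^{a_r}S_{0,\mu}$ rigorously within the TUY framework used in the paper. This is standard modular data for affine Lie algebras but must be translated into the algebro-geometric language here, either by direct manipulation of the Weyl--Kac character formula for $\mathcal{H}_{\ell\omega_1}$ evaluated at the Verlinde points, by exploiting the action of the diagram automorphism $\sigma$ on integrable modules (via Lemma \ref{adjoint} and Lemma \ref{lem:diagram}), or by invoking the sewing/factorization computations of \cite{Mukhopadhyay:12} referenced in the introduction; once the $(-1)^{a_r}$ cancellation is established, the rest of the argument is purely combinatorial.
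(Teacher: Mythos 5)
Your proposal is correct and follows essentially the same route as the paper: decompose $H^0(\mathcal{M}_{2r+1},\mathcal{P}^{\otimes\ell})$ into the conformal blocks $\mathcal{V}^*_{\omega_0}\oplus\mathcal{V}^*_{\ell\omega_1}$ via Theorem \ref{identification}, apply the Verlinde formula to each summand, and observe that the sign $\pm1$ attached to each Verlinde term cancels the non-$\SO$ weights and doubles the $\SO$ weights. The only presentational difference is that you phrase the Verlinde sum through the $S$-matrix $\sum_\mu S_{\lambda,\mu}S_{0,\mu}^{1-2g}$ and obtain the sign $(-1)^{a_r}$ from the simple-current/monodromy-charge identity, whereas the paper uses the equivalent character form of Verlinde and cites \cite[Lemmas 10.6 and 10.7]{Mukhopadhyay:12} for the fact that $\operatorname{Tr}_{V_{\ell\omega_1}}(\exp(2\pi i(\mu+\rho)/(\ell+2r-1)))=\pm1$ according as $\mu$ does or does not exponentiate to $\SO(2r+1)$; these are the same statement via the Kac--Peterson identity $S_{\lambda,\mu}/S_{0,\mu}=\operatorname{Tr}_{V_\lambda}(\exp(\cdots))$. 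Your caveat about rigorously justifying the simple-current $S$-matrix identity within the TUY framework is precisely the point the paper outsources to the cited reference, so there is no genuine gap.
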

\begin{proof}
By Theorem \ref{identification}, 
$H^0(\mathcal{M}_{2r+1},\mathcal{P}^{\otimes\ell})\simeq\mathcal{V}^{*}_{\omega_0}(\mathfrak{X},\mathfrak{so}(2r+1),\ell)\oplus \mathcal{V}^{*}_{\ell\omega_1}(\mathfrak{X},\mathfrak{so}(2r+1),\ell)$. Now the Verlinde formula tells us the following:
\begin{eqnarray*}
&&\dim_{\mathbb{C}}(\mathcal{V}^{*}_{\omega_0}(\mathfrak{X},\mathfrak{so}(2r+1),\ell))=\\
&&\left(4(\ell+2r-1)^{{r}}\right)^{g-1}\bigl\{\sum_{\mu \in P_{\ell}(\SO(2r+1))}\prod_{\alpha >0}\bigl( 2 \sin \pi \frac{(\mu+\rho,\alpha)}{\ell+2r-1}\bigr)^{2-2g}\\
&& + \sum_{\mu \in P_{\ell}(\SO(2r+1))^c} \prod_{\alpha >0}\bigl( 2 \sin \pi \frac{(\mu+\rho,\alpha)}{\ell+2r-1}\bigr)^{2-2g}\bigr\}\ ,
\end{eqnarray*}
where $P_{\ell}(\SO(2r+1))^{c}:=  P_\ell(\sofrak(2r+1))\setminus P_\ell(\SO(2r+1))$ is the set of level $\ell$ weights that do not exponentiate to representations of $\SO(2r+1)$.
Similarly 
\begin{eqnarray*}
&&\dim_{\mathbb{C}}(\mathcal{V}_{\ell \omega_1}^{*}(\mathfrak{X},\mathfrak{so}(2r+1),\ell))=\left(4(\ell+2r-1)^{{r}}\right)^{g-1} \times\bigl\{ \\
&&\sum_{\mu \in P_{\ell}(\SO(2r+1))}\operatorname{Tr}_{V_{\ell\omega_1}}(\operatorname{exp}2\pi i\frac{\mu+\rho}{\ell+2r-1})\prod_{\alpha >0}\bigl( 2 \sin \pi \frac{(\mu+\rho,\alpha)}{\ell+2r-1}\bigr)^{2-2g} \\
&&+  \sum_{\mu \in P_{\ell}(\SO(2r+1))^c}\operatorname{Tr}_{V_{\ell\omega_1}}(\operatorname{exp}2\pi i\frac{\mu+\rho}{\ell+2r-1}) \prod_{\alpha >0}\bigl( 2 \sin \pi \frac{(\mu+\rho,\alpha)}{\ell+2r-1}\bigr)^{2-2g}\bigr\}\ .
\end{eqnarray*}
 It follows from \cite[Lemmas 10.6 and 10.7]{Mukhopadhyay:12} that 
	$$\operatorname{Tr}_{V_{\ell\omega_1}}\left(\operatorname{exp}\left(2\pi \sqrt{-1}\frac{\mu+\rho}{\ell+2r-1}\right)\right)
	=\begin{cases} 1 &
	\mu \in P_{\ell}({\SO_{2r+1}}) \\
	-1 & \text{otherwise.}
	\end{cases}
	$$
Using this, the proof follows by taking the sum of the expressions above. 
\end{proof}
\begin{remark}
The  formula \eqref{eqn:oxbury} was conjectured in Oxbury-Wilson \cite[Conj.\ 5.2]{OxburyWilson:96}. Theorem \ref{oxbury} resolves this conjecture.

\end{remark}
For any $r,s\geq 2$, the following result is proved in \cite{OxburyWilson:96}. 
\begin{lemma} \label{lem:oxbury}
$N_g^0(\sofrak(2r+1),2s+1)=N_g^0(\sofrak(2s+1),2r+1)$.
\end{lemma}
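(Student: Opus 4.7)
The plan is to prove the identity by an explicit matching of the two Verlinde sums, using the fact that the shifted levels on the two sides coincide. Indeed, with $\ell = 2s+1$ we have $\ell + g^\vee(\sofrak(2r+1)) = 2(r+s)$, and likewise with $\ell = 2r+1$ for $\sofrak(2s+1)$. Writing $N := r+s$, both sums involve sines with the common denominator $2N$, which is the starting point for the comparison.

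First I would introduce the parametrization $\mu + \rho = (n_1, \ldots, n_r)$ in the standard $L_i$-basis for $\sofrak(2r+1)$. A weight $\mu \in P_{2s+1}(\SO(2r+1))$ then corresponds to a strictly decreasing sequence of positive half-odd-integers satisfying the level bound $n_1 + n_2 \leq 2N - 1$ (coming from $(\mu, L_1 + L_2) \leq 2s+1$), the half-odd-integrality being equivalent to $\mu$ exponentiating to $\SO(2r+1)$. Setting $N_i := 2n_i$, the set $P_{2s+1}(\SO(2r+1))$ is in bijection with the strictly decreasing $r$-tuples $(N_1 > \cdots > N_r \geq 1)$ of positive odd integers with $N_1 + N_2 \leq 4N - 2$; an analogous description yields $P_{2r+1}(\SO(2s+1))$ as $s$-tuples of the same type.

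Next I would construct an explicit bijection $\phi \colon P_{2s+1}(\SO(2r+1)) \to P_{2r+1}(\SO(2s+1))$ by a combinatorial rule based on complementation within a fixed range of odd integers and the reflection $x \leftrightarrow 4N - 2 - x$ (reflecting the sine symmetry $\sin(\pi x/(2N)) = \sin(\pi(2N - x)/(2N))$). Under $\phi$, one verifies that the sine product
$$\prod_{\alpha > 0,\; B_r} 2\sin \frac{\pi(\mu + \rho, \alpha)}{2N}$$
matches the analogous product for $\phi(\mu)$ on the $B_s$ side up to a universal power of $2N$, namely $(2N)^{(r-s)/2}$. Raised to the exponent $2-2g$, this factor contributes $(2N)^{(s-r)(g-1)}$, which exactly compensates the discrepancy between the prefactors $(4(2N)^r)^{g-1}$ and $(4(2N)^s)^{g-1}$ appearing in $N_g^0(\sofrak(2r+1), 2s+1)$ versus $N_g^0(\sofrak(2s+1), 2r+1)$.

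The main obstacle is setting up $\phi$ and verifying the sine-product matching: the three types of positive roots of $B_r$ (namely $L_i - L_j$, $L_i + L_j$, and $L_i$) pair off with the corresponding roots of $B_s$ in an intricate manner, and the residual $(2N)$-factors must be tracked carefully. The full combinatorial verification is essentially a discrete manifestation of the rank-level duality for the conformal embedding \eqref{eqn:so-embedding}, and is carried out in Oxbury-Wilson \cite{OxburyWilson:96} as a key ingredient in their conjectural Verlinde formula for twisted spin bundles.
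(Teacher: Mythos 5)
The paper gives no proof of this lemma at all; it simply cites Oxbury-Wilson \cite{OxburyWilson:96}, which is also where your sketch ultimately defers. Your preparatory bookkeeping is correct and consistent with that reference: both shifted levels equal $2(r+s)$, the prefactor ratio $(2N)^{(r-s)(g-1)}$ is exactly absorbed by a sine-product ratio of $(2N)^{(r-s)/2}$ raised to the $(2-2g)$-th power, and the parametrization of $P_{2s+1}(\SO(2r+1))$ by strictly decreasing strings of positive odd integers $2(\mu+\rho)$ with $N_1+N_2\le 4N-2$ is right -- so your proposal is essentially the same approach as the paper, just with more of the underlying argument made explicit.
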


\begin{proof}[Proof of Corollary \ref{cor:spin-dimension}]
Combine Lemma \ref{lem:oxbury} and Theorem \ref{oxbury}.
\end{proof}

\begin{remark}\label{rem:thaddds}
The  equality of dimensions in Corollary \ref{cor:spin-dimension} also holds if either $r,s=1$. 
In this case, $\SC(3)=\GL(2)$, and so
the moduli stack $\mathcal{M}_{3}$ is the disjoint union of the moduli stacks of rank 2 vector bundles with fixed trivial determinant and  determinant $=\mathcal{O}_{C}(p)$. The Verlinde formula for these spaces is  due to Thaddeus \cite{Thaddeus:92}. Also in this case, the equality of dimensions in Lemma \ref{lem:oxbury} is mentioned in \cite[Prop.\ 4.16]{OxburyWilson:96}.
\end{remark}

\section{Hecke transformations for orthogonal bundles} \label{sec:abe}

\subsection{The $\iota$-transform on orthogonal bundles} \label{sec:parabolic}
In this section we review a Hecke type elementary transformation called the $\iota$-transform introduced by T. Abe \cite{Abe:13}.
This exchanges one orthogonal bundle with a choice of isotropic line at a point for another.
 As we shall see, this operation flips the Stiefel-Whitney class.

Let $B$ be a scheme, $ \Xcal:=  C\times B$, and $\pi: \Xcal\to B$ the projection. Let $\sigma: B\to \Xcal$ be a constant section of $\pi$. 
 A \emph{parabolic structure} on an orthogonal bundle $(\Ecal, q)\to \Xfrak$ at $\sigma$ is a choice of isotropic line subbundle of $\sigma^\ast\Ecal$.  If we let $\OG(\sigma^\ast\Ecal)\to B$ denote the bundle of  Grassmannians of isotropic lines of $\sigma^\ast\Ecal$, and $\tau\to \OG(\sigma^\ast\Ecal)$ the tautological line bundle, then
the data of an orthogonal bundle with parabolic structure on $\Xcal$  may be summarized in the following diagram:
\begin{equation*} \label{eqn:parabolic}
\begin{split}
\Scale[.9]{
\xymatrix{
(\Ecal, q) \ar[r] & \Xcal \ar[d]_{\pi} &&\tau\ar[r] & \OG(\sigma^\ast\Ecal)\ar[d] \\
\sigma^\ast\Ecal\ar[r]&B\ar@/_1pc/@{>}[u]_{\sigma} && s^\ast\tau \subset\sigma^\ast\Ecal\ar[r] & B\ar@/_2pc/@{>}[u]_{s \ } 
}}
\end{split}
\end{equation*}
Let $\tau^\perp\to \OG(\sigma^\ast\Ecal)$ be the bundle orthogonal to $\tau$ in the quadratic form $q$, and let $\tau_1=\sigma^\ast\Ecal/s^\ast\tau^\perp$ be the quotient line bundle on $B$. Then we may define the locally free sheaf $\Ecal^\flat$ by the \emph{elementary transformation} (cf.\ \cite{Maruyama:82}),
\begin{equation} \label{eqn:E-flat}
0\lra \Ecal^\flat\to \Ecal\to \sigma_\ast(\tau_1)\lra 0\ .
\end{equation}
Next, let $\Ecal^\sharp=(\Ecal^\flat)^*$.
Since the normal bundle to $\sigma(B)$ is trivial and the orthogonal structure gives an isomorphism $\Ecal^*\simeq\Ecal$, dualizing \eqref{eqn:E-flat} gives
\begin{equation} \label{eqn:E-flat-dual}
0\lra \Ecal\to \Ecal^\sharp\to \sigma_\ast(\tau_1^\ast)\lra 0\ .
\end{equation}
 Now $q$ induces  maps
\begin{equation}
\label{eqn:q-sharp}
q: \Ecal^\sharp\otimes \Ecal^\sharp\lra \Ocal_\Xcal(\sigma(B))\ ,\
q: \Ecal^\flat\otimes \Ecal^\sharp\lra \Ocal_\Xcal\ .
\end{equation}
Consider the subsheaf $\Ecal^\flat\hookrightarrow \Ecal^\sharp$ coming from \eqref{eqn:E-flat-dual}.
Then $\Ecal^\sharp/\Ecal^\flat$ is a torsion sheaf supported on $\sigma(B)$, and along $\sigma(B)$ it is locally free of rank $2$ with trivial determinant and an orthogonal structure.   Since $\Ecal/\Ecal^\flat$ is isotropic, $\Ecal^\sharp/\Ecal^\flat\simeq \Ecal/\Ecal^\flat\oplus \sigma_\ast(\tau_1^\ast)$.  Finally, we define  $\Ecal^\iota\subset \Ecal^\sharp$ to be the kernel of the map $\Ecal^\sharp\to \Ecal/\Ecal^\flat$. Equivalently, there is an exact sequence
\begin{equation} \label{eqn:E-iota}
0\lra \Ecal^\flat\to \Ecal^\iota\to \sigma_\ast(\tau_1^\ast)\lra 0\ .
\end{equation}
Then $\Ecal^\iota$ inherits an orthogonal structure $q^\iota$ from \eqref{eqn:q-sharp}. Moreover, the exact sequence \eqref{eqn:E-iota} determines an isotropic line $s^\iota\subset \sigma^\ast(\Ecal^\iota)$. Finally, from \eqref{eqn:E-flat} and \eqref{eqn:E-iota},  the trivialization of $\det\Ecal$ induces one for $\det\Ecal^\iota$.
\begin{definition} \label{def:iota}
The $\iota$-transform is the map: $(\Ecal, q, s )\mapsto (\Ecal^\iota, q^\iota, s^\iota)$.
\end{definition}

\begin{remark} \label{rem:functorial}
It is clear that the $\iota$-transform is functorial with respect to base change. Moreover T. Abe \cite{Abe:13} has shown that the $\iota$-transform is an involution. 
\end{remark}

\subsection{The $\iota$-transform on Clifford bundles}
We now show that the $\iota$-transform sends a bundle in one component of $\Mcal_{\SO(m)}$ to the other one.  
Fix a point $p\in C$, and recall that $C^\ast=C-\{p\}$.
 Let $\Mcal_{\Spin(m)}^{par}$ denote 
the moduli stack of pairs $({\mathcal S}, \Psf)$, 
where $\mathcal S$ is a $\Spin(m)$-bundle on $C$ and 
$\mathsf P$ is a maximal parabolic subgroup of the fiber $\sigma^\ast{\mathcal S}$ preserving an isotropic line $s $ in the fiber of the associated orthogonal bundle at $p$. 
Similarly, let $\Mcal_{\SO(m)}^{par}$ be the moduli stack of tuples $({\mathcal E}, q, s )$, where $({\mathcal E}, q)$ is a
 rank $m$ orthogonal bundle, and $s$ is an isotropic line in the fiber ${\mathcal E}_p$.
We then have a map
$
\Mcal_{\Spin(m)}^{par} \to \Mcal_{\SO(m)}^{par}$,
$({\mathcal S}, \Psf) \mapsto ({\mathcal E}, q, s )
$.
Forgetting the parabolic structure gives a morphism
$
\Mcal_{\Spin(m)}^{par}\rightarrow \Mcal_{\Spin(m)}\rightarrow \Mcal_{\SO(m)}^{+}
$.

We wish to define a 
morphism
$
\Mcal_{\Spin(m)}^{par}\rightarrow \Mcal_{\Spin(m)}^-\rightarrow \Mcal_{\SO(m)}^{-}
$.
Associated to $(\Ecal, q, s )$ we obtain 
a new orthogonal bundle with isotropic line $({\mathcal E}^\iota, q^\iota, s^\iota)$ defined in the previous section. By Remark \ref{rem:functorial}, this gives an involution of  stacks:
$
\iota:
\Mcal^{par}_{\SO(m)}\to \Mcal^{par}_{\SO(m)}
$.
This can be described explicitly in terms of transition functions  as follows.  First, since the result we wish to prove is topological it suffices to  work locally in the analytic topology, and in fact at a closed point of $B$.  We therefore let
 $\mathcal S$ be a spin bundle and $({\mathcal E},q)$ the associated
orthogonal bundle; $\Scal=\Spin(\Ecal, q)$.
Let $\Delta\subset C$ be a disk centered at $p$, and $\sigma: \Delta\to {\mathcal S}$ a section. 
This gives a trivialization of ${\mathcal S}$ and a local frame $e_1,\ldots, e_m$ for $\mathcal E$ on $\Delta$ with respect
to which the quadratic structure is, say, of the form $q_{ij}=\delta_{i+j-1,m}$.
Similarly, we may choose a section of $\left.{\mathcal S}\right|_{C^\ast}$. Set $\Delta^\ast=C^\ast\cap \Delta$.
Let $\hat\varphi: \Delta^\ast\to \Spin({\mathcal E},q)$ denote the transition function gluing the  bundles $\left.{\mathcal S}\right|_\Delta$ and $\left.{\mathcal S}\right|_{C^\ast}$, 
and let $\varphi: \Delta^\ast\to \SO(\left.({\mathcal E}, q)\right|_{\Delta^\ast})$ be the quotient transition function for 
 $({\mathcal E},q)$.
The transformed bundle ${\mathcal E}^\iota$ (cf.\ Section \ref{sec:parabolic}) is defined by modifying  $\varphi$  
 via
$\zeta: \Delta^\ast\to \SO(\left.({\mathcal E}, q)\right|_{\Delta^\ast})$, where $\zeta$ is as in \eqref{zeta}.
Write $z=\exp(2\pi i\xi)$. Then there is a well-defined lift 
$\widehat \zeta: \Delta^\ast\to \SC(\left.({\mathcal E}, q)\right|_{\Delta^\ast})$, 
given by 
\begin{equation} \label{eqn:bhat}
\widehat \zeta(z)= \exp(\pi i\xi)
\exp\left((\pi i\xi/2)(e_1e_m-e_me_1)
\right)\ .
\end{equation}
One checks that $\widehat \zeta$ is well-defined under $\xi\mapsto\xi+1$, and
the projection of $\widehat \zeta$ under the map $\SC({\mathcal E},q)\to \SO({\mathcal E},q)$
 recovers $\zeta$.
Gluing  the trivial $\SC$-bundles over $\Delta$ and $C^\ast$
via $\hat\varphi(z) \widehat \zeta(z)$, 
we define a new Clifford bundle ${\mathcal S}^\iota$.
The 
associated orthogonal bundle (with transition function $\varphi(z)\zeta(z)$) coincides with ${\mathcal E}^\iota$.
With this understood, the main observation is the following.
\begin{proposition}
The $\iota$-transform switches the Stiefel-Whitney class, i.e.\ if ${\mathcal E}
\in \Mcal_{\SO(m)}^+$ then
$ {\mathcal E}^{\iota}\in \Mcal^-_{\SO(m)}$, and vice-versa.
\end{proposition}

\begin{proof}
It suffices to check the spinor norm of ${\mathcal S}^\iota$. 
But from 
\eqref{eqn:bhat},  $\Nm({\mathcal S}^\iota)$ is a line bundle with transition function on $\Delta^\ast$ given by:
$$
\Nm(\hat\varphi\widehat \zeta)=\exp(2\pi i\xi)\Nm(\hat\varphi(z)) \Nm
\left(\exp\left((\pi i\xi/2)(e_1e_m-e_me_1)\right)\right) = z\ ,
$$
since $\hat\varphi(z)$ and $\exp\left((\pi i\xi/2)(e_1e_m-e_me_1)\right)\in \Spin({\mathcal
E},q)$.
Therefore,  $\Nm({\mathcal S}^\iota)\simeq{\mathcal O}_C(p)$.
\end{proof}

It will be useful to keep in mind the following diagram:

\begin{equation}
 \label{eqn:hecke}
 \begin{split}
 \Scale[.9]{
\xymatrix{
& \Mcal^{par}_{\Spin(m)}\ar[ld]_{\pr^+} \ar[rd]^{\pr^-} &\\
\Mcal^+_{\Spin(m)}\ar[d]&& \Mcal^-_{\Spin(m)}\ar[d] \\
\Mcal^+_{\SO(m)}&& \Mcal^-_{\SO(m)} 
}}
\end{split}
\end{equation}
Here, $\pr^+$ is the forgetful map that discards the parabolic structure, and $\pr^-$ is the $\iota$-transform described above.

\begin{remark} \label{rem:fiber}
As in the case of $\SO(m)$ bundles, the $\iota$-transform on $\SC(m)$ bundles  is reversible. In particular,  the fiber of $\pr^-$ is a copy of $\OG$, and so  it is connected and projective.
\end{remark}

\subsection{The $\iota$-transform and the Pfaffian bundle} \label{sec:det-pfaffian}
We first use the $\iota$-transform to prove the following.

\begin{proof}[Proof of Proposition \ref{prop:pfaffian-divisor}]
	For an orthogonal bundle, we have (cf.\ \cite[Prop.\ 4.6]{Oxbury:99})
	$
	w_2(\Ecal)\equiv h^0(C,\Ecal\otimes\kappa)+ mh^0(C,\kappa)$ mod $2$.
	Hence, for $\Ecal\in \Mcal^-_{\SO(m)}$, if either $m$ or $\kappa$ are even, then $h^0(C,\Ecal\otimes\kappa)$ is odd. If both $m$ and $\kappa$ are odd, then by 
	\cite[Prop.\ 4.6]{Oxbury:99}, $h^0(C,\Ecal\otimes\kappa)=0$   for generic $\Ecal$. On the other hand, choose any theta characteristic $\kappa_0$ with $h^0(C, \kappa_0)\neq 0$.
	Write $m=2r+1$, and let:
	$\Ecal_0= (\kappa_0\otimes \kappa^{-1})^{\oplus r}\oplus \Ocal_C\oplus (\kappa\otimes \kappa_0^{-1})^{\oplus r}$,
	with the obvious orthogonal structure. 
	Then 
	\begin{equation} \label{eqn:h0}
	h^0(C, \Ecal_0\otimes \kappa)=(m-1) h^0(C, \kappa_0) +h^0(C, \kappa)\geq m-1\geq 2\ .
	\end{equation}
	Pick an isotropic line of $\Ecal_0$ at a point, and perform the elementary transformation in \eqref{eqn:E-flat}. Then by \eqref{eqn:h0}, $h^0(C, \Ecal^\flat\otimes \kappa)\neq 0$, which by \eqref{eqn:E-iota} implies that the $\iota$-transform $\Ecal=\Ecal_0^\iota\in\Mcal^-_{\SO(m)} $ has $h^0(C, \Ecal\otimes \kappa)\neq 0$. This completes the proof.
\end{proof}
Recall the notation from Section \ref{sec:parabolic}.  Choose $\kappa\in\Th(C)$ and denote the pull-back to $\Xcal$ by $\pr^\ast\kappa$. Then we have the next result.
\begin{proposition} \label{prop:det}
For a family of orthogonal bundles $(\Ecal, q)\to \Xcal$, and $\Ecal^\iota$ the $\iota$-transform,
$$
\Det R\pi_\ast(\Ecal^\iota\otimes \pr^\ast\kappa)\simeq \Det R\pi_\ast(\Ecal\otimes \pr_1^\ast\kappa)\otimes (s^\ast\tau)^{\otimes 2}\ .
$$
\end{proposition}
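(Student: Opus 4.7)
The plan is to exploit multiplicativity of the determinant of cohomology in short exact sequences, applied to the two defining sequences of the $\iota$-transform, together with the adjunction $\pi_\ast\sigma_\ast=\id$ (which holds since $\pi\circ\sigma=\id_B$). First I will tensor the sequences
\begin{equation*}
0\to \Ecal^\flat\to \Ecal\to \sigma_\ast(\tau_1)\to 0,\qquad 0\to \Ecal^\flat\to \Ecal^\iota\to \sigma_\ast(\tau_1^\ast)\to 0
\end{equation*}
by $\pr^\ast\kappa$ and apply the functor $\Det R\pi_\ast$. Writing $L_\flat:=\Det R\pi_\ast(\Ecal^\flat\otimes \pr^\ast\kappa)$, multiplicativity gives
\begin{align*}
\Det R\pi_\ast(\Ecal\otimes \pr^\ast\kappa) &\simeq L_\flat\otimes \Det R\pi_\ast(\sigma_\ast\tau_1\otimes \pr^\ast\kappa),\\
\Det R\pi_\ast(\Ecal^\iota\otimes \pr^\ast\kappa) &\simeq L_\flat\otimes \Det R\pi_\ast(\sigma_\ast\tau_1^\ast\otimes \pr^\ast\kappa).
\end{align*}

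Next I will evaluate the torsion contributions. By the projection formula, $\sigma_\ast\tau_1\otimes \pr^\ast\kappa\simeq \sigma_\ast(\tau_1\otimes \sigma^\ast\pr^\ast\kappa)$, and since $\sigma$ is a section of $\pi$, $R\pi_\ast\sigma_\ast$ is the identity on $B$. Hence the determinant factor reduces to the line bundle $\tau_1\otimes \sigma^\ast\pr^\ast\kappa$ on $B$, and similarly with $\tau_1^\ast$ in place of $\tau_1$. Taking the ratio of the two expressions above, the common factor $L_\flat\otimes \sigma^\ast\pr^\ast\kappa$ cancels and leaves
\begin{equation*}
\Det R\pi_\ast(\Ecal^\iota\otimes \pr^\ast\kappa)\simeq \Det R\pi_\ast(\Ecal\otimes \pr^\ast\kappa)\otimes \tau_1^{\otimes(-2)}.
\end{equation*}

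To conclude, I will identify $\tau_1$ with $(s^\ast\tau)^\ast$. By definition $\tau_1=\sigma^\ast\Ecal/s^\ast\tau^\perp$, and the nondegenerate quadratic form $q$ induces a perfect pairing between the isotropic line $s^\ast\tau$ and the rank-one quotient $\sigma^\ast\Ecal/s^\ast\tau^\perp$, yielding a canonical isomorphism $\tau_1\simeq (s^\ast\tau)^\ast$. Substituting $\tau_1^{\otimes(-2)}\simeq (s^\ast\tau)^{\otimes 2}$ gives the desired formula.

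The argument is essentially formal; I do not anticipate a serious obstacle. The subtlest point is getting the direction right in the isomorphism $\tau_1\simeq (s^\ast\tau)^\ast$ so that the twist comes out as $(s^\ast\tau)^{\otimes 2}$ rather than its dual, and a small care is needed to confirm that the projection formula and the identification $R\pi_\ast\sigma_\ast=\id$ are being applied to sheaves supported along $\sigma(B)$ in a way compatible with base change, as guaranteed by Remark \ref{rem:functorial} and Proposition \ref{LSpfaff}.
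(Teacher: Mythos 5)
Your proof is correct and follows the same route as the paper: multiplicativity of the determinant of cohomology applied to the two defining sequences \eqref{eqn:E-flat} and \eqref{eqn:E-iota}, the identification $\Det R\pi_\ast\sigma_\ast(-)=(-)$ since $\sigma$ is a section, and the duality $\tau_1\simeq s^\ast(\tau^\ast)$ supplied by the quadratic form. The paper simply substitutes $\tau_1\simeq s^\ast(\tau^\ast)$ before taking the ratio rather than after, which is a cosmetic reordering.
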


\begin{proof}

First, notice that the quadratic form gives an isomorphism $\tau_1\simeq s^\ast(\tau^*)$.  Let
$\kappa_\sigma=\sigma^\ast\pr_1^\ast\kappa$.
Then using \eqref{eqn:E-flat} and \eqref{eqn:E-iota}, 
\begin{align*}
\Det R\pi_\ast (\Ecal\otimes \pr_1^\ast\kappa)
&
\simeq \Det R\pi_\ast (\Ecal^\flat\otimes \pr_1^\ast\kappa)\otimes s^\ast(\tau^*)\otimes \kappa_\sigma \notag\ , \\
\Det R\pi_\ast (\Ecal^\iota\otimes \pr_1^\ast\kappa)
&
\simeq \Det R\pi_\ast (\Ecal^\flat\otimes \pr_1^\ast\kappa)\otimes s^\ast \tau\otimes \kappa_\sigma  \notag\ .
\end{align*}
The result follows.
\end{proof}

\begin{corollary} \label{cor:determinant}
When pulled back to $\Mcal_{\Spin(m)}^{par}$, 
the Pfaffian bundles on $\Mcal_{\Spin(m)}^{\pm}$ for any theta characteristic $\kappa$ are related by
$
(\pr^-)^\ast \Pcal_\kappa \simeq (\pr^+)^\ast \Pcal_\kappa\otimes  s^\ast(\tau^*)$.
\end{corollary}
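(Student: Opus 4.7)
The strategy is to derive the corollary by ``taking a square root'' of Proposition~\ref{prop:det}. I would first pull back Proposition~\ref{prop:det} to the universal family over $\Mcal_{\Spin(m)}^{par}\times C$, where the universal orthogonal bundle pulled back via $\pr^+$ plays the role of $(\Ecal,q)$ and, by the construction summarized in diagram \eqref{eqn:hecke}, the map $\pr^-$ records its $\iota$-transform $(\Ecal^\iota,q^\iota)$. Rewriting the result in terms of the line bundle $\mathcal{D}$ from Proposition~\ref{LSpfaff}, this yields
$$
(\pr^-)^\ast\mathcal{D}\;\simeq\;(\pr^+)^\ast\mathcal{D}\otimes (s^\ast\tau^\ast)^{\otimes 2}.
$$
Since Proposition~\ref{LSpfaff} provides canonical square roots $\Pcal_\kappa^{\otimes 2}\simeq \mathcal{D}$, squaring the claimed isomorphism in the corollary recovers exactly this identity.

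Thus the ``error'' line bundle
$$
L\;:=\;(\pr^-)^\ast\Pcal_\kappa\otimes \bigl((\pr^+)^\ast\Pcal_\kappa\bigr)^{-1}\otimes s^\ast\tau
$$
is automatically $2$-torsion on $\Mcal_{\Spin(m)}^{par}$. The main task, and the main obstacle, is to upgrade this to $L\simeq\Ocal$. My preferred approach is to lift the proof of Proposition~\ref{prop:det} itself to the Pfaffian level: exploit that the Pfaffian functor commutes with base change (Proposition~\ref{LSpfaff}) and is canonically determined by the orthogonal datum, so the Knudsen--Mumford decompositions of $\mathcal{D}$ along the elementary transformation sequences \eqref{eqn:E-flat}, \eqref{eqn:E-flat-dual}, \eqref{eqn:E-iota} --- combined with the compatible quadratic structures in \eqref{eqn:q-sharp} linking $\Ecal^\flat\subset\Ecal\subset\Ecal^\sharp$ and $\Ecal^\flat\subset\Ecal^\iota\subset\Ecal^\sharp$ --- should produce a canonical identification of Pfaffians refining the identity for determinants.

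The delicate step will be controlling the signs in this canonical Pfaffian construction: the sequences \eqref{eqn:E-flat} and \eqref{eqn:E-iota} are not themselves sequences of orthogonal bundles (only their ``sum'' $\Ecal^\sharp$ inherits an $\Ocal_\Xcal(\sigma(B))$-valued form), so some additional bookkeeping is needed to check that the natural isomorphism preserves the canonical sign of the Pfaffian. Should this route become too cumbersome, I would fall back on a Picard-theoretic argument: by Proposition~\ref{uniformlyBLS}, $\Pic(\Mcal_{\Spin(m)}^\pm)$ is torsion-free, and by Remark~\ref{rem:fiber} the maps $\pr^\pm$ are fibrations with connected, simply connected projective fibers $\OG$. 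A Leray-type computation then shows $\Pic(\Mcal_{\Spin(m)}^{par})$ is generated by $(\pr^+)^\ast\Pcal$ and $\tau$, hence is torsion-free, forcing $L\simeq\Ocal$ and completing the proof.
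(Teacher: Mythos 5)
Your fallback argument is essentially the paper's proof: combining Proposition~\ref{prop:det} with the square--root property in Proposition~\ref{LSpfaff} yields
\[
\left[(\pr^-)^\ast\Pcal_\kappa\right]^{\otimes 2}\simeq \left[(\pr^+)^\ast\Pcal_\kappa\otimes s^\ast(\tau^\ast)\right]^{\otimes 2},
\]
and then one invokes torsion--freeness of $\Pic(\Mcal^{par}_{\Spin(m)})$ to cancel the squares. The only divergence is in how that torsion--freeness is sourced. The paper simply cites \cite[Thm.~1.1]{LaszloSorger:97}, which computes the Picard group of the parabolic moduli stack directly (as the product of the Picard group of the non-parabolic stack and a character group, hence free), whereas you propose to re-derive it via a Leray computation for the fibration $\pr^+ : \Mcal^{par}_{\Spin(m)}\to\Mcal^+_{\Spin(m)}$ with fiber $\OG$. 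Your route is workable --- $\OG$ is a smooth quadric, simply connected with free Picard group, and $\Pic(\Mcal^+_{\Spin(m)})\simeq\ZBbb$ by Proposition~\ref{uniformlyBLS} --- but it duplicates a result that is already in the literature and is exactly tailored to this situation, so the citation is the cleaner move.

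Your ``preferred approach'' of refining the determinant identity to a canonical Pfaffian identity along the sequences \eqref{eqn:E-flat}, \eqref{eqn:E-flat-dual}, \eqref{eqn:E-iota} runs into more than a sign-bookkeeping issue: the Pfaffian of Proposition~\ref{LSpfaff} is attached to a bundle \emph{with} an $\omega_C$-valued nondegenerate quadratic form, and neither $\Ecal^\flat$ nor the quotient torsion sheaves carry such data, so there is no Pfaffian functor to apply term-by-term in those exact sequences. You would need some other device (e.g.\ an ad hoc construction of a square root of the determinant along the Hecke modification at a single point), and it is not clear that a \emph{canonical} choice exists; the torsion-freeness argument neatly sidesteps having to make one. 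You correctly identify that $L$ is $2$-torsion and that this is the crux; the paper goes straight to the fallback and never attempts the direct Pfaffian comparison.
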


\begin{proof}
From  Propositions \ref{LSpfaff} and \ref{prop:det},
we see that 
$
\left[(\pr^-)^\ast \Pcal_\kappa\right]^{\otimes 2} \simeq \left[(\pr^+)^\ast \Pcal_\kappa\otimes s^\ast(\tau^*)\right]^{\otimes 2}
$.
The result follows from the fact that $\Pic(\Mcal_{\Spin(m)}^{par})$ is torsion-free (cf.\ \cite[Thm.\ 1.1]{LaszloSorger:97}).
\end{proof}

\subsection{Geometric version of Theorem \ref{identification}}
By Corollary \ref{cor:determinant} and Remark \ref{rem:fiber}, we have
$$
H^0(\Mcal^-_{\Spin(m)}, \Pcal^{\otimes \ell})= H^0(\Mcal^{par}_{\Spin(m)}, (\pr^-)^\ast \Pcal^{\otimes \ell}) = H^0(\Mcal^{par}_{\Spin(m)}, (\pr^+)^\ast \Pcal^{\otimes \ell}\otimes s^\ast(\tau^*)^{\otimes \ell})\ .
$$
By the Borel-Weil theorem, the highest weight representation $V_{\ell\omega_1}$ of $\Spin(m)$ is given by the global sections of 
$(\tau^*)^{\otimes \ell}\to \OG$. It then follows as in  \cite[Thm.\ 1.2]{LaszloSorger:97} or \cite[Props.\ 6.5 and 6.6]{Pauly:96} that the space of sections of $\Pcal^{\otimes \ell} \to \Mcal^-_{\Spin(m)}$ is isomorphic to the space of conformal blocks. This gives an alternative proof of Theorem \ref{identification}.

\section{Hitchin connection for twisted spin bundles}\label{twistedHiggs}

\subsection{Higgs bundles} \label{sec:higgs}

Let $M^\theta_{\SO(m)}$ denote the coarse quasi-projective moduli space of semistable \emph{$\SO(m)$-Higgs bundles} on $C$,
 and let $M^{\theta, reg}_{\SO(m)}\subset M^\theta_{\SO(m)}$ denote the regularly stable locus, i.e. pairs $(\Ecal,\theta)$, $\theta\in H^0(C,  \Ad(\Ecal)\otimes\omega_C)$, such that $\operatorname{Aut}(E,\theta)=Z(\SO(m))$, where $Z(\SO(m))$ denotes the center of $\SO(m)$.
We assume in this section that  $C$ has genus $g\geq 2$.
 Then $M^{\theta, reg}_{\SO(m)}$ is smooth (\cite[Proposition 2.14]{BradlowGpradagothen}), with complement of codimension $\geq 2$ (\cite[Theorem II.6]{Faltings:93}).
Let 
\begin{equation} \label{eqn:hitchin-base}
B(m)
=\begin{cases}
\bigoplus_{i=1}^r H^0(C, \omega_C^{\otimes 2r}) & m=2r+1 \\
\bigoplus_{i=1}^{r-1} H^0(C, \omega_C^{\otimes 2r})\oplus H^0(C,\omega_C^r) & m=2r\ .
\end{cases}
\end{equation}
 The Hitchin map $H: M^\theta_{\SO(m)}\to B(m)$ is a dominant, proper morphism.
 Away from the discriminant locus $\Delta\subset B_{\SO(m)}$, $H$ is a smooth fibration by abelian varieties, and $M^{\theta, ns}_{\SO(m)}:=M^\theta_{\SO(m)}\bigr|_{B(m)-\Delta}\subset M^{\theta, reg}_{\SO(m)}$ (see \cite{Hitchin:87b,Hitchin:07}, and also \cite[Lemma 4.2]{DonagiPantev:12}).
 There is an action of $\CBbb^\times$ on $M^\theta_{\SO(m)}$, and $H$ is equivariant with respect to multiplication on $H^0(C, \omega_C^{\otimes k})$ with weight $k$. 
We will need  the following.

\begin{proposition} \label{prop:flat-trivial}
Let $V\to M^{\theta, reg}_{\SO(m)}$ be a flat bundle. If $V$ restricted to a general fiber of $h$ is trivial, then $V$ is trivial.
\end{proposition}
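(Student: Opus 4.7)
The plan is to show that the monodromy representation $\rho: \pi_1(M^{\theta,reg}_G) \to GL(r,\mathbb{C})$, with $r = \rk V$, is trivial, which is equivalent to $V$ being trivial as a flat bundle. The strategy is to descend $V$ along the Hitchin map to a flat bundle on $B_G$ and exploit the contractibility of $B_G$.

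First, I would translate the hypothesis: triviality of $V$ on a general fiber $F_0$ means $\rho \circ (i_0)_{\ast} = 1$, where $i_0: F_0 \hookrightarrow M^{\theta,reg}_G$ is the inclusion. Propagating via parallel transport in the base, the image $(i_b)_{\ast}\pi_1(F_b) \subset \pi_1(M^{\theta,reg}_G)$ lies in $\ker\rho$ for every $b \in B_G \setminus \Delta$. Next, for the smooth proper fibration $h|_{M^{\theta,ns}_G}: M^{\theta,ns}_G \to B_G \setminus \Delta$ with connected abelian variety fibers, the homotopy long exact sequence yields
\[ \pi_1(F_0) \to \pi_1(M^{\theta,ns}_G) \to \pi_1(B_G \setminus \Delta) \to 1. \]
Combined with the previous step, $\rho|_{\pi_1(M^{\theta,ns}_G)}$ factors through a representation $\bar\rho$ of $\pi_1(B_G \setminus \Delta)$. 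Equivalently, there is a flat bundle $W := h_{\ast}V$ on $B_G \setminus \Delta$ with $V|_{M^{\theta,ns}_G} \cong h^{\ast} W$, and the natural pullback map is an isomorphism by comparison of ranks on the open dense locus of trivial fibers.

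The crucial step is to extend $W$ to a flat bundle on all of $B_G$. Since $B_G$ is a vector space, any such extension is trivial as a flat bundle; then $V|_{M^{\theta,ns}_G} = h^{\ast} W$ is trivial, and the flat sections extend across the codimension $\ge 1$ complement to all of $M^{\theta,reg}_G$ via the flat connection, yielding triviality of $V$. To extend $W$ across $\Delta$, it suffices to verify that the monodromy $\bar\rho([\gamma])$ around a small loop $\gamma$ encircling a smooth point $b_0$ of any irreducible component of $\Delta$ is trivial. Take a small disk $D \subset B_G$ transverse to $\Delta$ at $b_0$, so $\gamma = \partial D \subset B_G \setminus \Delta$, and pick a lift $\tilde\gamma \subset h^{-1}(\partial D)$; then $\bar\rho([\gamma]) = \rho([\tilde\gamma])$. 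Inside the smooth open subset $M_D := h^{-1}(D) \cap M^{\theta,reg}_G$, the central fiber $h^{-1}(b_0) \cap M^{\theta,reg}_G$ is non-empty (by surjectivity of $h$ on the regularly stable locus) and may be taken connected; the family $M_D \to D$ deformation-retracts onto this central fiber. Hence $\tilde\gamma$ is null-homotopic in $M_D \subset M^{\theta,reg}_G$, forcing $\rho([\tilde\gamma]) = 1$.

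The main obstacle is the last point, namely the deformation-retract / connectedness property for $M_D$ along the discriminant. This requires detailed control over the local topology of the Hitchin fibration near smooth points of $\Delta$: one must know that $h^{-1}(b_0) \cap M^{\theta,reg}_G$ is non-empty and (path-)connected, and that the restriction of $h$ over $D$ has the expected topology. In the setting of this paper, this is precisely where one invokes the connectivity results of Donagi--Pantev cited in the discussion preceding the proposition, or alternatively an explicit analysis via spectral covers and the abelianized Hitchin fibration, which also underlies the reduction to the $\SO(m)$ moduli space mentioned in the introduction.
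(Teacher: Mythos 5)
Your overall framework is the same as the paper's: use the homotopy exact sequence of the fibration $h$ over $B_G \setminus \Delta$, note that the hypothesis kills the monodromy on $\pi_1(A)$, and reduce the whole question to showing that loops around $\Delta$ have trivial monodromy. The first two paragraphs of your argument are sound and parallel the paper's setup.

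The gap — which you flag yourself — is in the extension step across $\Delta$. You want to argue that for a small transverse disk $D$ at a smooth point $b_0$ of $\Delta$, the set $M_D = h^{-1}(D)\cap M^{\theta,reg}_G$ deformation retracts onto the central fiber $h^{-1}(b_0)\cap M^{\theta,reg}_G$, and that this central fiber is connected. Neither claim is established, and both are genuinely problematic: $h$ restricted to $M^{\theta,reg}_G$ is not proper (the regularly stable locus is open in each fiber), so the standard retraction of a proper family over a disk onto its central fiber does not obviously restrict to $M_D$; and connectivity of $h^{-1}(b_0)\cap M^{\theta,reg}_G$ at a discriminant point is exactly the kind of assertion one cannot take for granted here (the whole subtlety of this section is that such fiber-connectivity statements are unavailable in the twisted $\Spin$ setting). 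The paper avoids all of this by a much lighter argument: since the critical locus of $h|_{M^{\theta,reg}_G}$ maps into a codimension-$\geq 2$ subset of $B_G$, after moving $b_0$ within $\Delta$ one can find a \emph{regular} point $x\in h^{-1}(b_0)\cap M^{\theta,reg}_G$; near $x$ the map $h$ is a submersion, so after shrinking $D$ there is a local section $\sigma: D\to M^{\theta,reg}_G$, and the lift $\sigma(\partial D)$ of $\gamma=\partial D$ bounds the disk $\sigma(D)$ and is therefore null-homotopic in $M^{\theta,reg}_G$. No retraction and no connectivity of the singular fiber is needed. You should replace your retraction argument by this section argument; everything else in your writeup then goes through.
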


The proposition is an immediate consequence of  the following.

\begin{lemma}
If $A$ is a general fiber of $h$, then the inclusion $A\hookrightarrow M^{\theta, reg}_{\SO(m)}$ induces a surjection 
$\pi_1(A)\rightarrow\pi_1(M^{\theta, reg}_{\SO(m)})$.
\end{lemma}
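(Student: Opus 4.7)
The plan is to reduce the assertion to two facts: Nori's theorem on fundamental groups of proper surjective morphisms, applied to the Hitchin map $h$, will give surjectivity of $\pi_1(A)\to \pi_1(M^\theta_G)$; and a standard codimension argument will promote this to the claimed surjectivity onto $\pi_1(M^{\theta, reg}_G)$.

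First I would verify the hypotheses of Nori's theorem for $h: M^\theta_G \to B_G$. The map $h$ is proper and surjective, $B_G$ is an affine space and hence simply connected, $M^\theta_G$ is normal (as a good GIT quotient of a smooth variety), and for semisimple $G$ the generic Hitchin fiber $A$ is a connected abelian variety, as one sees either via spectral covers for classical $G$ or via the cameral cover description in general. Nori's theorem then yields the exact sequence
\begin{equation*}
\pi_1(A) \lra \pi_1(M^\theta_G) \lra \pi_1(B_G) \lra 1,
\end{equation*}
and since $\pi_1(B_G) = 1$, this gives surjectivity of $\pi_1(A) \to \pi_1(M^\theta_G)$.

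Second, I would show that the open embedding $M^{\theta, reg}_G \hookrightarrow M^\theta_G$ induces an isomorphism on $\pi_1$. The complement consists of strictly semistable Higgs bundles together with stable Higgs bundles whose automorphism group strictly contains $Z(G)$; for $g \geq 2$ and semisimple $G$, a stratification of $M^\theta_G$ by stabilizer type coupled with the usual Riemann--Roch based dimension count shows that this complement has complex codimension at least $2$ in $M^\theta_G$. Since $M^\theta_G$ is normal, removing a codimension-$\geq 2$ closed subvariety does not alter $\pi_1$, and we obtain $\pi_1(M^{\theta, reg}_G) \isorightarrow \pi_1(M^\theta_G)$. Composing with the surjection from the previous step completes the proof.

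The main obstacle is the codimension-$\geq 2$ estimate in the second step: it requires care in low genus and for groups with nontrivial center, and is the step most likely to require case analysis or explicit appeal to the moduli literature. A secondary, more bookkeeping-level subtlety is to invoke the correct form of Nori's theorem -- one wants a version valid when the source variety is merely normal rather than smooth, so that one can apply it directly to $M^\theta_G$ without a preliminary resolution.
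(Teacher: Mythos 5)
Your argument diverges from the paper's at the outset: you try to apply a Nori-type exact sequence to the full Hitchin map $h:M^\theta_G\to B_G$ (which is proper, with normal source and simply connected base) and then transfer the conclusion from $M^\theta_G$ to $M^{\theta,reg}_G$ by a codimension argument. The paper instead never leaves the smooth open locus $M^{\theta,reg}_G$: it uses the honest fibration over $B_G-\Delta$ to get an exact sequence $\pi_1(A)\to\pi_1(M^{\theta,ns}_G)\to\pi_1(B_G-\Delta)\to 1$, and then kills $\pi_1(B_G-\Delta)$ inside $\pi_1(M^{\theta,reg}_G)$ by lifting each generating loop (the boundary of a transverse disk to $\Delta$) along a local section of $h$ through a regular point of $h$ in $M^{\theta,reg}_G$, so that the lifted loop bounds in $M^{\theta,reg}_G$.

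There is a genuine gap in your second step. You assert that since $M^\theta_G$ is normal, removing a closed subvariety of codimension $\geq 2$ does not change $\pi_1$, and conclude that $\pi_1(M^{\theta,reg}_G)\to\pi_1(M^\theta_G)$ is an isomorphism. That statement is correct for \emph{smooth} varieties, but it fails for merely normal ones: for a dense open $U$ in a normal variety $X$, one always has a surjection $\pi_1(U)\twoheadrightarrow\pi_1(X)$, but injectivity can fail even when $X\setminus U$ has codimension $\geq 2$. A standard counterexample is the affine or projective cone $X$ over a smooth projective curve $C$ of genus $g\geq 1$: the vertex has codimension $2$, $\pi_1(X)=\{1\}$, yet $X$ minus the vertex deformation retracts to $C$ and has $\pi_1=\pi_1(C)\neq\{1\}$. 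Since for semisimple $G$ the moduli space $M^\theta_G$ is genuinely singular along the strictly semistable (and stable-with-extra-automorphisms) locus, you cannot conclude injectivity of $\pi_1(M^{\theta,reg}_G)\to\pi_1(M^\theta_G)$, and surjectivity of $\pi_1(A)\to\pi_1(M^\theta_G)$ alone does not give surjectivity onto $\pi_1(M^{\theta,reg}_G)$. To make an argument in your style work you would have to restrict $h$ to $M^{\theta,reg}_G$ before invoking a fundamental-group exact sequence, but then $h$ is no longer proper, which is precisely the difficulty the paper's transverse-disk construction is designed to get around. The Nori step (your step 1) is also more delicate than advertised -- one must check that the relevant exact sequence holds for a possibly singular normal total space and that reducible or nonreduced fibers over $\Delta$ do not spoil it -- but the decisive error is the normality-implies-$\pi_1$-invariance claim.
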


\begin{proof}
Since $\Delta$ is codimension $1$, inclusion induces a surjection
$
\imath_\ast: \pi_1(M^{\theta, ns}_{\SO(m)})\to \pi_1(M^{\theta, reg}_{\SO(m)})
$.
Hence, we have the diagram:
$$
\Scale[.9]{
\xymatrix{
\pi_1(A)\ar[r]^{a_\ast\qquad} \ar[dr]&\pi_1(M^{\theta, ns}_{\SO(m)}) \ar[r]^{h_\ast}\ar[d]^{\imath_\ast}  & \pi_1(B_(m)-\Delta)\ar[r] & \{1\} \\
& \pi_1(M^{\theta, reg}_{\SO(m)}) \ar[d] && \\
& \{1\} &&
}}
$$
Notice  that every element of $\pi_1(B(m)-\Delta)$ is represented by the boundary of a transverse disk. More precisely, for $\gamma\in \pi_1(B(m)-\Delta)$
 and $D\subset \CBbb$ the unit disk, there is  an embedding $D\hookrightarrow B(m)$, $D\cap\Delta=\{0\}$, and
  such that $\partial D$ represents $\gamma$. 
  
  Next, consider the fiber in $M^{\theta, reg}_{\SO(m)}$ of $h$ over $\{0\}$.  Since the image in $B(m)$ of fibers  contained in the critical locus of $h$ lies in a set of codimension $2$, we may assume without loss of generality that there is a regular point $x\in h^{-1}(0)$.  There are therefore local smooth coordinates about $x$ with respect to which $h$ is given by projection.
   By shrinking the disk if necessary, it follows that there is a local section $\sigma:D\to M^{\theta, reg}_{\SO(m)}$ of $h$. The image $\sigma(\partial D)$ is therefore a loop in $M^{\theta, ns}_{\SO(m)}$, contractible in $M^{\theta, reg}_{\SO(m)}$,  that projects to $\partial D$.  
   We conclude that for any $\gamma\in \pi_1(B(m)-\Delta)$ there is $\beta\in \pi_1(M^{\theta, ns}_{\SO(m)})$ such that 
$h_\ast(\beta)=\gamma$ and $\imath_\ast(\beta)=1$.

The lemma now follows easily. For if $\alpha\in \pi_1(M^{\theta, reg}_{\SO(m)})$, then by surjectivity of $\iota_\ast$ there is $\tilde\alpha\in \pi_1(M^{\theta, ns}_{\SO(m)})$ such that $\imath_\ast(\tilde\alpha)=\alpha$. By the discussion in the previous paragraph, we can find $\beta
\in \pi_1(M^{\theta, ns}_{\SO(m)})$ such that $h_\ast(\beta)=h_\ast(\tilde\alpha)$ and $\imath_\ast(\beta)=1$. But then $\beta^{-1}\tilde\alpha$ is in the kernel of $h_\ast$, and so is in the image of $a_\ast$, while at the same time it projects by $\imath_\ast$ to $\alpha$.  Therefore $\imath_\ast\circ a_\ast$ is surjective.
\end{proof}

The application of the previous result that we need is the following
\begin{corollary}  \label{cor:nontrivial}
Let $\chi: \pi_1(M^{reg}_{\SO(m)})\to \{\pm 1\}$ be a nontrivial character with associated line bundle $L_\chi$. Then the pullback of $L_\chi$ to $M^{\theta, reg}_{\SO(m)}$ is nontrivial on generic fibers of the Hitchin map.
\end{corollary}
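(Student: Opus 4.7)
The strategy is to apply the contrapositive of Proposition \ref{prop:flat-trivial}. Let $\pi : M^{\theta, reg}_{\SO(m)} \dashrightarrow M^{reg}_{\SO(m)}$ denote the forgetful rational map $(E,\phi) \mapsto E$. It is regular on the open subset $U \subset M^{\theta, reg}_{\SO(m)}$ consisting of those Higgs bundles whose underlying orthogonal bundle is already regularly stable, and on $U$ one has the honest pullback $\pi^{\ast} L_\chi$, which is a flat line bundle (being the pullback of a flat line bundle under a morphism).

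The first step is to extend $\pi^{\ast} L_\chi$ from $U$ to all of $M^{\theta, reg}_{\SO(m)}$. Since $M^{\theta, reg}_{\SO(m)}$ is smooth and, by a standard dimension count, the locus of Higgs bundles whose underlying bundle fails to be regularly stable has codimension at least $2$, the flat line bundle $\pi^{\ast} L_\chi$ on $U$ extends uniquely (and flatly) to a line bundle $\tilde L_\chi$ on $M^{\theta, reg}_{\SO(m)}$. This codimension claim is the main technical point of the proof; once it is in hand, the rest of the argument is essentially formal.

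Next, I would verify that $\tilde L_\chi$ is itself nontrivial on $M^{\theta, reg}_{\SO(m)}$. For this, consider the embedding $\imath : M^{reg}_{\SO(m)} \hookrightarrow M^{\theta, reg}_{\SO(m)}$ given by $E \mapsto (E,0)$. Its image lies in $U$ and satisfies $\pi \circ \imath = \mathrm{id}$, so $\imath^{\ast} \tilde L_\chi \simeq L_\chi$. Since $\chi$ is nontrivial and the assignment $\chi \mapsto L_\chi$ is injective (by part (4) of the Proposition following Lemma \ref{lem:simplyconnected}), the bundle $L_\chi$ is nontrivial, and hence so is $\tilde L_\chi$.

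Finally, I apply the contrapositive of Proposition \ref{prop:flat-trivial} to the flat line bundle $\tilde L_\chi$: if $\tilde L_\chi$ were trivial on a general fiber of the Hitchin map $h$, it would be trivial on all of $M^{\theta, reg}_{\SO(m)}$, contradicting nontriviality. Therefore $\tilde L_\chi$ is nontrivial on a generic fiber, which is precisely the claim of the corollary.
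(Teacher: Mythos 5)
Your proof is correct and follows essentially the same route as the paper: both arguments hinge on the fact that the complement of the cotangent bundle $T^\ast M^{reg}_{\SO(m)}$ (your $U$) inside $M^{\theta,reg}_{\SO(m)}$ has codimension at least two, which lets the flat line bundle $L_\chi$ transfer to the Higgs moduli space, after which Proposition \ref{prop:flat-trivial} is applied contrapositively. The paper states the transfer in terms of a surjection of fundamental groups, while you phrase it as uniquely extending the flat bundle and then verifying nontriviality via restriction to the zero section $E \mapsto (E,0)$; these are equivalent formulations of the same argument.
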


\begin{proof} Let $M^{\theta,s}_{{\SO(m)}}$ be the locus of stable ${\SO(m)}$-Higgs bundles. By  \cite[Thm.\ II.6]{Faltings:93}, the set $M_{\SO(m)}^{\theta,hs}$ consisting of stable Higgs bundles $(E,\theta)$ where $E$ is not stable as a principal ${\SO(m)}$ bundle has codimension at least $2$ in $M^{\theta,s}_{\SO(m)}$. Clearly, $M^{\theta,reg}_{\SO(m)}\backslash T^{\ast}M^{reg}_{\SO(m)}$ is contained in $M^{\theta,hs}_{\SO(m)}$. Moreover by \cite[Theorem II.6]{Faltings:93}, the complement of  $M^{\theta,reg}_{\SO(m)}$ in $M^{\theta,s}_{\SO(m)}$ has codimension at least $4$. This implies that the complement of $T^\ast M^{reg}_{\SO(m)}\subset M^{\theta, reg}_{\SO(m)}$ has positive codimension. Hence, there is a surjection
$\pi_1(M^{\theta, reg}_{\SO(m)})\to  \pi_1(M^{reg}_{\SO(m)})$. The result then follows from Proposition \ref{prop:flat-trivial}.
\end{proof}

\subsection{A vanishing theorem}  We use the notation $Y=M_{\SO(m)}^{-,reg}$ and $\widetilde Y=p^{-1}(Y)\subset M^{-, reg}_{\Spin(m)}$ from Section \ref{sec:divisor}.
The following is a key assumption in the construction of the Hitchin connection.
\begin{proposition} \label{prop:vanishing}With the  notation above,
\begin{enumerate}
\item $H^0(\widetilde Y, T\widetilde Y)=\{0\}$;
\item $H^1(\widetilde Y, {\mathcal O}_{\widetilde Y})=\{0\}$.
\end{enumerate}
\end{proposition}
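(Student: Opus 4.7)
The plan is to adapt the approach of Kumar--Narasimhan \cite{KumarNarasimhan:97}, working directly with the uniformization of Proposition \ref{uniformlyBLS} rather than with Higgs bundles. The latter are unavailable here because the connectedness of the fibers of the Hitchin map in the twisted setting is not established; this is exactly the obstruction mentioned after the statement of Theorem \ref{hitchinlaszlo}. Both statements will be reduced, via Hartogs and normality of $M^-_{\Spin(m)}$, to vanishing results on the full moduli space: since $\widetilde{Y}$ sits inside the normal projective variety $M^-_{\Spin(m)}$ with complement of codimension $\geq 2$ (as already recorded in the proof of Lemma \ref{lem:simplyconnected}), sections of $\Ocal$ and of the tangent sheaf extend uniquely across the singular locus.

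For part (2), I would pull back any class in $H^1(M^-_{\Spin(m)}, \Ocal)$ to the affine Grassmannian $\Qcal_{\Spin(m)}$ via the quotient map of Proposition \ref{uniformlyBLS}. Since $\Qcal_{\Spin(m)}$ admits an affine Schubert cell decomposition, $H^1(\Qcal_{\Spin(m)}, \Ocal) = 0$, and a descent argument for the loop group action then forces vanishing downstairs. For part (1), I would exploit the \'etale Galois cover $p : \widetilde{Y} \to Y$ with group $A \simeq J_2(C)$, giving the $A$-equivariant decomposition
\[
H^0(\widetilde{Y}, T\widetilde{Y}) \simeq \bigoplus_{\chi \in \widehat{A}} H^0(Y, TY \otimes L_\chi).
\]
For each $\chi$, a nonzero section would extend via Hartogs to a vector field on the projective variety $M^-_{\SO(m)}$ whose flow would define a one-parameter family of automorphisms of the moduli functor; this contradicts rigidity, since the infinitesimal automorphism group of a regularly stable twisted spin bundle reduces to its center.

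The main obstacle will be executing part (1) rigorously without recourse to the Higgs bundle picture. Hitchin's original argument relates $H^0(T)$ and $H^1(\Ocal)$ by Serre duality, using the holomorphic symplectic structure on $T^{\ast}M^{reg}$ together with completeness of the Hitchin map, and this path is inapplicable in the present twisted context. The substitute is a direct analysis of infinitesimal automorphisms of the moduli functor, twisted by the characters of $A \simeq J_2(C)$, which introduces a layer of bookkeeping beyond what appears in the simply connected setting of \cite{KumarNarasimhan:97}; verifying rigidity character by character, and checking that no $L_\chi$-twisted vector field survives, is where most of the work will be concentrated.
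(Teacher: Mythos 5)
Your proposal for part (1) has a fatal flaw, and it stems from a misreading of the paper's strategy. You claim Higgs bundles are unavailable in the twisted setting, but the paper uses them: it reduces to the $\SO(m)$ Higgs moduli space (not the twisted $\Spin$ one) and invokes Donagi--Pantev's connectedness of $\SO(m)$ Hitchin fibers. More seriously, your replacement argument --- that a vector field on $M^{-}_{\SO(m)}$ ``contradicts rigidity, since the infinitesimal automorphism group of a regularly stable bundle reduces to its center'' --- is a non-sequitur. The condition $\Aut(\Ecal)=Z(G)$ ensures the moduli problem is representable with no stacky directions, but it says nothing about global vector fields on the resulting moduli space. (The Jacobian, which parametrizes bundles with only central automorphisms, is full of vector fields; the reason $H^0(Y,TY)=0$ here depends on $g\geq 2$ and the geometry of the Hitchin fibration, neither of which your rigidity argument sees.) And for $\chi\neq 1$ the object $\sigma\in H^0(Y, TY\otimes L_\chi)$ is not a vector field, so there is no flow to integrate at all. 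The paper's actual mechanism for $\chi\neq 1$ is that contracting $\sigma$ with the fibers of $T^*Y\hookrightarrow M^\theta_{\SO(m)}$ yields a section of the flat bundle $L_\chi$ over $T^*Y$, and by Corollary \ref{cor:nontrivial} this $L_\chi$ is nontrivial on the generic Hitchin fiber, hence has no sections there; for $\chi=1$ one gets an honest function, extends it by Hartogs, pushes it down the proper Hitchin map with connected fibers, and finishes with the degree/scaling argument of Hitchin.

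For part (2), your uniformization approach differs genuinely from the paper, which follows Kumar--Narasimhan: it exhibits $M^{-}_{\Spin(m)}$ as a GIT quotient, invokes Boutot to get rational singularities, identifies the dualizing sheaf with (the dual of) $\Det R\pi_*\Ad$ to conclude Gorenstein, and applies Serre duality plus Grauert--Riemenschneider vanishing and Scheja. Your idea of pulling $H^1(\Ocal)$ classes back to $\Qcal_{\Spin(m)}$ and citing $H^1(\Qcal,\Ocal)=0$ is not implausible in spirit, but ``a descent argument for the loop group action'' is doing all the work and is left unjustified: the uniformization map is a torsor for an infinite-dimensional ind-group, and injectivity of pullback on $H^1$ requires controlling $\pi_*\Ocal$ through that quotient as well as passing from the stack $\Mcal^{-}_{\Spin(m)}$ to the coarse space $M^{-}_{\Spin(m)}$ and its open subset $\widetilde Y$. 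None of these reductions is automatic, and as written the proposal does not close them. The paper's route is more elementary precisely because it never leaves the finite-dimensional GIT picture.
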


\begin{proof}
(1) 
Since $\widetilde Y\to Y$ is \'etale
it suffices to prove the vanishing of 
$$H^0(\widetilde Y, T\widetilde Y)=\bigoplus_{\chi\in J_2(C)} H^0(Y, TY\otimes L_\chi) \ .
$$
Fix a character $\chi$, and
suppose 
$\sigma\in H^0(Y, TY\otimes L_\chi)$ is nonzero.
Then pulling back  and contracting with the fibers gives  a section  $f$ of the flat line bundle (also denoted $L_\chi$) corresponding to $\chi$ on  $T^\ast Y$. 
 By Corollary \ref{cor:nontrivial},  it immediately follows that $\sigma$ vanishes if $\chi\neq 1$, for since $L_\chi$ is torsion it could have no sections on the generic fiber unless it is trivial there.
   For $\chi=1$, 
the argument is exactly as in \cite[p.\ 373]{Hitchin:90}.  Namely, 
by Hartog's theorem $f$ extends to a function on $M_{\SO(m)}^\theta$.
Since the Hitchin map  $H:M_{\SO(m)}^\theta\to B(m)$ is proper with connected fibers, it follows that  $f$ is the pull-back of a function $g$ on  $B(m)$. 
On the other hand, $f$ is homogeneous of degree 1, whereas the minimal degree of homogeneity for the action on $B(m)$ is $2$ (see the discussion following \eqref{eqn:hitchin-base}).  
Since $H$ is equivariant  we conclude that $f$, and hence also $\sigma$, must vanish.
%

For (2), we first note that provided $g\geq 2$,  the complement of $\widetilde Y\subset M^{-}_{\Spin(m)}$ is of codimension at least $3$. This follows from the fact that the complement of $Y$ in $M^{-}_{\SO(m)}$ of codimension at least $3$ if $g\geq 2$ and the map $p:M^{-}_{\Spin(m)} \rightarrow M^{-}_{\SO(m)}$ is finite and dominant (see \cite[Thm.\  II.6]{Faltings:93}, \cite[Appendix]{Laszlo} and recall $m\geq 5$). The proof of part (2) now follows from Scheja's theorem \cite{Scheja:61}.
 To prove the result, it then suffices  by Scheja's theorem  to prove that $H^1(M_{\Spin(m)}^-, {\mathcal O})=\{0\}$.
For this we closely follow the proof in \cite[Thm.\ 2.8]{KumarNarasimhan:97}. First, observe that by  \cite[Lemma 7.3]{BLS:98} (see also  \cite{Ramanathan:96})  the moduli space $M^{-}_{\Spin(m)}$ is a good quotient of a projective scheme $R$ by a reductive group $\Gamma$. From  \cite{Boutot:87} it then follows that $M^{-}_{\Spin(m)}$ is Cohen-Macaulay, normal and has rational singularities. Let $\omega_M$ denote the dualizing sheaf. 

Let $\widehat \Ecal\to C\times R$ be the universal bundle. By construction of the GIT quotient it follows that the adjoint vector bundle
$\operatorname{Ad}(\mathcal{\widetilde \Ecal})$ descends to a vector bundle on $C\times \widetilde{Y}$, which we also denote by $\operatorname{Ad}(\widetilde \Ecal)$. Since $\widetilde{Y}$ is an \'etale cover of $M_{\SO(m)}^{-,reg}$, deformation theory tells us that $T_{[\Ecal]}{\widetilde{Y}}$ can be identified with $H^1(C,\operatorname{Ad}(\Ecal))$. Moreover, since $\Ecal$ is regularly stable,  $H^0(C, \Ad (\Ecal))=\{0\}$. In particular, it follows from the definition of determinant of cohomology that 
$\Det(R\pi_\ast \Ad\widehat\Ecal)^*|_{\widetilde Y}=\left.\omega_M\right|_{\widetilde Y}$.
But $\Det(R\pi_\ast \Ad\widehat\Ecal)$ extends to an invertible sheaf  on the entire moduli space $M_{\Spin(m)}^{-}$, and hence in particular it is reflexive. Furthermore,  dualizing sheaves on Cohen-Macaulay normal varieties are also reflexive. Since the complement of $\widetilde Y$ is codimension $\geq 2$, then  by \cite[Lemma 2.7]{KumarNarasimhan:97} we get that $\omega_M$ is locally free, and hence by definition $M_{\Spin(m)}^{-}$ is Gorenstein. Now $\Det(R\pi_\ast \Ad\widehat\Ecal)$ is ample and the Picard group of $M_{\Spin(m)}^{-}$ is isomorphic to $\mathbb{Z}$ (\cite{BLS:98}). It follows from  Serre duality \cite[Cor.\ III.7.7]{Hartshorne:77} and the Grauert-Riemenschneider vanishing theorem  \cite{GrauertRiemenschneider:70} that $H^i(M_{\Spin(m)}^{-}, \mathcal{O})=\{0\}$  for $i>0$. 
\end{proof}

\subsection{The Hitchin connection}
Let $\Ccal\to B$ be a smooth family of genus $g$ curves, $B$ smooth.  Let $\pi: \underline M_{\Spin(m)}^{-, reg}\to B$ denote the universal moduli space of regularly stable twisted $\Spin(m)$ bundles on the fibers of $\Ccal$, with universal Pfaffian bundle $\underline\Pcal$.  Then the direct image sheaf $\pi_\ast\underline\Pcal^\ell$ is a holomorphic vector bundle over $B$ with fiber 
$H^0(M_{\Spin(m)}^{-, reg}, \Pcal^{\otimes \ell})$.  We wish to find a connection on the projective bundle
$\PBbb(\pi_\ast\underline\Pcal^{\otimes \ell})\to B$.
Following the method of Hitchin \cite{Hitchin:90}, the connection is constructed as a ``heat operator'' on the smooth sections of $\underline\Pcal^{\otimes \ell}\to\underline M_{\Spin(m)}^{-, reg}$. As noted in that reference (see also \cite{Andersen:12}), the procedure can be applied to the open moduli space of (regularly) stable points. 

 Given $[\Ecal]\in \widetilde Y\subset M_{\Spin(m)}^{-, reg}$, recall that
$T{\widetilde Y}|_{[\Ecal]} \simeq H^1(C, \Ad\Ecal)$.
By Serre duality, $$H^0(C, \Ad \Ecal\otimes\omega_C)\simeq H^1(C, \Ad\Ecal)^\ast \ .$$
Combine this with the cup product:
$$
H^1(C, TC)\otimes H^0(C, \Ad \Ecal\otimes\omega_C)\to H^1(C, \Ad \Ecal)
$$
and the identification above to obtain a map:
$
\tau: H^1(C, TC)\to H^0(\widetilde Y, S^2T\widetilde Y)
$.
Let $\Dcal^i(\Pcal^{\otimes \ell})$ denote the sheaf of differential operators of order $i$ on $\widetilde Y$.  
Given a nonzero section $s\in H^0(\widetilde Y, \Pcal^{\otimes \ell})$, evaluation on $s$ gives a length $2$ complex: $\Dcal^i(\Pcal^{\otimes \ell})\to \Pcal^{\otimes \ell}$, from which we obtain 
a hypercohomology group $\HBbb_s^1(\widetilde Y, \Dcal^1(\Pcal^{\otimes \ell}))$.
Let $\delta: H^0(\widetilde Y, S^2T{\widetilde Y})\to \HBbb_s^1(\widetilde Y, \Dcal^1(\Pcal^{\otimes \ell}))$ be given by the coboundary associated to
\begin{equation} \label{eqn:diff-exact}
0\lra \Dcal^1(\Pcal^{\otimes \ell})\lra \Dcal^2(\Pcal^{\otimes \ell})\lra S^2T{\widetilde Y}\lra 0\ .
\end{equation}
The main result is the following.
\begin{theorem} \label{thm:hitchin} 
\begin{enumerate}With the above notation
\item
Given a deformation $[\mu]\in H^1(C, TC)$, let $\dot I$ denote the variation of the almost complex structure on $\widetilde Y$.
The association of the class
$\displaystyle A(\dot I, s):= \frac{\delta\tau[\mu]}{2i(\ell+g^\vee)}\in \HBbb_s^1(\widetilde Y, \Dcal^1(\Pcal^{\otimes \ell}))$ to each section $s\in H^0(\widetilde Y, \Pcal^{\otimes \ell})$ defines a   connection on $\PBbb(\pi_\ast\underline\Pcal^{\otimes \ell})\to B$. 
 \item The connection commutes with the action of $J_2(C)$.
 \item The connection is projectively flat.
 \item Under the identification \eqref{eqn:stack-moduli}, the connection agrees with the TUY connection.
 \end{enumerate}
\end{theorem}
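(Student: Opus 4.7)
The plan is to port the Hitchin--Laszlo construction verbatim to the twisted setting: the essential new inputs, namely the vanishing results in Proposition \ref{prop:vanishing} and the reduction of Hitchin-fiber connectivity to $\SO(m)$ via Donagi--Pantev that was already used in the proof of that proposition, have all been set up in the preceding sections. Given a deformation class $[\mu]\in H^1(C, TC)$, I would take the symbol $\tau[\mu]\in H^0(\widetilde Y, S^2 T\widetilde Y)$ and, using the coboundary $\delta$ associated to
\[
0\to \Dcal^1(\Pcal^{\otimes\ell})\to \Dcal^2(\Pcal^{\otimes\ell})\to S^2 T\widetilde Y\to 0,
\]
interpret $\delta\tau[\mu]/(2i(\ell+g^\vee))$ as the infinitesimal variation of a projective heat operator on sections of $\Pcal^{\otimes\ell}$. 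Uniqueness of the second-order lift modulo scalars requires $H^0(\widetilde Y, T\widetilde Y)=\{0\}$, while existence of the scalar-ambiguity lift uses $H^1(\widetilde Y, \Ocal_{\widetilde Y})=\{0\}$; both are furnished by Proposition \ref{prop:vanishing}. Packaging the heat operators into a global object on $\PBbb(\pi_\ast\underline\Pcal^{\otimes\ell})\to B$ is then identical to \cite{Hitchin:90}.

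The equivariance under $J_2(C)$ is immediate from functoriality: $J_2(C)$ acts on $\widetilde Y$, on the Pfaffian $\Pcal$ through the isomorphisms discussed in Section \ref{sec:divisor}, and on every sheaf and coboundary entering the construction, so the symbol map $\tau$ and its lift are equivariant by construction. Projective flatness is the main technical obstacle, and is the point where the twisted case most plausibly departs from the untwisted one. In \cite{Hitchin:90} the argument rewrites the symbol of the curvature as a Poisson bracket of two Hitchin Hamiltonians pulled back from $B_G$, then invokes their Poisson-commutativity; descending these functions through the Hitchin map requires connectivity of the generic Hitchin fiber. Just as in the proof of Proposition \ref{prop:vanishing}(1), I would reduce to the $\SO(m)$ Hitchin map, where connectivity is supplied by Donagi--Pantev \cite{DonagiPantev:12}, and then check that the rest of the curvature computation is insensitive to the twist, since it depends only on the symbol calculus and on this descent, not on the choice of component.

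Finally, the identification with the TUY connection is Laszlo's theorem \cite{Laszlo} with Theorem \ref{identification} substituted for the untwisted uniformization isomorphism. The key computation identifies the symbol of the TUY connection, expressed via the Sugawara operator on $\Qcal_{\Spin(m)}$, with $\tau[\mu]/(2(\ell+g^\vee))$; this is a local calculation in the affine Grassmannian that depends only on the form of the uniformization map $\pi:\Qcal_{\Spin(m)}\to \Mcal^-_{\Spin(m)}$. The twist by $\zeta$ in Proposition \ref{uniformlyBLS} enters only through an inner automorphism of $\widehat{\sofrak}(m)$ that modifies weights, as in Lemma \ref{adjoint}; since the Sugawara operator is central, it is unaffected by this twist, and Laszlo's comparison carries over unchanged.
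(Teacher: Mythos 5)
Your proposal matches the paper's approach: the paper itself omits the detailed proof of this theorem, stating only that the key requirements of the construction follow from Proposition~\ref{prop:vanishing} and referring the reader to Hitchin and Laszlo for the remainder, and you correctly identify that the two genuinely new ingredients are exactly the vanishing statements $H^0(\widetilde Y, T\widetilde Y)=0$ and $H^1(\widetilde Y, \Ocal_{\widetilde Y})=0$ (whose proofs in turn invoke the reduction to the $\SO(m)$ Hitchin map via Donagi--Pantev for connectivity, and Kumar--Narasimhan for the $H^1$ vanishing). Your outline of the $J_2(C)$-equivariance, the flatness via descent of Hitchin Hamiltonians to $B_{\SO(m)}$, and the comparison with the TUY connection through Theorem~\ref{identification} and the inner automorphism of Lemma~\ref{adjoint} is the same route the paper takes, with the same level of detail deferred to Hitchin and Laszlo.
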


\begin{proof} The existence (1) of the connection follows if we show that the hypotheses (i) and (ii) of \cite[Thm.\ 1.20]{Hitchin:90} are satisfied. Condition (i) follows trivially from the vanishing result Proposition \ref{prop:vanishing} above.  For (ii), we must show that $-i A(\dot I, s)$ projects to the Kodaira-Spencer class under the map $\sigma : \HBbb_s^1(\widetilde Y, \Dcal^1(\Pcal^{\otimes \ell}))\to H^1(\widetilde Y, T\widetilde Y)$. This follows exactly as in \cite[p.\ 365]{Hitchin:90}, where in \cite[Lemma 2.13]{Hitchin:90} one replaces $\End_ 0 E$ with $\Ad E$, and in \cite[Prop.\ 2.16]{Hitchin:90} one uses the cup product defined above.
	
	 Item (2) follows
 by the same argument as in \cite[Cor.\ 4.2]{Belkale:09}.
 
  The flatness (3) proved in \cite[Thm.\ 4.9]{Hitchin:90} is a consequence of two other results of that paper: the fact that certain trace functions Poisson commute (\cite[Prop.\ 4.2]{Hitchin:90})  follows in our case from the integrable system structure for orthogonal bundles (cf.\ \cite{Hitchin:87b}); and by (2) above the version of \cite[Prop.\ 4.4]{Hitchin:90} that we need 
states that the map
$$
f: H^1(C,TC) \lra H^0( Y, S^2 T Y) \quad ,\quad 
f([\mu])(\alpha,\alpha)=\int_C(\alpha,\alpha)\, \mu
$$
is an isomorphism. This also follows as in \cite[bottom of p.\ 373]{Hitchin:90}. Indeed, for $(G^{ij})\in H^0( Y, S^2 T\widetilde Y)$, then by the same argument as in the proof of Proposition \ref{prop:vanishing} (1), the pull-back of $(G^{ij})$ via the Hitchin map extends as a function on $M_{\SO(m)}^\theta$, and thus descends to a function on $B(m)$, homogeneous of degree 2. The homogeneous degree $2$ functions on $B(m)$ are identified
 in \eqref{eqn:hitchin-base} with $H^0(C,\omega_C)^\ast\simeq H^1(C, TC)$.
 
  Finally, (4) follows from the proof in  \cite{Laszlo} where the only properties of the Hitchin connection used are the ones stated above and the uniformization theorem. For the case of (simply connected) $G$-bundles, Laszlo first considers the uniformization  $\pi:\mathcal{Q}_G=LG/L^{+}G\rightarrow \mathcal{M}_G$ of the moduli stack of $G$-bundles over a curve $C$ and restricts to the regularly stable locus $\mathcal{Q}^{0}_G$ of $\mathcal{Q}_G$. The ample generator $\mathcal{P}$ of the Picard group  of $\mathcal{M}_G$ pulls back to  give a line bundle $\mathcal{L}_{\chi}$, where $\chi$ is the character of the affine fundamental weight $\omega_0$. The Sugawara construction  then provides a second order differential operator $T$ (see  \cite[eq.\ (8.8)]{Laszlo}) on $\mathcal{L}_{\chi}^{\otimes\ell}$ (see Section \ref{sec:confuniform} for notation). Following \cite{DS}, 
    Laszlo takes  an \'etale quasi-section $r:N\twoheadrightarrow M_{G}^{reg}$ and $q:N\rightarrow \mathcal{Q}^{0}_G$  of the map $\pi: \mathcal{Q}_G^0\rightarrow M_{G}^{reg}$
    $$
    \xymatrix{
& \mathcal{Q}^{0}_{G}\ar[d]^{\pi} &\\
N\ar[ru]^{q}\ar@{->>}[r] & M^{reg}_{G}&    
}
    $$
     and defines an action of the differential operator $T$ on the line bundle $r^*\mathcal{P}^{\otimes \ell}$(also isomorphic to $q^*\mathcal{L}_{\chi}^{\otimes \ell}$). Once this is done, then the symbol of the ``heat operator" in the Hitchin connection is shown to be equal to the operator $T$ coming from the Sugawara construction (see \cite[Sec.\ 8.14]{Laszlo}). 
    
    In the present situation for $\mathcal{M}_{\operatorname{Spin}(m)}^{-}$, uniformization (Proposition \ref{uniformlyBLS}) and the existence of \'etale quasi-section follows from \cite{BLS:98, DS}. Further the pull back of the Pfaffian line bundle $\mathcal{P}$ on $\mathcal{Q}_{\operatorname{Spin}(m)}$ is also given by the same character $\chi$ of the affine fundamental weight $\omega_0$ (Section \ref{sec:confuniform}). Thus the set-up is exactly as in \cite{Laszlo}.

\end{proof}

We will call the projective connection constructed in Theorem \ref{thm:hitchin} the \emph{Hitchin connection}. The main consequence of the existence of a Hitchin connection is the following.

\begin{proposition}
The Pfaffian sections $s_{\kappa}$ are projectively flat with respect to the Hitchin connection.
\end{proposition}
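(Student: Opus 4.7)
The plan is to combine the $J_2(C)$-equivariance of the Hitchin connection (the second bullet of Theorem \ref{thm:hitchin}) with the fact that each odd Pfaffian section spans a $J_2(C)$-isotypic line in $H^0(\widetilde Y,\Pcal)$, as recorded in Proposition \ref{prop:belkale}. First I would work étale-locally on the base $B$ of the family $\Ccal\to B$ so that the $J_2(\Ccal/B)$-torsor of theta characteristics admits a section; this allows the assignment $\kappa\mapsto s_\kappa$ to vary as a holomorphic section of the Hitchin bundle $\pi_\ast\underline\Pcal$ over the cover, which is what is needed to talk about projective flatness of $s_\kappa$ in the first place.

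Next I would set up the isotypic decomposition. Fix $\kappa\in\Th^-(C)$. From \eqref{eqn:decomposition} we have
\[
H^0(\widetilde Y,\Pcal)\;=\;\bigoplus_{\chi\in\widehat A} H^0(Y,\Pcal_\kappa\otimes L_\chi),
\]
and by Proposition \ref{prop:belkale} together with the identification $\Pcal_{\kappa\otimes\alpha}\otimes\Pcal_\kappa^{-1}\simeq L_{W(\alpha,\cdot)}$, each nonzero summand is one-dimensional and is spanned by the pull-back of $s_{\kappa\otimes\alpha}$ for the unique $\alpha\in J_2(C)$ matching $\chi$. In other words, $\{[s_{\kappa'}]\}_{\kappa'\in\Th^-(C)}$ is exactly the set of $J_2(C)$-eigenlines in $\PBbb(H^0(\widetilde Y,\Pcal))$. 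This decomposition is clearly compatible with variation in moduli, so it assembles to a splitting of the Hitchin bundle into line subbundles indexed by $\Th^-(C)$.

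Now I would invoke the $J_2(C)$-equivariance of the projective Hitchin connection from Theorem \ref{thm:hitchin}. Since parallel transport commutes with the $J_2(C)$-action on $\PBbb(\pi_\ast\underline\Pcal)$, it must preserve the subscheme of $J_2(C)$-fixed points, and in particular it permutes the finite set of $\chi$-eigenlines. Each individual eigenline is cut out by a distinct character, and since parallel transport is the identity at $t=0$ and varies continuously, it cannot permute the eigenlines nontrivially. Hence each line $\CBbb\cdot s_\kappa\subset H^0(\widetilde Y,\Pcal)$ is preserved by parallel transport, which is precisely the statement that $[s_\kappa]$ is projectively flat.

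The only real obstacle in this outline is verifying the $J_2(C)$-equivariance of the heat operator defining the Hitchin connection, but this is already asserted in Theorem \ref{thm:hitchin}; it ultimately reduces to the observation that the symbol $\delta\tau[\mu]/2i(\ell+g^\vee)$ is built from the cup product $H^1(C,TC)\otimes H^0(\Ad\Ecal\otimes\omega_C)\to H^1(\Ad\Ecal)$, which is manifestly invariant under the $J_2(C)$-action of tensoring by two-torsion line bundles (such tensoring preserves the adjoint bundle). As the paper notes, once this equivariance is in hand, the rest of the argument is formally identical to Belkale's in \cite{Belkale:12}.
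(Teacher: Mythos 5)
Your proof is correct and follows essentially the same line as the paper's: $J_2(C)$-equivariance of the Hitchin heat operator (asserted in Theorem~\ref{thm:hitchin}) forces parallel transport to preserve the character decomposition $\bigoplus_\chi H^0(Y,\Pcal_\kappa\otimes L_\chi)$, and each nonzero summand is the line $\CBbb\cdot s_{\kappa'}$ by Proposition~\ref{prop:belkale}. The one superfluous step is the continuity argument ruling out a permutation of eigenlines: if parallel transport $P$ commutes with the $J_2(C)$-action, then for $s$ a $\chi$-eigenvector one has $g\cdot P(s)=P(g\cdot s)=\chi(g)P(s)$, so $P(s)$ is again a $\chi$-eigenvector and each eigenline is preserved outright, without invoking the identity at $t=0$.
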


\begin{proof}
The action of $J_2(C)$ commutes with the projective heat operator. Hence, the Hitchin connection preserves the spaces $H^0(M_{\SO(m)}^{reg}, \Pcal_\kappa)$. Now these one dimensional spaces are spanned by Pfaffian sections, making them projectively flat. 
\end{proof}
\begin{remark}
	If we consider the isomorphism of $\operatorname{SC}(3)\simeq \GL(3)$, then $\mathcal{M}^{-}_{\operatorname{Spin}(3)}$  is just the stack of rank two bundles on a curve with determinant $\mathcal{O}_C(p)$. The corresponding moduli space in this case is smooth (GIT stability and GIT semistability coincide) and the Hitchin connection is constructed in \cite{Hitchin:90}. The case $\mathcal{M}^{-}_{\Spin(4)}$ can be defined and handled similarly using the isomorphism of $\Spin(4)$ with $\SL(2)\times \SL(2)$.
\end{remark}

\section{Fock space realization of level one modules}
In this section, following work of I. Frenkel \cite{Frenkel} and Kac-Petersen \cite{KP}, we first recall the explicit construction of level one modules of $\widehat{\mathfrak{so}}(2d+1)$ using infinite dimensional Clifford algebras. We also give explicit expressions for the space of invariants.  

\subsection{Clifford algebra} Let $W$ be a vector space (not necessarily finite dimensional) with a nondegenerate bilinear form $\{ \, ,\, \}$. Let $T(W)$ denote the tensor algebra of $W$, and  define the {\em Clifford algebra} $Cl(W)$ to be the quotient of $T(W)$ by the ideal generated by elements of the form $v\otimes w+ w\otimes v-\{v,w\}$. 
Let $W=W^{+}\oplus W^{-}\oplus \mathbb{C}\cdot e^0$ be a quasi-isotropic decomposition of $W$ such that $e^0$ is either orthonormal with respect to $W^\pm$,  or zero. 
Then the Clifford algebra $Cl(W)$ acts on $\bigwedge W^{-}$ by setting $w^{+}\cdot1=0$ for all $w^{+}\in W^{+}$ and letting $W^{-}$ act by wedge product on the left. If $e^0\neq0$ and $v\in \bigwedge^p W^{-}$, then we set 
$\sqrt{2}e^0\cdot v=(-1)^pv$.

\subsection{Level one modules} \label{sec:levelone}
Now suppose $W=W_d$ is $(2d+1)$-dimensional. We choose an ordered basis $\phi^1,\ldots \phi^r,\phi_0=\phi^0,\phi^{-r},\ldots, \phi^{-1}$ of $W_d$ such that $\{\phi^{a},\phi^{b}\}=\delta_{a+b,0}$. Define operators $E^{i}_j(\phi^k):=  \delta_{j,k}\phi^i$, and set $B^i_j:=  E^i_j-E^{-j}_{-i}$. It follows that elements of $\mathfrak{so}(2d+1)$ are of the form $B^{i}_j$ (cf.\ \cite{hasegawa, FultonHarris:91}).

For $h\in \{0,\frac{1}{2}\}$, let $W_d^{\mathbb{Z}+h}:=  W_d\otimes t^{h}\mathbb{C}[t,t^{-1}]$. We extend the bilinear form on $W_d$ to $W_d^{\mathbb{Z}+h}$ by setting 
$\{w_1({a_1}),w_2({a_2})\}:=  \{w_1,w_2\}\delta_{a_1+a_2,0}$, where $w(a)=w\otimes t^{a}$. As above, choose a quasi-isotropic decomposition:

$$W_d^{\mathbb{Z}+h}=\begin{cases}
W_d^{\mathbb{Z}+h,+}\oplus W_d^{\mathbb{Z}+h,-}\oplus \mathbb{C}\cdot e^0 & \ ,\ \text{if }h=1/2 \ ,\\
W_d^{\mathbb{Z}+h,+}\oplus W_d^{\mathbb{Z}+h,-} &\ ,\ \text{if }h=0\ ,
\end{cases}
$$
where $e^0=\phi_0(0)$. Similarly, $W_d^{\mathbb{Z}+h,\pm}$ is given by the following:
\begin{itemize}
\item If $h=0$, then $W_d^{\mathbb{Z}+h,\pm}:=  W_d\otimes t^{\pm 1}\mathbb{C}[t^{\pm 1}]\oplus W_d^{\pm}\otimes t^0$;
\item If $h=\frac{1}{2}$, then $W_d^{\mathbb{Z}+h,\pm}:=  W_d\otimes t^{\pm \frac{1}{2}}\mathbb{C}[t^{\pm 1}]$.
\end{itemize}
We define the \emph{normal ordering} $\normalorder{\hbox{}}$ for products in $Cl(W_d^{\mathbb{Z}+h})$ as follows: For any element $w_1(a_1)$ and $w_2(a_2)$ in $W_d^{\mathbb{Z}+h}$, we define

\begin{equation} \label{eqn:normalorder}
  \normalorder{w_1(a_1)w_2(a_2)} =
\begin{cases}
      -w_2(a_2)w_1(a_1) & \mbox{if $a_1>0>a_2$} \\      
      \frac{1}{2}(w_1(a_1)w_2(a_2)-w_2(a_2)w_1(a_1)) & \mbox{ if $a_1=a_2=0$}\\
      w_1(a_1)w_2(a_2) & \mbox{otherwise.} 
\end{cases} 
\end{equation}

For $X\in \mathfrak{so}(2d+1)$, we denote $X(m):=  X\otimes t^m$. Now for any $i$ and $j$, we can define an action of $B^{i}_j(m)$ on $\bigwedge W_d^{\mathbb{Z}+h,-}$ by the following formula:$B^{i}_j(m)w:=  \sum_{a+b=m}\normalorder{\phi^i(a)\phi_j(b)}\ w\ ,$
where 
 the action on $w$ is given by Clifford multiplication. Then we have the following important result.
\begin{proposition}[Frenkel \cite{Frenkel}, Kac-Petersen \cite{KP}]
The above action $B^i_j(m)$ gives an isomorphism of the following $\widehat{\mathfrak{so}}(2d+1)$-modules at level one.
\begin{itemize}
\item $\displaystyle\mathcal{H}_{\omega_0}(\mathfrak{so}(2d+1),1)\oplus \mathcal{H}_{\omega_1}(\mathfrak{so}(2d+1),1) \simeq \bigwedge W_d^{\mathbb{Z}+\frac{1}{2},-}$;
\item $\displaystyle\mathcal{H}_{\omega_d}(\mathfrak{so}(2d+1),1)\simeq \bigwedge W_d^{\mathbb{Z},-}$.
\end{itemize}

\end{proposition}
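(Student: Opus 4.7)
The plan has three logical steps: (i) verify that the operators $B^i_j(m)$ defined via normal ordering give a representation of $\widehat{\mathfrak{so}}(2d+1)$ at level one on each Fock space $\bigwedge W_d^{\mathbb{Z}+h,-}$; (ii) establish integrability, so that the representation decomposes into a finite sum of level-one irreducibles $\mathcal{H}_\mu(\mathfrak{so}(2d+1),1)$ with $\mu\in P_1(\mathfrak{so}(2d+1))=\{\omega_0,\omega_1,\omega_d\}$; and (iii) identify the summands by computing the minimal-energy subspaces as $\mathfrak{so}(2d+1)$-modules.

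For (i), the essential computation is the commutator $[B^i_j(m),B^k_l(n)]$, carried out by applying Wick's theorem to the Clifford anticommutation relations $\{\phi^a(p),\phi^b(q)\}=\delta^{a+b,0}\delta^{p+q,0}$. The single-contraction terms assemble into the loop bracket $[B^i_j,B^k_l](m+n)$, with $[B^i_j,B^k_l]$ computed in the finite-dimensional algebra, while the double contraction produces a scalar anomaly of the form $m\,\delta_{m+n,0}\cdot c(B^i_j,B^k_l)$. One must check that the prefactor $c$ matches the normalized invariant form $(X,Y)=\tfrac{1}{2}\tr(XY)$, which pins down the level at $1$; the symmetrized zero-mode convention in \eqref{eqn:normalorder} is exactly what is needed to make the bookkeeping work uniformly in both the NS ($h=\tfrac{1}{2}$) and Ramond ($h=0$) sectors. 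Integrability then follows from local nilpotence: each real root vector $X_\alpha(n)$ is a bilinear in the $\phi^a(m)$ and annihilates any wedge of sufficiently large degree, so the Fock space splits into irreducible level-one modules.

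For (iii), I would identify the minimal-$L_0$ subspace in each Fock space; since a level-one integrable $\widehat{\mathfrak{so}}(2d+1)$-module is determined by its top space, this suffices. In the Ramond sector ($h=0$) this subspace is $\bigwedge W_d^-$, a $2^d$-dimensional space on which the zero modes $\phi^a(0)$ act by Clifford multiplication; consequently $B^i_j(0)=\tfrac{1}{2}(\phi^i(0)\phi_j(0)-\phi_j(0)\phi^i(0))$ realizes the spin representation $V_{\omega_d}$, forcing $\bigwedge W_d^{\mathbb{Z},-}\simeq\mathcal{H}_{\omega_d}(\mathfrak{so}(2d+1),1)$. In the NS sector ($h=\tfrac{1}{2}$), the $\mathbb{Z}/2$-grading on the exterior algebra by parity of wedge degree is preserved by the currents (each is a bilinear in $\phi^a(m)$). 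The even part contains the vacuum $1$ as its minimal-energy vector, an irreducible trivial $\mathfrak{so}(2d+1)$-module, hence equals $\mathcal{H}_{\omega_0}$; the odd part has minimal-energy subspace $\{\phi^a(-\tfrac{1}{2})\wedge 1 : -d\le a\le d\}$, a $(2d+1)$-dimensional copy of $V_{\omega_1}$ sitting at conformal weight $\tfrac{1}{2}=\Delta_{\omega_1}(\mathfrak{so}(2d+1),1)$, giving $\mathcal{H}_{\omega_1}$.

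The main obstacle is the anomaly computation in step (i): the precise value of the level is sensitive to the zero-mode symmetrization in \eqref{eqn:normalorder}, and it is easy to misplace a factor of $\tfrac{1}{2}$, which would shift the level. A useful cross-check is that the restriction of $B^i_j(0)$ to the minimal-energy subspace must reproduce exactly the finite-dimensional matrix $E^i_j-E^{-j}_{-i}$ on $V_{\omega_1}$ or $V_{\omega_d}$; once this is verified, a comparison of graded dimensions via the Kac character formula for level-one $B_d$-modules provides a final independent confirmation of both isomorphisms.
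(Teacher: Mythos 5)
The paper itself does not prove this proposition; it is cited from Frenkel and Kac--Petersen as a known result, so there is no in-paper argument to compare against. Your outline is the standard proof found in the literature: establish the level, establish integrability, and identify the summands by their minimal-energy ($L_0$-lowest) subspaces. The anomaly/level computation in step (i) is right in spirit (and your remark about the zero-mode symmetrization is the correct reason the Ramond sector level comes out the same as the NS sector). Step (iii) is correct: the minimal-energy subspace in the Ramond sector is $\bigwedge W_d^-$ of dimension $2^d$ carrying the spin representation $V_{\omega_d}$ under the zero-mode Clifford action, and in the NS sector the even part has lowest vector $1$ while the odd part has lowest subspace $\{\phi^a(-\tfrac12)\} \cong V_{\omega_1}$ at conformal weight $\tfrac12 = \Delta_{\omega_1}$.

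The weak point is step (ii). The phrase ``each real root vector $X_\alpha(n)$ $\ldots$ annihilates any wedge of sufficiently large degree'' is not a coherent statement: the Fock space contains wedges of arbitrarily large degree, and a fixed current mode $X_\alpha(n)$ certainly does not kill all of them. What local nilpotence actually requires is that $X_\alpha(n)^N v = 0$ for each \emph{fixed} $v$ and $N \gg 0$; the cleanest route is to note that for the finite simple roots the $\sll_2$ triples $\{e_i,f_i,h_i\}$ preserve each (finite-dimensional) $L_0$-eigenspace, and to treat the affine node separately by energy-boundedness of $L_0$ together with the Pauli-exclusion constraint on the fermion modes available at a given energy level. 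Relatedly, identifying the minimal-energy subspace only tells you the lowest irreducible constituent of each $\ZBbb/2$-graded piece; you still need the character (graded dimension) comparison that you relegate to a ``cross-check'' in order to rule out additional highest-weight vectors at higher conformal weight, so that step is actually a necessary ingredient of the proof, not an optional confirmation. With those two points tightened, the argument is complete.
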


\subsection{Clifford multiplication and the invariant form}\label{clifford}
In this section, we give explicit expressions for conformal blocks in $\mathcal{V}^*_{\omega_0,\omega_d,\omega_d}(\mathbb{P}^1, \mathfrak{so}(2d+1),1)$ and $\mathcal{V}^*_{\omega_1,\omega_d,\omega_d}(\mathbb{P}^1, \mathfrak{so}(2d+1),1)$ in terms of Clifford multiplication. In the following, let $W_d$ be as in Section \ref{sec:levelone}.

\subsubsection{The case $\vec{\Lambda}=(\omega_0,\omega_d,\omega_d)$}\label{case0dd} 
From  representation theory it follows that the conformal block $\mathcal{V}^*_{\omega_0,\omega_d,\omega_d}(\mathbb{P}^1, \mathfrak{so}(2d+1),1)$ is a subspace of $\operatorname{Hom}_{\mathfrak{so}(2d+1)}(V_{\omega_d}\otimes V_{\omega_d},\mathbb{C})$. But since both  spaces are $1$-dimensional they are isomorphic. 
Since we know that ${V}_{\omega_d}$ is isomorphic as an $\mathfrak{so}(2d+1)$-module to $\bigwedge W_d^{-}$, then taking the opposite Borel we can express it as $\bigwedge W_d^{+}$. Hence, the invariant bilinear form (unique up to constants) is given by 
\[
   B(\phi_{i_1}\wedge \cdots\wedge  \phi_{i_p},\phi^{j_1}\wedge \cdots \wedge \phi^{j_{q}} ):=   
\begin{cases}
    \prod_{a=1}^p \{ \phi_{i_a},\phi^{j_a}\}& \text{if $p=q$}\\
    0           & \text{otherwise.}
\end{cases}
\]
 
\subsubsection{The case $\vec{\Lambda}=(\omega_1,\omega_d,\omega_d)$}\label{case1dd}
As in the previous case, we know that the conformal block $\mathcal{V}^*_{\omega_1,\omega_d,\omega_d}(\mathbb{P}^1, \mathfrak{so}(2d+1),1)$ is a subspace of $\operatorname{Hom}_{\mathfrak{so}(2d+1)}(V_{\omega_1}\otimes V_{\omega_d}\otimes V_{\omega_d},\mathbb{C})$. The $\mathfrak{so}(2d+1)$-module $V_{\omega_1}$ appears with multiplicity one (see Kempf-Ness \cite{KempfNess:88}) in the tensor product of module $V_{\omega_d}\otimes V_{\omega_d}$, thus we have 
$$\operatorname{Hom}_{\mathfrak{so}(2d+1)}(W_d\otimes \bigwedge W_d^{-} \otimes \bigwedge W_d^{+}, \mathbb{C})\simeq \mathbb{C}\ .$$ 
We will denote a nonzero element of the left hand side above as $\langle \widetilde{\Psi}\mid $. 
Consider the Clifford multiplication map from $m:W_d\otimes \wedge W_d^{-} \rightarrow \wedge W_d^{-}$. Then we define $$\langle{\widetilde{\Psi}}\mid  a\otimes v\otimes w^*\rangle:=  B( m(a\otimes v),w^*)\ .$$ 
 We will show that $\langle \widetilde{\Psi}\mid $ is a nonzero element of $\mathcal{V}_{\omega_1,\omega_d,\omega_d}(\mathbb{P}^1,\mathfrak{so}(2d+1),1)$ and hence it is unique up to constants. First we prove that $\langle \widetilde{\Psi}\mid $ is $\mathfrak{so}(2d+1)$-invariant. 

There is an isomorphism $\wedge^2 W_d\simeq\mathfrak{so}(2d+1)$. 
Any $X \in \mathfrak{so}(2d+1)$ may be regarded as an element of the Clifford algebra as follows: for $a,b\in W_d$, we get an element of the Clifford algebra $a\cdot b -\frac{1}{2}\{a,b\}$. First we show that the Clifford multiplication map $m$ defined above is $\mathfrak{so}(2d+1)$-invariant; i.e $X\cdot m(a,w)=m(X\cdot a,w)+m(a,X\cdot w)$, where $a\in W_d$ and $w\in \bigwedge W_d^{-}$. Without loss of generality assume that $X$ is of the form $a\cdot b-\frac{1}{2}\{a,b\}$. By a direct calculation we see that 
\begin{align*}
m((X\cdot a)\otimes w) + m(a\otimes (X\cdot w))
&=(((ab-\tfrac{1}{2}\{a,b\})v\big)\cdot w+ v\cdot \big((ab-\tfrac{1}{2}\{a,b\})\cdot w\big)\\
&=(a\cdot b-\tfrac{1}{2})v\cdot w=X\cdot (v\cdot w)\ .\\
\end{align*}
Thus the Clifford multiplication map $m$ is $\mathfrak{so}(2d+1)$-invariant.  By a direct calculation, we get the following: 
\begin{align*}
 \langle{\widetilde{\Psi}}\cdot X\mid a\otimes v \otimes w^*\rangle
&:=   \langle \widetilde{\Psi}\mid  Xa \otimes v \otimes w^*\rangle +\langle \widetilde{\Psi}\mid  a \otimes Xv \otimes w^*\rangle +\langle \widetilde{\Psi}\mid  a \otimes v \otimes Xw^*\rangle\\
&=B( Xm(a\otimes v) \otimes w^*) + B( m(a\otimes v) \otimes Xw^*) \\
& =B\cdot  X ( m(a\otimes v) \otimes w^* )\\
&=0 \qquad\qquad \mbox{(since $B$ is $\mathfrak{so}(2d+1)$ invariant).}
\end{align*}
This shows that $\langle \widetilde{\Psi}\mid $ is $\mathfrak{so}(2d+1)$-invariant.

 It will actually be more convenient to express $\langle{\widetilde{\Psi}} \mid $ in terms of an $\mathfrak{so}(2d+1)$-equivariant map 
$f: \bigwedge W_d^{-} \otimes \bigwedge W_d^{+} \rightarrow W_d^*$,
that will be  unique up to constants;  the relationship  is
$
\langle\widetilde \Psi \mid  a\otimes v\otimes w^\ast\rangle= f(v\otimes w^\ast)(a)
$.
We want to write  $f$ explicitly with respect to the given choice of basis. 
Let $\mathcal{I}_p=(1\leq i_1<\cdots < i_p\leq d )$  be a set of $p$ tuples of distinct ordered integers from the set $\{1,\ldots,d\}$ and similarly let $\mathcal{J}_q=((1\leq j_1<\cdots < j_q\leq d )$  be a set of $q$ tuples of distinct ordered integers. We are now ready to define the function $f$. This will be defined in several steps. First of all $f(v,w)=0$ if $v \in 
\wedge^{p} W_d^{-}$ and $w\in \wedge^{q}W_d^{+}$ and $|p-q|>1$.

\subsubsection{Case I, p=q}This is divided into the following subcases. If $\mathcal{I}_p\neq \mathcal{J}_p$, then we declare $f(\phi_{i_1}\wedge \cdots\wedge  \phi_{i_p},\phi^{j_1}\wedge \cdots \wedge \phi^{j_p}):=  0$. If $\mathcal{I}_p=\mathcal{J}_p$, then we define 
$f(\phi_{i_1}\wedge \cdots\wedge  \phi_{i_p},\phi^{i_1}\wedge \cdots \wedge \phi^{i_p})$ is up to a constant equal to $\{\phi_0,-\}$.

\subsubsection{Case II, q=p-1}Then $f(\phi_{i_1}\wedge \cdots\wedge  \phi_{i_p},\phi^{j_1}\wedge \cdots \wedge \phi^{j_{p-1}} )$ is up to a constant equal to
\[
\begin{cases}
    \{\phi_{i_k},-\}& \text{if } \mathcal{J}_{p-1}\cup \{i_k\}=\mathcal{I}_p\\
    0,              & \text{otherwise.}
\end{cases}
\]

\subsubsection{Case III, q=p+1}Then $f(\phi_{i_1}\wedge \cdots\wedge  \phi_{i_p},\phi^{j_1}\wedge \cdots \wedge \phi^{j_{p+1}} )$ is up to a constant equal to
\[
 \begin{cases}
    \{\phi^{j_k},-\}& \text{if } \mathcal{I}_{p}\cup \{j_k\}=\mathcal{J}_{p+1}\\
    0,              & \text{otherwise.}
\end{cases}
\]
This shows that $f$, and hence also $\langle \widetilde{\Psi}\mid $, is nonzero and $\mathfrak{so}(2d+1)$-invariant.

\section{Highest weight vectors for branching of basic modules}
We  give an explicit description of highest weight vectors in the branching rule in ``Kac-Moody'' form i.e.\ as product of operators from the affine Lie algebra acting on the level one representations. Our guideline is the paper of Hasegawa \cite{hasegawa}.

\subsection{Tensor products}
Let $W_s$ be a $(2s+1)$-dimensional $\CBbb$-vector space  with a nondegenerate bilinear form $\{,\}$, and let $\{e_{p}\}_{p=-s}^s$ be an orthonormal basis of $W_s$. Let $\phi^1,\ldots, \phi^s,\phi^0,\phi^{-s},\ldots,\phi^{-1}$ be an ordered quasi-isotropic basis of $W_s$. The tensor product of $W_d=W_r\otimes W_s$ carries a nondegenerate symmetric bilinear form $\{,\}$ given by the product of the forms on $W_r$ and $W_s$. Clearly the elements $\{e_{j,p}:=  e_j\otimes e_p | -r\leq j\leq r \ \mbox{and}\  -s\leq p\leq s \}$ form an orthonormal basis of $W_d$. By $(j,p)>0$, we mean $j>0$ or $j=0, p>0$. Set 
$$\phi^{j,p}=\frac{1}{\sqrt{2}}(e_{j,p}-\sqrt{-1}e_{-j,-p})\ , \hspace{1cm} \phi^{-j,-p}=\frac{1}{\sqrt{2}}(e_{j,p}+\sqrt{-1}e_{-j,-p})\ ,$$ 
 for $(j,p)>0$. The form $\{,\}$ on $W_d$ is given by the formula 
$\{ \phi^{j,p},\phi^{-k,-q}\}=\delta_{j,k}\delta_{p,q}$, for $-r\leq j,k \leq r$,  $-s\leq p,q \leq s$. Let as before $W_d^{\pm}=\bigoplus_{(j,p)>0}\mathbb{C}\cdot\phi^{\pm j,\pm p}$ and $\phi^{0,0}=e_{0,0}$. The quasi-isotropic decomposition of $W_d$ is given by
$W_d=W_d^{+}\oplus W_d^{-}\oplus \mathbb{C}\cdot\phi^{0,0}$.

Define the operators $E^{j,p}_{k,q}$ by the formula $E^{j,p}_{k,q}(\phi^{i,l})=\delta_{i,k}\delta_{l,q}\phi^{j,p}$. We get a matrix in $\mathfrak{so}(2d+1)$ by the formula 
$B^{j,p}_{k,q}=E^{j,p}_{k,q}-E^{-k,-q}_{-j,-p}$. Clearly the Cartan subalgebra $\mathfrak{h}$ of $\mathfrak{so}(2d+1)$ is generated by the diagonal matrices $B^{j,p}_{j,p}$ for $(j,p)>0$. The dual basis is denoted by $L_{j,p}$. Hence 
$\mathfrak{h}^*=\oplus_{(j,p)>0}\mathbb{C}\cdot L_{j,p}$.

The tensor product $W_d=W_r\otimes W_s$ gives the embedding \eqref{eqn:so-embedding}.
If $B^{i}_j$ be an element of $\mathfrak{so}(2r+1)$, then the action of $B^{i}_j(m)$ on $W_d^{\ZBbb+h}$ is given by 
$$L(B^{i}_j(m))= \sum_{q=-s}^s \sum_{a+b=m} \normalorder{\phi^{i,q}(a)\phi_{j,q}(b)}\ .$$
Similarly for $B^{i}_j$ is an element of $\mathfrak{so}(2s+1)$, then the action of $B^{i}_j(m)$ is given by 
$$R(B^{i}_j(m))= \sum_{p=-r}^r \sum_{a+b=m} \normalorder{\phi^{p,i}(a)\phi_{p,j}(b)}\ .$$

\subsection{Notation for weights}The Cartan algebra $\mathfrak{h}$ of $\mathfrak{so}(2r+1)$ is generated by elements of the form $B^i_i$ for $1\leq i \leq r$. Let $L_i$ denote the dual of $B^i_i$. The fundamental weights of $\mathfrak{so}(2r+1)$ are given by $\omega_i=\sum_{k=1}^i L_i$ for $1\leq i \leq r-1$ and $\omega_r=\frac{1}{2}(L_1+\cdots+L_r)$. 
Let us denote by $\mathcal{Y}_{r}$ the set of Young diagrams with at most $r$ rows. Any integral dominant weight $\lambda$ of $\mathfrak{so}(2r+1)$ is of the form 
$\lambda=\sum_{i=1}a_i\omega_i$, $a_i\geq 0$ for all $i$.
\begin{enumerate}
\item If $a_r$ is even, then the representation $\lambda$ induces a representation of the group $\SO(2r+1)$. By using the expression of $\omega_i$ in terms of $L_i$'s, we get  $\lambda=\sum_{i}b_iL_i$, and $b_1\geq \cdots \geq b_r$, give a Young  diagram in $\mathcal{Y}_r$.

\item If $a_r$ is odd, then we can rewrite $\lambda=\lambda'+\omega_r$. Then the coefficient of $\omega_r$ in $\lambda'$ is even and by repeating the same process for $\lambda'$, we can write $\lambda=Y+\omega_r$, where $Y$ is an element of $\mathcal{Y}_{r}$.  
\end{enumerate}

The group of affine Dynkin-diagram automorphisms $\mathbb{Z}/2$ acts on the set of level $2s+1$ weights $P_{2s+1}(\mathfrak{so}(2r+1))$ by interchanging the affine fundamental weight $\omega_0$ with $\omega_1$. Let $\lambda=\sum_{i=1}^ra_i\omega_i$ and $\sigma$ be the generator of $\mathbb{Z}/2$, then 

\begin{equation}\label{daiso}
\sigma(\lambda):=(2s+1-(a_1+2(a_2+\dots+a_{r-1})+a_r))\omega_1+a_2\omega_2+\dots+a_r\omega_r\ .
\end{equation}
Let $\mathcal{Y}_{r,s}$ denote the set of Young diagrams with at most $r$ rows and $s$ columns. Then
the orbits of the group action with the cardinality are given below \cite{OxburyWilson:96}:
\begin{itemize}
\item $Y\in \mathcal{Y}_{r,s}$ and the orbit length is $2$. 
\item $Y+\omega_r$, where $Y\in \mathcal{Y}_{r,{s-1}}$ and the orbit length is $2$.
\item $Y+\omega_r$, where $Y\in \mathcal{Y}_{r,s}\backslash \mathcal{Y}_{r,s-1}$ and the orbit length is $1$.
\end{itemize}
For a Young diagram $Y$, we denote by $Y^T\in \mathcal{Y}_{s,r}$, the diagram obtained by interchanging the rows and columns of $Y$,   by $Y^c\in Y_{r,s}$  the complement of $Y$ in a box of size $r\times s$, and by $Y^*\in \mathcal{Y}_{s,r}$ the Young diagram $(Y^{T})^c$ obtained by first taking the transpose and then taking the complement in  a box of size $(s\times r)$.

\subsection{Branching rules}\label{branching}
For reference, we
  state here some of the components that appear in the branching rule  for the  embedding \eqref{eqn:so-embedding} (recall Section \ref{sec:conformal-embedding}). Let $\sigma$ be as in the previous section.
   Let $\lambda$, $\mu$, and $\Lambda$ be integrable highest weights for 
$\sofrak(2r+1)$ at level $2s+1$, $\sofrak(2s+1)$ at level $2r+1$, and $\sofrak(2d+1)$ at level $1$, respectively. 
We say that $(\lambda,\mu) \in B(\Lambda)$ if 
$\mathcal{H}_{\lambda}(\sofrak(2r+1))\otimes \mathcal{H}_{\mu}(\sofrak(2s+1))$
 appears in the branching of $\mathcal{H}_{\Lambda}(\sofrak(2d+1))$. 
Note that here (and for the rest of the paper) unless specified otherwise the levels $2s+1, 2r+1$, and $1$,  have been (wiil be) suppressed from the notation of highest weight modules.
Then the branching rules we need are the following:
\begin{itemize}
\item $(Y,Y^T)\in B(\omega_0)$ if $|Y|$ is even;
\item $(\sigma(Y),Y^T)$ and $(Y,\sigma(Y^T))\in B(\omega_0)$ if $|Y|$ is odd;
\item $(Y,Y^T)\in B(\omega_1)$ if $|Y|$ is odd;
\item $(\sigma(Y),Y^T)$ and $(Y,\sigma(Y^T))\in B(\omega_1)$ if $|Y|$ is even;
\item for $Y\in \Ycal_{r,s-1}$, $(Y+\omega_r,Y^\ast+\omega_s)$ and  $(\sigma(Y+\omega_r),Y^\ast+\omega_s)$ are in  $B(\omega_d)$;
\item for $Y\in \Ycal_{r,s}\setminus\Ycal_{r,s-1}$, $(Y+\omega_r,Y^\ast+\omega_s)$ and $(Y+\omega_r,\sigma(Y^\ast+\omega_s))$ are in  $B(\omega_d)$.
\end{itemize}
 We  refer the reader to \cite{hasegawa} for a proof. 

\subsection{Highest weight vectors of branching } \label{sec:highest-weight}
An explicit description of the highest weight vectors for the components of the branching can be found in \cite{hasegawa}. In this section,  in those cases that  will be convenient for our applications,
we express them  as   products of operators in $\widehat{\mathfrak{so}}(2d+1)$ acting on the level one representations of $\widehat{\mathfrak{so}}(2d+1)$.
  Recall the following  from \cite{Mukhopadhyay:12}.

\begin{proposition}\label{kacmoody1}
Let $\lambda' \in \mathcal{Y}_{r,s}$ be obtained from $\lambda$ by removing two boxes with coordinates $(a,b)$ and $(c,d)$ and $\epsilon \in \{0,1\}$. Assume that $(a,b)<(c,d)$ under the lexicographic ordering. If $v_{\lambda'} \in \operatorname{End}(\mathcal{H}_{\omega_{\epsilon}}(\mathfrak{so}(2d+1)))$ is the highest weight vector of the component $\mathcal{H}_{\lambda'}\otimes\mathcal{H}_{\lambda'^T}$, then the highest weight vector $v_{\lambda}$ of the component $\mathcal{H}_{\lambda}(\sofrak(2r+1))\otimes \mathcal{H}_{\lambda^T}(\sofrak(2s+1))$ is given by
$v_{\lambda}=B^{a,b}_{-c,-d}(-1)v_{\lambda'}$.

\end{proposition}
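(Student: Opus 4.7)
My plan is to verify that $w := B^{a,b}_{-c,-d}(-1)v_{\lambda'}$ satisfies the three defining properties of the highest weight vector of the component $\mathcal{H}_\lambda(\sofrak(2r+1))\otimes\mathcal{H}_{\lambda^T}(\sofrak(2s+1))$ inside $\mathcal{H}_{\omega_\epsilon}(\sofrak(2d+1))$: it has weight $(\lambda,\lambda^T)$; it is annihilated by the positive part of $\widehat{\sofrak}(2r+1)\oplus\widehat{\sofrak}(2s+1)$; and it is nonzero.

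The weight calculation is a routine commutator computation. Using the normal-ordered expressions for $L(B^i_i(0))$ and $R(B^p_p(0))$ from Section \ref{branching}, one obtains
\begin{equation*}
[L(B^i_i(0)), B^{a,b}_{-c,-d}(-1)] = (\delta_{i,a}+\delta_{i,c})\, B^{a,b}_{-c,-d}(-1),
\end{equation*}
and an analogous formula for $R(B^p_p(0))$ giving the factor $(\delta_{p,b}+\delta_{p,d})$. Since passing from $\lambda'$ to $\lambda$ amounts to adding $L_a+L_c$ to the $\sofrak(2r+1)$-weight and $L_b+L_d$ to the $\sofrak(2s+1)$-weight, the weight of $w$ is exactly $(\lambda,\lambda^T)$.

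For highest-weight annihilation, I would apply the Jacobi identity: for any positive root operator $X$ (finite positive root of either subalgebra, or positive-mode $X(n)$ with $n>0$, or the affine lowering $E_{-\theta}(1)$),
\begin{equation*}
X\cdot w \;=\; [X, B^{a,b}_{-c,-d}(-1)]\,v_{\lambda'} + B^{a,b}_{-c,-d}(-1)\, X\cdot v_{\lambda'}.
\end{equation*}
The second term vanishes by the highest-weight property of $v_{\lambda'}$. The commutator in the first term, computed via the bracket of $\widehat{\sofrak}(2d+1)$, equals either $L([X,B^{a,b}_{-c,-d}](n-1))$ or $R(\cdots)$, plus a possible central term (which vanishes by orthogonality of the bilinear form in the relevant indices). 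I would then perform a case analysis on the simple positive roots of the two subalgebras, showing that each such commutator either yields an operator of strictly positive mode (automatically killing $v_{\lambda'}$) or a mode-zero element of the positive nilpotent subalgebra of $\sofrak(2r+1)\oplus\sofrak(2s+1)$ (which also annihilates $v_{\lambda'}$).

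The main obstacle is this case analysis, where the hypothesis $(a,b)<(c,d)$ is essential: the ordering guarantees that when a positive root $X$ acts on the lower index $(c,d)$ of $B^{a,b}_{-c,-d}$, the produced index is still of ``raising'' type with respect to the $\sofrak(2r+1)\oplus\sofrak(2s+1)$ polarization, so that $v_{\lambda'}$ is annihilated; without this ordering, one could land on mode-zero operators that do not lie in the positive nilpotent part. Finally, for nonvanishing of $w$, I would expand
\begin{equation*}
B^{a,b}_{-c,-d}(-1) \;=\; \sum_{a_1+a_2=-1}\normalorder{\phi^{a,b}(a_1)\phi_{-c,-d}(a_2)}
\end{equation*}
in the Clifford realization of Section \ref{sec:levelone}, and pair it against Hasegawa's explicit product-of-fermions formula for $v_{\lambda'}$. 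The term $\phi^{a,b}(0)\phi_{-c,-d}(-1)$ introduces two new anticommuting modes absent from $v_{\lambda'}$ (precisely because $(a,b)$ and $(c,d)$ are the removed boxes), and the remaining terms cannot cancel this contribution; hence $w\neq 0$, completing the proof.
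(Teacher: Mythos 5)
The paper does not actually prove this proposition; it is recalled from \cite{Mukhopadhyay:12} (note the sentence ``Recall the following from \cite{Mukhopadhyay:12}'' immediately preceding the statement), so there is no internal proof to compare against and your attempt is judged as a reconstruction.

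Your outline has a genuine gap in the annihilation step. You claim that $[X, B^{a,b}_{-c,-d}(-1)]$ ``equals either $L([X,B^{a,b}_{-c,-d}](n-1))$ or $R(\cdots)$'', but this is false: taking $X=L(B^i_j(0))=\sum_q B^{i,q}_{j,q}$, the bracket in $\sofrak(2d+1)$ produces terms such as $\delta_{j,a}\delta_{q,b}\,B^{i,b}_{-c,-d}$, whose row and column index pairs are mismatched, so the result lies in $\sofrak(2d+1)$ but \emph{not} in the image of $\sofrak(2r+1)\oplus\sofrak(2s+1)$. Moreover, for $X$ a zero-mode finite root the commutator has mode $-1$, so neither of your fallbacks (``strictly positive mode'' or ``mode-zero element of the positive nilpotent of the subalgebra'') applies. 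A mode $-1$ operator of $\widehat{\sofrak}(2d+1)$ has no general structural reason to annihilate $v_{\lambda'}$; whether it does depends entirely on the explicit Clifford-algebra form of $v_{\lambda'}$ (for instance, the target fermion mode being already occupied, or a sign cancellation forced by the ordering $(a,b)<(c,d)$). This is precisely why the cited references prove the claim by a direct Fock-space computation: express $v_{\lambda'}$ as a wedge of fermion modes as in Hasegawa \cite{hasegawa}, apply the normal-ordered expansion of $B^{a,b}_{-c,-d}(-1)$, and verify that the output equals the explicit fermionic formula for $v_{\lambda}$. Your closing paragraph on nonvanishing gestures at exactly this style of argument, but the annihilation step needs the same explicit Clifford-level treatment rather than the Jacobi-identity shortcut.
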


From \cite{hasegawa} we have:
\begin{proposition}\label{kacmoody2}
The element $\bigwedge_{j=-r}^r \phi^{j,1}(-\frac{1}{2})\in\bigwedge W_d^{\mathbb{Z}+\frac{1}{2},-}$ gives the highest weight vector for the component $\mathcal{H}_{\omega_0}(\sofrak(2r+1))\otimes \mathcal{H}_{(2r+1)\omega_1}(\sofrak(2s+1))$. 
\end{proposition}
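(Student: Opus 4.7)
The plan is to verify directly that $v := \bigwedge_{j=-r}^r \phi^{j,1}(-\tfrac{1}{2})\cdot|0\rangle$ is the highest weight vector of the specified component by checking three things: (i) it lies in the correct $\widehat{\mathfrak{so}}(2d+1)$-summand of the Fock space; (ii) its weight under the Cartan of $\mathfrak{so}(2r+1)\oplus\mathfrak{so}(2s+1)$ equals $(\omega_0,(2r+1)\omega_1)$; and (iii) it is annihilated by every positive-mode operator and every positive root vector at mode zero in both affine subalgebras. For (i), I would note that $v$ involves an odd number ($2r+1$) of fermionic creation operators, so the $\mathbb{Z}/2$-parity grading on $\bigwedge W_d^{\mathbb{Z}+1/2,-}$ places $v$ in $\mathcal{H}_{\omega_1}(\mathfrak{so}(2d+1))$. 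Taking $Y=\emptyset$ in the branching rules of Section~\ref{branching} (the $|Y|$ even case with $\sigma$ applied to the second factor) shows that $(\omega_0,(2r+1)\omega_1)$ does appear in that summand.

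For (ii), I would exploit that $B^i_i\in\mathfrak{so}(2r+1)$ acts on $W_d=W_r\otimes W_s$ only through the first factor, giving $[L(B^i_i(0)),\phi^{k,q}(n)] = (\delta_{i,k}-\delta_{-i,k})\phi^{k,q}(n)$. Summing this commutator over all $k\in\{-r,\dots,r\}$ telescopes to zero, so $v$ has $\mathfrak{so}(2r+1)$-weight $0=\omega_0$. On the $\mathfrak{so}(2s+1)$-side, $[R(B^p_p(0)),\phi^{k,1}(-\tfrac{1}{2})]=\delta_{p,1}\phi^{k,1}(-\tfrac{1}{2})$, and the $2r+1$ identical contributions sum to $(2r+1)\omega_1$.

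For (iii), I would write $v=\Phi|0\rangle$ with $\Phi$ the ordered Clifford product of the $\phi^{j,1}(-\tfrac{1}{2})$, and use the derivation property of the commutator together with $[L(B^i_j(m)),\phi^{k,q}(n)] = \delta_{j,k}\phi^{i,q}(m+n)-\delta_{-i,k}\phi^{-j,q}(m+n)$ (and its $R$-analog). For positive modes $m\geq 1$ the normal-ordering rule \eqref{eqn:normalorder} gives $L(B^i_j(m))|0\rangle = 0$, while the derivation produces positive-mode fermions $\phi^{\bullet,1}(m-\tfrac{1}{2})$ that anticommute trivially with all factors of $\Phi$ (since $\{\phi^{a,1},\phi^{b,1}\}=0$) and then annihilate the vacuum. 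For a positive root vector $L(B^i_j(0))$ of $\mathfrak{so}(2r+1)$ at mode zero, the same commutator would insert $\phi^{i,1}(-\tfrac{1}{2})$ at the slot $k=j$ or $-\phi^{-j,1}(-\tfrac{1}{2})$ at the slot $k=-i$; but since $\Phi$ already contains \emph{every} $\phi^{l,1}(-\tfrac{1}{2})$ with $l\in\{-r,\dots,r\}$, each such insertion creates a repeated anticommuting factor and hence vanishes. For positive root vectors of $\mathfrak{so}(2s+1)$, namely $R(B^p_q(0))$, $R(B^p_{-q}(0))$, $R(B^p_0(0))$ with $0<p<q\leq s$, the relevant identity is $[R(B^p_q(0)),\phi^{k,1}(-\tfrac{1}{2})] = \delta_{q,1}\phi^{k,p}(-\tfrac{1}{2})-\delta_{-p,1}\phi^{k,-q}(-\tfrac{1}{2})$, and for each of these three families of positive roots the constraints $q\geq 2$ and $p\geq 0$ force both Kronecker deltas to vanish, so $v$ is annihilated outright.

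Since $v\neq 0$ (the $\phi^{j,1}(-\tfrac{1}{2})$ are linearly independent in $W_d^{\mathbb{Z}+1/2,-}$), it generates an irreducible highest-weight module for $\widehat{\mathfrak{so}}(2r+1)\oplus\widehat{\mathfrak{so}}(2s+1)$ of the stated weight, and by multiplicity-freeness of the conformal branching this forces $v$ to be the unique highest weight vector (up to scalar) of the $(\omega_0,(2r+1)\omega_1)$-component. The only real obstacle is bookkeeping: correctly identifying which component of the branching $v$ lands in (one must invoke the $\sigma$-twisted rule for $|Y|=0$), and keeping the normal-ordering and Clifford signs consistent throughout; no genuinely hard input is required beyond these verifications and the identities from Section~\ref{clifford}.
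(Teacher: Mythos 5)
The paper does not actually give a proof of this proposition: it is stated with the attribution ``From \cite{hasegawa} we have:'' and no further argument, so there is nothing internal to compare your proposal against. That said, your blind verification is correct and self-contained. The parity argument is right: $v$ involves $2r+1$ (odd) fermionic creators so sits in the odd summand $\mathcal{H}_{\omega_1}(\sofrak(2d+1))$, and the branching rule for $|Y|=0$ (even) does produce $(\omega_0,\sigma(\omega_0))=(\omega_0,(2r+1)\omega_1)\in B(\omega_1)$, as you note. The weight computation is correct on both sides: for $L(B^i_i(0))$ the contributions $\delta_{i,k}-\delta_{-i,k}$ telescope to zero over $k\in\{-r,\dots,r\}$, while for $R(B^1_1(0))$ each of the $2r+1$ factors contributes $+1$, giving $(2r+1)L_1=(2r+1)\omega_1$, which one can check is indeed a valid level-$(2r+1)$ weight $((2r+1)\omega_1,\theta)=2r+1$. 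For annihilation: the positive-mode argument (the inserted $\phi^{\bullet,1}(m-\tfrac12)$ with $m\ge1$ slides past all $\phi^{\bullet,1}(-\tfrac12)$ since $\{\phi^{a,1},\phi^{b,1}\}=0$, and $L(B^i_j(m))|0\rangle=0$ from normal ordering) is sound; the repeated-factor argument for the $\sofrak(2r+1)$ positive roots at mode zero works for all three families $B^i_j$, $B^i_{-j}$, $B^i_0$ because every $\phi^{l,1}(-\tfrac12)$, $l\in\{-r,\dots,r\}$, is already a factor; and the two Kronecker deltas in $[R(B^p_q(0)),\phi^{k,1}(-\tfrac12)]$ do vanish for all positive roots $B^p_q$, $B^p_{-q}$, $B^p_0$ of $\sofrak(2s+1)$. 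Invoking multiplicity-freeness of the conformal branching (which the paper also uses in Section \ref{factorizationlemma}) to conclude uniqueness is the right closing step. In short, your proof supplies the verification the paper outsources to \cite{hasegawa}, and it is correct.
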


Next, we describe the highest weight vectors for the branching of the Spin module at level one. First, we need some notation. 
Given  $Y\in\mathcal{Y}_{r,s}$, we view it pictorially  as an $r\times s$ box  with the white boxes carving out the Young diagram (see below). We associate to $Y$
another diagram as follows:
\[
\widetilde{Y}_{j,p}=\left\{
\begin{array}{l l}
\blacksquare &\quad \text{if $Y$ has an empty box in the $(j,p+s+1)$-th position,} \\
\square &\quad \text{otherwise.}\\
\end{array}\right.
\]
Here in the matrix $\widetilde Y$, 
$j=0,1,\ldots, r$, $p=1,\ldots, s,-s, \ldots, -1$.
%
This is illustrated as:

\begin{equation*}
  Y=\Scale[.8]{
  \tikz[baseline=(M.west)]{%
    \node[matrix of math nodes,matrix anchor=west,ampersand replacement=\&] (M) {%
      \square \& \square \& \blacksquare \& \blacksquare  \\
      \square \& \square \& \blacksquare \& \blacksquare   \\
       \square \& \square \& \blacksquare \& \blacksquare  \\
       \square \&  \blacksquare \&   \blacksquare  \&\blacksquare \\
      };
    \node[draw,fit=(M-1-1)(M-4-4),thick,  inner sep=-1pt] {};
  }}
\qquad\longleftrightarrow\qquad
  \widetilde Y_{j,p}=
  \Scale[.8]{
  \tikz[baseline=(M.west)]{%
    \node[matrix of math nodes,matrix anchor=west,ampersand replacement=\&] (M) {%
   j\backslash p \&   1 \& .. \& .. \& s \&-s\& ..\& ..\& -1 \\
	-1  \&  \square \& .. \& .. \& \square\& \square\& .. \& .. \& \square \\
  0  \&  \square \& .. \& .. \& \square\& \square\& .. \& .. \& \square \\
   1  \&  \square \& .. \& .. \& \square \& \square \& \square \& \blacksquare \&\blacksquare \\
  : \& :   \&  \& \& :  \&    \square  \& \square \& \blacksquare\&\blacksquare  \\
  : \& :   \&  \& \& :  \&    \square  \& \square \& \blacksquare \& \blacksquare  \\
    r \&\square  \& ..   \& ..  \&\square  \&\square\& \blacksquare \&  \blacksquare \&  \blacksquare \\
    };
     \node[draw,fit=(M-2-2)(M-7-9),thick,  inner sep=0pt] {};
    \node[draw,fit=(M-7-6)(M-4-9),thick, dashed, inner sep=-2.5pt] {};
  }}
\end{equation*}
With this notation, we can state the branching rules.
If $Y \in \mathcal{Y}_{r,s}$, then $\mathcal{H}_{{Y}+\omega_r}(\sofrak(2r+1))\otimes \mathcal{H}_{Y^*+\omega_s}(\sofrak(2s+1))$ appears in the decomposition of $\mathcal{H}_{\omega_d}(\sofrak(2d+1))$. For a proof of the following, we refer the reader to \cite{hasegawa}.
\begin{proposition}\label{kacmoody3}
 The highest weight vector $v_Y$ of the component $\mathcal{H}_{{Y}+\omega_r}(\sofrak(2r+1))\otimes \mathcal{H}_{Y^*+\omega_s}(\sofrak(2s+1))$ is given by $\bigwedge_{\widetilde{Y}_{j,p}=\,\blacksquare}\phi_{j,p}$.

\end{proposition}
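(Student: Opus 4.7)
The plan is to verify directly, using the fermionic realization $\mathcal{H}_{\omega_d}(\sofrak(2d+1)) \simeq \bigwedge W_d^{\ZBbb,-}$ of Section 7.2, that $v_Y := \bigwedge_{\widetilde{Y}_{j,p}=\,\blacksquare}\phi_{j,p}$ is the highest weight vector of the branching component $\mathcal{H}_{Y+\omega_r}\otimes \mathcal{H}_{Y^{*}+\omega_s}$. By the branching rules in Section 8.3, this component occurs with multiplicity one in $\mathcal{H}_{\omega_d}$, so it suffices to check three conditions: $v_Y$ is a nonzero element of $\bigwedge W_d^{\ZBbb,-}$; it has weight $(Y+\omega_r,\,Y^{*}+\omega_s)$ under the Cartan subalgebra of $\sofrak(2r+1)\oplus\sofrak(2s+1)$ acting through $L\oplus R$; and it is annihilated by the positive part of both affine algebras $\widehat{\sofrak}(2r+1)$ and $\widehat{\sofrak}(2s+1)$ acting through the same embeddings.

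Nonvanishing of $v_Y$ is immediate from the definition of $\widetilde Y$, since distinct $\blacksquare$-positions yield distinct basis vectors of $W_d^-$. For the weight computation I would proceed by induction on $|Y|$, starting from $v_\emptyset = 1$, the vacuum of $\bigwedge W_d^{\ZBbb,-}$, which by construction is a joint highest weight vector of weight $(\omega_r,\omega_s)$. Each additional factor $\phi_{j,p}$ at a $\blacksquare$-position shifts the left weight by $-L_{|j|}$ and the right weight by $-L_{|p|}$ (with a sign determined by whether we are in the $(j,p)>0$ or $(j,p)<0$ quadrant of $W_d^-$). The combinatorial recipe defining $\widetilde Y$ is tuned so that the aggregate shift assembles the row-sum data of $Y$ into the left Cartan coordinates and the column-sum data of $Y$ (which is the row-sum data of $Y^{T}$, hence of $Y^{*}$ up to the fundamental weight shift from complementation) into the right Cartan coordinates.

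The main step is the annihilation by positive operators. Using the explicit formulas
\[
L(B^i_j(m))=\sum_{q=-s}^{s}\sum_{a+b=m}\normalorder{\phi^{i,q}(a)\phi_{j,q}(b)},\qquad R(B^i_j(m))=\sum_{p=-r}^{r}\sum_{a+b=m}\normalorder{\phi^{p,i}(a)\phi_{p,j}(b)},
\]
positive-mode operators ($m \ge 1$) vanish on $v_Y$ for free: $v_Y$ is a wedge of zero-mode fermions, so any nontrivial contribution would require a creation operator $\phi(-a)$ with $a>0$ which has no available partner in the normal-ordered sum. For zero-mode positive-root generators, a summand $\normalorder{\phi^{i,q}(0)\phi_{j,q}(0)}$ can act nontrivially on $v_Y$ only if $\phi_{j,q}$ appears as a factor and $\phi^{i,q}$ does not; the combinatorial content of the proof is that the layout of $\widetilde Y$ is precisely such that, for every positive root $(i,j)$ on the $\sofrak(2r+1)$ side, summation over $q$ of these forbidden configurations cancels, and symmetrically for $\sofrak(2s+1)$.

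The principal obstacle is this last combinatorial step: one must match the positive-root structure of both algebras against the rectangular strip pattern carved out by $\widetilde Y$ and verify, row-by-row and column-by-column, that each sum over the free index $q$ (resp.\ $p$) truly telescopes to zero, tracking the signs introduced by normal ordering, by the quasi-isotropic change of basis from $e_{j,p}$ to $\phi^{j,p}$, and by reordering of wedge factors. This bookkeeping is carried out by Hasegawa \cite{hasegawa} in essentially the same language; our proposed proof amounts to transcribing it into the conventions of Sections 7 and 8 above and confirming that the conventions for $Y^{*}$ and $\widetilde Y$ are compatible with those loc.\ cit.
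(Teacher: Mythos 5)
The paper itself gives no proof of Proposition \ref{kacmoody3}: it simply refers the reader to Hasegawa \cite{hasegawa}. Your outline therefore does not so much reproduce the paper's argument as supply the sketch the paper omits, and in that sense it is a useful addition; but in the end you also defer the decisive combinatorics to Hasegawa, so the two "proofs" coincide in substance. Your reduction (multiplicity one from the branching rules, hence it suffices to exhibit a nonzero vector of the right $\hfrh_1\oplus\hfrh_2$-weight annihilated by $\widehat{\mathfrak{n}}_+$) is the correct strategy, and the observation that positive-mode operators $L(B^i_j(m))$, $R(B^i_j(m))$ with $m\geq 1$ kill $v_Y$ for degree reasons because $v_Y$ is a pure wedge of zero-mode fermions is sound. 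One point worth tightening: your description of the zero-mode step ("a summand $\normalorder{\phi^{i,q}(0)\phi_{j,q}(0)}$ can act nontrivially only if $\phi_{j,q}$ appears and $\phi^{i,q}$ does not, and the sum over $q$ cancels") is not quite the right picture. A zero-mode creation operator $\phi_{j,q}(0)\in W_d^-$ acts nontrivially precisely when $\phi_{j,q}$ is \emph{not} already a factor of $v_Y$, so the clause "appears as a factor" has the inclusion backwards; and in the Clifford realization the vanishing on the highest weight vector is to a large extent termwise in $q$ (the $\blacksquare$-pattern is arranged to be Borel-stable), not primarily a telescoping cancellation. Since you ultimately invoke Hasegawa for the bookkeeping this imprecision does not invalidate the proposal, but if you wanted the proof to stand on its own you would need to state the zero-mode criterion correctly and then check it against the definition of $\widetilde Y$ rather than appeal to an unproven cancellation.
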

From Section \ref{branching}, if
$Y\in \mathcal{Y}_{r,s-1}$,  the component $\mathcal{H}_{\sigma(Y+\omega_r)}(\sofrak(2r+1))\otimes \mathcal{H}_{Y^*+\omega_s}(\sofrak(2s+1))$ appears in the decomposition of $\mathcal{H}_{\omega_d}(\sofrak(2d+1))$. 
 We  describe the highest weight vectors. We define a new diagram $\sigma(\widetilde{Y})$, obtained by  first considering $\widetilde{Y}$ and then interchanging the black boxes in the $1$-st row by the corresponding white boxes in the $(-1)$-st row, and keeping the columns invariant.
\begin{equation*}
  Y=\Scale[.8]{
  \tikz[baseline=(M.west)]{%
    \node[matrix of math nodes,matrix anchor=west,ampersand replacement=\&] (M) {%
      \square \& \square \& \blacksquare \& \blacksquare  \\
      \square \& \square \& \blacksquare \& \blacksquare   \\
       \square \& \square \& \blacksquare \& \blacksquare  \\
       \square \&  \blacksquare \&   \blacksquare  \&\blacksquare \\
      };
    \node[draw,fit=(M-1-1)(M-4-4),thick,  inner sep=-1pt] {};
  }}
\qquad\longleftrightarrow\qquad
 \sigma( \widetilde Y)_{j,p}=
 \Scale[.8]{
  \tikz[baseline=(M.west)]{%
    \node[matrix of math nodes,matrix anchor=west,ampersand replacement=\&] (M) {%
   j\backslash p \&   1 \& .. \& .. \& s \&-s\& ..\& ..\& -1 \\
	-1  \&  \square \& .. \& .. \& \square\& \square\& .. \& \blacksquare \& \blacksquare \\
  0  \&  \square \& .. \& .. \& \square\& \square\& .. \& .. \& \square \\
   1  \&  \square \& .. \& .. \& \square \& \square \& \square \& \square \&\square \\
  : \& :   \&  \& \& :  \&    \square  \& \square \& \blacksquare\&\blacksquare  \\
  : \& :   \&  \& \& :  \&    \square  \& \square \& \blacksquare \& \blacksquare  \\
    r \&\square  \& ..   \& ..  \&\square  \&\square\& \blacksquare \&  \blacksquare \&  \blacksquare \\
    };
     \node[draw,fit=(M-2-2)(M-7-9),thick,  inner sep=0pt] {};
    \node[draw,fit=(M-7-6)(M-4-9),thick, dashed, inner sep=-2.5pt] {};
  }}
\end{equation*}
With the above notation, we have the following from \cite{hasegawa}.
\begin{proposition}\label{sigmatwist}
The highest weight vector  of the component $\mathcal{H}_{\sigma({Y}+\omega_r)}(\sofrak(2r+1))\otimes \mathcal{H}_{Y^*+\omega_s}(\sofrak(2s+1))$ is given by $\bigwedge_{\sigma(\widetilde{Y})_{j,p}=\,\blacksquare}\phi_{j,p}(\epsilon)$, where $\epsilon=-1$ if $j=-1$, and $\epsilon=0$  otherwise.  
\end{proposition}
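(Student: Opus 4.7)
The plan is to verify directly that
\[
w_Y := \bigwedge_{\sigma(\widetilde{Y})_{j,p}=\blacksquare}\phi_{j,p}(\epsilon)
\]
is a joint highest weight vector for the conformal embedding $\widehat{\mathfrak{so}}(2r+1)\oplus\widehat{\mathfrak{so}}(2s+1)\hookrightarrow \widehat{\mathfrak{so}}(2d+1)$ inside $\bigwedge W_d^{\mathbb{Z},-}\simeq \mathcal{H}_{\omega_d}$, of weight $\sigma(Y+\omega_r) + (Y^\ast+\omega_s)$. Since the branching rules of Section \ref{branching} produce this component with multiplicity one, the conclusion will follow.

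For the weight, each factor $\phi_{j,p}(\epsilon)$ contributes joint finite weight $-\mathrm{sgn}(j)L^{(r)}_{|j|} - \mathrm{sgn}(p)L^{(s)}_{|p|}$ and $L_0$-grading $-\epsilon$. Passing from $\widetilde{Y}$ to $\sigma(\widetilde{Y})$ relocates the $N_1 = s-Y_1$ black boxes of row $1$ to row $-1$ while preserving columns, and changes their modes from $0$ to $-1$. Since the column indices are fixed, the $\mathfrak{so}(2s+1)$-weight agrees with that of $v_Y$ from Proposition \ref{kacmoody3}, namely $Y^\ast+\omega_s$. The $\mathfrak{so}(2r+1)$-weight shifts by $2N_1L^{(r)}_1$, which by \eqref{daiso} together with a brief calculation (writing $Y+\omega_r=\sum a_i\omega_i$) equals $\sigma(Y+\omega_r)-(Y+\omega_r) = 2(s-Y_1)L^{(r)}_1$. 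The degree shift of $N_1$ produced by the mode-$(-1)$ factors coincides with the conformal anomaly difference $\Delta_{\sigma(\lambda)}-\Delta_\lambda$, giving the correct affine weight.

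For annihilation by positive root operators of $\widehat{\mathfrak{so}}(2s+1)$, the operators $R(X)(n)$ act only on the $p$-index of each $\phi_{j,p}(\epsilon)$, with the $j$-index a passive spectator, so the combinatorics is identical to that in the proof of Proposition \ref{kacmoody3} and annihilation follows verbatim. For the finite positive root operators $L(B^i_j)(0)$ of $\mathfrak{so}(2r+1)$ with $i<j$, the explicit Clifford formula shows that each such operator either creates a wedge factor already present in $w_Y$ (yielding zero) or produces an index forbidden by the shape of $\sigma(\widetilde{Y})$, essentially by Hasegawa's combinatorial argument in the untwisted case \cite{hasegawa}. The operators $L(X)(n)$ for $n\geq 1$ annihilate trivially, since all modes of $w_Y$ lie in $\{-1,0\}$ and anticommutator contractions can only increase the total mode.

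The delicate point is annihilation by the affine simple positive root $L(e_{-\theta})(1)$ of $\widehat{\mathfrak{so}}(2r+1)$, with $\theta=L_1+L_2$, because the mode-$(-1)$ factors in $w_Y$ admit nontrivial contractions with the mode-$1$ Clifford generators appearing here. The guiding principle, parallel to Lemma \ref{adjoint}, is that the diagram automorphism $\sigma$ of $\widehat{\mathfrak{so}}(2r+1)$ is implemented on the Fock space by the loop $\zeta_r\otimes \mathrm{id}_{W_s}$ in $L\mathrm{SO}(2d+1)$, with $\zeta_r$ as in \eqref{zeta}; under this implementation, $L(e_{-\theta})(1)$ is intertwined with a finite positive root operator that already annihilates $v_Y$ by Proposition \ref{kacmoody3}. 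Translating this into an explicit cancellation within the normal-ordered products \eqref{eqn:normalorder}---tracking how the mode-$(-1)$ contractions pair against each moded Clifford generator---is the main technical obstacle, but is forced by the precise definition of $\sigma(\widetilde{Y})$ and parallels the untwisted analysis.
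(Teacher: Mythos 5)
The paper itself does not prove this statement; it cites Hasegawa's paper \cite{hasegawa}. You instead attempt a direct verification, which is a reasonable alternative strategy, and your weight computation is in fact correct (the identity $\sigma(Y+\omega_r)-(Y+\omega_r)=2(s-Y_1)L_1$ can be checked directly from \eqref{daiso}). However, the argument as written has a genuine gap and cannot stand as a proof of the proposition.

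The gap is precisely the step you flag as ``the main technical obstacle'': annihilation by the affine simple positive root operators $L(e_{-\theta})(1)$ of $\widehat{\sofrak}(2r+1)$, and, symmetrically, $R(e_{-\theta_s})(1)$ of $\widehat{\sofrak}(2s+1)$. You describe a plausible strategy --- implement the diagram automorphism of $\widehat{\sofrak}(2r+1)$ via $\operatorname{Ad}(\zeta_r\otimes\mathrm{id}_{W_s})$ in the spirit of Lemma \ref{adjoint} and transport the known annihilation of $v_Y$ from Proposition \ref{kacmoody3} --- but you do not carry it out. Making this precise requires showing that $\operatorname{Ad}(\zeta_r\otimes\mathrm{id}_{W_s})$ acts on the Fock space $\bigwedge W_d^{\mathbb{Z},-}$ by the explicit Clifford-algebra formula that shifts the mode of $\phi_{1,p}(0)$ to $\phi_{-1,p}(-1)$, and that it carries $v_Y$ to $w_Y$ (up to scale); none of this is done. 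Moreover the interim claim that ``the operators $L(X)(n)$ for $n\geq 1$ annihilate trivially, since \ldots contractions can only increase the total mode'' is incorrect exactly at $n=1$: a mode-$+1$ Clifford generator in $L(X)(1)$ pairs nontrivially against a mode-$(-1)$ factor of $w_Y$ and lands back in the Fock space --- this is precisely the new phenomenon absent in Proposition \ref{kacmoody3}, where $v_Y$ lives entirely at mode $0$. The same remark undercuts your assertion that annihilation on the $R$-side ``follows verbatim'' from Proposition \ref{kacmoody3}; the $p$-index combinatorics may be unchanged, but the mode structure is not, so the affine simple root of $\widehat{\sofrak}(2s+1)$ at mode $1$ requires a separate check. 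As it stands, the proposal is an outline that identifies the right obstacle and a promising line of attack, but leaves the decisive verifications unproved.
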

We can rewrite the result above  in the ``Kac-Moody" form. 
\begin{corollary}\label{kacmoody4}
Let $Y'$ be the Young diagram obtained from  $Y$  by changing the black boxes in the first row to white. Let $v_{Y'}$ be the highest weight vector of the component $\mathcal{H}_{Y'+\omega_r}(\sofrak(2r+1))\otimes \mathcal{H}_{(Y')^*+\omega_s}(\sofrak(2s+1))$. Then a highest weight vector of the component  $\mathcal{H}_{\sigma({Y}+\omega_r)}(\sofrak(2r+1))\otimes \mathcal{H}_{Y^*+\omega_s}(\sofrak(2s+1))$ is given by:
$\displaystyle\prod_{\sigma(\widetilde{Y})_{-1,p}=\,\blacksquare}B^{1,-p}_{0,0}(-1)v_{Y'}$.
\end{corollary}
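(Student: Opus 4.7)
The plan is to match the explicit Fock expression for $v_{\sigma(Y+\omega_r)}$ given by Proposition~\ref{sigmatwist} against the action of $\prod_p B^{1,-p}_{0,0}(-1)$ on $v_{Y'}$, working throughout inside the realization $\mathcal{H}_{\omega_d}(\sofrak(2d+1))\simeq\bigwedge W_d^{\mathbb Z,-}$ of Section~\ref{sec:levelone}.

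First I would compare the diagrams $\sigma(\widetilde Y)$ and $\widetilde{Y'}$. By construction, $\widetilde{Y'}$ differs from $\widetilde Y$ only in row~$1$, which is emptied by the whitening operation defining $Y'$. The involution $\sigma$ also empties row~$1$ of $\widetilde Y$, but records the erased positions as new black boxes in row~$-1$ of $\sigma(\widetilde Y)$. Consequently $\sigma(\widetilde Y)$ and $\widetilde{Y'}$ carry identical black boxes outside row~$-1$, while $\sigma(\widetilde Y)$ has an additional black box in row~$-1$ at each column $p$ with $\widetilde Y_{1,p}=\blacksquare$. Combining this with Propositions~\ref{kacmoody3} and~\ref{sigmatwist} yields the Fock-space identity, up to an overall sign coming from wedge reordering,
\begin{equation*}
v_{\sigma(Y+\omega_r)} \;=\; \Bigl(\,\bigwedge_{\sigma(\widetilde Y)_{-1,p}=\blacksquare}\phi_{-1,p}(-1)\Bigr)\wedge v_{Y'}\ .
\end{equation*}

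Next I would show that each $B^{1,-p}_{0,0}(-1)$ has the effect of wedging on the single factor $\phi_{-1,p}(-1)=\phi^{1,-p}(-1)$ up to a nonzero scalar. Expanding
\begin{equation*}
B^{1,-p}_{0,0}(-1) \;=\; \sum_{a+b=-1}\normalorder{\phi^{1,-p}(a)\phi_{0,0}(b)}\ ,
\end{equation*}
I would analyze each summand acting on an intermediate state of the form $w=\phi^{1,-p_1}(-1)\wedge\cdots\wedge\phi^{1,-p_k}(-1)\wedge v_{Y'}$. The term with $(a,b)=(-1,0)$ equals $\phi^{1,-p}(-1)\phi_{0,0}(0)$, which multiplies by the scalar $\pm 1/\sqrt{2}$ from the zero-mode Clifford action of $\phi_{0,0}(0)$ and then wedges on $\phi^{1,-p}(-1)$. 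For $a\geq 1$, the operator $\phi^{1,-p}(a)$ annihilates $w$: it seeks a factor $\phi_{1,-p}(-a)$, but $v_{Y'}$ contains only mode-zero factors $\phi_{j,q}(0)$, and the added factors are of the form $\phi^{1,-p'}(-1)$ whose anticommutator with $\phi^{1,-p}(a)$ vanishes identically, since $\{\phi^{1,-p},\phi^{1,-p'}\}=\delta_{(1,-p)+(1,-p'),0}=0$. The $(a,b)=(0,-1)$ term requires a contraction with $\phi_{1,-p}(0)$, corresponding to a black box in row~$1$ of $\widetilde{Y'}$, which does not exist by construction of $Y'$. Finally, for $a\leq -2$ the operator $\phi_{0,0}(-1-a)$ with $-1-a\geq 1$ is an annihilator seeking a $\phi_{0,0}(1+a)$ factor that is absent in every intermediate state. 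Hence only the leading summand contributes at each step, and iterating yields the desired equality with $v_{\sigma(Y+\omega_r)}$ up to a nonzero scalar.

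The main technical obstacle is precisely this case analysis: the intermediate states accumulate extra $\phi^{1,-p'}(-1)$ factors as the product is applied, and one must verify these never create unwanted contractions when subsequent $B^{1,-p}_{0,0}(-1)$ operators act. The identity $\{\phi^{1,-p}(a),\phi^{1,-p'}(-1)\}=0$ for all $a,p,p'$ ensures that the bookkeeping remains consistent across iterations, so that the $(a,b)=(-1,0)$ term is the only one that survives at every stage, and the overall product produces the highest weight vector specified by Proposition~\ref{sigmatwist}.
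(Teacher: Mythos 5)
Your proposal is correct and takes essentially the same approach as the paper, which gives only a one-line proof ("This follows from Proposition~\ref{sigmatwist} by applying the definition of the action of $B^{1,-p}_{0,0}(-1)$"); you have simply written out the case analysis for the modes in the normal-ordered sum, verified that only the $(a,b)=(-1,0)$ term survives, and checked that the extra $\phi^{1,-p'}(-1)$ factors accumulated along the way never produce unwanted contractions.
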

\begin{proof}
Use Proposition \ref{sigmatwist} and the definition of the action of $B^{1,-p}_{0,0}(-1)$.
\end{proof}

\section{Rank-level duality in genus zero}

\subsection{General context of rank-level duality}\label{ranklevelinjectivity}  Let $\phi:\frg_1\oplus \frg_2\rightarrow \frg$ be a conformal embedding with Dynkin multi-index
 $d_\phi=(\ell_1,\ell_2)$  (see Section \ref{sec:conformal-embedding}).
Then $\phi$ extends  to a homomorphism of affine Lie algebras $\widehat{\phi}:\widehat{\frg}_1\oplus \widehat{\frg}_2 \rightarrow \widehat{\frg}$. Let $\vec{\lambda}=(\lambda_1,\ldots, \lambda_n)$ (resp.\  $\vec{\mu}=(\mu_1,\ldots, \mu_n))$ and $\vec{\Lambda}=(\Lambda_1,\ldots, \Lambda_n)$ be $n$-tuples of level $\ell_1$ (resp.\  $\ell_2$) and level one integrable highest weights such that for each $1\leq i \leq n$, $(\lambda_i, \mu_i)\in B(\Lambda_i)$.
Taking the $n$-fold tensor product, we get a map: 
$\bigotimes_{i=1}^n \mathcal{H}_{\lambda_i}(\gfrak_1)\otimes \mathcal{H}_{\mu_i}(\gfrak_2) \rightarrow \bigotimes_{i=1}^n\mathcal{H}_{\Lambda_i}(\gfrak)$. Let $\mathfrak{X}$ be the data associated to curve $C$ with $n$ marked points and chosen coordinates. Taking coinvariants, we get  the following map of dual conformal blocks:$\alpha:\mathcal{V}_{\vec{\lambda}}(\mathfrak{X},\frg_1,\ell_1)\otimes \mathcal{V}_{\vec{\mu}}(\mathfrak{X},\frg_2,\ell_2)\rightarrow \mathcal{V}_{\vec{\Lambda}}(\mathfrak{X},\frg,1)\ .$
We call a triple $(\vec{\lambda},\vec{\mu},\vec{\Lambda})\in P^n_{\ell_1}(\frg_1)\times P^n_{\ell_2}(\frg_2) \times P^n_{1}(\frg) $ {\em admissible}, if they are connected by a map as above by the branching of level one modules.  If $\mathcal{V}_{\vec{\Lambda}}(\mathfrak{X},\frg,1) $ is one dimensional we get a map:
$\alpha^*:\mathcal{V}_{\vec{\lambda}}(\mathfrak{X},\frg_1,\ell_1)\rightarrow \mathcal{V}^*_{\vec{\mu}}(\mathfrak{X},\frg_2,\ell_2)$, which is determined up to a nonzero multiplicative constant.
 Then $\alpha^*$ is known as the {\em rank-level duality map}.  We say that \emph{rank-level duality holds} if $\alpha^\ast$ is an isomorphism.

Let $\mathcal{F}=(\pi:\mathcal{C}\rightarrow B; \sigma_1,\ldots, \sigma_n; \xi_1,\ldots, \xi_n)$ be a family of nodal curves on a base $B$ with sections $\sigma_i$ and local coordinates $\xi_i$. The map $\alpha^\ast$ can be extended to a map of sheaves 
$
\alpha(\mathcal{F}): \mathcal{V}_{\vec{\lambda}}(\mathcal{F}, \frg_1, \ell_1 )\otimes \mathcal{V}_{\vec{\mu}}(\mathcal{F},\frg_2, \ell_2 )\rightarrow \mathcal{V}_{\vec{\Lambda}}(\mathcal{F}, \frg, 1)\ .
$
Furthermore, if the embedding is $\frg_1\oplus \frg_2 \rightarrow \frg$ is conformal \cite{KacWakimoto:88}, as in the case of the odd orthogonal groups considered here,
 it follows that the rank-level duality map is flat with respect to the TUY connection. We refer the reader to \cite{Belkale:09} for a proof.

\subsection{Failure of rank-level duality over $\mathbb{P}^1$ with spin weights} \label{failure}
Rank-level duality isomorphisms for odd orthogonal groups on $\PBbb^1$ with $\SO$-weights was proved in \cite{Mukhopadhyay:12}. In this section, we give explicit examples where the rank-level duality map over $\mathbb{P}^1$ with four marked points is well-defined, but fails to be an isomorphism. 

\subsubsection{Example 1}
Consider the embedding  $\mathfrak{so}(5)\oplus \mathfrak{so}(7)\rightarrow \mathfrak{so}(35)$. From the branching rules in Section \ref{branching}, we know that $\mathcal{H}_{2\omega_1+\omega_2}(\sofrak(5))\otimes \mathcal{H}_{\omega_1+3\omega_3}(\sofrak(7))$ appears in the branching of the spin module $\mathcal{H}_{\omega_{17}}(\sofrak(35))$. By functoriality we get the following map of conformal blocks:
$\mathcal{V}_{\vec{\lambda}}(\mathbb{P}^1,\mathfrak{so}(5),7)\otimes \mathcal{V}_{\vec{\mu}}(\mathbb{P}^1,\mathfrak{so}(7),5)\rightarrow \mathcal{V}_{\omega_{17},\omega_{17},\omega_1,\omega_1}(\mathbb{P}^1,\mathfrak{so}(35),1)$, where $\vec{\lambda}=({2\omega_1+\omega_2, 2\omega_1+\omega_2,\omega_1,\omega_1})$ and $\vec{\mu}=({\omega_1+3\omega_3, \omega_1+3\omega_3,\omega_1,\omega_1})$. One checks (e.g.\ by \cite{Swinarski:10}) that  $\dim_\CBbb\mathcal{V}_{\omega_{17},\omega_{17},\omega_1,\omega_1}(\mathbb{P}^1,\mathfrak{so}(35),1)=1$. Hence, we get a rank-level duality map between $\mathcal{V}_{\vec{\lambda}}(\mathbb{P}^1,\mathfrak{so}(5),7)^*$ and $\mathcal{V}_{\vec{\mu}}(\mathbb{P}^1,\mathfrak{so}(7),5)$. But this map cannot be an isomorphism since  $\dim_\CBbb\mathcal{V}_{\vec{\mu}}(\mathbb{P}^1,\mathfrak{so}(7),5)=5$, whereas  $\dim_\CBbb\mathcal{V}_{\vec{\lambda}}(\mathbb{P}^1,\mathfrak{so}(5),7)=4$.



\subsubsection{Example 2}
Consider the embedding  $\mathfrak{so}(7)\oplus \mathfrak{so}(9)\rightarrow \mathfrak{so}(63)$. Then  
$$\dim_\CBbb \mathcal{V}_{\omega_{31},\omega_{31},\omega_1}(\mathbb{P}^1,\mathfrak{so}(63),1)=1\ ,$$  and following the branching rules in Section \ref{branching}, there is a well-defined rank-level duality map
$\mathcal{V}_{\vec{\lambda}}(\mathbb{P}^1,\mathfrak{so}(7),9)\otimes \mathcal{V}_{\vec{\mu}}(\mathbb{P}^1,\mathfrak{so}(9),7)\rightarrow \mathcal{V}_{\omega_{31},\omega_{31},\omega_1}(\mathbb{P}^1,\mathfrak{so}(63),1)$, where 
$\vec{\lambda}=(\omega_2+3\omega_3,\omega_2+3\omega_3, \omega_1+\omega_2)$, $\vec{\mu}=(\omega_1+2\omega_3+\omega_4,\omega_1+2\omega_3+\omega_4, \omega_1+\omega_2)$. But this map is not an isomorphism, since the dimensions of $\mathcal{V}_{\vec{\lambda}}(\mathbb{P}^1,\mathfrak{so}(7),9)$ and $\mathcal{V}_{\vec{\mu}}(\mathbb{P}^1,\mathfrak{so}(9),7)$ are $3$ and $4$,  respectively.

\subsubsection{Example 3}
Consider the embedding  $\mathfrak{so}(9)\oplus \mathfrak{so}(7)\rightarrow \mathfrak{so}(63)$. Then  
$$\dim_\CBbb \mathcal{V}_{\omega_{31},\omega_{31},\omega_1}(\mathbb{P}^1,\mathfrak{so}(63),1)=1\ ,$$
and following the branching rules in Section \ref{branching}, there is a well-defined rank-level duality map
$\mathcal{V}_{\vec{\lambda}}(\mathbb{P}^1,\mathfrak{so}(9),7)\otimes \mathcal{V}_{\vec{\mu}}(\mathbb{P}^1,\mathfrak{so}(7),9)\rightarrow \mathcal{V}_{\omega_{31},\omega_{31},\omega_1}(\mathbb{P}^1,\mathfrak{so}(63),1)$, where 
$\vec{\lambda}=(\omega_2+3\omega_4,\omega_2+3\omega_4, 2\omega_1+2\omega_4)$, $\vec{\mu}=(2\omega_1+2\omega_2+\omega_3,2\omega_1+2\omega_2+\omega_3, 3\omega_1+2\omega_3)$. But this map is not an isomorphism, since the dimensions of $\mathcal{V}_{\vec{\lambda}}(\mathbb{P}^1,\mathfrak{so}(9),7)$ and $\mathcal{V}_{\vec{\mu}}(\mathbb{P}^1,\mathfrak{so}(7),9)$ are $8$ and $14$,  respectively. 

\begin{remark}The above examples show that even if the rank-level duality map is well-defined it may not be an isomorphism due to the inequality of  dimensions of the source and the target spaces. However, we can still ask if  $\alpha^*$ is  injective?  The next section gives a positive answer to that question.   This is distinct from the issue in higher genus, where we will see that there is an equality of dimensions but still the rank-level duality map is not an isomorphism (see Section \ref{sec:higher-genus}).
\end{remark}

\subsection{Rank-level duality for 3-pointed $\mathbb{P}^1$ with spin weights}
 Consider the embedding \eqref{eqn:so-embedding}.
 The only interesting cases for 3-points with spin weights are the tuples $\vec{\Lambda}=(\omega_0,\omega_d,\omega_d)$ and $\vec{\Lambda}=(\omega_1,\omega_d,\omega_d)$. We also observe that the action of the automorphism of the affine Dynkin diagram fixes $\omega_d$ and interchanges  $\omega_0$ with $\omega_1$. We first fix some notation.
Let $Y_1\in \mathcal{Y}_{r,s}$ and $Y_2, Y_3  \in \mathcal{Y}_{r,s-1}$ and we consider $\vec{\lambda}=(Y_1, Y_2+\omega_r,Y_3+\omega_r)$. Let $\vec{\mu}=(Y_1^T,Y_2^*+\omega_s, Y_3^*+\omega_s)$ and $\vec{\Lambda}=(\omega_{\epsilon},\omega_d,\omega_d)$, where $\epsilon$ is zero or one depending on the even or odd parity of the number of boxes of the Young diagram of $Y_1$. From the branching rules of Section \ref{branching}, we get the following map of conformal blocks
\begin{equation}\label{injectivity}
\mathcal{V}_{\vec{\lambda}}(\mathfrak{X},\mathfrak{so}(2r+1),2s+1))\rightarrow \mathcal{V}^{*}_{\vec{\mu}}(\mathfrak{X},\mathfrak{so}(2s+1),2r+1)\otimes \mathcal{V}_{\vec{\Lambda}}(\mathfrak{X},\mathfrak{so}(2d+1),1)\ ,
\end{equation}
Here, $\mathfrak{X}$ denotes the data associated to $\mathbb{P}^1$ with three marked points and chosen coordinates.
The following is the main statement of this section.
\begin{theorem}\label{braidreducibility}
The rank-level duality map defined  in \eqref{injectivity} is injective.
\end{theorem}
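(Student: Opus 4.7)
The plan is to follow the strategy employed in \cite{Mukhopadhyay:12} for the $\SO$-weight case, now incorporating the ``Kac--Moody form'' of the spin highest weight vectors developed in Section \ref{sec:highest-weight}. Since $\dim_\CBbb \mathcal{V}_{\vec{\Lambda}}(\mathfrak{X},\sofrak(2d+1),1)=1$, after fixing the generator $\langle\widetilde{\Psi}\mid$ supplied by the Clifford construction of Section \ref{clifford}, the rank-level duality map \eqref{injectivity} is equivalent to a bilinear pairing
\begin{equation*}
\Pi\colon \mathcal{V}_{\vec{\lambda}}(\mathfrak{X},\sofrak(2r+1),2s+1) \otimes \mathcal{V}_{\vec{\mu}}(\mathfrak{X},\sofrak(2s+1),2r+1) \lra \CBbb,
\end{equation*}
and the injectivity of the dual map translates into nondegeneracy of $\Pi$ in the first variable.

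The first step is to set up $\Pi$ explicitly at the level of representatives. A class $[\phi]\in \mathcal{V}_{\vec{\lambda}}$ paired with a class $[\psi]\in \mathcal{V}_{\vec{\mu}}$ gives, via the affine branching rule of Section \ref{branching}, a vector in $\mathcal{H}_{\vec{\Lambda}}$, and $\Pi([\phi],[\psi])$ is the value of $\langle\widetilde{\Psi}\mid$ on it. Because the embedding $\sofrak(2r+1)\oplus \sofrak(2s+1)\hookrightarrow \sofrak(2d+1)$ is conformal, the current algebra $\sofrak(2r+1)(\mathfrak{X})\oplus \sofrak(2s+1)(\mathfrak{X})$ sits inside $\sofrak(2d+1)(\mathfrak{X})$, so all gauge relations on the larger current algebra descend to compatible relations on the smaller ones, making $\Pi$ well-defined on coinvariants.

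The heart of the argument is an inductive verification of nondegeneracy on a spanning set of the first factor, organized by the total number of boxes $|Y_1|+|Y_2|+|Y_3|$. For the base case, at least one $Y_i$ is empty, and (after propagation of vacua) the highest weight classes can be written directly from Propositions \ref{kacmoody2} and \ref{kacmoody3} as wedges of generators $\phi^{j,p}$ in the relevant Fock space; the nondegeneracy of the Clifford pairing $B$ of Sections \ref{case0dd}--\ref{case1dd} then gives a nonzero answer. In the inductive step I would use Proposition \ref{kacmoody1} (or Corollary \ref{kacmoody4} when the diagram automorphism intervenes) to write a highest weight representative in $\mathcal{H}_{\vec\lambda}$ as $B^{a,b}_{-c,-d}(-1)$ (or its $\sigma$-twisted analogue) acting on a lower-weight vector, and then use gauge symmetry in $\sofrak(2d+1)(\mathfrak{X})$ to transfer the creation operator to another marked point, producing a matching partner class on the $\sofrak(2s+1)$-side against which the lower inductive hypothesis can be applied.

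The main obstacle I anticipate is the bookkeeping in the $\sigma$-twisted case of Corollary \ref{kacmoody4}: the operator $B^{1,-p}_{0,0}(-1)$ mixes with the Cartan and with normal-ordered Clifford quadratics via \eqref{eqn:normalorder}, so moving it across marked points by gauge symmetry and matching signs against the inductive partner on the dual side requires care. Once this sign and support bookkeeping is in place, the distinct branching components of $\mathcal{H}_{\omega_\epsilon}(\sofrak(2d+1))$ are mutually orthogonal with respect to $\Pi$ and each leading component is nonzero, so no nontrivial combination of highest weight classes can lie in the kernel of the rank-level duality map, completing the proof of injectivity.
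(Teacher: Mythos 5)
Your proposal never engages with the structural step that makes the paper's argument go through: the reduction to one-dimensional conformal blocks. The conformal blocks $\mathcal{V}_{\vec\lambda}(\mathfrak{X},\mathfrak{so}(2r+1),2s+1)$ in the statement can have dimension greater than one (indeed, in the examples of Section \ref{failure} they have dimensions $3$, $4$, $5$, $14$, $20$, and moreover $\dim\mathcal{V}_{\vec\lambda}\neq\dim\mathcal{V}_{\vec\mu}$ in general). Your induction on the total box count $|Y_1|+|Y_2|+|Y_3|$ is an induction over weight tuples, not over a basis of a single conformal block, so it cannot by itself establish injectivity once the source has dimension $\geq 2$: producing one nonzero value of the pairing $\Pi$ for each weight tuple proves nonvanishing of the rank-level duality map, not its injectivity. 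The concluding claim that ``the distinct branching components \ldots are mutually orthogonal with respect to $\Pi$ and each leading component is nonzero, so no nontrivial combination of highest weight classes can lie in the kernel'' is not justified: $\Pi$ is a bilinear pairing on the \emph{fixed} pair $\mathcal{V}_{\vec\lambda}\otimes\mathcal{V}_{\vec\mu}$, and there is no decomposition of this pairing indexed by branching components of a single level-one module. Because $\dim\mathcal{V}_{\vec\lambda}$ and $\dim\mathcal{V}_{\vec\mu}$ can differ, there cannot even be a dual-basis-type argument of the kind you gesture at.

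The paper's proof instead first enlarges the tuple $\vec\lambda$ by splitting each $Y_i$ into a sequence of $\omega_1$-insertions (Steps I--II), then uses factorization/sewing and Lemma \ref{geometricinput} to decompose the rank-level duality map into a direct sum of maps of the same type (Step III), and then degenerates $\mathbb{P}^1$ further (Step IV) to land on the ``minimal'' triples $\vec\lambda=(Y,\omega_r,\omega_r)$ with $Y\in\{\omega_0,\ldots,\omega_{r-1},2\omega_r\}$, where the source conformal block is one-dimensional (Step V via Kempf--Ness and \cite[Prop.\ 4.3]{Beauville:93}). Only in that one-dimensional situation does injectivity reduce to nonvanishing, at which point the explicit Clifford-algebra computations (your Propositions \ref{kacmoody2}, \ref{kacmoody3}, Corollary \ref{kacmoody4}, gauge symmetry, etc.) take over. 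The Fock-space ingredients you invoke are the right ones for the minimal cases, but the decisive reduction step — which requires the nonvanishing statement in Proposition \ref{basiccaseLRS} and the vanishing of the $\sigma$-twisted blocks to ensure the hypotheses of Lemma \ref{geometricinput} — is absent from your argument, and it is precisely what bridges the gap between a single nonzero pairing value and genuine injectivity.
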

The proof of this theorem is broken up into several steps and can be reduced to the case when both $Y_2$ and $Y_3$ are empty and $Y_1$ is just a Young diagram with one column, in which case the corresponding conformal blocks are one dimensional for $\mathfrak{so}(2r+1)$. We now describe the steps in the reduction.

\subsection{Reduction to the one dimensional case}The main tools used here are factorization/sewing of conformal blocks  (cf.\ Sections \ref{sec:conformal-block} and \ref{factorizationlemma}), and the fact that certain Littlewood Richardson coefficients are one.

\subsubsection{Step I}
Clearly, we may assume that the rank of the conformal block in the source of \eqref{injectivity} is nonzero. 
Now  consider a new tuple $\vec{\lambda}'=(\vec{\nu}_2,\omega_r,Y_1,\vec{\nu}_3,\omega_r)$, where $\vec{\nu}_2$ (resp.\  $\vec{\nu}_3$) is a tuple of $\omega_1$ of cardinality $|Y_2|$ (resp.\  $|Y_3|$). Similarly let $\vec{\mu}'=(\vec{\nu}_2^T,(2r+1)\omega_s,Y_1^T,\vec{\nu}_3^T,(2r+1)\omega_s)$ and $\vec{\Lambda}'=(\vec{\omega}_1, \omega_d,{\omega}_{{\epsilon}},\vec{\omega}_1,\omega_d)$, where $\omega_{{\epsilon}}$ is  $\omega_1$ or $\omega_0$'s, depending on the number of boxes of $Y_1$. It is easy to see that the triple $(\vec{\lambda}',\vec{\mu}',\vec{\Lambda}')$ is admissible.

\subsubsection{Step II} Let $\mathfrak{X}$ denote the data associated to $\mathbb{P}^1$ with $|Y_2|+|Y_3|+3$ marked points with chosen coordinates. The rank of the conformal block $\mathcal{V}_{\vec{\Lambda}'}(\mathfrak{X},\mathfrak{so}(2d+1),1)$ is one and the rank of the conformal block $\mathcal{V}_{\vec{\lambda}'}(\mathfrak{X},\mathfrak{so}(2r+1),2s+1)$ is nonzero. The first assertion can be easily checked via factorization (cf.\ Section \ref{factorizationlemma}) since the only nontrivial three point cases with spin weights up to permutation are $(\omega_0,\omega_d,\omega_d)$ and $(\omega_1,\omega_d,\omega_d)$ both of which are rank one.
For the second assertion, we get by the factorization theorem that the dimension of the conformal block $\mathcal{V}_{\vec{\lambda}'}(\mathfrak{X},\mathfrak{so}(2r+1),2s+1)$ is greater than equal to the dimension of the following product of conformal blocks:
\begin{eqnarray*}
&&  \mathcal{V}_{\vec{\nu}_2,\omega_r,Y_2+\omega_r}(\mathfrak{X}_1,\mathfrak{so}(2r+1),2s+1)\otimes \mathcal{V}_{Y_1,Y_2+\omega_r,Y_3+\omega_r}(\mathfrak{X}_2,\mathfrak{so}(2r+1),2s+1)\\
  && \hspace{2cm}\otimes \mathcal{V}_{\vec{\nu}_3,\omega_r,Y_3+\omega_r}(\mathfrak{X}_3,\mathfrak{so}(2r+1),2s+1) \ .
	\end{eqnarray*}
	Here, $\mathfrak{X}_1$ (resp.\  $\mathfrak{X}_3$) denote the data associated to a $\mathbb{P}^1$ with $|Y_2|+2$ (resp.\  $|Y_3|+ 2$) marked points and $\mathfrak{X}_2$ denote the data associated to a $\mathbb{P}^1$ with three marked points and chosen coordinates. 
The nonvanishing  of the dimensions on the first and third factors in the above expression follows from  Proposition \ref{basiccaseLRS}.

\subsubsection{Step III}Assume that the injectivity of the rank-level duality map for the admissible pairs $(\vec{\lambda}',\vec{\mu}',\vec{\Lambda}')$ holds, then Theorem \ref{braidreducibility} holds, where $\vec{\lambda}'$, $\vec{\mu}'$ and ${\vec{\Lambda}'}$ be as in Step II. The basic idea is that we split up the rank-level duality map into a direct sum of several rank-level duality maps. Now the injectivity of the rank-level duality map for the bigger space implies the injectivity of the rank-level duality map for the components, and vice-versa. 
 The key geometric input is Lemma \ref{geometricinput} in Section \ref{factorizationlemma}. The conditions in Lemma \ref{geometricinput} are guaranteed by the fact that the dimensions of the two conformal blocks on $\mathbb{P}^1$ with weights $\vec{\nu}_2,\omega_r,\sigma(Y_2+\omega_r)$ and $\vec{\nu}_3,\omega_r,\sigma(Y_3+\omega_r)$ are zero (cf.\ Proposition \ref{basiccaseLRS}). This is where we use that $Y_2,Y_3\in\mathcal{Y}_{r,s-1}$. 

\subsubsection{Step IV}By the previous discussion, it enough to prove the injectivity of the following rank-level duality map:
$$\mathcal{V}_{\vec{\lambda}'}(\mathfrak{X},\mathfrak{so}(2r+1),2s+1)\longrightarrow \mathcal{V}_{\vec{\mu}'}^{*}(\mathfrak{X},\mathfrak{so}(2s+1),2r+1)\otimes\mathcal{V}_{\vec{\Lambda}'}(\mathfrak{X},\mathfrak{so}(2d+1),1)\ .$$
We now consider a degeneration of $\mathbb{P}^1$ into nodal curve $C=C_1\cup C_2$, where $C_1$ is a copy of $\mathbb{P}^1$ with two smooth marked points and the weights $\omega_r$, $\omega_r$ are the weights attached to the markings. The other component $C_2$ is $\mathbb{P}^1$ with rest of the marked points. The components $C_1$ and $C_2$ meet at a point $p$.
The normalization $\widetilde{C}$ of $C$ is a disjoint union of $C_1$ and $C_2$ with one extra marked point on each component. Since the two marked points of $C_1$ have spin weights, it follows that the weight associated to the new marked point on $C_1$, considered as a component of the normalization of $\widetilde{C}$, is marked by an $\SO(2r+1)$ weight.
Hence, by repeating the process discussed in Section \ref{factorizationlemma}, we are reduced to the case where $\vec{\lambda}=(\omega_r,\omega_r, Y)$,  $Y\in P_{2s+1}(\SO(2r+1))$, and the case where $\vec{\lambda}=(Y, Y_1, \vec{\omega}_1)$. The rank-level duality in the latter case is a Theorem in \cite{Mukhopadhyay:12}. Hence, we are only left with the admissible triples of the form $(\vec{\lambda}, \vec{\mu}, \vec{\Lambda})$, when $\vec{\lambda}=(Y,\omega_r,\omega_r)$, $\vec{\mu}=(Y^T, (2r+1)\omega_s,(2r+1)\omega_s)$ and $\vec{\Lambda}=(\omega_{\epsilon},\omega_d,\omega_d)$. We now determine which $Y$ are possible. 

\subsubsection{Step V} As in Section \ref{sec:affine}, let  $V_\lambda$
denote the finite dimensional irreducible representation of $\Spin(2r+1)$ with highest weight $\lambda$. By a theorem of Kempf-Ness \cite{KempfNess:88}, we know that  $V_\lambda$ appears in the tensor product decomposition of $V_{\omega_r}\otimes V_{\omega_r}$ if and only if $\lambda \in \{\omega_0,\omega_1,\ldots,\omega_{r-1}, 2\omega_{r}\}$. It follows from  \cite[Prop.\ 4.3]{Beauville:93} that the conformal blocks $\mathcal{V}_{Y,\omega_r,\omega_r}(\mathbb{P}^1,\mathfrak{so}(2r+1),2s+1)$ are one dimensional, where $Y \in \{\omega_0,\ldots \omega_{r-1}, 2\omega_r\}$, and trivial otherwise. We refer to these  as the {\em minimal cases}.

\subsection{Rank-level duality for the minimal cases}The minimal case can be further subdivided into the case when $|Y|$ is odd or even. After dividing into these cases, we will approach the minimal cases by induction. The basic strategy is similar to the strategy of the minimal cases in \cite{Mukhopadhyay:12}.  We will refer to the Appendix for some of the details of the formulas.

To show that the rank-level duality map is injective, it is enough to find vectors 
$$v_1\otimes v_2\otimes v_3 \in \mathcal{H}_{Y}\otimes \mathcal{H}_{Y^T}\otimes \mathcal{H}_{\omega_r}\otimes \mathcal{H}_{(2r+1)\omega_s}\otimes \mathcal{H}_{\omega_r}\otimes \mathcal{H}_{(2r+1)\omega_s}$$
 such that $\langle \Psi\mid v_1\otimes v_2\otimes v_3\rangle \neq 0$, where $\langle{\Psi}\mid $ is  the (up to scalars) unique nonzero element of $\mathcal{V}^*_{\omega_{\epsilon},\omega_d,\omega_d}(\mathbb{P}^1, \mathfrak{so}(2d+1),1)$, and $\epsilon$ is either $0$ or $1$, depending on the parity of $|Y|$.

\subsubsection{The case $|Y|=0$} In this case we choose $v_1=1$, $v_2=\bigwedge_{1\leq i\leq r,-1\leq j\leq -s}\phi_{i,j}$, and $v_3=\bigwedge_{1\leq i\leq r,-1\leq j\leq -s}\phi^{i,j}$.
It is then clear (cf.\ Section \ref{case0dd}) that $\langle \Psi\mid  v_1\otimes v_2\otimes v_2 \rangle \neq 0$. 
%

\subsubsection{The case $Y=\omega_1$}
 Choose $v_1=\phi^{1,0}(-\frac{1}{2})=R(B^{0}_1)\phi^{1,1}(-\frac{1}{2})$ (cf.\ Lemma \ref{lemmak=1} of the Appendix),
$ v_2= L(B^{0}_1)\bigwedge_{1\leq i\leq r,-1\leq j\leq -s}\phi_{i,j}$, and  $v_3=\bigwedge_{1\leq i\leq r,-1\leq j\leq -s}\phi^{i,j}$. Now by a  direct computation (cf.\ Proposition \ref{leftaction}), we get $ v_2=\phi_{1,0}\wedge \bigwedge_{1\leq i\leq r,-1\leq j\leq -s}\phi_{i,j}$. We are now left to evaluate $\langle \Psi \mid  v_1\otimes v_2\otimes v_3\rangle$, and from the discussion in Section \ref{case1dd}, it is nonzero.

\subsubsection{The case $Y=\omega_2$}We need to choose $v_1$, $v_2$ and $v_3$ as before. 
In this case, take $v_1=R^2(B^0_1)\phi^{1,1}(-\frac{1}{2})\phi^{2,1}(-\frac{1}{2})$ and $v_2=L(B^{-1}_2)v$, where as before $v=\bigwedge_{1\leq i\leq r,-1\leq j\leq -s}\phi_{i,j}$. By Proposition \ref{leftaction}, we get $v_2=\phi_{1,0}\wedge \phi_{2,0}\wedge v$ and $v_3=v^{opp}=\bigwedge_{1\leq i\leq r,-1\leq j\leq -s}\phi^{i,j}$. Now by Proposition \ref{k=2}, we get 
$$R^2(B^{0}_1)\phi^{1,1}(-\tfrac{1}{2})\phi^{2,2}(-\tfrac{1}{2})\cdot1=\bigg(2B^{1,0}_{-2,0}(-1)+B^{2,1}_{-1,1}(-1)+B^{2,-1}_{-1,-1}(-1)\bigg)\cdot 1\ .$$

Let the three points be $p_1=0$, $p_2=1$ and $p_3=\infty$ and let $z$ be the local coordinate at the point $0$. Consider $f$ defined by the equation $1/z$. Around $P_1$, the functions $f$ has a pole of order one and hence a zero of order one  around $p_3$. Let $\xi=z-1$ be a coordinate at the point $p_2=1$ and around $p_2$, the function $f$ has the following form $f_1(\xi)=1-\xi+\xi^2-\xi^3+\cdots$. This follows by formally expanding 
$$f(z)=\frac{1}{1+(z-1)}=1-(z-1) + (z-2)^2-(z-3)^3 + \cdots $$

We now use gauge symmetry (cf.\ Section \ref{propertiesofconformalblocks}) to finish the argument 
\begin{align*}
\langle \Psi \mid  R^2(B^0_1)&\phi^{1,1}(-\tfrac{1}{2})\phi^{2,1}(-\tfrac{1}{2})\otimes L(B^{-1}_2)v \otimes v^{opp}\rangle\\
&=\langle \Psi \mid  (2 B^{1,0}_{-2,0}(-1) + B^{2,1}_{-1,1}(-1)+ B^{2,-1}_{-1,-1}(-1) ).1\otimes\phi_{1,0}\wedge \phi_{2,0}\wedge v\otimes v^{opp}\rangle\\
&=2\langle \Psi\mid  1\otimes (-B^{1,0}_{-2,0}+B^{1,0}_{-2,0}(1)-\cdots)(\phi_{1,0}\wedge \phi_{2,0}\wedge v)\otimes v^{opp}\rangle \\
&\qquad + \langle \Psi\mid  1\otimes (-B^{2,1}_{-1,1}+B^{2,1}_{-1,1}(1)-\cdots)(\phi_{1,0}\wedge \phi_{2,0}\wedge v)\otimes v^{opp}\rangle \\
&\qquad + \langle \Psi\mid  1\otimes (-B^{2,-1}_{-1,-1}+B^{2,-1}_{-1,-1}(1)-\cdots)(\phi_{1,0}\wedge \phi_{2,0}\wedge v)\otimes v^{opp}\rangle\\
&=-2\langle\Psi\mid 1\otimes B^{1,0}_{-2,0}\phi_{1,0}\wedge \phi_{2,0}\wedge v\otimes v^{opp}\rangle\ .
\end{align*}
In the above calculation, we use the fact $B^{2,-1}_{-1,-1}(\phi_{1,0}\wedge \phi_{2,0}\wedge v)=B^{2,1}_{-1,1}(\phi_{1,0}\wedge \phi_{2,0}\wedge v)=0$. This is justified by Lemma \ref{khayal}. But now $B^{1,0}_{-2,0}(\phi_{1,0}\wedge \phi_{2,0}\wedge v)=-v$. Hence $\langle \Psi\mid  1\otimes B^{1,0}_{-2,0}(\phi_{1,0}\wedge \phi_{2,0}\wedge v\otimes v^{opp})\rangle\neq 0$ (cf.\ Section \ref{case0dd}). Thus we are done in this case.

\subsubsection{The case $Y=\omega_3$} The highest weight vector for the component $\mathcal{H}_{\omega_3}\otimes \mathcal{H}_{3\omega_1}$ is given by $\phi^{1,1}(-\frac{1}{2})\phi^{2,1}(-\frac{1}{2})\phi^{3,1}(-\frac{1}{2})$. We choose $v_1=R^{3}(B^0_1)\phi^{1,1}(-\frac{1}{2})\phi^{2,1}(-\frac{1}{2})\phi^{3,1}(-\frac{1}{2})$,  $v_2=L(B^0_1)L(B^{-2}_3)v$, and $v_3=v^{opp}$. 
Now by Proposition \ref{leftaction}, we get $L(B^0_1)L(B^{-2}_3)v=\phi_{1,0}\wedge \phi_{2,0}\wedge \phi_{3,0}\wedge v$, and by Proposition \ref{thumri} we get 
\begin{eqnarray*}
&&R^3(B^{0}_{1})\phi^{1,1}(-\tfrac{1}{2})\phi^{2,1}(-\tfrac{1}{2})\phi^{3,1}(-\tfrac{1}{2})= 3!\big[\phi^{1,0}(-\tfrac{1}{2})\phi^{2,0}(-\tfrac{1}{2})\phi^{3,0}(-\tfrac{1}{2})\big]\\
&&-3\big[ \phi^{1,-1}(-\tfrac{1}{2})\phi^{2,0}(-\tfrac{1}{2})\phi^{3,1}(-\tfrac{1}{2}) + \phi^{1,0}(-\tfrac{1}{2})\phi^{2,-1}(-\tfrac{1}{2})\phi^{3,1}(-\tfrac{1}{2})\\
&& + \phi^{1,-1}(-\tfrac{1}{2})\phi^{2,1}(-\tfrac{1}{2})\phi^{3,0}(-\tfrac{1}{2})+ \phi^{1,0}(-\tfrac{1}{2})\phi^{2,1}(-\tfrac{1}{2})\phi^{3,-1}(-\tfrac{1}{2}) \\
&&+ \phi^{1,1}(-\tfrac{1}{2})\phi^{2,-1}(-\tfrac{1}{2})\phi^{3,0}(-\tfrac{1}{2}) + \phi^{1,1}(-\tfrac{1}{2})\phi^{2,0}(-\tfrac{1}{2})\phi^{3,-1}(-\tfrac{1}{2})\big]\ .
\end{eqnarray*}
We can rewrite the above expression in the following ``Kac-Moody" form. 
\begin{eqnarray*}
&&R^3(B^{0}_{1})\phi^{1,1}(-\tfrac{1}{2})\phi^{2,1}(-\tfrac{1}{2})\phi^{3,1}(-\tfrac{1}{2})= 3!B^{2,0}_{-3,0}(-1)\phi^{1,0}(-\tfrac{1}{2})\\
&&-3\big[- B^{1,-1}_{-3,-1}(-1)\phi^{2,0}(-\tfrac{1}{2}) + B^{2,-1}_{-3,-1}(-1) \phi^{1,0}(-\tfrac{1}{2}) + B^{1,-1}_{-2,-1}(-1)\phi^{3,0}(-\tfrac{1}{2})\\ 
&&- B^{3,-1}_{-2,-1}(-1) \phi^{1,0}(-\tfrac{1}{2})  -B^{2,-1}_{-1,-1}(-1)\phi^{3,0}(-\tfrac{1}{2}) + B^{3,-1}_{-1,-1}(-1)\phi^{2,0}(-\tfrac{1}{2})\big]\ .
\end{eqnarray*}

We now evaluate  $\langle \Psi \mid  v_1\otimes v_2\otimes v_3\rangle $ using gauge symmetry (cf.\ Section \ref{propertiesofconformalblocks}) as before. Choose $P_1$, $P_2$ and $P_3$ to be $(0,1,\infty)$ with the obvious coordinates. Expanding $f(z)=1/z$ around $1$ and $\infty$ and applying gauge symmetry, we get that $\langle \Psi\mid  v_1\otimes v_2\otimes v_3\rangle$ is (up to a sign)  equal to $3!\langle \Psi \mid  \phi^{1,0}(-\frac{1}{2}) \otimes B^{2,0}_{-3,0}(\phi_{1,0}\wedge \phi_{2,0}\wedge \phi_{3,0}\wedge v)\otimes v^{opp}\rangle$, which is nonzero by the discussion in Section \ref{case1dd}. Explicitly,
\begin{lemma}\label{extravanishing}
Let $a$ and $b$ be both nonzero integers and $i\neq j$ are both positive integers, then
$B^{2,0}_{-3,0}(\phi_{1,0}\wedge \phi_{2,0}\wedge \phi_{3,0}\wedge v)=-v$, and
$B^{i,a}_{-j,b}(\phi_{1,0}\wedge \phi_{2,0}\wedge \phi_{3,0}\wedge v)=0$.
\end{lemma}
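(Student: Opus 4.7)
The plan is to reduce both identities to direct computations in the Clifford algebra $Cl(W_d)$, exploiting the isomorphism $\mathfrak{so}(2d+1)\simeq \wedge^2 W_d \hookrightarrow Cl(W_d)$ together with the realization $V_{\omega_d}\simeq \bigwedge W_d^-$ used in Section \ref{case1dd}. Under this identification, $B^{i,p}_{k,q}$ corresponds to $\phi^{i,p}\wedge\phi_{k,q}$, which inside $Cl(W_d)$ equals $\phi^{i,p}\phi_{k,q}-\tfrac12\{\phi^{i,p},\phi_{k,q}\}$. For both generators treated in the lemma—$B^{2,0}_{-3,0}$ and $B^{i,a}_{-j,b}$ with $i\neq j$ positive—the two Clifford elements involved anticommute (matching would require $i=-j$ with matching second indices, which is excluded). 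Hence the action reduces to the pure Clifford products $\phi^{2,0}\phi^{3,0}$ and $\phi^{i,a}\phi^{j,-b}$, respectively.

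Recall from Section \ref{case1dd} that $\phi^{k,\ell}$ with $(k,\ell)>0$ acts on $\bigwedge W_d^-$ by contraction against $\phi^{-k,-\ell}$, picking up a sign $(-1)^{p-1}$ where $p$ is the position of that factor in the ordered wedge, and kills the vector when $\phi^{-k,-\ell}$ is absent. The crucial structural observation is that every factor of $\phi_{1,0}\wedge\phi_{2,0}\wedge\phi_{3,0}\wedge v$ lies in the set
\[
\{\phi^{-1,0},\phi^{-2,0},\phi^{-3,0}\}\cup\{\phi^{-k,-\ell}:1\le k\le r,\ -s\le \ell\le -1\},
\]
so in particular each factor has \emph{nonnegative} second index. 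For the first identity, I would apply $\phi^{3,0}$ and then $\phi^{2,0}$ to the ordered wedge: the first contracts against $\phi_{3,0}$ in position three with sign $+1$, the second contracts against $\phi_{2,0}$ in position two with sign $-1$, producing $-\phi_{1,0}\wedge v$; this is then converted to $-v$ by the Clifford contraction with $\phi^{1,0}(-\tfrac12)$ coming from the first slot in the evaluation of $\langle\Psi\mid$, which is the form in which the identity is actually used in the preceding $Y=\omega_3$ computation.

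For the second identity, the nonnegativity of the second indices of the factors forces $\phi^{j,-b}$ to annihilate the wedge whenever $-b>0$, and forces $\phi^{i,a}$ to annihilate whenever $a>0$, because in each case the required contraction partner would have a negative second index and is therefore absent. This already disposes of all cases appearing in the $Y=\omega_3$ expansion—namely $(a,b)=(-1,-1)$, where $\phi^{j,1}$ seeks the partner $\phi^{-j,-1}$ not present in the wedge; the residual combinations are ruled out either by the same argument applied to $\phi^{i,a}$ or by the hypothesis $i\neq j$ combined with the observation that the only factors of the form $\phi^{-\bullet,0}$ available are $\phi^{-1,0},\phi^{-2,0},\phi^{-3,0}$, so no surviving contraction can land in the span of $v$. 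The main obstacle is merely the careful tracking of signs coming from the positions of the extracted factors in the ordered wedge; once the Clifford realization is fixed and the list of factors written down, the identities are essentially combinatorial.
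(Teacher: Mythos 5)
Your Clifford-algebra approach is the right one, and the catch on the first identity is genuine: a direct computation gives
\[
B^{2,0}_{-3,0}(\phi_{1,0}\wedge\phi_{2,0}\wedge\phi_{3,0}\wedge v)
=\phi^{2,0}\phi^{3,0}\cdot\bigl(\phi^{-1,0}\wedge\phi^{-2,0}\wedge\phi^{-3,0}\wedge v\bigr)
=-\phi_{1,0}\wedge v\ ,
\]
not $-v$; the extra factor $\phi_{1,0}$ is removed only by the subsequent contraction against $\phi^{1,0}(-\tfrac12)$ in the first slot when $\langle\widetilde\Psi\mid$ is evaluated via Clifford multiplication, which is exactly how the identity is used in the $Y=\omega_3$ computation. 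Your reading of the statement is correct.

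For the second identity, your argument is complete for $b<0$ (then $\phi^{j,-b}$ seeks $\phi^{-j,b}$ with negative second index, absent from the wedge) and for $a>0$ (then $\phi^{i,a}$ seeks $\phi^{-i,-a}$, likewise absent), and since the only operators arising in the $Y=\omega_3$ expansion have $(a,b)=(-1,-1)$, this already suffices for the application. However, the ``residual combinations'' argument does not close the case $a<0<b$, and in fact the lemma as literally stated fails there: take $B^{1,-1}_{-2,1}=\phi^{1,-1}\phi^{2,-1}$ (so $i=1$, $j=2$, $a=-1$, $b=1$). Then $\phi^{2,-1}$ contracts against $\phi^{-2,1}=\phi_{2,-1}$, a factor of $v$, and $\phi^{1,-1}$ then contracts against $\phi^{-1,1}=\phi_{1,-1}$, still present since $i\neq j$; the result is a nonzero wedge retaining $\phi_{1,0},\phi_{2,0},\phi_{3,0}$ but missing two factors of $v$. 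What \emph{is} true --- and what your phrase about ``no surviving contraction landing in the span of $v$'' is gesturing at --- is that such a wedge pairs to zero with $v^{opp}$ after one further contraction by $\phi^{k,0}(-\tfrac12)$, so it still contributes nothing to $\langle\widetilde\Psi\mid\cdot\otimes\cdot\otimes v^{opp}\rangle$. To prove the lemma in the generality stated one must restrict to $b<0$ or $a>0$ (which covers all applications); the correct general statement for $a<0<b$ is vanishing of the pairing, not of the vector $B^{i,a}_{-j,b}w$.
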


\subsubsection{The general case:  $Y=\omega_{k}$ or $2\omega_r$}
The strategy for the general case is the same as for the previous special case. We choose the  points $(p_1, p_2, p_3)=(0,1,\infty)$. We choose $v_1=R^{k}(B^0_1)\phi^{1,1}(-\frac{1}{2})\wedge\phi^{2,1}(-\frac{1}{2})\wedge\cdots\wedge \phi^{k,1}(-\frac{1}{2})$, $v_2=\phi_{1,0}\wedge\cdots\wedge \phi_{k,0}\wedge v$ and $v_3=v^{opp}$. 
 Using gauge symmetry (cf.\ Section \ref{propertiesofconformalblocks}),
the expression $\langle \Psi\mid  v_1\otimes v_2\otimes v_3 \rangle$ is equal  (up to a sign) to, 
\begin{eqnarray}\label{generalgauge}
k!\langle \Psi \mid  \phi^{1,0}(-\tfrac{1}{2})\wedge \cdots \wedge \phi^{k,0}(-\tfrac{1}{2})\otimes v_2\otimes v_3\rangle\ .
\end{eqnarray}
The above step uses Proposition \ref{extraterms} and a calculation similar to Lemma \ref{extravanishing}. We can rewrite the right hand side of \eqref{generalgauge} as follows:
\begin{itemize}
\item If $k$ is odd,
\begin{eqnarray*}\langle \Psi \mid  \phi^{1,0}(-\tfrac{1}{2}) \wedge \cdots \wedge  \phi^{k,0}(-\tfrac{1}{2})\otimes v_2\otimes v_3\rangle=\langle B^{2,0}_{-3,0}(-1)\cdots B^{k-1,0}_{-k,0}(-1)\phi^{1,0}(-\tfrac{1}{2})\otimes v_2\otimes v_3\rangle\ ;
\end{eqnarray*}
\item If $k$ is even,
\begin{eqnarray*}\langle \Psi \mid  \phi^{1,0}(-\tfrac{1}{2})\wedge  \cdots \wedge \phi^{k,0}(-\tfrac{1}{2})\otimes v_2\otimes v_3\rangle=\langle B^{1,0}_{-2,0}(-1)\cdots B^{k-1,0}_{-k,0}(-1)\otimes v_2\otimes v_3\rangle\ . 
\end{eqnarray*}
\end{itemize}

By using gauge symmetry (cf.\ Section \ref{propertiesofconformalblocks}) we get that up to a sign $\langle \Psi \mid  v_1\otimes v_2\otimes v_3\rangle$ is the following:
\begin{itemize}
\item If $k$ is odd, 
\begin{eqnarray*}
k!\langle \Psi\mid  \phi^{1,0}(-\tfrac{1}{2})\otimes B^{2,0}_{-3,0}\cdots B^{k-1,0}_{-k,0}v_2\otimes v_3=k!\langle \Psi \mid  \phi^{1,0}(-\tfrac{1}{2})\otimes \phi_{1,0}\wedge v \otimes v^{opp}\rangle=k!\ ;
\end{eqnarray*}
\item If $k$ is even, 
\begin{eqnarray*}
k!\langle \Psi\mid  1\otimes \otimes B^{1,0}_{-2,0}\cdots B^{k-1,0}_{-k,0}v_2\otimes v_3=k!\langle \Psi \mid  1\otimes  v \otimes v^{opp}\rangle=k!.
\end{eqnarray*}
\end{itemize}
This completes the proof in the general case. 

\subsection{Key Littlewood-Richardson coefficients}In this section, we prove some basic facts on dimensions of conformal blocks and apply this to reduce the general case of rank-level duality to the  minimal cases.

\begin{proposition}\label{basiccaseLRS}
Let $\lambda \in P_{\ell}(\mathfrak{so}(2r+1))$ and  let $\lambda=Y+\omega_r$ and $Y \in \mathcal{Y}_{r,s}$, then
$\dim_{\mathbb{C}}\mathcal{V}^{*}_{\lambda,\vec{\omega}_1,\omega_r}(\mathfrak{X}, \mathfrak{so}(2r+1),\ell)\neq 0\ ,$ where 
$\vec{\omega}_1$ is a $|Y|$-tuple of $\omega_1$'s at level $\ell$. 
\end{proposition}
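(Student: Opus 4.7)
The plan is to argue by induction on $n:=|Y|$, peeling off one copy of $\omega_1$ at a time by factorizing a nodal degeneration of $\PBbb^1$.

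For the base case $n=0$ we have $\lambda=\omega_r$, and the two-point block $\Vcal^{*}_{\omega_r,\omega_r}(\PBbb^1,\sofrak(2r+1),\ell)$ has dimension $1$. This is immediate from propagation of vacua together with the fact that every representation of $\sofrak(2r+1)$ is self-dual, so $\omega_r^{\dagger}=\omega_r$.

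For the inductive step, suppose $n\geq 1$ and the result is known for every Young diagram in $\Ycal_{r,s}$ with fewer than $n$ boxes. Choose a removable corner of $Y$ in some row $j$, and let $Y'\in\Ycal_{r,s}$ be the result of removing it, so $\lambda':=Y'+\omega_r\in P_{\ell}(\sofrak(2r+1))$ and $\lambda-\lambda'=L_j$. Degenerate the $(n+2)$-pointed $\PBbb^1$ to a nodal union $C_1\cup C_2$ of two copies of $\PBbb^1$, where $C_1$ carries the points of weight $\lambda$ and one $\omega_1$, and $C_2$ carries the remaining $n-1$ copies of $\omega_1$ together with $\omega_r$. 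By the factorization theorem of \cite{TUY:89}, and using self-duality of $\sofrak(2r+1)$-weights,
$$\dim\Vcal^{*}_{\lambda,\vec{\omega}_1,\omega_r}(\Xfrak)\;\geq\;\dim\Vcal^{*}_{\lambda,\omega_1,\lambda'}(\Xfrak_1)\cdot\dim\Vcal^{*}_{\lambda',\vec{\omega}_1^{(n-1)},\omega_r}(\Xfrak_2).$$
The second factor is nonzero by the induction hypothesis applied to $Y'\in\Ycal_{r,s}$, so it remains to show that the first, three-point factor is nonzero.

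The main obstacle is showing the three-point block $\Vcal^{*}_{\lambda,\omega_1,\lambda'}(\PBbb^1,\sofrak(2r+1),\ell)$ does not vanish. Its dimension equals the level-$\ell$ fusion coefficient $N^{\lambda}_{\omega_1,\lambda'}$. Classically, the Pieri rule for the vector representation $V_{\omega_1}$ of $\sofrak(2r+1)$ yields $V_{\lambda'}\otimes V_{\omega_1}\supseteq V_{\lambda'+L_j}=V_\lambda$ with multiplicity one. To promote this to a statement about fusion, I would invoke the Kac--Walton formula: since $\lambda,\lambda'\in P_\ell$, and the remaining classical constituents $V_{\lambda'\pm L_i}$ either lie in $P_\ell$ or are reflected by the affine Weyl action to weights that do not interact with the summand $V_\lambda$, the affine corrections to the classical Littlewood--Richardson coefficient leave $N^{\lambda}_{\omega_1,\lambda'}=1$. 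This verification is the only nontrivial technical input; once it is in place, the product on the right-hand side of the displayed inequality is positive and the induction closes.
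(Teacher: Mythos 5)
Your proposal takes essentially the same route as the paper: induction on $|Y|$ via factorization at a node of $\PBbb^1$, reducing to the inductive hypothesis for $\lambda'$ together with the nonvanishing of the three-point block $\Vcal^*_{\lambda,\omega_1,\lambda'}(\PBbb^1,\sofrak(2r+1),\ell)$. Your Kac--Walton justification of the three-point step is sound (every constituent of $V_{\omega_1}\otimes V_{\lambda'}$ has level at most $\ell+1$, so any level-$(\ell+1)$ constituents sit exactly on the affine wall and self-cancel), and simply replaces the paper's citation of Lemma \ref{LRS}, which in turn rests on the description of genus-zero three-point blocks in \cite[Prop.\ 4.3]{Beauville:93}.
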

\begin{proof}
The proof is by induction on $|Y|$. If $Y$ is zero or one, then it is easy to see that $\mathcal{V}_{\omega_r,\omega_r,\omega_0}(\mathfrak{X},\mathfrak{so}(2r+1),\ell)$ and $\mathcal{V}_{\omega_r,\omega_r,\omega_1}(\mathfrak{X},\mathfrak{so}(2r+1),\ell)$ are both one dimensional. 
Now the inductive step follows by factorization (cf.\ Section \ref{factorizationlemma}). By factorization and Lemma \ref{LRS}, we know that the dimension of 
$ \mathcal{V}_{\lambda,\omega_r, \vec{\omega}_1}(\mathfrak{X},\mathfrak{so}(2r+1),\ell)$ is greater than equal to the dimensions  $\mathcal{V}_{\lambda,\omega_1,\lambda'}(\mathfrak{X},\mathfrak{so}(2r+1),\ell)\otimes \mathcal{V}_{\lambda',\omega_r, \vec{\omega}'_1}(\mathfrak{X},\mathfrak{so}(2r+1),\ell)$. Here $\lambda'=Y'+\omega_r$ and $|Y'|=|Y|-1$ and $\vec{\omega}_1'$ is an $|Y|$-tuple of $\omega_1$. 

\end{proof}

We now determine  which  three point $\mathfrak{so}(2r+1)$, level $2s+1$, conformal blocks  with weights $\vec{\lambda}$ are nonzero. First, we compute the  Littlewood-Richardson numbers following Littlemann \cite{Littelmann:90}.
\begin{lemma}\label{1dimS}
Let $\lambda \in P_{+}(\mathfrak{so}(2r+1))$ and assume that $\lambda$ is of the form $Y +\omega_r$, where $Y \in \mathcal{Y}_{r,s}$. Then the dimension of the space $\operatorname{Hom}_{\mathfrak{so}(2r+1)}(V_{\omega_1}\otimes V_{\lambda}\otimes V_{\mu}, \mathbb{C})$ is nonzero if $\mu$ is either $\lambda$, or is of the form $Y'+\omega_r$, where $Y'$ is obtained by adding or deleting a box  of $Y$. 
\end{lemma}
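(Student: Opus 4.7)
The claim reduces to a classical tensor-product question in the finite-dimensional representation theory of $\mathfrak{so}(2r+1)$. Since every $\mathfrak{so}(2r+1)$-representation is self-dual, the nonvanishing of $\Hom_{\mathfrak{so}(2r+1)}(V_{\omega_1}\otimes V_\lambda \otimes V_\mu, \CBbb)$ is equivalent to $V_\mu$ appearing as a summand of $V_{\omega_1}\otimes V_\lambda$. My plan is to decompose this tensor product via the Racah--Speiser (equivalently, Klimyk) rule, which is cleaner here than a Littelmann-path approach because $V_{\omega_1}$ has multiplicity-free weights.

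The weights of the vector representation $V_{\omega_1}$ are $\{0,\pm L_1,\ldots,\pm L_r\}$, each of multiplicity one, so Racah--Speiser will produce
$$V_{\omega_1}\otimes V_\lambda \;=\; \bigoplus_{\nu}\, \bigl[V_{\lambda+\nu}\bigr],$$
where $\nu$ runs over the weights of $V_{\omega_1}$ and the bracket denotes the following prescription: if $\lambda+\rho+\nu$ is strictly dominant include $V_{\lambda+\nu}$ with multiplicity $+1$; if it lies on a wall of the fundamental chamber contribute $0$; otherwise reflect it back to the dominant chamber with the sign of the associated Weyl-group element. Writing $\lambda = Y+\omega_r = \sum_j(y_j+\tfrac{1}{2})L_j$, one has $\lambda+\rho = \sum_j a_j L_j$ with $a_j = y_j + r - j + 1$, a strictly decreasing sequence of positive integers.

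A direct case-by-case analysis on $\nu$ then yields the lemma. The weight $\nu=0$ keeps $\lambda+\rho$ strictly dominant, giving $V_\lambda$. The weight $\nu=L_i$ produces a strictly dominant vector iff $a_{i-1}\geq a_i+2$, equivalently $y_{i-1}>y_i$ (automatic for $i=1$), which is precisely the condition that $Y'=Y+e_i$ is a valid Young diagram; then $V_{\lambda+L_i}=V_{Y'+\omega_r}$ appears. Similarly $\nu=-L_i$ with $i<r$ succeeds iff $y_i>y_{i+1}$, which corresponds to removing the corner of row $i$, and $\nu=-L_r$ succeeds iff $a_r\geq 2$, i.e.\ $y_r\geq 1$, corresponding to removing a box from the last row. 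All other cases lie on walls and cancel out, and in every surviving case the multiplicity is $+1$, exhausting exactly the list of $\mu$'s in the statement.

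The main technical point requiring care will be the $\nu = \pm L_r$ analysis: the half-integer shift coming from $\omega_r$ interacts with the short simple root $\alpha_r=L_r$, and one must verify that $\mu \pm L_r$ is an integral dominant weight of $\mathfrak{so}(2r+1)$ exactly when the corresponding addition or deletion of a box from $Y$ yields a valid diagram. Beyond that short check, the argument is essentially combinatorial bookkeeping, and no hidden difficulty is expected.
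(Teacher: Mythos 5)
Your Racah--Speiser argument is correct: for $B_r$ the weights of the vector representation $V_{\omega_1}$ are $\{0,\pm L_1,\dots,\pm L_r\}$ each of multiplicity one, and with $\lambda+\rho=\sum_j a_jL_j$, $a_j=y_j+r-j+1$ a strictly decreasing sequence of positive integers, every shift $\lambda+\rho+\nu$ is either already strictly dominant or sits exactly on a wall ($a_{i-1}=a_i$ or $a_r=0$), so no reflection and no sign ever occurs and every surviving summand has multiplicity one. Your wall conditions translate precisely into the box-addition/deletion conditions on $Y$ (including the boundary case $\nu=-L_r$, where $a_r-1\ge 1$ is exactly $y_r\ge 1$), and $\nu=0$ gives the summand $V_\lambda$, so you recover exactly the list of $\mu$ in the statement, with multiplicity one.

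The paper itself does not actually carry out the computation: it merely records that these Littlewood--Richardson coefficients are obtained by ``following Littelmann,'' i.e.\ by appeal to the Littelmann path model, and then invokes the Beauville criterion to pass from the invariant-theoretic statement to the three-point conformal block dimension. Your route through the Racah--Speiser (Klimyk) rule is genuinely different and, in this particular case, somewhat tighter: since $V_{\omega_1}$ has all weight multiplicities equal to one and $\lambda+\rho$ is never moved off a wall by a single $\pm L_i$, the signed sum degenerates to an unsigned one, so you get the multiplicity-one conclusion (needed later in Lemma~\ref{LRS}) as an immediate byproduct rather than as an extra check. The path-model citation is more uniform across representations, but for this specific multiplicity-free calculation your approach is more self-contained and makes the role of the half-integer shift from $\omega_r$ at the short root $L_r$ completely transparent.
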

We use the above proposition to calculate the following dimensions. 
\begin{lemma}\label{LRS} Let $\lambda=\sum_{i=1}^r a_i\omega_i +\omega_r \in P_{\ell}(\mathfrak{so}(2r+1))$. Then,  
$$\dim_\CBbb\mathcal{V}^{*}_{\lambda,\mu, \omega_1}(\mathfrak{X}, \mathfrak{so}(2r+1),\ell)=
\begin{cases}
1& \text{ if }  \mu\in P_{\ell}(\mathfrak{so}(2r+1))\ ,\ \mu \text{ as in Lemma \ref{1dimS};} \\
0& \text{ otherwise.}
\end{cases}
$$
\end{lemma}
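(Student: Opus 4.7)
The statement has two parts: an upper bound and a matching lower bound in the allowed cases.

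For the upper bound I use the standard classical-invariants inequality
\begin{equation*}
\dim_{\CBbb}\mathcal{V}^{*}_{\lambda,\mu,\omega_{1}}(\mathfrak{X},\mathfrak{so}(2r+1),\ell)\leq \dim_{\CBbb}\operatorname{Hom}_{\mathfrak{so}(2r+1)}(V_\lambda\otimes V_\mu\otimes V_{\omega_{1}},\CBbb),
\end{equation*}
a consequence of the canonical surjection from $V_\lambda\otimes V_\mu\otimes V_{\omega_1}$ onto the three-point coinvariant space. Each integrable module $\mathcal{H}_{\lambda_i}$ is generated over the current algebra by its highest weight $\mathfrak{so}(2r+1)$-submodule $V_{\lambda_i}$, and constants $\mathfrak{so}(2r+1)\subset \mathfrak{so}(2r+1)(\mathfrak{X})$ already impose the diagonal $\mathfrak{so}(2r+1)$-invariance; combined with Lemma \ref{1dimS} this immediately gives the vanishing ``otherwise'' case, and the bound $\leq 1$ when $\mu$ is of the allowed form.

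For the strict positivity when $\mu\in P_\ell$ is allowed, my preferred route is to identify $\dim_{\CBbb}\mathcal{V}^{*}_{\lambda,\mu,\omega_{1}}$ with the level $\ell$ fusion coefficient $N^{\mu,\ell}_{\lambda,\omega_1}$ (by Verlinde/Faltings) and to invoke the Kac--Walton formula. Since $\lambda,\mu\in P_{\ell}(\mathfrak{so}(2r+1))$ by hypothesis and $\mu$ already appears with multiplicity one in the classical tensor product $V_\lambda\otimes V_{\omega_1}$ by Lemma \ref{1dimS}, the only term of the Kac--Walton alternating sum that survives is the identity element of the affine Weyl group, contributing exactly the classical multiplicity $1$. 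A more self-contained alternative, in keeping with the sewing methods used elsewhere in the paper, is to argue by induction on $|Y|$ starting from Proposition \ref{basiccaseLRS}: the nonvanishing of $\mathcal{V}^{*}_{\lambda,\vec{\omega}_1,\omega_r}$ established there, factorized one $\omega_1$ at a time, writes its dimension as a sum of products of three-point dimensions, each bounded above by $1$ (by the upper bound) and supported only on intermediate weights obtained from $\omega_r$ by successive box-moves (by the classical Pieri rule of Lemma \ref{1dimS}). Comparing the two sides then forces each factor along any chain through $\mu$ to be exactly $1$.

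The main obstacle in the inductive version is combinatorial: for a prescribed allowed $\mu$, one must exhibit a Pieri chain $\omega_r=\nu_0\rightarrow\nu_1\rightarrow\cdots\rightarrow \nu_{|Y|}=\lambda$ passing through $\mu$ at the step adjacent to $\lambda$, and then isolate the positivity of that specific three-point factor from the full factorization sum. The Kac--Walton approach circumvents this entirely, relying only on $\lambda,\mu\in P_\ell$ together with the smallness $(\omega_1,\theta)=1$ of the vector weight, which ensures that no affine reflection truncates the classical multiplicity.
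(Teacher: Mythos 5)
Your proof is correct, but it takes a genuinely different route from the paper's. The paper proves this in one line by invoking Beauville's explicit description of three-point conformal blocks on $\mathbb{P}^1$ as a subspace of classical invariants cut out by a concrete $X_\theta$-nilpotency condition (cf.\ \cite[Prop.~4.3]{Beauville:93}); since $\omega_1$ is the vector weight, the extra condition is immediately read off and the dimension is seen to be the truncation of the classical multiplicity to $P_\ell$. You instead pass to the fusion coefficient $N^{(\ell)\mu}_{\lambda,\omega_1}$ via Verlinde/Faltings and run the Kac--Walton (Racah--Speiser) algorithm, arguing that because $(\omega_1,\theta)=1$ and $\lambda,\mu\in P_\ell$, every classical constituent $\lambda+\epsilon$ that escapes the alcove has $(\lambda+\epsilon+\rho,\theta)=\ell+g^\vee$ and is therefore affine-singular, so no non-identity element of $\widehat W$ can contribute. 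That computation is sound (I verified that the only wall crossing possible is onto the affine wall, where the shifted weight is singular and the term is dropped), so the conclusion matches. Two small remarks. First, your upper bound and your Kac--Walton step both silently use that the classical multiplicity of $V_\mu$ in $V_\lambda\otimes V_{\omega_1}$ is exactly $1$, not merely nonzero as Lemma~\ref{1dimS} literally states; this is the multiplicity-freeness of the Pieri rule for the vector representation (one-dimensional weight spaces of $V_{\omega_1}$), which you should cite. Second, your ``inductive alternative'' via Proposition~\ref{basiccaseLRS} really does have the combinatorial gap you identify and should be dropped: the factorization sum mixes intermediate weights in a way that is not easily disentangled, and the Kac--Walton (or, more economically, the Beauville formula the paper actually uses) is the clean route.
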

\begin{proof}
The proof follows directly from the explicit description of three pointed description of conformal blocks on $\mathbb{P}^1$ as the space of invariants (cf.\ \cite[Prop.\ 4.3]{Beauville:93}). 
\end{proof}

\subsection{Sewing and injectivity}\label{factorizationlemma}
In this section, we discuss the key induction steps in the proof of rank-level duality maps. This strategy has already been used in \cite{BoysalPauly:10, Mukhopadhyay:15}. We recall the details here for completeness. We  begin with an important lemma.

Let $B=\operatorname{Spec}\mathbb{C}\llrrbracket{t}$. Suppose $\mathcal{V}$ and $\mathcal{W}$ are two coherent sheaves such that $\operatorname{rank}\mathcal{V}\leq \operatorname{rank}\mathcal{W}$ and $\mathcal{L}$  be a line bundle on $B$. Suppose $f: \mathcal{V} \rightarrow \mathcal{W}\otimes \mathcal{L}$ be a morphism of vector bundles over $B$. Assume that over $B$ there are isomorphisms:
$\oplus s_i:\mathcal{V}\isorightarrow \oplus_{i\in I} \mathcal{V}_i$, and $\oplus t_j:\oplus_{j\in I}\mathcal{W}_j\isorightarrow \mathcal{W}$, so that if  $f_{i,j}:\mathcal{V}_i\rightarrow \mathcal{W}_j\otimes \mathcal{L}$, then
\begin{itemize}
\item For each $i \in I$, $f_{i,j}=0$ unless $i=j$. 
\item The map $f=\sum_{i} t^{m_i}t_i\circ f_{i,i}\circ s_{i},$ where $m_i$ are nonnegative integers.
\end{itemize}
With the above notation and hypotheses, we have the following easy lemma.
\begin{lemma}\label{geometricinput}
The map $f$ is injective on $B^*=B \backslash\{t=0\}$ if and only if the maps $f_{i,i}$'s are injective for all $i \in I$.
\end{lemma}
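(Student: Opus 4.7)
The statement is essentially a local observation over the discrete valuation ring $B$: after inverting $t$ on $B^\ast$, the condition on $f$ becomes a genuine block-diagonal decomposition, and block-diagonal maps are injective precisely when each block is.

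My plan is to first restrict to $B^\ast$ and exploit the fact that there $t$, and hence each $t^{m_i}$, is a unit. Composing $f|_{B^\ast}$ on the source with $(\bigoplus s_i)|_{B^\ast}$ and on the target with the inverse of $(\bigoplus t_j)|_{B^\ast}$, the hypothesis $f_{i,j}=0$ for $i\neq j$ together with the displayed decomposition $f=\sum_i t^{m_i}\, t_i\circ f_{i,i}\circ s_i$ identifies $f|_{B^\ast}$ with the direct sum morphism
$$\bigoplus_{i\in I} \bigl(t^{m_i}\cdot f_{i,i}\bigr)\bigg|_{B^\ast}:\ \bigoplus_{i\in I}\mathcal{V}_i|_{B^\ast}\longrightarrow \bigoplus_{i\in I}(\mathcal{W}_i\otimes\mathcal{L})|_{B^\ast}\ .$$
Since the $t^{m_i}$ act as invertible scalars on the line bundle $\mathcal{L}|_{B^\ast}$, this differs from $\bigoplus_i f_{i,i}|_{B^\ast}$ only by an automorphism of the target, and therefore has the same kernel.

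Next I would invoke the trivial fact that a direct sum of morphisms of modules is injective if and only if each summand is injective. Combined with the previous identification, this yields that $f|_{B^\ast}$ is injective if and only if each $f_{i,i}|_{B^\ast}$ is injective. To bridge between injectivity on the punctured base and injectivity on all of $B$, I will use that in the intended applications the sheaves $\mathcal{V}_i$ are sheaves of conformal blocks, and hence are locally free over $B$ by \cite{TUY:89}; in particular they are torsion-free. Over a DVR, a morphism out of a torsion-free coherent sheaf is injective if and only if it is injective at the generic point, so $f_{i,i}$ is injective as a morphism of sheaves on $B$ precisely when $f_{i,i}|_{B^\ast}$ is injective. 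Assembling the three equivalences gives the lemma.

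There is no serious obstacle: the only content is the observation that after inverting $t$ the morphism becomes block-diagonal with invertible twists, and that the local-freeness of conformal block sheaves lets one pass freely between injectivity on $B$ and on $B^\ast$. I would keep the write-up short, since the usefulness of the statement lies entirely in the subsequent application to sewing and the verification that the hypotheses on the off-diagonal blocks $f_{i,j}=0$ are satisfied in the rank-level duality setting of Section \ref{factorizationlemma}.
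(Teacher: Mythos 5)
Your proof is correct, and it is essentially the only sensible argument. The paper actually states Lemma~\ref{geometricinput} as an ``easy lemma'' and gives no proof, so there is no written argument to compare against; your write-up supplies exactly what the authors left implicit: over $B^\ast$ the factors $t^{m_i}$ become units, $f$ becomes (up to the isomorphisms $\oplus s_i$, $\oplus t_j$) the block-diagonal map $\oplus_i\bigl(t^{m_i}f_{i,i}\bigr)$, and a direct sum of morphisms is injective iff each summand is. One small economy: you invoke local freeness of conformal-block sheaves via \cite{TUY:89} to pass between injectivity on $B$ and on $B^\ast$, but the lemma's hypothesis that $f$ is a morphism of vector bundles over $B=\operatorname{Spec}\CBbb\llrrbracket{t}$ already makes every $\mathcal{V}_i$ torsion-free, so the equivalence is built into the setup and needs no appeal to the specific application; the torsion-free observation is nevertheless exactly what resolves the mild ambiguity in whether ``$f_{i,i}$ injective'' is meant over $B$ or over $B^\ast$.
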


\begin{remark}We will sometimes need to use a slightly generalized version of Lemma \ref{geometricinput}. Suppose in the above situation there is an isomorphism $\oplus t_j:\oplus_{j\in J}\mathcal{W}_j\isorightarrow \mathcal{W}$, and an injective map $\delta: I \rightarrow J$  such that $f_{i,j}=0$  unless $j=\delta({i})$. 
Then $f$ is injective if and only if for each $i \in I$, the maps $f_{i,\delta(i)}$'s are injective.
For our applications, the role of $I$ will often be played by the set $\mathcal{Y}_{r,s}$ or $\mathcal{Y}_{r,s-1}$ and the role of $J$  by $\mathcal{Y}_{s,r}$.
\end{remark}

Consider a conformal embedding $\sfrak\rightarrow \frg$. Assume that all level one highest weight integrable modules of $\widehat{\frg}$ decompose with multiplicity one as $\widehat{\sfrak}$-modules. Let $\vec{\Lambda}=(\Lambda_1, \ldots, \Lambda_n)$ be an $n$-tuple of level one highest weights of $\frg$ and $\vec{\lambda}=(\lambda_1,\ldots,\lambda_n)$ be an $n$-tuple of level $\ell$ weights that appear in the branching of $\vec{\Lambda}$. By functoriality of the embedding of $\sfrak \rightarrow \frg$, we get a $\mathbb{C}\llrrbracket{t}$-linear map 
 $\alpha(t) :\mathcal{V}_{\vec{\Lambda}}^{*}(\mathcal{X}, \frg,1) \rightarrow \mathcal{V}_{\vec{\lambda}}^{*}(\mathcal{X}, \sfrak,\ell).$ For ${\lambda}$ appearing the branching of $\Lambda$, we denote by $\alpha_{\Lambda, \lambda}(t)$ the rank-level duality map for the smooth curve $X_0$. 
as follows:
 $$\alpha_{\Lambda, \lambda}(t): \mathcal{V}^{*}_{\vec{\Lambda},\Lambda, \Lambda^{\dagger}}(\widetilde{\mathfrak{X}}_0, \frg,1) \otimes \mathbb{C}\llrrbracket{t}\rightarrow \mathcal{V}^{\ast}_{ \vec{\lambda},\lambda, \lambda^{\dagger}}(\widetilde{\mathfrak{X}}_0, \sfrak,\ell)\otimes \mathbb{C}\llrrbracket{t}\ .$$
We recall the following proposition from \cite{BoysalPauly:10}.
\begin{proposition}\label{keydegen} On $B$, the map $\alpha(t)$ decomposes under factorization/sewing as follows
$$\alpha(t) \circ s_{\Lambda}(t)=\sum_{{\lambda} \in B(\Lambda)}t^{m_{\lambda}}\cdot s_{\lambda}(t)\circ\alpha_{\Lambda,\lambda}(t)\ ,$$ where $m_{\lambda}$ are positive integers given by the formula:
 $ m_{\lambda}=\Delta_{\lambda}(\sfrak,\ell)-\Delta_{\Lambda}(\mathfrak{g},1)$ $($see \eqref{eqn:trace-anomaly}$)$.
\end{proposition}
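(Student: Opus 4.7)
The plan is to trace the explicit TUY sewing formula through the conformal embedding, using the fact that the Sugawara operators for $\widehat{\frg}$ and $\widehat{\sfrak}$ differ only by a scalar on each irreducible $\widehat{\sfrak}$-component of $\Hcal_\Lambda$. Concretely, I would first recall that an embedding $\sfrak\hookrightarrow\frg$ is conformal precisely when the coset Virasoro generators $L_n^{\frg}-L_n^{\sfrak}$ act as zero on every level-one $\widehat{\frg}$-module for $n\neq 0$, and the residual operator $L_0^{\frg}-L_0^{\sfrak}$ is central, hence scalar on each irreducible $\widehat{\sfrak}$-submodule. Evaluating on the highest weight vector $v_\lambda\in\Hcal_\lambda(\sfrak,\ell)\subset\Hcal_\Lambda(\frg,1)$ and comparing with \eqref{eqn:trace-anomaly} yields the key identity
$$L_0^{\sfrak}-L_0^{\frg}\big|_{\Hcal_\lambda}=\Delta_\lambda(\sfrak,\ell)-\Delta_\Lambda(\frg,1)=m_\lambda\ ,$$
with $m_\lambda\geq 0$ because $\Delta_\Lambda(\frg,1)$ is the minimum $L_0^{\frg}$-eigenvalue on $\Hcal_\Lambda$, and $m_\lambda\in\ZBbb$ because both $L_0$'s induce $\ZBbb$-gradings on the integrable module once shifted by their respective minimum eigenvalues.

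Next I would write out the sewing map explicitly. For the family $\mathcal{X}\to B=\spec\CBbb\llrrbracket{t}$ degenerating so that two marked points $q_\pm$ of $\widetilde X_0$ are identified via coordinates satisfying $\xi_+\xi_-=t$, the TUY sewing at weight $\Lambda$ takes the form
$$s_\Lambda(t)\psi\bigl(|\vec\Phi\rangle\bigr)=\sum_{d\geq 0}t^d\sum_{\{v\}}\psi\bigl(|\vec\Phi\rangle\otimes v[d]\otimes v[d]^\dagger\bigr)\ ,$$
where $\{v[d]\}$ runs over a basis of the $L_0^{\frg}$-eigenspace in $\Hcal_\Lambda$ of eigenvalue $\Delta_\Lambda(\frg,1)+d$ and $v[d]^\dagger$ is its dual in $\Hcal_{\Lambda^\dagger}$. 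An identical formula defines $s_\lambda(t)$, but with $L_0^{\sfrak}$ in place of $L_0^{\frg}$ and with the energy counted from $\Delta_\lambda(\sfrak,\ell)$. The central point is that composing with the branching map $\alpha(t)$ rewrites the insertion at $q_\pm$ in terms of the $\widehat{\sfrak}$-decomposition $\Hcal_\Lambda=\bigoplus_\lambda\Hcal_\lambda$; by the scalar shift established above, a homogeneous vector of $\frg$-degree $d$ that lies in the $\lambda$-component sits in $\sfrak$-degree $d-m_\lambda$, so re-indexing the sum contributes an overall factor $t^{m_\lambda}$.

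Carrying this out, one gets for each $\lambda\in B(\Lambda)$ a sub-summand equal to $t^{m_\lambda}\,s_\lambda(t)\circ\alpha_{\Lambda,\lambda}(t)$ after identifying the $\widehat{\sfrak}$-projection of the insertion with the map $\alpha_{\Lambda,\lambda}$ on the normalization (this identification is exactly the content of the multiplicity-one branching and of $\CBbb\llrrbracket{t}$-linearity of $\alpha_{\Lambda,\lambda}(t)$). Summing over $\lambda\in B(\Lambda)$ produces the identity asserted by the proposition.

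The main obstacle I anticipate is making the grading-shift argument genuinely rigorous at the level of the formal parameter $t$: one has to check that the sewing formula is compatible with changes in the choice of energy normalization, and that the branching map $\alpha$ respects the decomposition $\Hcal_\Lambda=\bigoplus_\lambda\Hcal_\lambda$ not merely as $\widehat{\sfrak}$-modules but as $\ZBbb_{\geq 0}$-graded vector spaces up to the uniform shift by $m_\lambda$. Once this bookkeeping is in place, the positivity of $m_\lambda$ follows from the general fact that on integrable highest weight modules the Sugawara $L_0$ attains its minimum precisely on the top graded piece, and integrality from the integer-valued $L_0^{\frg}$-grading on $\Hcal_\Lambda$.
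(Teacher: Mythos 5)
The paper does not actually supply a proof of this proposition: it is quoted from Boysal--Pauly (\cite{BoysalPauly:10}), so there is no in-text argument to compare against. Your reconstruction nonetheless follows the standard route for such degeneration formulas --- express the sewing insertion as a degree-graded sum and track the grading shift through the conformal embedding --- so I evaluate it on its own merits.

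The genuine error is the displayed identity
\[
L_0^{\sfrak}-L_0^{\frg}\big|_{\Hcal_\lambda}=\Delta_\lambda(\sfrak,\ell)-\Delta_\Lambda(\frg,1)=m_\lambda\ .
\]
For a conformal embedding the coset Virasoro has central charge zero, the branching of $\Hcal_\Lambda(\frg,1)$ into $\widehat\sfrak$-modules is finite, and the coset Virasoro acts on the (finite-dimensional) multiplicity spaces; a finite-dimensional Virasoro representation with $c=0$ is trivial, so $L_n^{\frg}-L_n^{\sfrak}=0$ on all of $\Hcal_\Lambda$ for \emph{all} $n$, including $n=0$ --- not just $n\neq 0$, and not by the nonzero scalar $m_\lambda$. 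What you in fact need, and use two sentences later, is the consequence of this triviality: the $\widehat\sfrak$-highest weight vector $v_\lambda\in\Hcal_\lambda\subset\Hcal_\Lambda$ satisfies $L_0^{\frg}v_\lambda=L_0^{\sfrak}v_\lambda=\Delta_\lambda(\sfrak,\ell)\,v_\lambda$, whence $v_\lambda$ sits at $\frg$-degree $\Delta_\lambda(\sfrak,\ell)-\Delta_\Lambda(\frg,1)=m_\lambda$ inside $\Hcal_\Lambda$, i.e.\ the inclusion $\Hcal_\lambda\hookrightarrow\Hcal_\Lambda$ shifts the energy grading by exactly $m_\lambda$. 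You appear to have silently replaced the Sugawara $L_0^{\frg}$ by the energy operator $L_0^{\frg}-\Delta_\Lambda(\frg,1)\cdot\id$ that supplies the exponent of $t$ in the sewing sum. Once the identity is corrected, the degree shift, the factor $t^{m_\lambda}$ after re-indexing $\sum_{d\geq 0}t^d\sum_{\{v\}}\cdots$, and the integrality and non-negativity of $m_\lambda$ all follow as you describe; the remaining bookkeeping (multiplicity-one branching, $\CBbb\llrrbracket{t}$-linearity of $\alpha_{\Lambda,\lambda}(t)$) is routine. One minor caveat: your argument gives $m_\lambda\geq 0$ (and $m_\lambda=0$ does occur for the component containing the $\widehat\frg$-highest weight vector), which is what the re-indexing actually requires, even though the proposition loosely says ``positive.''
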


\begin{remark}
For the Lie algebra $\mathfrak{so}(2r+1)$, it is easy to see that $V_{\lambda}$ is isomorphic to its dual as an $\mathfrak{so}(2r+1)$-module. Hence $\lambda^{\dagger}=\lambda$.
\end{remark}

\subsection{Proof of
Theorem \ref{mainjune16}}
The proof now follows from 
factorization as in the previous section, Lemma \ref{geometricinput}, the rank-level duality for $\SO$-weights in \cite{Mukhopadhyay:12}, and Theorem \ref{braidreducibility}.

\section{Strange duality maps in higher genus} \label{sec:higher-genus}

\subsection{Formulation of the problem} As mentioned in the introduction, the natural map between the special Clifford groups obtained by the tensor product of vector spaces induces one between the moduli stacks 
$p:\mathcal{M}_{2r+1}\times \mathcal{M}_{2s+1}\rightarrow \mathcal{M}_{2d+1}$.
 By a direct calculation we can check that $p^*(\mathcal{P})\simeq \mathcal{P}^{\otimes 2s+1}\boxtimes \mathcal{P}^{\otimes 2r+1}$. Hence, we obtain the  map $SD$ defined in \eqref{eqn:SD}.
Recall that $\dim_\CBbb H^0(\mathcal{M}_{2d+1},\mathcal{P})=2^{2g}$, so by Corollary \ref{cor:spin-dimension} the map $SD$ cannot be an isomorphism. However,
it is natural to ask the following:
\begin{question}  \label{spinconjecture}
For $r,s\geq 1$, is the map $SD$ injective?
\end{question}
\noindent
We shall show that the answer to this question is actually negative for all $r,s$, and in all genus.


\subsection{Action of $J_2(C)$ and the strange duality map}\label{strangedualityj2C}
If $W_r$ and $W_s$ are vector spaces each with a nondegenerate symmetric bilinear form,  then the tensor product $W_d=W_r\otimes W_s$ inherits one as well. This gives an embedding
$\SO(W_r)\times \SO(W_s) \lra \SO(W_d)$.
If $\dim_\CBbb W_r=2r+1$, $\dim_\CBbb W_s=2s+1$, the map above in turn induces one between the corresponding moduli stacks,
\begin{equation} \label{eqn:m}
m:
\mathcal{M}_{\SO(2r+1)}\times \mathcal{M}_{\SO(2s+1)}\longrightarrow \mathcal{M}_{\SO(2d+1)}\ .
\end{equation} 

The embedding of orthogonal groups lifts to one on spin groups.
Then we have a commutative diagram:
\begin{equation}
\begin{split} \label{spin-diagram}
\Scale[.9]{
\xymatrix{
\ZBbb/2\times \ZBbb/2 \ar[d]\ar[r] & \ZBbb/2\ar[d] \\
\Spin(2r+1)\times \Spin(2s+1) \ar[d] \ar[r] & \Spin(2d+1)\ar[d] \\
\SO(2r+1)\times \SO(2s+1)  \ar[r] & \SO(2d+1)
}}
\end{split}
\end{equation}
where the map $\ZBbb/2\times \ZBbb/2 \to \ZBbb/2$ is multiplication.
By results of \cite{BLS:98}, we know that $\mathcal{M}_{2r+1}$ forms a $J_2(C)$ torsor over $\mathcal{M}_{\SO(2r+1)}$.
Hence, from \eqref{spin-diagram}  we get the following commutative diagram of moduli stacks:
\begin{equation}
\begin{split}\label{diagram}
\Scale[.9]{
\xymatrix{
\mathcal{M}_{2r+1}\times \mathcal{M}_{2s+1}\ar[d]^{J_2(C)\times J_2(C)} \ar[r] &\mathcal{M}_{2d+1}\ar[d]^{J_2(C)}\\
\mathcal{M}_{\SO(2r+1)}\times \mathcal{M}_{\SO(2s+1)}\ar[r]^{\qquad m} &\mathcal{M}_{\SO(2d+1)}}}
\end{split}
\end{equation}
 which is equivariant with respect to  the action of $J_2(C)\times J_2(C)$ under the multiplication map $J_2(C)\times J_2(C) \rightarrow J_2(C)$.

The natural inclusion of $\SO(2r+1) \subset \SL(2r+1)$ gives the following commutative diagram of moduli stacks:
$$\Scale[.9]{
\xymatrix{
\mathcal{M}_{\SO(2r+1)}\times \mathcal{M}_{\SO(2s+1)} \ar[d]^{f_1\otimes f_2}\ar[r]^{\qquad m} &\mathcal{M}_{\SO(2d+1)}\ar[d]^{f}\\
\mathcal{M}_{\SL(2r+1)}\times \mathcal{M}_{\SL(2s+1)}\ar[r]^{\qquad p} &\mathcal{M}_{\SL(2d+1)}}}
$$
Let $\mathcal{D}$ be the determinant of cohomology on $\mathcal{M}_{\SL(2d+1)}$ (cf.\ Proposition \ref{LSpfaff}). Also, denote by $\widetilde\Dcal$ the pull-back of $\mathcal{D}$ under $f$. Since we know that $p^{*}{\mathcal{D}}=\mathcal{D}^{\otimes (2s+1)}\boxtimes \mathcal{D}^{\otimes(2r+1)}$, it follows that $m^*{\widetilde{\Dcal}}=\widetilde{\Dcal}_1^{\otimes(2s+1)}\boxtimes \widetilde{\Dcal}_2^{\otimes(2r+1)}$, where $\mathcal{D}_i$ and (resp.\  $\widetilde{\Dcal}_i$) denote the determinants of cohomology and their respective pull-backs.
From Proposition \ref{LSpfaff}, it follows that if we fix a theta characteristic $\kappa$, the pull-back of $\mathcal{P}_{\kappa}$ under the map $m$ in \eqref{eqn:m} is $\mathcal{P}_{\kappa}^{\otimes(2s+1)}\boxtimes \mathcal{P}_{\kappa}^{\otimes(2r+1)}$.

As mentioned before, given $\kappa\in\Th(C)$ we get an action of $J_2(C)$ on the space of global sections $H^0(\mathcal{M}_{2r+1},\mathcal{P}^{\otimes(2s+1)})$, and the above diagram  of moduli stacks commutes and is equivariant with respect to  $J_2(C)\times J_2(C)$. We have the following.

\begin{lemma}\label{equivariantsection}
Let $X$ and $Y$ be two spaces with an action of a group $G$ actions and $f:X\rightarrow Y$ be a $G$-equivariant map. Suppose $\mathcal{L}$ is a line bundle on $Y$ and suppose both $\mathcal{L}$ and $f^*\mathcal{L}$ are $G$-linearized. Then the map of global sections is $G$-equivariant
$$f^*:H^0(Y,\mathcal{L})\longrightarrow H^0(X, f^*(\mathcal{L}))\ .$$
\end{lemma}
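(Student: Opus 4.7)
The plan is to proceed from the basic formalism of $G$-linearizations and the naturality of pullback. Recall that a $G$-linearization of a line bundle $\mathcal{L}\to Y$ is the datum of an isomorphism $\sigma^Y_g: g^*\mathcal{L}\isorightarrow \mathcal{L}$ for every $g\in G$, satisfying the cocycle condition $\sigma^Y_{gh}=\sigma^Y_g\circ g^*\sigma^Y_h$. The associated $G$-action on $H^0(Y,\mathcal{L})$ is $g\cdot s := \sigma^Y_g(g^*s)$. We have an analogous $G$-linearization $\sigma^X_g:g^*f^*\mathcal{L}\isorightarrow f^*\mathcal{L}$ on the pullback, giving an action on $H^0(X, f^*\mathcal{L})$.

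First, I would record the compatibility forced by equivariance of $f$. Since $f\circ g_X=g_Y\circ f$ for every $g\in G$, one has a canonical identification $g_X^*f^*\mathcal{L}=f^*g_Y^*\mathcal{L}$ coming from $(g_Y\circ f)^*=f^*\circ g_Y^*=(f\circ g_X)^*=g_X^*\circ f^*$. Under this identification the ``induced'' linearization on $f^*\mathcal{L}$ is precisely $f^*\sigma^Y_g$. The essential (and standard) observation is that any other $G$-linearization on $f^*\mathcal{L}$ compatible with the given one on $\mathcal{L}$ must agree with $f^*\sigma^Y_g$; this is the implicit compatibility being invoked in the hypothesis.

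Next I would verify equivariance directly. For $s\in H^0(Y,\mathcal{L})$, by naturality of pullback,
\[
f^*(g\cdot s) \;=\; f^*\bigl(\sigma^Y_g(g_Y^*s)\bigr) \;=\; (f^*\sigma^Y_g)\bigl(f^*g_Y^*s\bigr) \;=\; \sigma^X_g\bigl(g_X^* f^*s\bigr) \;=\; g\cdot f^*s\ .
\]
This is the required equivariance. There is no real obstacle here: the only subtlety is bookkeeping of the compatible identifications of the various pullback bundles, so the ``hard'' part is simply writing down the diagram unambiguously. In the paper's application, the two $G$-linearizations on $m^*\Pcal_\kappa$ (the one induced from $\Pcal_\kappa$ via the equivariance of $m$, and the one declared in the isomorphism $m^*\Pcal_\kappa\simeq \Pcal_\kappa^{\otimes(2s+1)}\boxtimes \Pcal_\kappa^{\otimes(2r+1)}$) must be checked to coincide, which follows from the functoriality of the Pfaffian construction (Proposition \ref{LSpfaff}) and the fact that the diagram \eqref{diagram} commutes over the $J_2(C)\times J_2(C)$-action.
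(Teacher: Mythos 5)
Your proof is correct and in essence the same computation the paper makes, but you carry it out at a higher level of rigor. The paper's proof is a quick pointwise chase
\[
(g\cdot f^*s)(x)=f^*s(g^{-1}x)=s(f(g^{-1}x))=s(g^{-1}f(x))=f^*(g\cdot s)(x)\ ,
\]
which treats sections as if they were scalar-valued functions and never touches the linearization data. That works as a heuristic, but strictly speaking it suppresses exactly the point you make explicit: the $G$-linearization on $f^*\mathcal{L}$ must be the one induced by pullback from the linearization on $\mathcal{L}$ (via the canonical identification $g_X^*f^*\mathcal{L}\simeq f^*g_Y^*\mathcal{L}$ supplied by $f\circ g_X=g_Y\circ f$); otherwise $f^*$ has no reason to intertwine the two actions — an arbitrary second linearization could differ by a character of $G$ and break equivariance. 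Your version spells out that the lemma's hypothesis should be read as asserting this compatibility, and that once it is assumed the equivariance is formal, namely $f^*(g\cdot s)=f^*\sigma^Y_g(g_Y^*s)=\sigma^X_g(g_X^*f^*s)=g\cdot f^*s$. The extra remark at the end about the Pfaffian linearizations in the paper's application (that compatibility follows from Proposition~\ref{LSpfaff} and the commutativity of the diagram) is exactly the sort of sanity check the paper should have flagged but doesn't. So: same idea, but your write-up is the more careful and more defensible one.
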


\begin{proof}
Let $s$ be a global section of $X$ and we consider $gf^*(s)$. Let $x$ be any element of $X$. Then we get $gf^*(s)(x)=f^*(s)(g^{-1}x)=s(f(g^{-1}x)=s(g^{-1}(f(x))$. On the other hand $f^*(g(s))(x)=(g(s))(f(x)=s(g^{-1}f(x))$. Thus we have the equality $gf^*(s)=f^{*}(gs)$. 
\end{proof}

The commutativity of the diagram \ref{diagram} and Lemma \ref{equivariantsection} implies that the following map of global sections is $J_2(C)\times J_2(C)$ equivariant.
\begin{equation} \label{eqn:rsd}
H^0(\mathcal{M}_{2r+1},\mathcal{P}^{\otimes(2s+1)})^*\otimes H^0(\mathcal{M}_{2s+1},\mathcal{P}^{\otimes(2r+1)})^*\longrightarrow H^0(\mathcal{M}_{2d+1},\mathcal{P})^*\ .
\end{equation}

\begin{lemma}\label{algebra}
Let $V_1$, $V_2$ and $W$ be three vector spaces endowed with an action of a finite abelian group $A$. Let $f:V_1\otimes V_2 \rightarrow W$ be a $A\times A$ equivariant map, where the action of $A\times A$ on $W$ is via multiplication map $A\times A\rightarrow A$. Then $f:V_1^{\chi_1}\otimes V_2^{\chi_2}\rightarrow W^{\chi_3}$ is zero unless $\chi_1=\chi_2=\chi_3$, where $V^{\chi}$ denotes the $\chi$-character spaces of a vector space $V$ with respect to $A$ and $\chi \in \widehat{A}$.
\end{lemma}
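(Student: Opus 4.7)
The plan is to prove Lemma \ref{algebra} as a direct application of character orthogonality, after spelling out the two $A\times A$-representations concretely.

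First I would identify the characters of $A\times A$ acting on the two sides. By definition, on $V_1^{\chi_1}\otimes V_2^{\chi_2}$ the element $(a_1,a_2)\in A\times A$ acts by the scalar $\chi_1(a_1)\chi_2(a_2)$, so this subspace is the $A\times A$-isotypic component for the character $(\chi_1,\chi_2)\in\widehat{A}\times\widehat{A}\simeq\widehat{A\times A}$. On the other hand, since the action of $A\times A$ on $W$ factors through the multiplication homomorphism $\mu:A\times A\to A$, the subspace $W^{\chi_3}$ is the $A\times A$-isotypic component for the character $\chi_3\circ\mu$, which sends $(a_1,a_2)$ to $\chi_3(a_1)\chi_3(a_2)$; in the identification above this is the character $(\chi_3,\chi_3)$.

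Next I would compose $f$ with the projection $\pi_{\chi_3}:W\to W^{\chi_3}$ and restrict to $V_1^{\chi_1}\otimes V_2^{\chi_2}$. Since $f$ is $A\times A$-equivariant and $\pi_{\chi_3}$ is $A\times A$-equivariant (being a projection onto an isotypic subspace in a representation of a finite abelian group, which is canonical), the composition
\[
\pi_{\chi_3}\circ f\bigr|_{V_1^{\chi_1}\otimes V_2^{\chi_2}}:V_1^{\chi_1}\otimes V_2^{\chi_2}\longrightarrow W^{\chi_3}
\]
is $A\times A$-equivariant between two isotypic subspaces corresponding to the distinct characters $(\chi_1,\chi_2)$ and $(\chi_3,\chi_3)$ respectively. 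By Schur's lemma for finite abelian groups (equivalently, by character orthogonality), any equivariant map between isotypic components for distinct characters vanishes. Thus the displayed composition is zero unless $(\chi_1,\chi_2)=(\chi_3,\chi_3)$, i.e.\ $\chi_1=\chi_2=\chi_3$, which is the assertion of the lemma.

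There is no real obstacle here; the only subtlety is keeping track of the convention that $A\times A$ acts on $W$ through multiplication, so that the character $\chi_3$ of $A$ pulls back to $(\chi_3,\chi_3)$ and not to some other pair. Once this is set up correctly, the statement reduces to the basic fact that Hom between distinct irreducible representations of a finite abelian group is zero.
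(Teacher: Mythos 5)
Your proof is correct and is essentially the same argument the paper uses: both proceed by observing that $V_1^{\chi_1}\otimes V_2^{\chi_2}$ carries the $A\times A$-character $(\chi_1,\chi_2)$ while $W^{\chi_3}$ carries $(\chi_3,\chi_3)$, and that equivariance forces these to agree if the map is nonzero. You have merely spelled out the isotypic/Schur's-lemma language that the paper leaves implicit.
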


\begin{proof}
The proof follows directly by taking character subspaces of $f$ with respect to $A\times A$. This implies that $\chi_1(a_1)\chi_2(a_2)=\chi_3(a_1.a_2)$ for any $a_1$ and $a_2$ in $A$. This is only possible when $\chi_1=\chi_2=\chi_3$.
\end{proof}

Let $V$ be a finite dimensional vector space and $A$  a finite abelian group acting on $V$. The invariant subspace $V^A$ is a $A$-submodule of $V$, and since $A$ is finite, $V$ admits an orthogonal splitting of the form $V=\oplus_{\chi \in \widehat{A}}V^{\chi}$, i.e.\ there is a symmetric nondegenerate bilinear $A$-invariant bilinear form $\{,\}$ such that $\{V^{\chi_1},V^{\chi_2}\}=0$ unless $\chi_1=\chi_2$. Hence $s:(V^{\chi})^*\simeq (V^*)^{\chi}$ canonically. The map is given as if $f\in (V^{\chi})^*$, then extend $f$ to $\tilde{f}$ by the obvious  rule $\tilde{f}(v)=0$ unless $v \in V^{\chi}$. Clearly $\tilde{f} \in (V^*)^{\chi}$.
\begin{lemma}\label{basic1}
Let $V_1$, $V_2$ and $W$ be as in Lemma \ref{algebra} and further assume that $f^*:V_1\rightarrow V_2^*\otimes W$ is injective. Then the induced map between the $\chi$-character spaces $g:V_1^{\chi} \rightarrow (V_2^{\chi})^*\otimes W^{\chi}$ is also injective.
\end{lemma}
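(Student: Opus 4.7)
The plan is to exploit the character decompositions $V_1 = \bigoplus_\chi V_1^\chi$, $V_2 = \bigoplus_\chi V_2^\chi$, $W = \bigoplus_\chi W^\chi$ and argue that $f^\ast$ is block-diagonal with respect to these decompositions, so that injectivity passes to each character component. The crucial input is Lemma \ref{algebra}, which says $f$ vanishes on $V_1^{\chi_1}\otimes V_2^{\chi_2}$ unless $\chi_1=\chi_2$, and in that case lands in $W^{\chi_1}$.

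First I would recall that since $A=J_2(C)$ is $2$-torsion, every character satisfies $\chi=\chi^{-1}$, so any $A$-invariant, symmetric, nondegenerate bilinear form on $V_i$ pairs $V_i^\chi$ with itself and annihilates $V_i^\chi\otimes V_i^{\chi'}$ for $\chi'\neq\chi$. This induces canonical identifications $(V_i^\chi)^\ast \simeq (V_i^\ast)^\chi$ and $V_i^\ast\simeq \bigoplus_\chi (V_i^\chi)^\ast$, as recorded in the paragraph preceding the statement. The target space of $f^\ast$ then decomposes as
\[
V_2^\ast\otimes W \;\simeq\; \bigoplus_{\chi,\chi'} (V_2^\chi)^\ast\otimes W^{\chi'}.
\]

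Next I would verify that $f^\ast(V_1^\chi)\subset (V_2^\chi)^\ast\otimes W^\chi$. Unwinding the definition $f^\ast(v_1)(v_2)=f(v_1\otimes v_2)$: for $v_1\in V_1^\chi$ and $v_2\in V_2^{\chi'}$, Lemma \ref{algebra} forces $f(v_1\otimes v_2)=0$ when $\chi'\neq\chi$, and $f(v_1\otimes v_2)\in W^\chi$ when $\chi'=\chi$. Thus $f^\ast(v_1)$ annihilates every $V_2^{\chi'}$ with $\chi'\neq\chi$ and takes values in $W^\chi$ on $V_2^\chi$, so indeed $f^\ast(v_1)\in (V_2^\chi)^\ast\otimes W^\chi$. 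Under the decomposition of source and target, $f^\ast = \bigoplus_\chi g_\chi$.

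Finally, a direct sum of linear maps between direct sums is injective if and only if each summand is, so injectivity of $f^\ast$ immediately implies injectivity of each $g_\chi$. There is essentially no obstacle beyond bookkeeping; the only point requiring a moment of care is the identification $(V^\chi)^\ast\simeq (V^\ast)^\chi$, which is precisely where the $2$-torsion hypothesis on $A$ enters through the existence of the symmetric $A$-invariant pairing restricting nondegenerately to each $V^\chi$.
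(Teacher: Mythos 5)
Your proof is correct and follows essentially the same route as the paper's: decompose the source and target of $f^\ast$ into $\chi$-isotypic pieces, use Lemma \ref{algebra} to see that $f^\ast$ is block-diagonal with blocks $g=g_\chi$, and conclude injectivity of each block from injectivity of the whole. The paper phrases the first step slightly differently (restricting equivariance to $A\times\{1\}$ to get injective maps $V_1^\chi\to V_2^\ast\otimes W^\chi$, then factoring these through $g$ via the identification $(V^\chi)^\ast\simeq(V^\ast)^\chi$), but the content is the same and your more direct verification that $f^\ast(V_1^\chi)\subset (V_2^\chi)^\ast\otimes W^\chi$ is perfectly valid.
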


\begin{proof} Since the map $f$ is $A\times A$ equivariant under the multiplication map, then it is also equivariant with respect to the subgroup $B=A\times {\id}$. Taking invariants with respect to $B$ implies that the map $f_{\chi}^*:V_1^{\chi}\rightarrow V_2^*\otimes W^{\chi}$ is also injective. 
Now recall that there is a canonical isomorphism $s:(V^{\chi})^*\isorightarrow (V^*)^{\chi}$. Consider the map $V_1^{\chi}\rightarrow V_2^*\otimes W^\chi$ given by the following: 
$V_1^{\chi}
\stackrel{g}{\lra}
 (V_2^{\chi})^{*}\otimes W^{\chi}\stackrel{s}{\lra} (V_2^*)^{\chi}\otimes W^{\chi}\hookrightarrow V_2^*\otimes W^{\chi}$.
 It follows from the definition that the composition of the above map coincides with $f_{\chi}^\ast$, and since the latter is injective so must be $g$.
\end{proof}

By taking invariants in \eqref{eqn:rsd} with respect to the $J_2(C)\times J_2(C)$ action, Lemma \ref{basic1} and ``yes'' to Question \ref{spinconjecture} would imply that the following map is injective:
$$\left[ H^0(\mathcal{M}_{2r+1},\mathcal{P}^{\otimes(2s+1)})^*\right]^{J_2(C)} \longrightarrow \left[ H^0(\mathcal{M}_{2s+1},\mathcal{P}^{\otimes(2r+1)})\right]^{J_2(C)}\otimes \mathbb{C}\cdot s_{\kappa}^{\ast}\ .$$
 Now since $r$ and $s$ are arbitrary, and 
 $\left[H^0(\mathcal{M}_{2r+1},\mathcal{P}^{\otimes(2s+1)})\right]^{J_2(C)}=H^0(\mathcal{M}_{\SO(2r+1)},\mathcal{P}_{\kappa}^{2s+1})$, we get the following : 
\begin{question} \label{soconjecture}
Let $\kappa\in \Th(C)$ and consider the Pfaffian section $s_{\kappa}$ in $H^0(\mathcal{M}_{\SO(2d+1)},\mathcal{P}_{\kappa})$. The pull-back of the Pfaffian divisor induces a strange duality map:
$$s_\kappa^*: H^0(\mathcal{M}_{\SO(2r+1)},\mathcal{P}_{\kappa}^{\otimes(2s+1)})^*\longrightarrow H^0(\mathcal{M}_{\SO(2s+1)},\mathcal{P}_{\kappa}^{\otimes(2r+1)})\ .$$
Is $s_\kappa^\ast$ an isomorphism for all $\kappa$?
\end{question}
The discussion above tells us that an affirmative answer to Question \ref{spinconjecture} implies an affirmative answer to Question \ref{soconjecture}. We now show the converse. 
\begin{proposition} \label{prop:equiv}
Questions \ref{spinconjecture} and \ref{soconjecture} are equivalent.
\end{proposition}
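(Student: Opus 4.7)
Only the implication Question \ref{soconjecture} $\Rightarrow$ Question \ref{spinconjecture} requires a new argument, the converse having been sketched in the preceding discussion. The plan is to decompose the pullback map $p^* \colon W \to V_1 \otimes V_2$ along characters of $J_2(C)$, where $V_1 = H^0(\Mcal_{2r+1}, \Pcal^{\otimes(2s+1)})$, $V_2 = H^0(\Mcal_{2s+1}, \Pcal^{\otimes(2r+1)})$, $W = H^0(\Mcal_{2d+1}, \Pcal)$, and to match each character component with an individual map $s_\kappa^*$.

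Applying Lemma \ref{algebra} to the $J_2(C) \times J_2(C)$-equivariant map $p^*$ (with $J_2(C) \times J_2(C)$ acting on $W$ through multiplication $J_2(C) \times J_2(C) \to J_2(C)$) gives a direct sum decomposition
\[
p^* \;=\; \bigoplus_{\chi \in \widehat{J_2(C)}} p^*_\chi, \qquad p^*_\chi \colon W^\chi \longrightarrow V_1^\chi \otimes V_2^\chi.
\]
Because the $2^{2g}$ Pfaffian sections $\{s_\kappa\}_{\kappa \in \Th(C)}$ form a basis of $W$ and realize the regular representation of $J_2(C)$ (with characters indexed by $\Th(C)$ via the Weil pairing), each $W^\chi$ is one-dimensional. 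Consequently $SD$ is injective if and only if every $p^*_\chi$ is injective.

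The remaining task is to identify each $p^*_\chi$ with a nonzero scalar multiple of the element $m^*(s_\kappa) \in V_1^\chi \otimes V_2^\chi$ defining the map $s_\kappa^*$ in \eqref{eqn:SK}. Using the character decomposition $H^0(\widetilde Y, \Pcal^{\otimes(2s+1)}) \cong \bigoplus_\chi H^0(Y, \Pcal_\kappa^{\otimes(2s+1)} \otimes L_\chi)$ from Section \ref{sec:divisor}, together with the isomorphism $\Pcal_\kappa^{\otimes(2s+1)} \otimes L_\chi \cong \Pcal_{\kappa \otimes \alpha}^{\otimes(2s+1)}$ (where $W(\alpha) = \chi$, valid since $\chi$ is $2$-torsion), one identifies $V_i^\chi$ with sections of the appropriate Pfaffian bundle over the $\SO$ moduli. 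Then the commutative diagram \eqref{diagram} and the pullback compatibility in Proposition \ref{LSpfaff} yield that $p^*_\chi$ coincides, up to a nonzero scalar, with $m^*(s_\kappa)$. Assuming Question \ref{soconjecture} holds, each $s_\kappa^*$ is an isomorphism, so $m^*(s_\kappa) \neq 0$, and since $W^\chi$ is one-dimensional we conclude $p^*_\chi$ is injective. Hence $SD$ is injective.

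The main obstacle I foresee is the precise identification of $p^*_\chi$ with $m^*(s_\kappa)$ up to a nonzero scalar: this requires careful tracking of the $J_2(C)$-linearizations of the Pfaffian line bundles on the $\Spin$ and $\SO$ moduli stacks, and making consistent choices of generators of the various one-dimensional $W^\chi$ and the normalizations of the Pfaffian sections $s_\kappa$. Once this bookkeeping is in place, the character-wise reduction yields the proposition.
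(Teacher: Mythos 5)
There is a genuine gap in the step ``Consequently $SD$ is injective if and only if every $p^*_\chi$ is injective.'' These are not equivalent conditions. The map $p^*_\chi: W^\chi \to V_1^\chi \otimes V_2^\chi$ has one-dimensional domain, so it is injective precisely when it is nonzero, i.e., when the tensor $T_\chi := p^*_\chi(w_\chi)$ (for $w_\chi$ a generator of $W^\chi$) is nonzero in $V_1^\chi \otimes V_2^\chi$. But what is needed for $SD$ to be injective on the $\chi$-isotypic piece is that $T_\chi$, viewed as a linear map $(V_1^\chi)^* \to V_2^\chi$, is injective --- a strictly stronger condition whenever $\dim V_1^\chi > 1$ (which happens as soon as the level exceeds one). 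Injectivity of $p^*$ and injectivity of $SD$ are different nondegeneracy conditions on the same trilinear pairing $V_1^* \times V_2^* \times W \to \CBbb$, and your reduction conflates them.

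This error propagates to your final step. After correctly identifying $T_\chi$ with a nonzero scalar multiple of $m^*(s_\kappa)$, you extract from Question \ref{soconjecture} only the weak conclusion $m^*(s_\kappa)\neq 0$, whereas the hypothesis hands you exactly the strong statement you need: $s_\kappa^*$ being an isomorphism means precisely that the tensor $m^*(s_\kappa)$ defines an injective (indeed bijective) map $(V_1^\chi)^* \to V_2^\chi$. Keeping that full strength closes the argument: by equivariance, $SD = \bigoplus_\chi SD_\chi$ with $SD_\chi : (V_1^\chi)^* \to V_2^\chi \otimes (W^\chi)^*$, and $SD_\chi$ coincides with $s_\kappa^*$ up to a nonzero scalar (using $\dim W^\chi =1$), so each $SD_\chi$ is injective and hence so is $SD$. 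This is essentially what the paper does, but phrased via $s_\Delta^* = \sum_\kappa s_\kappa^*$: Lemma \ref{algebra} shows this sum is block-diagonal with respect to the character decomposition (which, since the levels $2s+1$ and $2r+1$ are odd, is the decomposition $H^0(\Mcal_{2r+1},\Pcal^{\otimes(2s+1)}) \simeq \bigoplus_{\kappa} H^0(\Mcal_{\SO(2r+1)},\Pcal_\kappa^{\otimes(2s+1)})$), each block being $s_\kappa^*$, so $s_\Delta^*$ is injective and therefore $SD$ is too. So your approach is the right one, but the reduction has to track injectivity of $SD_\chi$, not of $p^*_\chi$.
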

\begin{proof}
Since the $H^{0}(\mathcal{M}_{2d+1},\mathcal{P})^{J_2(C)}$ is one dimensional and generated by $s_{\kappa}$, it follows from Lemma \ref{algebra} that the map:$s_{\kappa}^*: H^0(\mathcal{M}_{\SO(2r+1)},\mathcal{P}_{\kappa_1}^{\otimes(2s+1)})^*\longrightarrow H^0(\mathcal{M}_{\SO(2s+1)},\mathcal{P}_{\kappa_2}^{\otimes(2r+1)})$
 is zero unless $\kappa=\kappa_1=\kappa_2$. Now if the answer to Question \ref{soconjecture} is yes, this implies that for $\kappa=\kappa_1=\kappa_2$, then $s_{\kappa}^*$ is  injective. Since the level is odd, we have $L_{\chi}^{2r+1}=L_{\chi}$, where $L_{\chi}$ is the two torsion line bundle corresponding to the character $\chi$, and so
$$H^0(\mathcal{M}_{2r+1},\mathcal{P}^{\otimes(2r+1)})\simeq \bigoplus_{\kappa \in \Th(C)} H^0(\mathcal{M}_{\SO(2r+1)},\mathcal{P}_{\kappa}^{\otimes(2r+1)})\ .$$ 
It follows that the map $s_\Delta^\ast$ from \eqref{eqn:SD1} is also injective. This implies an affirmative answer to Question \ref{spinconjecture}. In the above decomposition, it is crucial that we are working with odd levels. 
\end{proof}
\subsection{Comparison of  dualities and reduction to genus one}In this section, we reformulate Question  \ref{spinconjecture} in terms of conformal blocks. We use the factorization/sewing  theorem  of conformal blocks to reduce Question  \ref{spinconjecture} to a rank-level duality of conformal blocks on elliptic curves with one marked point (cf.\ Sections \ref{sec:conformal-block} and
 \ref{factorizationlemma}). 

Let $C$ be a stable curve of genus $g$ with one marked point, and let $\mathfrak{X}$ be the data associated to the additional choice of a formal neighborhood around the point. Introduce:
$$
\widetilde{\mathcal{V}}_{\omega_0}(\mathfrak{X},\mathfrak{so}(2r+1),2s+1):=  \mathcal{V}_{\omega_0}(\mathfrak{X}, \mathfrak{so}(2r+1),2s+1)\oplus \mathcal{V}_{(2s+1)\omega_1}(\mathfrak{X}, \mathfrak{so}(2r+1),2s+1)\ .$$
We have the following diagram:
\begin{equation}
\begin{split}\label{diagramcomm}
\Scale[.9]{
\xymatrix{
\widetilde{\mathcal{V}}_{\omega_0}(\mathfrak{X}, \mathfrak{so}(2r+1),2s+1)\otimes \widetilde{\mathcal{V}}_{\omega_0}(\mathfrak{X},\mathfrak{so}(2s+1),2r+1) \ar[r]\ar[d] & \widetilde{\mathcal{V}}_{\omega_0}(\mathfrak{X},\mathfrak{so}(2d+1),1)\ar[d]\\
H^0(\mathcal{M}_{2r+1},\mathcal{P}^{\otimes(2s+1)})^*\otimes H^0(\mathcal{M}_{2s+1},\mathcal{P}^{\otimes(2r+1)})^* \ar[r] & H^0(\mathcal{M}_{2d+1},\mathcal{P})^*
}}
\end{split}
\end{equation}
Here, the vertical arrows are given by Theorem \ref{identification},  and the horizontal arrow on the top is given by the rank-level duality map induced by the branching rule in Section \ref{branching}. The other horizontal arrow is the strange duality map. With the above notation, we have the following.

\begin{proposition}\label{equality}
The rank-level duality and strange duality maps are the same under uniformization, i.e.\ the  diagram \eqref{diagramcomm} commutes. 
\end{proposition}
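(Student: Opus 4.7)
The strategy is to unpack both maps via the uniformization identifications of Theorem \ref{identification} and to see that each comes from the same underlying algebraic operation: the branching of level one integrable $\widehat{\sofrak}(2d+1)$-modules under the conformal embedding \eqref{eqn:so-embedding} induced by tensor product $W_r\otimes W_s \simeq W_d$. Concretely, I will realize all four corners of the diagram \eqref{diagramcomm} as spaces of invariants in integrable representations of loop algebras, and then check that the two composites in the square coincide with two factorizations of the same branching map.

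First I would record a uniformization of $\Mcal_{2r+1}\times\Mcal_{2s+1}$ that is compatible with the tensor product map to $\Mcal_{2d+1}$. Using Proposition \ref{uniformlyBLS}, write each twisted component via the cocycle $\zeta$ of \eqref{zeta} associated to the respective rank. The key compatibility to verify is that under the map $\Spin(2r+1)\times \Spin(2s+1)\to \Spin(2d+1)$ induced by tensor product of the underlying orthogonal spaces, the pair $(\zeta_r,\zeta_s)$ is sent, up to a holomorphic gauge transformation on $\Delta$ and a holomorphic gauge transformation on $C^\ast$, to the cocycle defining $\Mcal^-_{\Spin(2d+1)}$ (or to the trivial cocycle when at least one of the factors is in the $+$-component). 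This is a transition-function calculation, using that the matrix $\zeta_r\otimes\zeta_s$ acts on $W_r\otimes W_s$ with eigenvalues in $\{z^{\pm 2},z^{\pm 1},1\}$ and is conjugate (via a constant element in $\SO(2d+1)$ that lifts to $\Spin$) to a product of an element of $L^+\Spin(2d+1)$ with an appropriate copy of $\zeta_d$. Next, using Proposition \ref{pullback} and the fact that the Dynkin multi-index of the conformal embedding \eqref{eqn:so-embedding} is $(2s+1,2r+1)$, we get that the pullback of $\Pcal$ on $\Mcal_{2d+1}$ agrees with $\Pcal^{\otimes(2s+1)}\boxtimes\Pcal^{\otimes(2r+1)}$ under this tensor-product map, matching the line bundles used to define $SD$.

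Having set up the uniformization, each vertical arrow in \eqref{diagramcomm} becomes, by (the proof of) Theorem \ref{identification}, the identification of the space of sections with the subspace of linear forms on the appropriate integrable highest weight module that vanish on the image of the current algebra $\gfrak(\Xfrak)$. The top horizontal map, by construction (cf.\ Section \ref{ranklevelinjectivity}), is the dual of the map $\Hcal_{\vec\lambda}\otimes \Hcal_{\vec\mu}\to \Hcal_{\vec\Lambda}$ given by branching, followed by coinvariants. The bottom horizontal map, by the definition of $SD$, is the pullback of sections along the tensor-product map of stacks. Under the uniformization dictionary the pullback of sections is computed by restricting linear forms through the map of integrable modules induced by the conformal embedding, which is precisely the branching map. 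Thus both composites in \eqref{diagramcomm} are induced by the same map of integrable modules, proving commutativity.

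The principal technical obstacle is the first step: verifying that the tensor product map on $\Spin$-bundles intertwines the twisted uniformizations correctly, i.e.\ that the product of the $\pm$-cocycles $\zeta_r,\zeta_s$ represents the $\pm$-cocycle $\zeta_d$ in the appropriate component of $L\SO(2d+1)$, modulo $L_C\Spin(2d+1)$ on the left and $L^+\Spin(2d+1)$ on the right, and that the Pfaffian trivializations are compatible with this intertwining. Once this is checked (and it is a computation local to the distinguished point $p$, reducible to a statement in the Kac-Moody group about $\Ad(\zeta_r\otimes \zeta_s)$ versus $\Ad(\zeta_d)$ on $\widehat{\sofrak}(2d+1)$), the rest of the argument is a direct diagram chase in the category of integrable $\widehat{\sofrak}$-modules.
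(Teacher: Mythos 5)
Your overall strategy---uniformize all four corners of \eqref{diagramcomm} via Theorem \ref{identification} and the Beauville--Laszlo/Kumar--Narasimhan--Ramanathan dictionary, recognize the bottom arrow (pullback of sections under the tensor-product map of stacks) as the restriction of linear forms through the map of integrable modules, and observe that this is exactly the branching map defining the top arrow---is the right one, and it is the same outline the paper gestures at by citing Proposition 5.2 of Belkale's earlier work.

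There is, however, a concrete error in the step that you yourself isolate as the ``principal technical obstacle.'' You assert that $(\zeta_r,\zeta_s)$ (both twisted) maps, up to gauge, to the cocycle defining $\Mcal^-_{\Spin(2d+1)}$, and that the image is the trivial cocycle as soon as one factor is untwisted. This is backwards. The commutative diagram \eqref{spin-diagram} shows that the induced map on the relevant $\ZBbb/2$'s (equivalently on $\pi_1$ of the $\SO$-groups) is \emph{multiplication}, so the twisting is additive: $(+,+)\mapsto +$, $(+,-)\mapsto -$, $(-,+)\mapsto -$, and $(-,-)\mapsto +$. You can also see this directly from your own eigenvalue count: $\zeta_r\otimes\zeta_s$ is conjugate to $\diag(z^2, z,\dots,z,1,\dots,1,z^{-1},\dots,z^{-1},z^{-2})$ with $z$ occurring $2r+2s-2$ times, and the class of this loop in $\pi_1(\SO(2d+1))\simeq\ZBbb/2$ is the parity of $2+(2r+2s-2)=2r+2s$, which is \emph{even}. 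Hence $\zeta_r\otimes\zeta_s$ lies in the \emph{trivial} component of $L\SO(2d+1)$ (and so cannot lie in the double coset of $\zeta_d$, contrary to your conjugacy claim). This sign matters: it is exactly what makes the geometric component matching line up with the branching rules, where $(\omega_0,\omega_0)\in B(\omega_0)$, $(\omega_0,(2r+1)\omega_1)$ and $((2s+1)\omega_1,\omega_0)$ lie in $B(\omega_1)$, and $((2s+1)\omega_1,(2r+1)\omega_1)=(\sigma\omega_0,\sigma\omega_0)\in B(\sigma^2\omega_0)=B(\omega_0)$. With the sign corrected, the rest of your argument (reducing commutativity of \eqref{diagramcomm} to the statement that pullback of sections under uniformization is dual to the branching map, as in Belkale's $\SL$ case) goes through.
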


\begin{proof}
The proof follows from the uniformization theorem of the moduli stacks and is similar to the proof of  \cite[Prop.\ 5.2]{Belkale:09}. We omit the details.
\end{proof}
Recall the notation $B(\Lambda)$ from Section \ref{branching}.

\begin{question}\label{elliptic}Let $E$ be any elliptic curve and $\mathfrak{X}$  associated to $E$ with a formal neighborhood at one marked point. Let $\lambda \in P_{2s+1}(\SO(2r+1))$ (resp.\  $\mu \in P_{2r+1}(\SO(2s+1))$ and $\Lambda \in P_{2d+1}(\SO(2r+1))$ such that $\Lambda$ is either $\omega_0$ or $\omega_1$, and $(\lambda,\mu)\in B(\Lambda)$. Is the following map of conformal blocks  injective:
\begin{eqnarray*}
\mathcal{V}_{ \lambda}(\Xfrak, \mathfrak{so}(2r+1),2s+1)\rightarrow \mathcal{V}^*_{\mu}(\Xfrak, \mathfrak{so}(2s+1),2r+1)\otimes \mathcal{V}_{\Lambda}(\Xfrak, \mathfrak{so}(2d+1),1)\\
\hspace{1cm} \oplus  \mathcal{V}^*_{\sigma(\mu)}(\Xfrak, \mathfrak{so}(2s+1),2r+1)\otimes \mathcal{V}_{\sigma(\Lambda)}(\Xfrak, \mathfrak{so}(2d+1),1)\ ?
\end{eqnarray*}
\end{question}
\begin{proposition} \label{prop:equivalence}
An affirmative answer to
Question \ref{elliptic} implies one for Question \ref{spinconjecture}.
\end{proposition}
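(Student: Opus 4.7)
By Proposition \ref{equality}, Question \ref{spinconjecture} is equivalent to the injectivity of the rank-level duality map
\[
\alpha:\widetilde{\mathcal{V}}_{\omega_0}(\mathfrak{X},\mathfrak{so}(2r+1),2s+1)\otimes \widetilde{\mathcal{V}}_{\omega_0}(\mathfrak{X},\mathfrak{so}(2s+1),2r+1)\lra \widetilde{\mathcal{V}}_{\omega_0}(\mathfrak{X},\mathfrak{so}(2d+1),1)
\]
for $\mathfrak{X}$ the datum of a smooth projective curve of genus $g$ with one marked point and a chosen formal coordinate. The plan is to argue by induction on $g$: I would degenerate the curve in families of stable curves and use the compatibility of the rank-level duality map with sewing (Proposition \ref{keydegen}) together with Lemma \ref{geometricinput} to reduce injectivity in higher genus to injectivity in the elliptic case (which is the content of Question \ref{elliptic}) combined with injectivity of rank-level duality on $\mathbb{P}^1$ with various weights (Theorem \ref{mainjune16} together with the main result of \cite{Mukhopadhyay:12}).

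For the inductive step in genus, I would choose a smoothing $\pi:\mathcal{X}\to \spec\mathbb{C}\llrrbracket{t}$ whose generic fiber is the smooth genus $g$ curve with one marked point and whose special fiber is an irreducible nodal curve of geometric genus $g-1$. By Proposition \ref{keydegen}, the map $\alpha$ on the generic fiber factors, under sewing, as a direct sum over admissible triples $(\lambda,\mu,\Lambda)$ attached at the two preimages of the node, and Lemma \ref{geometricinput} reduces the injectivity of $\alpha$ to the simultaneous injectivity of each summand on the genus $g-1$ normalization with three marked points. Iterating this $g-1$ times brings us to a smooth elliptic curve with $2g-1$ marked points, carrying a weight in $\{\omega_0,(2s+1)\omega_1\}$ at the original point and admissible weights at the $2g-2$ points introduced by sewing. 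A second round of degenerations, now pinching separating cycles that isolate the original marked point on the elliptic component and collect all other marked points on a rational component, further reduces injectivity to the conjunction of: (i) rank-level duality on an elliptic curve with two marked points (original weight plus node weight), and (ii) rank-level duality on $\mathbb{P}^1$ with one more marked point than before. Propagation of vacua identifies (i) with the one-pointed elliptic rank-level duality of Question \ref{elliptic} when the original weight is $\omega_0$ (and via the diagram automorphism $\sigma$ when it is $(2s+1)\omega_1$), while (ii) reduces by further factorization on $\mathbb{P}^1$ to three-pointed cases handled by Theorem \ref{mainjune16} and \cite{Mukhopadhyay:12}.

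The hard part is the bookkeeping required for Lemma \ref{geometricinput} at each sewing step: one must verify that the factorization of the rank-level duality map is ``block-diagonal'' with respect to an indexing of admissible triples, i.e.\ that the off-diagonal maps $f_{i,j}$ vanish. This multiplicity-one structure follows from the multiplicity-free branching of the level-one $\widehat{\mathfrak{so}}(2d+1)$-modules into $\widehat{\mathfrak{so}}(2r+1)\oplus \widehat{\mathfrak{so}}(2s+1)$-modules (guaranteed by the conformal embedding \eqref{eqn:so-embedding}) and the standard compatibility of sewing with the TUY connection, in a manner analogous to the arguments of \cite{BoysalPauly:10,Mukhopadhyay:15}. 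The only new subtlety relative to those arguments is the presence of spin weights and the role of the diagram automorphism $\sigma$; this is addressed by the explicit highest-weight vector computations of Section \ref{sec:highest-weight} and the branching rules of Section \ref{branching}, which together ensure that the summand structure of $\widetilde{\mathcal{V}}_{\omega_0}$ is preserved under all sewing operations and that the $\sigma$-conjugate summand appearing in the target of Question \ref{elliptic} matches the expected contribution from the $(2s+1)\omega_1$-part of the source.
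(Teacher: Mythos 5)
Your overall strategy (reduce to conformal blocks via Proposition \ref{equality}, then degenerate and apply Proposition \ref{keydegen} and Lemma \ref{geometricinput}) is the right framework, but the specific sequence of degenerations you propose breaks down, and the failure is precisely the one the paper flags in the remark immediately following this proposition.

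The gap lies in your use of iterated \emph{non-separating} degenerations. When you pinch a non-separating cycle, the weight pair $(\lambda,\lambda^\dagger)$ attached at the two preimages of the node runs over \emph{all} of $P_{2s+1}(\sofrak(2r+1))$, including the spin weights of the form $Y+\omega_r$. After $g-1$ iterations your elliptic curve therefore carries $2g-2$ sewing-points whose weights may be spin. This is fatal in two ways. First, the block-diagonality hypothesis in Lemma \ref{geometricinput} is not implied by multiplicity-free branching, as you claim; what is needed is that for each target pair $(\mu,\Lambda)$ there is a \emph{unique} source $\lambda$ with $(\lambda,\mu)\in B(\Lambda)$, and this fails exactly for spin weights. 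Concretely, from Section \ref{branching}, for $Y\in \Ycal_{r,s-1}$ both $(Y+\omega_r,\,Y^\ast+\omega_s)$ and $(\sigma(Y+\omega_r),\,Y^\ast+\omega_s)$ lie in $B(\omega_d)$, so for the same $\mu=Y^\ast+\omega_s$ and $\Lambda=\omega_d$ there are two admissible $\lambda$'s and the factorization is genuinely off-diagonal. Second, your claimed reduction to rank-level duality on $\PBbb^1$ would have to accommodate spin weights, but Section \ref{failure} shows the dimensions need not even agree there, and Theorem \ref{mainjune16} only yields injectivity for a restricted class of spin configurations, not the isomorphisms your induction would need. You also cannot invoke propagation of vacua to reduce the two-pointed elliptic block to Question \ref{elliptic}: the node weight is arbitrary, not $\omega_0$, and Question \ref{elliptic} stipulates $\lambda\in P_{2s+1}(\SO(2r+1))$, i.e.\ an $\SO$-weight, which your iterated degeneration does not guarantee.

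The paper's proof avoids all of this by degenerating \emph{in one step} to the nodal curve $C_0$ with $g$ elliptic tails attached to a single $\PBbb^1$. The crucial point, which your argument misses, is that the one-pointed elliptic conformal block $\mathcal{V}_{\lambda}(E,\sofrak(2r+1),2s+1)$ vanishes whenever $\lambda$ is a spin weight (since $\lambda$ would have to lie in the fusion of $\nu\otimes\nu^\dagger$ for some $\nu$, which lands only in $\SO$-weights). Consequently the factorization of $\mathcal{V}_{\omega_0}(C_0,\dots)$ is indexed only by $g$-tuples in $P_{2s+1}(\SO(2r+1))$, spin weights are quarantined to the elliptic tails (where they are handled by Question \ref{elliptic}), the $\PBbb^1$ component sees only $\SO$-weights (handled by \cite{Mukhopadhyay:12}), and block-diagonality holds because for $\SO$-weights the branching rule does give a unique $\lambda$ for each $(\mu,\Lambda)$.
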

\begin{proof}By Proposition \ref{equality}, it is enough to prove it for conformal blocks. Let $C_0$ be a nodal curve with $g$ elliptic tails attached to a $\mathbb{P}^1$. Consider a one parameter family of $\mathcal{C} \rightarrow \operatorname{Spec}(\mathbb{C}\llrrbracket{t})$ such that the generic fiber is smooth and the special fiber is $C_0$. The normalization of $C_0$ is a  $\mathbb{P}^1$ with $g$ marked points, and $g$-elliptic curves each with one marked point. 
By factorization (cf.\ Section \ref{factorizationlemma}), it follows that $\mathcal{V}_{\omega_0}(C_0,\mathfrak{so}(2r+1),2s+1)$ splits up as a direct sum where each component looks like $$\big(\bigotimes_{i=1}^g \mathcal{V}_{\lambda_i}(E,\mathfrak{so}(2r+1),2s+1)\big)\otimes \mathcal{V}_{\vec{\lambda}}(\mathbb{P}^1,\mathfrak{so}(2d+1),1)\ ,$$
and the direct sum is indexed by $g$-tuples $\vec{\lambda}=(\lambda_1,\ldots,\lambda_g)$, 
$\lambda_i\in P_{2s+1}(\SO(2r+1)$. These weights  
 have the special property  that given $\mu$ and $\Lambda$, there exists at most one $\lambda$ such that $(\lambda,\mu)\in B(\Lambda)$ (cf.\ Section \ref{branching}). This guarantees that the conditions in Lemma \ref{geometricinput} are satisfied, and the map in Question \ref{spinconjecture} splits as a direct sum (up to a nonnegative power of the parameter $t$) indexed by the set of $g$-tuple of points in $P_{2s+1}(\SO(2r+1))$.

Now  \cite[Prop.\ 9.11]{Mukhopadhyay:12} and the compatibility of factorization/sewing with rank-level duality (cf.\ Sections \ref{sec:conformal-block} and \ref{factorizationlemma}, and also \cite{TUY:89}), imply that Question \ref{spinconjecture} holds in the affirmative if this is true for Question \ref{elliptic}  and if there is a rank-level duality isomorphism on $\mathbb{P}^1$ with $g$ marked points and weights coming from $P_{2s+1}(\SO(2r+1))$. The rank-level duality isomorphism on $\mathbb{P}^1$ with $n$-marked points and weights coming from $P_{2s+1}(\SO(2r+1))$ has been proved in \cite{Mukhopadhyay:12}. Hence, ``yes'' in  Question \ref{elliptic}  implies ``yes'' in  Question \ref{spinconjecture}.
\end{proof}




\begin{remark} By the same strategy, it is natural to continue by degenerating the elliptic curves and applying factorization to reduce to the case of $\mathbb{P}^1$. However, serious issues occur due to the appearance of spin weights, which were avoided before. The problems are twofold.  They are:
\begin{enumerate}
\item The property that given $\Lambda$ and $\mu$, there exists an unique weights $\lambda$ such that $(\lambda,\mu) \in B(\Lambda)$ fails. The failure of this property means the factorization of rank-level duality maps falls into nondiagonal blocks, so we can not do induction.
\item Rank-level duality isomorphisms on $\mathbb{P}^1$ with spin weights fail to hold. This was explained in Section \ref{failure}. 
\end{enumerate}

\end{remark}

\section{The case of elliptic curves}
In this section, we study the following maps and investigate whether they are injective. An affirmative answer to both would give an affirmative answer to the strange duality question for elliptic curves. However, we shall see that this is in fact not the case (Proposition \ref{sde}). Let $E$ be an elliptic curve with one marked point and a choice of formal coordinate.  The maps are:

\begin{align}
\begin{split} \label{statement1} 
&\mathcal{V}_{\omega_0}(E,\mathfrak{so}(2r+1),2s+1)\lra 
\big( \mathcal{V}^*_{\omega_0}(E,\mathfrak{so}(2s+1),2r+1)\otimes \mathcal{V}_{\omega_0}(E,\mathfrak{so}(2d+1),1)   \\
&\qquad \qquad \quad  \quad \quad  \quad\oplus \mathcal{V}^*_{(2r+1)\omega_1}(E,\mathfrak{so}(2s+1),2r+1)\otimes \mathcal{V}_{\omega_1}(E,\mathfrak{so}(2d+1),1)\big) \ ;
\end{split}
 \end{align}
 
\begin{align}
\begin{split} \label{statement2}
&\mathcal{V}_{(2s+1)\omega_1}(E,\mathfrak{so}(2r+1),2s+1)\lra \big(  \mathcal{V}^*_{\omega_0}(E,\mathfrak{so}(2s+1),2r+1)\otimes \mathcal{V}_{\omega_1}(E,\mathfrak{so}(2d+1),1) \\
&\qquad\qquad \qquad \qquad \quad  \quad \quad  \quad\oplus \mathcal{V}_{(2r+1)\omega_1}^*(E,\mathfrak{so}(2s+1),2r+1)\otimes \mathcal{V}_{\omega_0}(E,\mathfrak{so}(2d+1),1) \big)\ .
\end{split}
\end{align}

\subsection{Factorization for elliptic curves}
We will use factorization to further reduce to the case of $\mathbb{P}^1$ with three marked points. Let us first focus on \eqref{statement2}. By definition of the diagram automorphism $\sigma$, we know that $(2r+1)\omega_1=\sigma(\omega_0)$. Hence, by factorization (cf.\ Section \ref{factorizationlemma}):
\begin{align*}
\dim_{\mathbb{C}}\mathcal{V}_{(2r+1)\omega_1}&(E,\mathfrak{so}(2s+1),2r+1)\\
&=\sum_{\lambda \in P_{2s+1}(\mathfrak{so}(2r+1))}\dim_{\mathbb{C}}\mathcal{V}_{(2r+1)\omega_1,\lambda,\lambda}(\mathbb{P}^1,\mathfrak{so}(2r+1),2s+1) \\ 
&=\sum_{\lambda \in P_{2s+1}(\mathfrak{so}(2r+1))}\dim_{\mathbb{C}}\mathcal{V}_{\omega_0,\sigma(\lambda),\lambda}({\mathbb{P}^1}, \mathfrak{so}(2r+1),2s+1) \qquad \mbox{(cf.\ \cite{FS})}\\
&= | \{ \lambda \in P_{2s+1}(\mathfrak{so}(2r+1))\mid \sigma(\lambda)=\lambda\}|\\
&= |\mathcal{Y}_{r,s}\backslash \mathcal{Y}_{r,s-1}|\ .
\end{align*}
By the above calculation, the next result  proves injectivity of \eqref{statement2}:
\begin{proposition}
The rank-level duality map between the following one dimensional conformal blocks is an isomorphism. 
\begin{align*}
\mathcal{V}_{(2s+1)\omega_1, \lambda, \lambda}(\PBbb^1,\sofrak(2r+1),2s+1)&\longrightarrow \\
 \mathcal{V}^*_{\omega_0,\lambda^*,\lambda^*}&(\PBbb^1,\sofrak(2s+1),2r+1)\otimes \mathcal{V}_{\omega_1,\omega_d,\omega_d}(\PBbb^1,\sofrak(2d+1),1)\ ,
\end{align*}
 where $\lambda= Y+\omega_r$ and $Y\in \mathcal{Y}_{r,s}\backslash \mathcal{Y}_{r,s-1}$ and $\lambda^*=Y^*+\omega_s$ and $Y^*\in\mathcal{Y}_{s,r}$ obtained by taking the transpose of $Y$ and taking the complement in an $r\times s$ box.
\end{proposition}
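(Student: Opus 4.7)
The plan is, first, to reduce the claim to non-vanishing of the rank-level duality pairing by verifying that both conformal blocks are one-dimensional. For the source, applying the diagram automorphism $\sigma$ to the first two weights (Lemma \ref{lem:diagram}) gives $\mathcal{V}_{(2s+1)\omega_1,\lambda,\lambda}\simeq \mathcal{V}_{\omega_0,\sigma(\lambda),\lambda}$, and propagation of vacua then identifies this with the two-point block $\mathcal{V}_{\sigma(\lambda),\lambda}(\PBbb^1,\sofrak(2r+1),2s+1)$, whose dimension equals $[\sigma(\lambda)=\lambda^\dagger]$. Self-duality of $\sofrak$-representations together with the hypothesis $Y\in\mathcal{Y}_{r,s}\setminus\mathcal{Y}_{r,s-1}$ --- which is precisely the condition for $\lambda=Y+\omega_r$ to be $\sigma$-fixed in the orbit description of \cite{OxburyWilson:96} --- gives dimension one. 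One-dimensionality of $\mathcal{V}^*_{\omega_0,\lambda^*,\lambda^*}(\PBbb^1,\sofrak(2s+1),2r+1)$ follows from the same propagation argument (the second weight $\lambda^*$ is always self-dual), and $\dim \mathcal{V}_{\omega_1,\omega_d,\omega_d}(\PBbb^1,\sofrak(2d+1),1)=1$ is established in Section \ref{case1dd}.

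Next, to prove non-vanishing I would adapt the strategy used for the minimal cases in the proof of Theorem \ref{braidreducibility}. The Kac--Moody/Fock-space realization of the basic level-one $\widehat\sofrak(2d+1)$-modules from Section \ref{sec:levelone} provides explicit highest-weight vectors for the relevant branching components. By the $r\leftrightarrow s$-symmetric version of Proposition \ref{kacmoody2}, the component $\mathcal{H}_{(2s+1)\omega_1}\otimes\mathcal{H}_{\omega_0}\subset \mathcal{H}_{\omega_1}$ has highest-weight vector $v_1=\bigwedge_{p=-s}^{s}\phi^{1,p}(-1/2)\in \bigwedge W_d^{\ZBbb+1/2,-}$. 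By Proposition \ref{kacmoody3}, the component $\mathcal{H}_{\lambda}\otimes \mathcal{H}_{\lambda^*}\subset \mathcal{H}_{\omega_d}$ has highest-weight vector $v_2=v_3=\bigwedge_{\widetilde Y_{j,p}=\blacksquare}\phi_{j,p}\in \bigwedge W_d^{\ZBbb,-}$. Evaluating the rank-level duality map on these highest-weight vectors amounts to computing the pairing $\langle \Psi|v_1\otimes v_2\otimes v_3\rangle$, where $\langle\Psi|$ is the generator of $\mathcal{V}^*_{\omega_1,\omega_d,\omega_d}$ described in Section \ref{case1dd} via Clifford multiplication and the invariant bilinear form $B$.

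The pairing itself is computed following the pattern of the minimal cases at the end of the proof of Theorem \ref{braidreducibility}. Place the three marked points at $(0,1,\infty)$ with the standard coordinates and apply gauge symmetry with respect to rational functions such as $f(z)=1/z$ to transport each $(-1/2)$-mode operator $\phi^{1,p}(-1/2)$ in $v_1$ across the configuration. By the isotropy of the basis vectors and the explicit form of the affine action on $v_2$ and $v_3$, most of the resulting terms vanish in a fashion analogous to Lemma \ref{extravanishing}, and the surviving contribution reduces to a product of Clifford contractions governed by the formula $\langle\widetilde\Psi|a\otimes v\otimes w^*\rangle=B(m(a\otimes v),w^*)$ of Section \ref{case1dd}. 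The condition that $Y$ has exactly $s$ columns ensures a full match between the operators $\phi^{1,p}(-1/2)$ appearing in $v_1$ and the black-box positions $\widetilde Y_{1,p}=\blacksquare$ in the first row of the diagram, so that after contraction the final expression is a nonzero multiple of $(2s+1)!$.

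The main obstacle I anticipate is the sign-and-multiplicity bookkeeping during the gauge-symmetry transport, parallel to the cancellation arguments encapsulated by Proposition \ref{thumri} and Lemma \ref{extravanishing} in the minimal-case proofs. Making this precise will require a careful combinatorial analysis of the black-box pattern of $\widetilde Y$ and of the resulting normal-ordered products in $\widehat\sofrak(2d+1)$, but once these are in place the nonvanishing of $\langle \Psi | v_1\otimes v_2\otimes v_3\rangle$ follows as in the corresponding minimal cases of Theorem \ref{braidreducibility}, yielding the desired isomorphism between the two one-dimensional conformal blocks.
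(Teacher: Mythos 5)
Your strategy---verify one-dimensionality, then show the pairing is nonzero by an explicit Fock-space computation---is a legitimate line of attack, but it is genuinely different from the paper's, and as written it has a real gap. The paper deduces the proposition in one line from Theorem \ref{braidreducibility}: since $Y\in\mathcal{Y}_{r,s}\setminus\mathcal{Y}_{r,s-1}$ forces $Y^*\in\mathcal{Y}_{s,r-1}$, the $\sofrak(2s+1)$-side weights $(\omega_0,\lambda^*,\lambda^*)$ fall squarely within the hypotheses of Theorem \ref{braidreducibility} with the roles of $r$ and $s$ exchanged; that theorem gives injectivity, and one-dimensionality of both blocks then promotes injectivity to an isomorphism. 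You never invoke Theorem \ref{braidreducibility}, and in attempting to redo the computation from scratch, you end up having to re-establish precisely the machinery (reduction to minimal cases, gauge-symmetry transport, Lemma \ref{extravanishing}-type vanishing) that went into that theorem.

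The gap in your argument is that the nonvanishing of $\langle\Psi|v_1\otimes v_2\otimes v_3\rangle$ is asserted but not proved. You explicitly defer the ``sign-and-multiplicity bookkeeping during the gauge-symmetry transport'' to a ``careful combinatorial analysis'' and then conclude that ``once these are in place'' nonvanishing follows. But that bookkeeping \emph{is} the proof; without it you have only a plausibility argument. Moreover there is a concrete technical error in your setup: you take $v_2=v_3=\bigwedge_{\widetilde Y_{j,p}=\blacksquare}\phi_{j,p}$, but for the pairing $\langle\widetilde\Psi\,|\,a\otimes v\otimes w^*\rangle=B(m(a\otimes v),w^*)$ from Section \ref{case1dd} to be nonzero, the vector attached to the third point must lie in the opposite Borel, i.e.\ $v_3=\bigwedge_{\widetilde Y_{j,p}=\blacksquare}\phi^{j,p}$ (raised indices). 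As stated, $B$ pairs $\bigwedge W_d^{-}$ against $\bigwedge W_d^{+}$, so your choice of $v_3$ would make the pairing degenerate from the start. This is exactly the pattern ($v$ vs.\ $v^{opp}$) used throughout the minimal-case computations in the proof of Theorem \ref{braidreducibility}, and missing it suggests the rest of the bookkeeping you defer is not straightforward. The much shorter route---observe that $Y^*\in\mathcal{Y}_{s,r-1}$ and apply Theorem \ref{braidreducibility} with $r\leftrightarrow s$---avoids all of this.
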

\begin{proof}
This is a consequence of Theorem \ref{braidreducibility}.
\end{proof}
It remains to investigate the injectivity of \eqref{statement1}. The following appear in the factorizations of the conformal block $\mathcal{V}_{\omega_0}(E, \mathfrak{so}(2r+1),2s+1)$:
\begin{enumerate}
\item $\mathcal{V}_{\lambda,\lambda}(\mathbb{P}^1,\mathfrak{so}(2r+1),2s+1)$ for $\lambda \in \mathcal{Y}_{r,s}$.
\item  $\mathcal{V}_{\lambda,\lambda}(\mathbb{P}^1,\mathfrak{so}(2r+1),2s+1)$ and $\mathcal{V}_{\sigma(\lambda),\sigma(\lambda)}(\mathbb{P}^1,\mathfrak{so}(2r+1),2s+1)$ for $\lambda \in \mathcal{Y}_{r,s-1}+\omega_r$.
\item $\mathcal{V}_{\lambda,\lambda}(\mathbb{P}^1,\mathfrak{so}(2r+1),2s+1)$ for $\lambda \in \mathcal{Y}_{r,s}\backslash \mathcal{Y}_{r,s-1}$.
\end{enumerate}
Thus we need for check injectivity for each of the factors. For the factors of the form in (1), $\lambda$ is a weight of $\SO(2r+1)$, and we only need that the rank-level duality map is an isomorphism for $\lambda \in \mathcal{Y}_{r,s}$:
$$\mathcal{V}_{\lambda,\lambda}(\mathbb{P}^1,\mathfrak{so}(2r+1),2s+1)\rightarrow \mathcal{V}_{\lambda^T,\lambda^T}(\mathbb{P}^1,\mathfrak{so}(2r+1),2s+1)^*\otimes \mathcal{V}_{\omega_{\epsilon}, \omega_{\epsilon}}(\mathbb{P}^1,\mathfrak{so}(2d+1),1)\ ,$$ where $\epsilon$ is zero or one depending on the parity of $|\lambda|$. This is done in \cite{Mukhopadhyay:12}. The argument for $\lambda \in \mathcal{Y}_{r,s}\backslash \mathcal{Y}_{r,s-1}$ follows from Theorem \ref{mainjune16}. Thus, we are only left with the case when $\lambda \in \mathcal{Y}_{r,s-1}+\omega_r$.
For every $\lambda=Y+\omega_r$,  $Y \in \mathcal{Y}_{r,s-1}$, consider the map:
\begin{align}
\begin{split}\label{sdelliptic}
&\mathcal{V}_{\omega_0, \lambda, \lambda}(\mathbb{P}^1,\mathfrak{so}(2r+1),2s+1)\oplus \mathcal{V}_{\omega_0, \sigma(\lambda), \sigma(\lambda)}(\mathbb{P}^1,\mathfrak{so}(2r+1),2s+1) \\
 &\qquad  \longrightarrow  \mathcal{V}^*_{\omega_0,\lambda^*,\lambda^*}(\mathbb{P}^1,\mathfrak{so}(2s+1),2r+1)\otimes \mathcal{V}_{\omega_0,\omega_d,\omega_d}(\mathbb{P}^1,\mathfrak{so}(2d+1),1)\\ 
 &\qquad\qquad\oplus\ \mathcal{V}^*_{(2r+1)\omega_1,\lambda^*,\lambda^*}(\mathbb{P}^1,\mathfrak{so}(2s+1),2r+1)\otimes \mathcal{V}_{\omega_1,\omega_d,\omega_d}(\mathbb{P}^1, \mathfrak{so}(2d+1),1)\ .
  \end{split}
\end{align}
The following is the main result of this section.
\begin{proposition}\label{sde}
The rank-level duality map  in \eqref{sdelliptic} is not injective. 
\end{proposition}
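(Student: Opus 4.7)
The plan is to use the diagram automorphism $\sigma$ together with the observation that the relevant target weights are $\sigma$-fixed, in order to show that the image of \eqref{sdelliptic} is one-dimensional while the source is two-dimensional.

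First I would confirm the dimensions. Both source summands are one-dimensional: by propagation of vacua, $\mathcal{V}_{\omega_0,\lambda,\lambda}\cong[V_\lambda\otimes V_\lambda]^{\sofrak(2r+1),*}$, which is one-dimensional since $V_\lambda$ is self-dual and the level $2s+1$ is large enough that no fusion truncation occurs for $Y\in\mathcal{Y}_{r,s-1}$. The analogous statement holds for $\sigma(\lambda)$. In the target, the first summand has dimension one by the same reasoning. For the second summand the key input is the identity $\sigma(\lambda^*)=\lambda^*$. Writing $\lambda^*=Y^*+\omega_s$ with $Y^*=(Y^T)^c$ of row lengths $z_1\geq\cdots\geq z_s$, the $\sigma$-fixed condition from \eqref{daiso} telescopes to $2z_1+1=2r+1$, i.e.\ $z_1=r$; this is automatic because $Y\in\mathcal{Y}_{r,s-1}$ forces $y_1\leq s-1$ and hence forces $Y^*$ to fill the first row of the $s\times r$ box. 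By Fuchs-Schweigert invariance under even $\sigma$-twists,
\[
\dim\mathcal{V}_{(2r+1)\omega_1,\lambda^*,\lambda^*}\;=\;\dim\mathcal{V}_{\omega_0,\sigma(\lambda^*),\lambda^*}\;=\;\dim\mathcal{V}_{\omega_0,\lambda^*,\lambda^*}\;=\;1,
\]
so the target is two-dimensional. The analogous telescoping computation shows $\sigma(\lambda)\neq\lambda$ (since $2y_1+1<2s+1$), so the two source summands are genuinely distinct.

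Next I would use diagram-automorphism equivariance. Since the affine branching satisfies $(\sigma\lambda,\sigma\mu)\in B(\sigma\Lambda)$ whenever $(\lambda,\mu)\in B(\Lambda)$, the map $\alpha$ of \eqref{sdelliptic} intertwines the simultaneous $\sigma$-twist at slots $2$ and $3$ on all three sides. On the $\sofrak(2r+1)$ side this permutes the two source summands, while on the $\sofrak(2s+1)$ and $\sofrak(2d+1)$ sides the slot weights $\lambda^*$ and $\omega_d$ are $\sigma$-fixed (the latter because the affine diagram automorphism of $B_d^{(1)}$ swaps only $\omega_0$ and $\omega_1$), so the twist acts as a scalar on each one-dimensional target summand. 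Because both target scalars depend only on the action of $\sigma\otimes\sigma$ at slots $2,3$, whose weights $(\lambda^*,\lambda^*)$ and $(\omega_d,\omega_d)$ agree in both target summands, they coincide; combining with the equivariance relations then forces the $2\times 2$ matrix of $\alpha$ to have vanishing determinant, so $\alpha$ has nontrivial kernel.

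The hard part will be justifying rigorously that the Fuchs-Schweigert scalar on a one-dimensional conformal block depends only on the $\sigma$-twisted slots, independent of the slot-$1$ weight; a priori it could depend on the invariant coinvariant pairing at slot $1$. Should that subtlety prove thorny, I would fall back on a direct Fock-space computation analogous to the minimal-case arguments in the proof of Theorem \ref{braidreducibility}: realize $\Phi_\lambda$ as $\bigwedge_{\widetilde Y_{j,p}=\blacksquare}\phi_{j,p}$ via Proposition \ref{kacmoody3}, write $\Phi_{\sigma(\lambda)}$ in Kac-Moody form via Corollary \ref{kacmoody4} as $\prod_p B^{1,-p}_{0,0}(-1)$ acting on an adjacent wedge, and use gauge symmetry at the three marked points to reduce $\alpha(\Phi_\lambda)$ and $\alpha(\Phi_{\sigma(\lambda)})$ to identical Clifford-pairing expressions in $\bigwedge W_d^{\ZBbb,-}$, directly exhibiting the one-dimensional kernel.
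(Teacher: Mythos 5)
Your fall-back plan --- realizing the highest-weight vectors explicitly in the Fock-space picture via Propositions \ref{kacmoody3} and \ref{kacmoody4}, moving operators between slots via gauge symmetry, and comparing the two columns of the $2\times 2$ matrix directly using the Clifford realizations of $\langle\Psi|$ and $\langle\widetilde\Psi|$ from Sections \ref{case0dd} and \ref{case1dd} --- is essentially identical to the paper's proof. The paper writes out the four matrix entries explicitly and observes the two identities $\langle\widetilde\Psi|\phi^{1,0}(-\tfrac12)\otimes\phi_{1,0}\wedge v_\lambda\otimes v^\lambda\rangle=\langle\Psi|1\otimes v_\lambda\otimes v^\lambda\rangle$ and its $\bar\lambda$ analogue, which make the determinant vanish. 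Your dimension counts and the verification that $\sigma(\lambda^\ast)=\lambda^\ast$ while $\sigma(\lambda)\neq\lambda$ are also correct.

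Your primary, equivariance-based argument, however, has a gap that is distinct from and more serious than the one you flagged. You assert that "the map $\alpha$ of \eqref{sdelliptic} intertwines the simultaneous $\sigma$-twist at slots $2$ and $3$ on all three sides" and justify this purely at the level of branching rules for weights. But the rank-level map is built from a fixed embedding of modules $\Hcal_{\lambda}(\sofrak(2r+1))\otimes\Hcal_{\lambda^\ast}(\sofrak(2s+1))\hookrightarrow\Hcal_{\omega_d}(\sofrak(2d+1))$, and the diagram automorphisms of the two subalgebras and of the big algebra are implemented by $\mathrm{Ad}(\zeta)$ for \emph{different} loop-group elements (cf.\ \eqref{zeta} and Lemma \ref{adjoint}): $\zeta_{2r+1}\otimes I_{2s+1}$ is not the element $\zeta_{2d+1}=\mathrm{diag}(z,1,\dots,1,z^{-1})\in L\SO(2d+1)$ implementing $\sigma$ on $\widehat\sofrak(2d+1)$, and it is not clear a priori that the induced automorphism of $\widehat\sofrak(2d+1)$ acts on $\Hcal_{\omega_d}$ in a way that restricts to the subalgebra twists you need. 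Without establishing this compatibility, the intertwining relation on which your whole diagonalization/scalar argument rests is not available, and the subsequent deduction that $c_1^2=c_2^2=1$, $A_{12}=c_1A_{11}$, etc., does not follow. The "hard part" you acknowledge --- whether the scalar depends only on the twisted slots --- sits downstream of this and would only become relevant once the intertwining itself is proved; as it stands, both need work. This is precisely the kind of pitfall that seems to have pushed the paper's authors toward the direct Clifford-algebra computation rather than an abstract simple-current argument.
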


\begin{proof}[Proof of Theorem \ref{thm:SD-false}]
By Propositions \ref{prop:equivalence} and \ref{sde}, it follows that the answer to Question \ref{spinconjecture} is negative. Then Proposition \ref{prop:equiv} completes the proof.
\end{proof}

\begin{remark}
It is easy to see that the dimensions of both the source and  target of \eqref{sdelliptic} is two. Since there are maps between all  components that appear in map Question \ref{elliptic}, it follows from Theorem \ref{braidreducibility} that all entries of the $(2\times 2)$-matrix are nonzero. The proof of Proposition \ref{sde} is broken up to into several steps. We will fix an explicit basis to compute the matrix of the map  \eqref{sdelliptic}  and show that the determinant vanishes. 
\end{remark}

\subsection{Tensor decompositions}
We digress to give an explicit expression for highest weight vectors that is compatible with branching and tensor products associated to the marked points.  
As above, let $\Xfrak$ refer to the data of a curve $C$ with marked points and a choice of local coordinates.
To make the notation more transparent, let us 
denote the inclusion of an abstract highest weight $\widehat\sofrak(2r+1)\oplus \widehat\sofrak(2s+1)$ module  appearing in the branching of 
a highest weight
 $\widehat\sofrak(2d+1)$ module  by
$$
\beta^{\lambda \mu}_\Lambda: \Hcal_\lambda(\sofrak(2r+1))\otimes \Hcal_{\mu}(\sofrak(2s+1))\hookrightarrow\Hcal_{\Lambda}(\sofrak(2d+1))\ .
$$
For an element $v_1\otimes \cdots\otimes v_n \in \Hcal_{\lambda_1}(\gfrak)\otimes\cdots\otimes\Hcal_{\lambda_n}(\gfrak)$, let $[v_1\otimes \cdots \otimes v_n]$ denote the vector in the quotient space of dual conformal blocks, e.g.\
$$[v_1\otimes \cdots \otimes v_n]\in \Vcal_{\vec\lambda}(\mathfrak{X},\sofrak(2r+1), 2s+1) =\Hcal_{\vec\lambda}(\sofrak(2s+1))/\gfrak(\Xfrak)\Hcal_{\vec\lambda}(\sofrak(2s+1))\ ,
$$ where $\vec{\lambda}=(\lambda_1,\dots,\lambda_n)$. Since $\mathfrak{X}$ is fixed, we will drop the notation for the curve $\mathfrak{X}$ from the notation of conformal blocks.
It is easy to check (and we have already used!) the fact that the map 
\begin{align*}
\Vcal_{\lambda_1,\lambda_2,\lambda_3}(\sofrak(2r+1), 2s+1)
&\otimes \Vcal_{\mu_1,\mu_2,\mu_3}(\sofrak(2s+1), 2r+1)
\to \Vcal_{\Lambda_1,\Lambda_2,\Lambda_3}(\sofrak(2d+1), 1)\ , \\
[v_1\otimes v_2\otimes v_3]\otimes [w_1\otimes w_2\otimes w_3]&\mapsto \left[\beta_{\Lambda_1}^{\lambda_1\mu_1}(v_1\otimes w_1)
\otimes \beta_{\Lambda_2}^{\lambda_2\mu_2}(v_2\otimes w_2)\otimes \beta_{\Lambda_3}^{\lambda_3\mu_3}(v_3\otimes w_3)\right]\ ,
\end{align*}
is well-defined.

Let $v_\lambda\in \Hcal_{\omega_d}(\sofrak(2d+1))$ be the highest weight vector of the component $\mathcal{H}_{\lambda}(\sofrak(2r+1))\otimes \mathcal{H}_{\lambda^*}(\sofrak(2s+1)) $, and $\bar{v}_{\lambda}\in \Hcal_{\omega_d}(\sofrak(2d+1))$  the highest weight vector of the component $\mathcal{H}_{\sigma({\lambda})}(\sofrak(2r+1))\otimes \mathcal{H}_{\lambda^*}(\sofrak(2s+1))$ as expressed explicitly as an element of $\mathcal{H}_{\omega_d}(\sofrak(2d+1))$ as in Section \ref{sec:highest-weight}. We denote $v^{\lambda}$ and $\bar{v}^{\lambda}$ to be the corresponding opposite highest weights again expressed explicitly. 

Choose highest weight vectors $v_1$ and $v_2$ of $\mathcal{H}_{\lambda}(\sofrak(2r+1))$ and $\mathcal{H}_{\lambda^*}(\sofrak(2s+1))$ such that $\beta_{\omega_d}^{\lambda \lambda^\ast}(v_1 \otimes v_2) =v_{\lambda}$. Similarly choose $v^1$ and $v^2$ for the opposite highest weight such that $\beta_{\omega_d}^{\lambda\lambda^\ast}(v^1 \otimes v^2)=v^{\lambda}$. Let $\bar{v}_1$ be such that $\beta_{\omega_d}^{\sigma(\lambda)\lambda^\ast}(\bar{v}_1\otimes v_2)=\bar{v}_{\lambda}$ and similarly choose $\bar{v}^1$ for the corresponding $\bar{v}^{\lambda}$. 

Let $\tilde{v}$ be a vector in $\Hcal_{\omega_1}(\sofrak(2d+1))$ which is equal  (up to a scalar)  to  $R^{2r+1}(B^0_1)$ acting on the highest weight vector of the component $\mathcal{H}_{\omega_0}(\sofrak(2r+1))\otimes \mathcal{H}_{(2r+1)\omega_1}(\sofrak(2s+1))$ expressed explicitly in Clifford algebra terms. Since we get the vector $\tilde{v}$ by acting only on the right component
in the tensor decomposition, it follows that
 $\tilde{v}$ is a pure tensor in $\mathcal{H}_{\omega_0}(\sofrak(2r+1))\otimes \mathcal{H}_{(2r+1)\omega_1}(\sofrak(2s+1))$. Hence, we can choose $x \in \mathcal{H}_{(2r+1)\omega_1}(\sofrak(2s+1))$ such that $\beta_{\omega_1}^{0, (2r+1)\omega_1}(1 \otimes x)=\tilde{v}$. 

We are interested in the following classes.

\par\noindent
$\bullet\ \mathcal{V}_{\omega_0,\lambda,\lambda}(\mathfrak{so}(2r+1), 2s+1)\otimes \mathcal{V}_{\omega_0,\lambda^*,\lambda^*}(\mathfrak{so}(2s+1), 2r+1) \rightarrow \mathcal{V}_{\omega_0,\omega_d,\omega_d}(\mathfrak{so}(2d+1), 1)$
\begin{align*}
[1\otimes v_1 \otimes v^1]\otimes [1\otimes v_2 \otimes v^2]
&\mapsto [\beta_{0}^{0,0}(1\otimes 1)\otimes \beta_{\omega_d}^{\lambda\lambda^\ast}(v_1\otimes v_2)\otimes \beta_{\omega_d}^{\lambda\lambda^\ast}(v^1\otimes v^2)] \\
&=[1\otimes v_{\lambda}\otimes v^{\lambda}]\ .
\end{align*}

\par\noindent
$\bullet\ \mathcal{V}_{\omega_0,\sigma(\lambda),\sigma(\lambda)}(\mathfrak{so}(2r+1), 2s+1)\otimes \mathcal{V}_{\omega_0,\lambda^*,\lambda^*}(\mathfrak{so}(2s+1), 2r+1) \rightarrow \mathcal{V}_{\omega_0,\omega_d,\omega_d}(\mathfrak{so}(2d+1), 1)$

\begin{align*}
[1\otimes \bar v_1 \otimes \bar v^1]\otimes [1\otimes v_2 \otimes v^2]
&\mapsto [\beta_{0}^{0,0}(1\otimes 1)\otimes \beta_{\omega_d}^{\sigma(\lambda)\lambda^\ast}(\bar v_1\otimes v_2)\otimes \beta_{\omega_d}^{\sigma(\lambda)\lambda^\ast}(\bar v^1\otimes v^2)] \\
&=[1\otimes \bar{v}_{\lambda}\otimes \bar{v}^{\lambda}]\ .
\end{align*}
%

\par\noindent
$\bullet\ \mathcal{V}_{\omega_0,\lambda,\lambda}(\mathfrak{so}(2r+1), 2s+1)\otimes \mathcal{V}_{(2r+1)\omega_1,\lambda^*,\lambda^*}(\mathfrak{so}(2s+1), 2r+1) \rightarrow \mathcal{V}_{\omega_1,\omega_d,\omega_d}(\mathfrak{so}(2d+1), 1)$

\begin{align*}
[1\otimes v_1 \otimes v^1]\otimes [x\otimes v_2 \otimes v^2]
&\mapsto [\beta_{\omega_1}^{0,(2r+1)\omega_1}(1\otimes x)\otimes \beta_{\omega_d}^{\lambda\lambda^\ast}(v_1\otimes v_2)\otimes \beta_{\omega_d}^{\lambda\lambda^\ast}(v^1\otimes v^2)] \\
&=[\tilde v\otimes v_{\lambda}\otimes v^{\lambda}]\ .
\end{align*}
%

\par\noindent
$\bullet\ \mathcal{V}_{\omega_0,\sigma(\lambda),\sigma(\lambda)}(\mathfrak{so}(2r+1), 2s+1)\otimes \mathcal{V}_{(2r+1)\omega_1,\lambda^*,\lambda^*}(\mathfrak{so}(2s+1), 2r+1) \rightarrow \mathcal{V}_{\omega_1,\omega_d,\omega_d}(\mathfrak{so}(2d+1), 1)$
\begin{align*}
[1\otimes \bar v_1 \otimes \bar v^1]\otimes [x\otimes v_2 \otimes v^2]
&\mapsto [\beta_{\omega_1}^{0,(2r+1)\omega_1}(1\otimes x)\otimes \beta_{\omega_d}^{\sigma(\lambda)\lambda^\ast}(\bar v_1\otimes v_2)\otimes \beta_{\omega_d}^{\sigma(\lambda)\lambda^\ast}(\bar v^1\otimes v^2)] \\
&=[\tilde v\otimes \bar{v}_{\lambda}\otimes \bar{v}^{\lambda}]\ .
\end{align*}
%

\subsection{Case by case analysis}
\subsubsection{The case $(\omega_0,\lambda,\lambda)\times (\omega_0,\lambda^*,\lambda^*)\rightarrow (\omega_0,\omega_d,\omega_d)$}
Let $\lambda=Y+\omega_r$, $Y\in \Ycal_{r,s}$.
 Let $v_{\lambda}=\bigwedge_{\widetilde{Y}_{i,j}=\,\blacksquare}\phi_{i,j}$ as in Proposition \ref{kacmoody3}. Similarly let $v^{\lambda}=\bigwedge_{\widetilde{Y}_{i,j}=\,\blacksquare}\phi^{i,j}$. Let $\langle \Psi|$ denote the unique up to constants nonzero element of $\mathcal{V}^*_{\omega_0,\omega_d,\omega_d}(\mathbb{P}^1,\mathfrak{so}(2d+1),1)$. This was discussed in Section \ref{case0dd}. 
Let $B(\ ,\ )$ be the nondegenerate bilinear form on $W_d$. The choice of $v_{\lambda}$ (resp.\  $v^{\lambda}$) implies that $\langle \Psi| 1\otimes v_{\lambda}\otimes v^{\lambda}\rangle$ is up to a sign equal to $\prod_{\widetilde{Y}_{i,j}=\, \blacksquare} B(\phi_{i,j},\phi^{i,j})$ which is nonzero.



\subsubsection{The case $(\omega_0,\lambda,\lambda)\times ((2r+1)\omega_1,\lambda^*,\lambda^*)\rightarrow (\omega_1,\omega_d,\omega_d)$} Let $\lambda=Y+\omega_r$ and further assume that $\lambda \in \mathcal{Y}_{r,s-1}$. We choose $v_{\lambda}$ and $v^{\lambda}$ as above in the previous case. We need to choose a vector in $\mathcal{H}_{\omega_0}(\sofrak(2r+1))\otimes \mathcal{H}_{(2r+1)\omega_1}(\sofrak(2s+1))$ as an explicit element in $\mathcal{H}_{\omega_1}(\sofrak(2d+1))$. We choose the vector: 
$\tilde{v}:=  B^{2,0}_{2,0}(-1)\cdots B^{r,0}_{r,0}(-1)B^{0,0}_{1,0}(-1)\phi^{1,0}(-\tfrac{1}{2} )$.
Let $\langle \widetilde{\Psi}\mid $ be the unique nonzero vector of $\mathcal{V}^*_{\omega_1,\omega_d,\omega_d}(\mathbb{P}^1,\mathfrak{so}(2d+1),1)$ normalized such that it is equal to the one induced from Clifford multiplication (cf.\ Section \ref{case1dd}). We now evaluate the following using gauge symmetry (cf.\ Section \ref{propertiesofconformalblocks}) and choosing the points to be $(1,0,\infty)$ with the obvious local coordinates.
\begin{eqnarray*}
\langle \widetilde{\Psi} \mid  \tilde{v}\otimes v_{\lambda}\otimes v^{\lambda}\rangle &=& \langle \widetilde{\Psi} \mid B^{2,0}_{2,0}(-1)\cdots B^{r,0}_{r,0}(-1)B^{0,0}_{1,0}(-1)\phi^{1,0}(-\tfrac{1}{2} )\otimes v_{\lambda}\otimes v^{\lambda}\rangle\\
&=&(-1)\langle \widetilde{\Psi}\mid B^{r,0}_{r,0}(-1)B^{0,0}_{1,0}(-1)\phi^{1,0}(-\tfrac{1}{2} )\otimes B^{2,0}_{2,0}v_{\lambda}\otimes v^{\lambda}\rangle\\
&=&\frac{1}{2}\langle \widetilde{\Psi}\mid B^{r,0}_{r,0}(-1)B^{0,0}_{1,0}(-1)\phi^{1,0}(-\tfrac{1}{2} )\otimes v_{\lambda}\otimes v^{\lambda}\rangle\\
&=&\frac{1}{2^{r-1}}\langle \widetilde{\Psi}\mid  B^{0,0}_{1,0}(-1)\phi^{1,0}(-\tfrac{1}{2} )\otimes v_{\lambda}\otimes v^{\lambda}\rangle\\
&=&\frac{1}{2^{r-1}}\langle \widetilde{\Psi}\mid  \phi^{1,0}(-\tfrac{1}{2} )\otimes B^{0,0}_{1,0}v_{\lambda}\otimes v^{\lambda}\rangle\\
&=& \frac{1}{2^{r-1}}\frac{(-1)^{rs-|\lambda|+1}}{\sqrt{2}}\langle \widetilde{\Psi}\mid  \phi^{1,0}(-\tfrac{1}{2} )\otimes \phi_{1,0}\wedge v_{\lambda}\otimes v^{\lambda}\rangle\ .
\end{eqnarray*}
We used the following in the  calculation above.
\begin{lemma} With the same notation,
\begin{enumerate}
\item for $i>0$, we get $B^{i,0}_{i,0}v_{\lambda}=\frac{1}{2}v_{\lambda}\ ;$ 
\item $ B^{0,0}_{1,0}v_{\lambda}=\frac{(-1)^{rs-|\lambda|+1}}{\sqrt{2}}\phi_{1,0} \wedge v_{\lambda}$.
\end{enumerate}
\end{lemma}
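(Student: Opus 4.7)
The plan is to work in the Frenkel--Kac--Petersen realization of $\mathcal{H}_{\omega_d}(\mathfrak{so}(2d+1),1)\simeq\bigwedge W_d^{\mathbb{Z},-}$ from Section~\ref{sec:levelone}, and to evaluate both Lie algebra elements on the explicit wedge vector $v_\lambda=\bigwedge_{\widetilde Y_{j,p}=\blacksquare}\phi_{j,p}$ of Proposition~\ref{kacmoody3}. The structural observation that drives both computations is that the indexing set for $\widetilde Y$ uses $p\in\{1,\dots,s,-s,\dots,-1\}$, so $p\neq 0$ for every wedge factor of $v_\lambda$. Consequently, for $i>0$ the Clifford anti-commutator $\{\phi^{i,0},\phi_{j,p}\}=\delta_{i,j}\delta_{0,p}$ vanishes on every factor of $v_\lambda$, and since $\phi^{i,0}(0)$ also annihilates the Fock vacuum, one obtains $\phi^{i,0}(0)v_\lambda=0$. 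This identity will be the common input for both parts.

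For part (1), I would expand $B^{i,0}_{i,0}=\sum_{a+b=0}\normalorder{\phi^{i,0}(a)\phi_{i,0}(b)}$ and observe that every non-zero mode term annihilates $v_\lambda$: for $a<0$ the normal ordering leaves the annihilator $\phi_{i,0}(-a)$ on the right, which commutes past the mode-zero factors of $v_\lambda$ and kills the vacuum, and for $a>0$ the same argument applies with $\phi^{i,0}(a)$ and $\phi_{i,0}(-a)$ interchanged. Only the $a=b=0$ term survives, and substituting $\phi^{i,0}(0)v_\lambda=0$ together with $\{\phi^{i,0}(0),\phi_{i,0}(0)\}=1$ into the diagonal normal-ordering formula of~\eqref{eqn:normalorder} produces the factor $\tfrac{1}{2}$.

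Part (2) is treated in the same way: the nonzero-mode terms drop, leaving $\normalorder{\phi^{0,0}(0)\phi_{1,0}(0)}v_\lambda$. Because $\phi^{0,0}$ and $\phi_{1,0}$ anti-commute in the Clifford algebra, the normal-ordered product simplifies to the ordinary product $\phi^{0,0}(0)\phi_{1,0}(0)$. The factor $\phi_{1,0}(0)$ acts as wedging by $\phi_{1,0}\in W_d^-$, while $\phi^{0,0}$ acts on $\bigwedge^k W_d^{\mathbb{Z},-}$ by the scalar $(-1)^k/\sqrt{2}$ via the Clifford module rule $\sqrt 2\,e^0\cdot v=(-1)^p v$ recalled in the Clifford algebra subsection. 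The stated sign then follows once one checks $\deg v_\lambda = rs-|\lambda|$, which is a direct count of the black boxes of $\widetilde Y$ against the white boxes of the $r\times s$ shape $Y$, using the convention $|\lambda|=|Y|$.

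The only real obstacle is sign bookkeeping: the diagonal case of normal ordering, the sign $(-1)^k/\sqrt 2$ in the $e^0$-action, and the parity of the black-box count. None of these is individually subtle, but they must be tracked carefully so that the ``$+1$'' in the exponent $rs-|\lambda|+1$ is correctly attributable to the extra factor $\phi_{1,0}$ produced by $\phi_{1,0}(0)$.
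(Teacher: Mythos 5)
Your proof is correct and follows exactly the approach the paper gestures at: a direct Fock-space computation in the Frenkel--Kac--Petersen realization, with the only genuinely substantive input being the Clifford module rule $\sqrt 2\,\phi^{0,0}\cdot v=(-1)^{\deg v}v$, which is precisely the one observation the paper's own (two-sentence) proof highlights. Your write-up simply fills in the mode-by-mode and sign bookkeeping that the paper leaves implicit; the only cosmetic slip is that annihilators \emph{anti}commute, rather than commute, past the mode-zero factors of $v_\lambda$, but this does not affect the argument since the relevant anticommutators vanish.
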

\begin{proof} The proof is as usual by a direct calculation. The most important observation is that $\sqrt{2}\phi^{0,0}v=(-1)^p v$, where $p$ is the degree of $v$ in $\bigwedge W_d^{-}$.
\end{proof}

\subsubsection{The case $(\omega_0,\sigma(\lambda),\sigma(\lambda))\times(\omega_0,\lambda^*,\lambda^*)\rightarrow (\omega_0,\omega_d,\omega_d)$}Let $\lambda=Y+\omega_r$ and further assume that the number of boxes in the first row of $Y$ is $s-1$. Assume that $\overline{Y}=Y+ y_{1,-1}$, so that the number of boxes of $\overline{Y}$ is $s$. Let $\bar{\lambda}=\overline{Y}+\omega_r$ and $v_{\bar{\lambda}}=\bigwedge_{\widetilde{\overline{Y}}_{i,j}=\,\blacksquare}\phi_{i,j}$ (for notation, see  Section \ref{sec:highest-weight}).

 We need to choose a highest weight vector of $\mathcal{H}_{\sigma(\lambda)}(\sofrak(2r+1))\otimes \mathcal{H}_{\lambda^\ast}(\sofrak(2s+1))$ as an explicit element in $\mathcal{H}_{\omega_d}(\sofrak(2d+1))$. Applying Corollary \ref{kacmoody4}, we choose $\bar{v}_{\lambda}:=  B^{1,1}_{0,0}(-1)v_{\bar{\lambda}}.$ Let $\langle \Psi\mid $ be the unique element of the $\mathcal{V}^*_{\omega_0,\omega_d,\omega_d}(\mathbb{P}^1,\mathfrak{so}(2d+1),1)$ We want to evaluate the following:

\begin{align*}
\langle \Psi \mid  1\otimes \bar{v}_{\lambda}\otimes \bar{v}^{\lambda}\rangle&=\langle \Psi \mid  1\otimes B^{1,1}_{0,0}(-1)v_{\bar{\lambda}}\otimes B^{0,0}_{1,1}(-1)v^{\bar{\lambda}}\rangle \\
&=-\langle \Psi \mid  1 \otimes  B^{0,0}_{1,1}(1) B^{1,1}_{0,0}(-1)v_{\bar{\lambda}}\otimes v^{\bar{\lambda}}\rangle\\
&=-\langle \Psi \mid  1 \otimes \big( [B^{0,0}_{1,1}, B^{1,1}_{0,0}]+ (B^{0,0}_{1,1},B^{1,1}_{0,0})c\big)v_{\bar{\lambda}}\otimes v^{\bar{\lambda}}\rangle\\
&=-\langle \Psi \mid  1 \otimes [B^{0,0}_{1,1},B^{1,1}_{0,0}]v_{\bar{\lambda}}\otimes v^{\bar{\lambda}}\rangle-\langle \Psi \mid  1 \otimes v_{\bar{\lambda}}\otimes v^{\bar{\lambda}}\rangle\\
&=\langle \Psi\mid  1\otimes B^{1,1}_{1,1}v_{\bar{\lambda}}\otimes v^{\bar{\lambda}}\rangle -\langle \Psi \mid  1 \otimes v_{\bar{\lambda}}\otimes v^{\bar{\lambda}}\rangle\\
&=\frac{1}{2}\langle \Psi\mid  1\otimes v_{\bar{\lambda}}\otimes v^{\bar{\lambda}}\rangle-\langle \Psi \mid  1 \otimes v_{\bar{\lambda}}\otimes v^{\bar{\lambda}}\rangle\\
&=-\frac{1}{2}\langle \Psi \mid  1 \otimes v_{\bar{\lambda}}\otimes v^{\bar{\lambda}}\rangle\ .
\end{align*}


\subsubsection{The case $(\omega_0,\sigma(\lambda),\sigma(\lambda))\times((2r+1)\omega_1,\lambda^*,\lambda^*)\rightarrow (\omega_1,\omega_d,\omega_d)$}Let $\lambda$ be such that the Young diagram associated to the weight $\lambda$ has exactly $s-1$ boxes in the first row. Let $\langle \widetilde{\Psi}\mid $ be the unique nonzero element of $\mathcal{V}^*_{\omega_1,\omega_d,\omega_d}(\mathbb{P}^1,\mathfrak{so}(2d+1),1)$ normalized such that it is equal to the Clifford multiplication. We want to evaluate 
\begin{align*}
\langle \widetilde{\Psi} \mid  \tilde{v}\otimes \bar{v}_{\lambda}\otimes \bar{v}^{\lambda}\rangle&=\frac{1}{2}\langle \Psi \mid  \tilde{v} \otimes v_{\bar{\lambda}} \otimes v^{\bar{\lambda}}\rangle
=\frac{1}{2^{r}}\langle \widetilde{\Psi} \mid  B^{0,0}_{1,0}(-1)\phi^{1,0}(-\tfrac{1}{2} )\otimes v_{\bar{\lambda}}\otimes v^{\bar{\lambda}}\rangle\\
&=\frac{1}{2^{r}}\langle \widetilde{\Psi}\mid  \phi^{1,0}(-\tfrac{1}{2} )\otimes B^{0,0}_{1,0}v_{\bar{\lambda}}\otimes v^{\bar{\lambda}}\rangle\\
&=\frac{1}{2^{r}}\langle \widetilde{\Psi}\mid  \phi^{1,0}(-\tfrac{1}{2} )\otimes \normalorder{\phi^{0,0}\phi_{1,0}}\, v_{\bar{\lambda}}\otimes v^{\bar{\lambda}}\rangle\\
&=\frac{1}{2^{r}}\langle \widetilde{\Psi}\mid  \phi^{1,0}(-\tfrac{1}{2} )\otimes \phi^{0,0}(\phi_{1,0}\wedge v_{\bar{\lambda}})\otimes v^{\bar{\lambda}}\rangle\\
&=\frac{(-1)^{(rs-|\bar{\lambda}|+1)}}{(\sqrt{2})^{2r+1}}\langle \widetilde{\Psi}\mid \phi^{1,0}(-\tfrac{1}{2} )\otimes \phi_{1,0}\wedge v_{\bar{\lambda}}\otimes v^{\bar{\lambda}}\rangle\\
&=\frac{(-1)^{(rs-|\lambda|)}}{(\sqrt{2})^{2r+1}}\langle \widetilde{\Psi}\mid \phi^{1,0}(-\tfrac{1}{2} )\otimes \phi_{1,0}\wedge v_{\bar{\lambda}}\otimes v^{\bar{\lambda}}\rangle\ .\\
\end{align*}

\subsection{Proof of Proposition \ref{sde}} Let $\lambda=Y+\omega_r$,  $Y\in\Ycal_{r,s-1}$, and we further assume that the number of boxes in the first row of $Y$ is exactly $s-1$. The previous calculations tell us that the matrix of the map \eqref{sdelliptic} is the following: $A_{\lambda}:= $
$$
 \left[
\begin{matrix}
    \langle \Psi\mid 1\otimes v_{\lambda}\otimes v^{\lambda}\rangle  & -\frac{1}{a^2}\langle \Psi\mid 1\otimes v_{\bar{\lambda}}\otimes v^{\bar{\lambda}}\rangle\\[6pt]
   \frac{(-1)^{rs+1-|\lambda|}}{a^{2r-1}}\langle \widetilde{\Psi}\mid \phi^{1,0}(-\tfrac{1}{2} )\otimes \phi_{1,0}\wedge v_{\lambda}\otimes v^{\lambda}\rangle   & \frac{(-1)^{(rs-|\lambda|)}}{a^{2r+1}}\langle \widetilde{\Psi}\mid \phi^{1,0}(-\tfrac{1}{2} )\otimes \phi_{1,0}\wedge v_{\bar{\lambda}}\otimes v^{\bar{\lambda}}\rangle \\
	\end{matrix}
\right].
$$
Then the determinant is (up to a constant): $\det{A_{\lambda}}\simeq$
$$\langle \Psi\mid 1\otimes v_{\lambda}\otimes v^{\lambda}\rangle \langle \widetilde{\Psi}\mid \phi^{1,0}(-\tfrac{1}{2} )\otimes \phi_{1,0}\wedge v_{\bar{\lambda}}\otimes v^{\bar{\lambda}}\rangle 
-\langle \Psi\mid 1\otimes v_{\bar{\lambda}}\otimes v^{\bar{\lambda}}\rangle  \langle \widetilde{\Psi}\mid \phi^{1,0}(-\tfrac{1}{2} )\otimes \phi_{1,0}\wedge v_{\lambda}\otimes v^{\lambda}\rangle \ .
$$
But now by the construction of $\langle \widetilde{\Psi}\mid $ and $\langle \Psi\mid $ in Sections \ref{case1dd} and \ref{case0dd}, we get 
\begin{itemize}
\item $\langle \widetilde{\Psi } \mid  \phi^{1,0}(-\tfrac{1}{2} )\otimes \phi_{1,0}\wedge v_{\lambda}\otimes v^{\lambda}\rangle=\langle \Psi\mid  1\otimes v_{\lambda}\otimes v^{\lambda}\rangle$;
\item $\langle \widetilde{\Psi } \mid  \phi^{1,0}(-\tfrac{1}{2} )\otimes \phi_{1,0}\wedge v_{\overline{\lambda}}\otimes v^{\overline{\lambda}}\rangle=\langle \Psi\mid  1\otimes v_{\overline{\lambda}}\otimes v^{\overline{\lambda}}\rangle$.
\end{itemize}
It follows that $\det A_\lambda=0$. This completes the proof.





\appendix
\section{Computations in the Clifford algebra}
In this section, we compute some vectors  in the highest weight modules as explicit elements in the infinite dimensional Clifford algebra. 
\subsection{Action of $L(B^{i}_j)$}
Consider the  rectangle $r\times s$ as a Young diagram $Y$ where the rows are indexed by integer in $\{1,\ldots, r\}$ and the columns by $\{-s,\ldots,-1\}$. Let $({i,j})$ be the coordinates of $Y$ and let $v:=  \bigwedge_{\tilde{Y}_{i,j}=\,\blacksquare}\phi_{i,j}$ (cf.\ Section \ref{sec:highest-weight}).

\begin{proposition}\label{leftaction}Let $v$ as before be the highest weight vector of the component with highest weight $(\omega_r, (2r+1)\omega_s)$. Let $0\leq k\leq m\leq r$, then
$$L(B^{-k}_{(k+1)})L(B^{-(k+2)}_{k+3})\cdots L(B^{-(m-3)}_{m-2})L(B^{-(m-1)}_{m})\cdot v=\phi_{k,0}\wedge \phi_{k+1,0}\wedge\cdots \wedge \phi_{m,0}\wedge v\ .$$

\end{proposition}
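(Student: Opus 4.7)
The plan is to proceed by induction on $N$, where $m = k + 2N + 1$ is the number of indices appearing in the product. The idea is to apply the operators from right to left, and to show at each stage that the next operator $L(B^{-k'}_{k'+1})$ prepends the wedge factors $\phi_{k',0}\wedge\phi_{k'+1,0}$.

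The key computation will be to unpack the formula
$$L(B^{-k}_{k+1}) \;=\; \sum_{q=-s}^{s}\normalorder{\phi^{-k,q}(0)\,\phi_{k+1,q}(0)}$$
and identify the surviving index $q$. First I would check that $\{\phi^{-k,q},\phi_{k+1,q}\}=0$ (since $-k\ne k+1$ for $k\ge 0$), so the normal ordering collapses to the plain product $\phi^{-k,q}\phi_{k+1,q}$. Then I would apply this operator to a vector of the form $w = \phi_{k+2,0}\wedge\cdots\wedge\phi_{m,0}\wedge v$, where $v=\bigwedge_{1\le i\le r,\,-s\le j\le -1}\phi_{i,j}$ is the highest-weight vector described by Proposition~\ref{kacmoody3} applied to the empty Young diagram.

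The heart of the argument is the combinatorial vanishing of all $q\ne 0$ terms. For $-s\le q\le -1$, the factor $\phi_{k+1,q}$ already appears in $v$, so wedging kills the term. For $1\le q\le s$ and $k\ge 1$, the factor $\phi^{-k,q}=\phi_{k,-q}$ already appears in $v$, and wedging again kills the term. For $1\le q\le s$ in the boundary case $k=0$, the operator $\phi^{0,q}\in W_d^{+}$ acts as annihilation, and the would-be contracted factor $\phi_{0,q}$ is absent from both $v$ and the prepended $\phi_{k'',0}$'s, so the term vanishes. The $q=0$ contribution $\phi^{-k,0}\phi_{k+1,0}\cdot w$ equals $\phi_{k,0}\wedge\phi_{k+1,0}\wedge w$ for $k\ge 1$, giving the claimed formula. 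The base case $N=0$ is precisely this computation with $w=v$, and the inductive step iterates it, noting that the intermediate factors $\phi_{k+2,0},\dots,\phi_{m,0}$ do not coincide with any $\phi^{-k,q}$ or $\phi_{k+1,q}$ for $q$ in the relevant range, so they play no role in the cancellations.

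The hard part will be the boundary case $k=0$, where the factor $\phi^{0,0}$ does not lie in $W_d^{-}$ but acts on $\bigwedge W_d^{-}$ by the Clifford rule $\sqrt{2}\,\phi^{0,0}\cdot w=(-1)^{\deg w}w$. The symbol $\phi_{0,0}\wedge$ appearing in the statement must then be interpreted through this Clifford action, with the resulting scalar absorbed into the formula; this matches the normalization used in the explicit sample computations of the paper (for instance the $|Y|=1$ case, where the formula $L(B^{0}_{1})v=\phi_{1,0}\wedge v$ records exactly the Clifford-sign convention). Once this convention is fixed, the sign bookkeeping is routine, and the induction goes through as outlined.
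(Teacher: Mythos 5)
You are filling a gap the paper leaves open: Proposition~\ref{leftaction} is stated in the Appendix without proof, so there is nothing to compare against. Your argument is essentially correct. Peeling the operators off from the right, checking that $\{\phi^{-k,q},\phi_{k+1,q}\}=0$ so the normal ordering becomes a plain Clifford product, and then running the case split on $q$ --- wedge vanishing for $q<0$ because $\phi_{k+1,q}$ is already a factor of $v$; wedge vanishing for $q>0$, $k\ge1$ because $\phi^{-k,q}=\phi_{k,-q}$ is already a factor of $v$; contraction vanishing for $q>0$, $k=0$ because $\phi^{0,q}\in W_d^+$ has zero pairing with $\phi_{k+1,q}$ and its partner $\phi_{0,q}$ is absent from $w$; and the $q=0$ term producing exactly the two new wedge factors, with the Clifford action of $\phi^{0,0}$ absorbing a scalar in the boundary case $k=0$ --- is the right chain of observations, and the induction structure and treatment of the intermediate factors $\phi_{k+2,0},\ldots,\phi_{m,0}$ are sound.

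There is one genuine gap in the formula you start from. By definition, $L(B^{-k}_{k+1})=L(B^{-k}_{k+1}(0))=\sum_{q=-s}^{s}\sum_{a+b=0}\normalorder{\phi^{-k,q}(a)\phi_{k+1,q}(b)}$ is a sum over all integers $a$ with $b=-a$, whereas you only write the $a=b=0$ term. You need to argue that the $a\neq0$ terms annihilate the vector $w=\phi_{k+2,0}\wedge\cdots\wedge\phi_{m,0}\wedge v$. This is true, but must be said: for $a<0$ the normal-ordered product is $\phi^{-k,q}(a)\phi_{k+1,q}(-a)$ and the rightmost factor lies in $W_d^{\ZBbb,+}$ with strictly positive mode $-a>0$; for $a>0$ it is $-\phi_{k+1,q}(-a)\phi^{-k,q}(a)$ whose rightmost factor again has strictly positive mode. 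Since every Clifford factor of $w$ sits at mode $t^0$ and the pairing $\{w_1(a_1),w_2(a_2)\}=\{w_1,w_2\}\delta_{a_1+a_2,0}$ is concentrated at opposite modes, these positive-mode annihilation operators contract to zero on $w$, so only $a=0$ survives. With that observation inserted, your proof is complete.
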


\subsection{Action of $R^k(B^0_1)$}Let $v_k=\phi^{1,1}(-\tfrac{1}{2} )\phi^{2,1}(-\tfrac{1}{2} )\cdots \phi^{k,1}(-\tfrac{1}{2} )\cdot 1$. In this section, we want to give explicit expressions for $R^k(B^i_j)v_k$. First, consider the case when $k=1$.

\begin{lemma}\label{lemma1}\label{lemmak=1} Consider the highest weight vector $\phi^{1,1}(-\tfrac{1}{2} )\cdot 1$ of the component with highest weight $(\omega_1,\omega_1)$. Then
$R(B^0_1)\phi^{1,1}(-\tfrac{1}{2} )\cdot 1=\phi^{1,0}(-\tfrac{1}{2} )$.
\end{lemma}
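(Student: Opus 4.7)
The plan is a direct computation using the definition of $R(B^0_1)$ together with the standard Clifford anticommutation relations on the half-integer Fock space $\bigwedge W_d^{\mathbb{Z}+\frac{1}{2},-}$.

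First I would expand
$$R(B^0_1)=R(B^0_1(0))=\sum_{p=-r}^{r}\sum_{a+b=0}\normalorder{\phi^{p,0}(a)\phi_{p,1}(b)}\ .$$
In the half-integer sector the modes $a,b$ are nonzero half-integers, so the normal ordering \eqref{eqn:normalorder} gives
$$R(B^0_1)=\sum_{p=-r}^{r}\sum_{k\geq 0}\Bigl(-\phi_{p,1}(-k-\tfrac{1}{2})\phi^{p,0}(k+\tfrac{1}{2})+\phi^{p,0}(-k-\tfrac{1}{2})\phi_{p,1}(k+\tfrac{1}{2})\Bigr)\ .$$

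Next, I would apply this operator to $\phi^{1,1}(-\tfrac{1}{2})\cdot 1$, using that all positive-mode Clifford generators annihilate the vacuum together with the anticommutation relation $\{\phi^a(m),\phi^b(n)\}=\{\phi^a,\phi^b\}\delta_{m+n,0}$. In the first family, $\phi^{p,0}(k+\tfrac{1}{2})$ can be pushed past $\phi^{1,1}(-\tfrac{1}{2})$, producing the anticommutator $\{\phi^{p,0},\phi^{1,1}\}\delta_{k,0}$; by the normalization $\{\phi^{j,p},\phi^{-k,-q}\}=\delta_{j,k}\delta_{p,q}$ the $s$-indices $0$ and $1$ never match, so this entire family contributes zero. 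In the second family, $\phi_{p,1}(k+\tfrac{1}{2})\phi^{1,1}(-\tfrac{1}{2})\cdot 1$ reduces to $\{\phi_{p,1},\phi^{1,1}\}\delta_{k,0}\cdot 1=\delta_{p,1}\delta_{k,0}$, singling out the unique term $p=1$, $k=0$.

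The surviving contribution is then $\phi^{1,0}(-\tfrac{1}{2})\cdot 1$, which is exactly the claimed identity. No step poses a real obstacle; the only care required is keeping track of signs arising from rearranging half-integer-moded fermionic generators across the vacuum.
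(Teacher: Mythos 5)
Your proof is correct and follows essentially the method the paper uses for the $k=2$ case in Lemma \ref{lemmak=2}: expand $R(B^0_1)$ as a normally ordered sum of Clifford modes, discard the family that annihilates the vector because the $s$-indices $0$ and $1$ are never dual, and evaluate the surviving $k=0$ term via $\{\phi_{p,1},\phi^{1,1}\}=\delta_{p,1}$. The paper states this $k=1$ lemma without proof, so your argument supplies exactly what is left implicit.
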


Now we want to compute $R^2(B^0_1)v_2$. We first have the following lemma
\begin{lemma} \label{lemmak=2} 
 For the component $(\omega_2,2\omega_1)$,  $\phi^{1,1}(-\tfrac{1}{2} )\phi^{2,2}(-\tfrac{1}{2} )\cdot 1$, is a highest weight vector.
Moreover,
\begin{equation} \label{eqn:rv2}
R(B^0_1)v_2= \phi^{1,0}(-\tfrac{1}{2} )\phi^{2,1}(-\tfrac{1}{2} )+\phi^{1,1}(-\tfrac{1}{2} )\phi^{2,0}(-\tfrac{1}{2} )\ .
\end{equation}
\end{lemma}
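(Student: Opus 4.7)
The proof splits naturally into two essentially independent parts, both of which amount to direct calculations in the Clifford-algebra realization of the basic representation $\mathcal{H}_{\omega_0}(\mathfrak{so}(2d+1),1)\oplus\mathcal{H}_{\omega_1}(\mathfrak{so}(2d+1),1)\simeq \bigwedge W_d^{\mathbb{Z}+1/2,-}$.

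For the highest-weight statement, the plan is to invoke Proposition~\ref{kacmoody1}. The Young diagram of $\omega_2$ has two boxes, located at $(1,1)$ and $(2,1)$, so removing both reduces $\omega_2$ to the empty diagram whose highest weight vector is the vacuum. The proposition then produces
\[
B^{1,1}_{-2,-1}(-1)\cdot 1 \;=\; \sum_{\substack{a+b=-1\\ a,b\in\mathbb{Z}+1/2}}\normalorder{\phi^{1,1}(a)\phi^{2,1}(b)}\cdot 1,
\]
and on the vacuum only the summand $a=b=-\tfrac{1}{2}$ survives, since all positive half-integer modes annihilate $1$. This yields the distinguished product $\phi^{1,1}(-\tfrac{1}{2})\phi^{2,1}(-\tfrac{1}{2})\cdot 1 = v_2$. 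The Cartan weights matching $(\omega_2,2\omega_1)$ are then read off from the fact that each $\phi^{i,p}(-\tfrac{1}{2})$ contributes weight $L_i$ to $\mathfrak{so}(2r+1)$ and weight $L_p$ to $\mathfrak{so}(2s+1)$, and one checks annihilation by the positive simple-root generators directly from the Clifford anticommutation relations.

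For the formula, expand the current
\[
R(B^{0}_{1}) \;=\; \sum_{p=-r}^{r}\sum_{\substack{a+b=0\\ a,b\in\mathbb{Z}+1/2}}\normalorder{\phi^{p,0}(a)\phi^{-p,-1}(b)}
\]
and act on $v_2$, using repeatedly the basic anticommutator
\[
\{\phi^{-p,-1}(b),\phi^{i,j}(c)\}\;=\;\delta_{p,i}\,\delta_{j,1}\,\delta_{b+c,0}.
\]
Only the mode split $(a,b)=(-\tfrac{1}{2},\tfrac{1}{2})$ can contribute: the mirror split $(\tfrac{1}{2},-\tfrac{1}{2})$ produces $\phi^{p,0}(\tfrac{1}{2})$, whose second index is $0$ and therefore has no partner to contract against in $v_2$, while higher splits such as $(-\tfrac{3}{2},\tfrac{3}{2})$ have no matching mode at all. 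Moving $\phi^{-p,-1}(\tfrac{1}{2})$ through $\phi^{1,1}(-\tfrac{1}{2})\phi^{2,1}(-\tfrac{1}{2})$ produces $\delta_{p,1}\phi^{2,1}(-\tfrac{1}{2})\cdot 1-\delta_{p,2}\phi^{1,1}(-\tfrac{1}{2})\cdot 1$, so exactly $p=1$ and $p=2$ survive; prepending $\phi^{p,0}(-\tfrac{1}{2})$ and anticommuting it past $\phi^{1,1}(-\tfrac{1}{2})$ in the $p=2$ term delivers the stated identity \eqref{eqn:rv2}.

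The only genuine obstacle, and a very mild one, is sign bookkeeping in the $p=2$ anticommutation; everything else is forced by mode degree and by the vanishing of positive half-integer modes on the vacuum.
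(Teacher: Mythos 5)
Your proof is correct and essentially the same as the paper's: both are direct Clifford-algebra computations in which only the mode split $(a,b)=(-\tfrac12,\tfrac12)$ survives, and the $p=1,2$ contractions produce the two terms (with the $p=2$ anticommutation supplying the sign). You additionally supply the highest-weight argument via Proposition~\ref{kacmoody1}, which the paper takes as read, and your derivation in fact yields $\phi^{1,1}(-\tfrac12)\phi^{2,1}(-\tfrac12)\cdot 1$ rather than $\phi^{1,1}(-\tfrac12)\phi^{2,2}(-\tfrac12)\cdot 1$ — the latter is a typo in the lemma as stated, which you have implicitly corrected by working with $v_2$ as defined in the surrounding text.
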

\begin{proof}
The proof is by a direct computation. As before, we know that $R(B^0_1)$ acts as $\sum_{-r\leq q\leq r} \normalorder{\phi^{q,0}\phi_{q,1}}$ on the infinite dimensional Clifford algebra for $\widehat{\mathfrak{so}}(2d+1)$. Hence,
\begin{align*}
R(B^{0}_{1})\phi^{1,1}(-\tfrac{1}{2} )\phi^{2,1}(-\tfrac{1}{2} ) &
= \sum_{-r \leq q\leq r}\bigl[\phi^{q,0}(-\tfrac{1}{2} )\phi_{q,1}(\tfrac{1}{2} )-\phi_{q,1}(-\tfrac{1}{2} )\phi^{q,0}(\tfrac{1}{2} )\bigr]\phi^{1,1}(-\tfrac{1}{2} )\phi^{2,1}(-\tfrac{1}{2} )\\
&= \sum_{-r \leq q\leq r}\bigl[\phi^{q,0}(-\tfrac{1}{2} )\phi_{q,1}(\tfrac{1}{2} )\bigr] \phi^{1,1}(-\tfrac{1}{2} )\phi^{2,1}(-\tfrac{1}{2} )\\
&= \phi^{1,0}(-\tfrac{1}{2} )\phi^{2,1}(-\tfrac{1}{2} )-\phi^{2,0}(-\tfrac{1}{2} )\phi^{1,1}(-\tfrac{1}{2} )\ .
\end{align*}
\end{proof}

\begin{proposition}\label{k=2} We have: 
$$
R^2(B^0_1)v_2=2\left[\phi^{1,0}(-\tfrac{1}{2} )\phi^{2,0}(-\tfrac{1}{2} )\right)-\left(\phi^{1,-1}(-\tfrac{1}{2} )\phi^{2,1}(-\tfrac{1}{2} )+\phi^{1,1}(-\tfrac{1}{2} )\phi^{2,-1}(-\tfrac{1}{2} )\right]\ .
$$
\end{proposition}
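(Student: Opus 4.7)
The plan is to apply $R(B^0_1)$ once more to the expression computed in Lemma~\ref{lemmak=2} and then collect terms. Recall that, from that lemma,
$$
R(B^0_1) v_2 \;=\; \phi^{1,0}(-\tfrac{1}{2})\phi^{2,1}(-\tfrac{1}{2})\cdot 1\;-\;\phi^{2,0}(-\tfrac{1}{2})\phi^{1,1}(-\tfrac{1}{2})\cdot 1\ .
$$
So the target reduces to computing $R(B^0_1)$ applied to each of the two rank-two Clifford monomials above.

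The first step is to expand
$$
R(B^0_1) \;=\; \sum_{q=-r}^{r}\sum_{a+b=0}\normalorder{\phi^{q,0}(a)\,\phi_{q,1}(b)}\ .
$$
When acting on a product of two modes at level $-\tfrac{1}{2}$, only the terms with $\{a,b\}=\{\tfrac{1}{2},-\tfrac{1}{2}\}$ contribute: the annihilation piece $\phi_{q,1}(\tfrac{1}{2})$ contracts with a $\phi^{\cdot, 1}(-\tfrac{1}{2})$ factor using $\{\phi_{q,1}(\tfrac{1}{2}),\phi^{i,p}(-\tfrac{1}{2})\}=\delta_{q,i}\delta_{p,1}$, while the other piece $\phi^{q,0}(-\tfrac{1}{2})\phi_{q,1}(\tfrac{1}{2})$ (and its companion with the roles of $a$ and $b$ switched, with the appropriate sign from normal ordering) either produces a new creation operator at mode $-\tfrac{1}{2}$ or kills the state.

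Second, one evaluates the two contributions separately. On $\phi^{1,0}(-\tfrac{1}{2})\phi^{2,1}(-\tfrac{1}{2})\cdot 1$, the annihilation piece must contract against $\phi^{2,1}(-\tfrac{1}{2})$, forcing $q=2$, and produces (after sign-tracking) a term proportional to $\phi^{1,0}(-\tfrac{1}{2})\phi^{2,0}(-\tfrac{1}{2})\cdot 1$; while the normal-ordered creation piece, with $q\in\{-1,1\}$ interacting with $\phi^{1,0}$, produces the \emph{spin} monomials involving $\phi^{1,\pm 1}$ and $\phi^{2,\mp 1}$. The analogous computation for $-\phi^{2,0}(-\tfrac{1}{2})\phi^{1,1}(-\tfrac{1}{2})\cdot 1$ yields a second copy of $\phi^{1,0}(-\tfrac{1}{2})\phi^{2,0}(-\tfrac{1}{2})\cdot 1$ (hence the factor of $2$) and another contribution to the $\phi^{1,\pm 1}\phi^{2,\mp 1}$ terms; the two contributions to each spin monomial add to give the stated coefficients.

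Finally, I would assemble the pieces and verify the signs to obtain exactly
$$
R^2(B^0_1) v_2 \;=\; 2\,\phi^{1,0}(-\tfrac{1}{2})\phi^{2,0}(-\tfrac{1}{2})\;-\;\bigl(\phi^{1,-1}(-\tfrac{1}{2})\phi^{2,1}(-\tfrac{1}{2})+\phi^{1,1}(-\tfrac{1}{2})\phi^{2,-1}(-\tfrac{1}{2})\bigr)\ .
$$
The only real difficulty is careful fermionic sign bookkeeping: one must be consistent about the normal-ordering convention in \eqref{eqn:normalorder}, the anticommutation relations for the Clifford modes, and the ordering of the two factors in each monomial. Apart from this, the calculation is a finite, mechanical extension of the one in Lemma~\ref{lemmak=2}.
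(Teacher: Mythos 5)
Your proposal is correct and follows essentially the same route as the paper: both start from Lemma~\ref{lemmak=2}, apply $R(B^0_1)$ once more to the two rank-two monomials, and track the creation/annihilation contractions (with the relevant half-integer modes and fermionic signs) to combine the resulting terms. The only imprecision is in the sentence locating the values of $q$ contributing to the spin monomials; for a given monomial the nonvanishing contraction singles out one $q$, not a set, but this does not affect the final bookkeeping.
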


\begin{proof} Compute using \eqref{eqn:rv2},
\begin{align*}
R(B^{0}_{1})\phi^{1,0}(-\tfrac{1}{2} )\phi^{2,1}(-\tfrac{1}{2} )
&= \sum_{-r \leq q\leq r}\bigl[\phi^{q,0}(-\tfrac{1}{2} )\phi_{q,1}(\tfrac{1}{2} )-\phi_{q,1}(-\tfrac{1}{2} )\phi^{q,0}(\tfrac{1}{2} )\bigr]\phi^{1,0}(-\tfrac{1}{2} )\phi^{2,1}(-\tfrac{1}{2} )\\
&=-\phi^{2,0}(-\tfrac{1}{2} )\phi^{1,0}(-\tfrac{1}{2} )-\phi_{-1,1}(-\tfrac{1}{2} )\phi^{2,1}(-\tfrac{1}{2} )\\
&= \phi^{1,0}(-\tfrac{1}{2} )\phi_{-2,0}(-\tfrac{1}{2} )+\phi^{2,1}(-\tfrac{1}{2} )\phi_{-1,1}(-\tfrac{1}{2} )\\
R(B^{0}_{1})\phi^{2,0}(-\tfrac{1}{2} )\phi^{1,1}(-\tfrac{1}{2} )
&= \sum_{-r \leq q\leq r}\bigl[\phi^{q,0}(-\tfrac{1}{2} )\phi_{q,1}(\tfrac{1}{2} )-\phi_{q,1}(-\tfrac{1}{2} )\phi^{q,0}(\tfrac{1}{2} )\bigr]\phi^{2,0}(-\tfrac{1}{2} )\phi^{1,1}(-\tfrac{1}{2} )\\
&=-\phi^{1,0}(-\tfrac{1}{2} )\phi^{2,0}(-\tfrac{1}{2} )-\phi_{-2,1}(-\tfrac{1}{2} )\phi^{1,1}(-\tfrac{1}{2} )\\
&= -\phi^{1,0}(-\tfrac{1}{2} )\phi_{-2,0}(-\tfrac{1}{2} )-\phi^{2,-1}(-\tfrac{1}{2} )\phi_{-1,-1}(-\tfrac{1}{2} )\ .\\
\end{align*}
\end{proof}

We use the following calculation in the proof of strange duality for the pair $(\omega_2,\omega_r,\omega_r)$ and $(2\omega_1,(2r+1)\omega_s, (2r+1)\omega_s)$.
\begin{lemma}\label{khayal}
Let $ w=\phi_{1,0}\wedge \phi_{2,0}\wedge \bigwedge_{ 1\leq i \leq r, -s\leq j\leq -1}\phi_{i,j}$. Then the following hold in $\mathcal{H}_{\omega_r}(\sofrak(2r+1))\otimes \mathcal{H}_{(2s+1)\omega_s}(\sofrak(2s+1))$:
$$
B^{2,1}_{-1,1}w=B^{2,-1}_{-1,-1}w=0\ ;\ 
 B^{1,0}_{-2,0}w=\bigwedge_{ 1\leq i \leq r, -s\leq j\leq -1}\phi_{i,j}\ .
 $$
\end{lemma}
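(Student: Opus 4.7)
The lemma reduces to a direct computation in the fermionic Fock space model of Section \ref{sec:levelone}. My plan has three steps. First, I would realize each $B^{j,p}_{k,q}$ as a Clifford algebra element: under the canonical embedding $\mathfrak{so}(2d+1)\cong\wedge^{2}W_{d}\hookrightarrow Cl(W_{d})$, the generator $B^{j,p}_{k,q}=E^{j,p}_{k,q}-E^{-k,-q}_{-j,-p}$ acts on the spin module by left Clifford multiplication by $-\phi^{j,p}\phi_{k,q}+\tfrac{1}{2}\delta_{j,k}\delta_{p,q}$. In each case in the lemma $(j,p)\neq(k,q)$, so the central term vanishes and the action is simply $-\phi^{j,p}\phi_{k,q}$ acting from the left.

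Second, I would enumerate the basis vectors of $W_{d}$ on which each operator has nonzero action as an endomorphism of $W_{d}$. An inspection of the defining formulas gives the following: $B^{2,1}_{-1,1}$ is nonzero only on $\phi_{1,-1}$ and $\phi_{2,1}$; $B^{2,-1}_{-1,-1}$ is nonzero only on $\phi_{2,-1}$ and $\phi_{1,1}$; and $B^{1,0}_{-2,0}$ is nonzero only on $\phi_{1,0}$ and $\phi_{2,0}$. In each case the map sends these $W_{d}^{-}$ vectors to $W_{d}^{+}$ vectors, which in the Clifford picture act as contractions against the paired basis vector.

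Third, I would evaluate the action on $w$ directly. Setting $v=\bigwedge_{1\leq i\leq r,\,-s\leq j\leq -1}\phi_{i,j}$ so $w=\phi_{1,0}\wedge\phi_{2,0}\wedge v$, each computation reduces to moving the $\phi^{j,p}\in W_{d}^{+}$ past the wedge factors via $\phi^{a}\phi_{b}+\phi_{b}\phi^{a}=\delta_{a,b}$ and then using the vacuum condition $\phi^{a}\cdot 1=0$ for $\phi^{a}\in W_{d}^{+}$. For the first two identities, the only nonzero anticommutator that could survive would require $\phi_{2,1}$ (resp.\ $\phi_{1,1}$) to appear in $w$, which it does not; so after anticommuting through, everything annihilates the vacuum. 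For the third identity, both $\phi_{1,0}$ and $\phi_{2,0}$ occur in $w$, so the calculation is nontrivial: using $\phi^{1,0}v=\phi^{2,0}v=0$ (because neither $\phi_{1,0}$ nor $\phi_{2,0}$ lies in $v$), the identities $\phi^{1,0}\phi_{1,0}v=v$ and $\phi^{2,0}\phi_{2,0}v=v$ combine to yield $-\phi^{1,0}\phi^{2,0}(\phi_{1,0}\phi_{2,0}v)=v$.

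The main obstacle is purely bookkeeping: pinning down the sign of the embedding $\mathfrak{so}(2d+1)\hookrightarrow Cl(W_{d})$ so that it is consistent with the conventions used in the current algebra action defined in Section \ref{sec:affine}, and tracking the signs that arise when moving the $\phi^{j,p}\in W_{d}^{+}$ through the long wedge product defining $v$. I note that the proof of the $Y=\omega_{2}$ case of the minimal rank-level duality earlier in the paper records the value $-v$ rather than $v$ for $B^{1,0}_{-2,0}w$; either convention is consistent with the final nonvanishing of the relevant pairing, but one should fix a single sign convention at the outset to avoid confusion.
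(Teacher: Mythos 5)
Your approach is the right one, and in fact the only natural one: the paper states Lemma~\ref{khayal} without a written proof, and it must be a direct Clifford-algebra computation exactly as you describe, using the realization of $B^{j,p}_{k,q}$ as a quadratic Clifford element and the vacuum conditions in $\bigwedge W_d^-$. Your identification of the basis vectors on which each $B^{j,p}_{k,q}$ is nonzero as an endomorphism of $W_d$, and the observation that both $\phi^{j,p}$ and $\phi_{k,q}=\phi^{-k,-q}$ lie in $W_d^+$ and hence act as contractions, is correct. Your reduction of the vanishing cases to the absence of $\phi_{2,1}$ (resp.\ $\phi_{1,1}$) from $w$ is also correct.

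One point to tighten up, which you flag but do not fully resolve: your Clifford realization $-\phi^{j,p}\phi_{k,q}+\tfrac{1}{2}\delta_{j,k}\delta_{p,q}$ has the opposite sign from what the paper's own action formula gives. Specializing the definition $B^{j,p}_{k,q}(m)w=\sum_{a+b=m}\normalorder{\phi^{j,p}(a)\phi_{k,q}(b)}w$ to $m=0$ and to the degree-zero element $w$, only the $a=b=0$ term survives, and the normal ordering convention \eqref{eqn:normalorder} gives $\tfrac{1}{2}(\phi^{j,p}\phi_{k,q}-\phi_{k,q}\phi^{j,p})=\phi^{j,p}\phi_{k,q}-\tfrac{1}{2}\delta_{j,k}\delta_{p,q}$, i.e.\ the sign \emph{opposite} to yours. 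With the paper's convention the computation you outline yields $B^{1,0}_{-2,0}w=-v$, which agrees with the displayed formula in the $Y=\omega_2$ case of the minimal rank-level duality; thus the Lemma as printed has a sign typo, and your negated convention happens to reproduce the typo rather than the internally-consistent one. Since the lemma is used only for vanishing and nonvanishing, this has no downstream consequences, but a clean write-up should derive the sign from the paper's normal-ordering definition rather than positing an independent embedding of $\mathfrak{so}(2d+1)$ into $Cl(W_d)$.
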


Next we   compute $R^3(B^0_1)v_3$. Our strategy is same as the previous steps.

\begin{proposition}\label{thumri} We have:
 \begin{align*}
R^3(B^0_1)v_3
 &=6\left[\phi^{1,0}(-\tfrac{1}{2} )\phi^{2,0}(-\tfrac{1}{2} )\phi^{3,0}(-\tfrac{1}{2} )\right]-3\bigl[ \phi^{1,-1}(-\tfrac{1}{2} )\phi^{2,0}(-\tfrac{1}{2} )\phi^{3,1}(-\tfrac{1}{2} )\\
&\qquad+ \phi^{1,0}(-\tfrac{1}{2} )\phi^{2,-1}(-\tfrac{1}{2} )\phi^{3,1}(-\tfrac{1}{2} )+ \phi^{1,-1}(-\tfrac{1}{2} )\phi^{2,1}(-\tfrac{1}{2} )\phi^{3,0}(-\tfrac{1}{2} )\\
&\qquad + \phi^{1,0}(-\tfrac{1}{2} )\phi^{2,1}(-\tfrac{1}{2} )\phi^{3,-1}(-\tfrac{1}{2} ) + \phi^{1,1}(-\tfrac{1}{2} )\phi^{2,-1}(-\tfrac{1}{2} )\phi^{3,0}(-\tfrac{1}{2} )\\
&\qquad + \phi^{1,1}(-\tfrac{1}{2} )\phi^{2,0}(-\tfrac{1}{2} )\phi^{3,-1}(-\tfrac{1}{2} )\bigr]\ .
\end{align*}

\end{proposition}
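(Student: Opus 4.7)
The identity is a direct iterative computation extending Proposition~\ref{k=2} from two factors to three. The plan is to apply $R(B^0_1)=\sum_{-r\leq q\leq r}\normalorder{\phi^{q,0}\phi_{q,1}}$ to $v_3=\phi^{1,1}(-\tfrac12)\phi^{2,1}(-\tfrac12)\phi^{3,1}(-\tfrac12)\cdot 1$ three times, keeping careful track of signs coming from Clifford anticommutativity and of the quasi-isotropic identifications $\phi_{-a,-b}=\phi^{a,b}$.

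First I would compute $R(B^0_1)v_3$. Writing $R(B^0_1)=\sum_q[\phi^{q,0}(-\tfrac12)\phi_{q,1}(\tfrac12)-\phi_{q,1}(-\tfrac12)\phi^{q,0}(\tfrac12)]+\cdots$ and noting that the positive-mode operators $\phi_{q,1}(\tfrac12)$ and $\phi^{q,0}(\tfrac12)$ annihilate the vacuum and can only contract with the three ``positive'' creation operators $\phi^{i,1}(-\tfrac12)$ via the Clifford anticommutator $\{\phi_{q,1}(\tfrac12),\phi^{i,1}(-\tfrac12)\}=\delta_{q,i}$, one obtains the ``derivation-like'' expression
$$R(B^0_1)v_3=\phi^{1,0}(-\tfrac12)\phi^{2,1}(-\tfrac12)\phi^{3,1}(-\tfrac12)+\phi^{1,1}(-\tfrac12)\phi^{2,0}(-\tfrac12)\phi^{3,1}(-\tfrac12)+\phi^{1,1}(-\tfrac12)\phi^{2,1}(-\tfrac12)\phi^{3,0}(-\tfrac12),$$
exactly as in Lemma~\ref{lemmak=2}.

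Second, I apply $R(B^0_1)$ to each of these three terms using the basic building-block computation from the proof of Proposition~\ref{k=2}: for $i\neq j$,
$$R(B^0_1)\bigl[\phi^{i,0}(-\tfrac12)\phi^{j,1}(-\tfrac12)\bigr]=\phi^{i,0}(-\tfrac12)\phi^{j,0}(-\tfrac12)+\phi^{j,1}(-\tfrac12)\phi^{i,-1}(-\tfrac12),$$
together with the variant in which the ``$0$-column'' is on the second factor. In the three-factor setting this produces, for each input term, one ``derivation'' term (lowering another column index from $1$ to $0$) plus cross terms involving a new $\phi^{\cdot,-1}(-\tfrac12)$ created by the contraction of $\phi^{q,0}(\tfrac12)$ with the existing $\phi^{i,0}(-\tfrac12)$. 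Collecting results, $R^2(B^0_1)v_3$ becomes a sum of diagonal ``all-zero column'' monomials plus cross terms of type $\phi^{\cdot,0}\phi^{\cdot,1}\phi^{\cdot,-1}$ (in all row permutations).

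Third, one more application of $R(B^0_1)$ completes the calculation. Here the diagonal terms $\phi^{1,0}\phi^{2,0}\phi^{3,1}$-type contribute the main ``$6[\phi^{1,0}\phi^{2,0}\phi^{3,0}]$'' summand (with the combinatorial factor $3!$ arising from the three possible positions at each stage of the iteration), while the mixed terms pair up and cancel except for the six explicit $\phi^{\cdot,\pm 1}\phi^{\cdot,\mp 1}\phi^{\cdot,0}$-type contributions, each accumulating the coefficient $-3$ from the combinatorics of the iteration. The main obstacle is purely bookkeeping: ensuring the signs from reordering Clifford generators combine correctly and that the ``new'' contraction terms generated at stage two are treated consistently when $R(B^0_1)$ is applied at stage three. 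As a shortcut, once the structure of the answer is seen to be $S_3$-symmetric in the row indices $\{1,2,3\}$, it suffices to verify the coefficient of a single representative monomial of each type (say $\phi^{1,0}\phi^{2,0}\phi^{3,0}$ and $\phi^{1,-1}\phi^{2,0}\phi^{3,1}$) and invoke symmetry to obtain the remaining five cross terms.
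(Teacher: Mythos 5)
Your strategy is the same as the paper's: iterate $R(B^0_1)=\sum_q\normalorder{\phi^{q,0}\phi_{q,1}}$ three times starting from $v_3$, reusing the two-factor building blocks from Lemma~\ref{lemmak=2} and Proposition~\ref{k=2}, and tracking the Clifford signs and the identifications $\phi_{-a,-b}=\phi^{a,b}$ at each stage. Your first step agrees exactly with the paper's intermediate expression for $R(B^0_1)v_3$.

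However, your description of the intermediate $R^2(B^0_1)v_3$ is wrong on grading grounds and is also internally inconsistent with your own third paragraph. Each application of $R(B^0_1)$ lowers the $\mathfrak{h}_2$-weight (the sum of the second indices) by exactly one; since $v_3$ starts at $1+1+1=3$, every monomial appearing in $R^2(B^0_1)v_3$ must have second indices summing to $1$. The paper's computation gives precisely this: $R^2(B^0_1)v_3 = 2\bigl(\phi^{1,0}\phi^{2,0}\phi^{3,1}+\phi^{1,0}\phi^{2,1}\phi^{3,0}+\phi^{1,1}\phi^{2,0}\phi^{3,0}\bigr) - \bigl(\phi^{1,1}\phi^{2,1}\phi^{3,-1}+\phi^{1,1}\phi^{2,-1}\phi^{3,1}+\phi^{1,-1}\phi^{2,1}\phi^{3,1}\bigr)$ (all arguments $-\tfrac12$), i.e.\ terms of type $(0,0,1)$ with coefficient $2$ and of type $(1,1,-1)$ with coefficient $-1$, not the ``all-zero column'' monomials nor the $\phi^{\cdot,0}\phi^{\cdot,1}\phi^{\cdot,-1}$ cross terms you describe, both of which would have weight $0$ and can therefore only appear at the $R^3$ stage. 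Your third paragraph then tacitly switches back to the correct $(0,0,1)$-type input, so the slip seems to be expository rather than fatal, but ``mixed terms pair up and cancel'' is also not what happens in the final application: the paper's $(0,0,1)$-type terms with coefficient $2$ and $(1,1,-1)$-type terms with coefficient $-1$ both feed forward and simply aggregate to the coefficients $6$ and $-3$. If you want to retain the $S_3$-symmetry shortcut (which is legitimate, since both $v_3$ and $R(B^0_1)$ are invariant up to Clifford sign under permuting the rows $\{1,2,3\}$), you should still state the intermediate $R^2(B^0_1)v_3$ correctly before verifying the two representative final coefficients.
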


\begin{proof}
The proof  follows by applying the expression for $R(B^0_1)$ successively:
\begin{align*}
R(B^0_1)v_3
 &=\phi^{1,0}(-\tfrac{1}{2} )\phi^{2,1}(-\tfrac{1}{2} )\phi^{3,1}(-\tfrac{1}{2} )+ \phi^{1,1}(-\tfrac{1}{2} )\phi^{2,0}(-\tfrac{1}{2} )\phi^{3,1}(-\tfrac{1}{2} )\\
&\qquad +  \phi^{1,1}(-\tfrac{1}{2} )\phi^{2,1}(-\tfrac{1}{2} )\phi^{3,0}(-\tfrac{1}{2} )\ . \\
R^2(B^0_1)v_3
 &=
2\bigl(\phi^{1,0}(-\tfrac{1}{2} )\phi^{2,0}(-\tfrac{1}{2} )\phi^{3,1}(-\tfrac{1}{2} )+\phi^{1,0}(-\tfrac{1}{2} \phi^{2,1}(-\tfrac{1}{2} )\phi^{3,0}(-\tfrac{1}{2} )\\
& \qquad +\phi^{1,1}(-\tfrac{1}{2} \phi^{2,0}(-\tfrac{1}{2} )\phi^{3,0}(-\tfrac{1}{2} )\bigr)\\
& \qquad -\bigl(\phi^{1,1}(-\tfrac{1}{2} \phi^{2,1}(-\tfrac{1}{2} )\phi^{3,-1}(-\tfrac{1}{2} )+\phi^{1,1}(-\tfrac{1}{2} )\phi^{2,-1}(-\tfrac{1}{2} )\phi^{3,1}(-\tfrac{1}{2} )\\
& \qquad +\phi^{1,-1}(-\tfrac{1}{2} \phi^{2,1}(-\tfrac{1}{2} )\phi^{3,1}(-\tfrac{1}{2} )\bigr).
\end{align*}
and acting once more by  $R(B^0_1)$.
\end{proof}

%
%

We now gather these calculations into the following algorithm:
\begin{itemize}
\item If $v_k=\phi^{1,1}(-\tfrac{1}{2} )\cdots \phi^{k,1}(-\tfrac{1}{2} )$, then the $\mathfrak{h}_2$-weight of $R^k(B^0_1)v_k$ is zero, where $\mathfrak{h}_2$ is the Cartan subalgebra of $\mathfrak{so}(2s+1)$. 
\item The expression for $R(B^0_1)$, viewed as an operator on the Clifford module for $\widehat{\mathfrak{so}}(2d+1)$, implies that
\begin{itemize}
\item if $v =\phi^{1,a_1}(-\tfrac{1}{2} )\cdots \phi^{k,a_k}(-\tfrac{1}{2} )$, where $0\leq a_1+\cdots+a_k\leq k$, and each $a_i\in \{-1,0,1\}$, then the action of $R({B^0_1})$ on $v$ is a sum of expressions of the form $\phi^{1,b_1}(-\tfrac{1}{2} )\cdots \phi^{k,b_k}(-\tfrac{1}{2} )$, where exactly one of the $b_i$'s is different from $a_i$;
\item the operator $R(B^0_1)$ can change an $a_i=1$ to $b_i=0$, or $a_i=0$ to $b_i=-1$. In the latter case, this introduces a minus sign in front of the new expression. In particular for each expression $\phi^{1,b_1}(-\tfrac{1}{2} )\cdots \phi^{k,b_k}(-\tfrac{1}{2} )$ appearing in $R(B^0_1)v$, we get $b_1+\cdots + b_k+1=a_1+\cdots+a_k$. For examples, see the previous lemmas. 
\end{itemize}
\item Thus, applying  the operator $R(B^0_1)$ to $v_k$, $k$-times, we get an expression which is a sum of terms of the form $(-1)^m\phi^{1,c_1}(-\tfrac{1}{2} )\cdots \phi^{k,c_k}(-\tfrac{1}{2} )$, with multiplicities, where $c_1+\cdots+c_k=0$, and each $-1\leq c_i\leq 1$, and $m$ is the number of $(-1)$'s appearing among the $c_i$'s. 
\item The multiplicity of the expression $\phi^{1,0}(-\tfrac{1}{2} )\cdots \phi^{k,0}(-\tfrac{1}{2} )$ is $k!$. 
\end{itemize}
To summarize, we have the following.
\begin{proposition}\label{extraterms}
As an element of $\mathcal{H}_{\omega_k}(\sofrak(2r+1))\otimes \mathcal{H}_{k\omega_1}(\sofrak(2s+1))$,
the vector $R^k(B^0_1)v_k$  is of the  form $k!\phi^{1,0}(-\tfrac{1}{2} )\cdots \phi^{k,0}(-\tfrac{1}{2} )$, plus a sum of terms of the form $B^{i,a}_{-j,b}(-1)w$, where $i\neq j$ are positive integers and $a$, $b$ are nonzero.
\end{proposition}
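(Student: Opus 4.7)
The plan is to iteratively expand $R^k(B^0_1)v_k$ using the Clifford realization and to identify the non-principal terms combinatorially. First I would record the local computation: for $i>0$, the formula $R(B^0_1)=\sum_{p=-s}^s\sum_{a+b=0}\normalorder{\phi^{p,0}(a)\phi^{-p,-1}(b)}$ together with $\{\phi^{j,q}(a),\phi^{j',q'}(b)\}=\delta_{j+j',0}\delta_{q+q',0}\delta_{a+b,0}$ gives
$$
R(B^0_1)\phi^{i,1}(-\tfrac12)\cdot 1 = \phi^{i,0}(-\tfrac12),\qquad R(B^0_1)\phi^{i,0}(-\tfrac12)\cdot 1 = -\phi^{i,-1}(-\tfrac12),
$$
while $R(B^0_1)\phi^{i,-1}(-\tfrac12)\cdot 1=0$. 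This extends the verifications in Lemmas \ref{lemmak=1} and \ref{lemmak=2}. Because $\phi^{l,c_l}(-\tfrac12)$ and $\phi^{l',c_{l'}}(-\tfrac12)$ strictly anti-commute when $l\neq l'$ are both positive (the pairing $\delta_{l+l',0}$ forces this), the normal-ordered quadratic $R(B^0_1)$ acts as a signed derivation on wedges $\phi^{1,c_1}(-\tfrac12)\cdots\phi^{k,c_k}(-\tfrac12)\cdot 1$, with the only contractions being the local ones above. I would then induct on $j\le k$ to conclude that $R^j(B^0_1)v_k$ is a signed sum of monomials $\phi^{1,c_1}(-\tfrac12)\cdots\phi^{k,c_k}(-\tfrac12)$ with $c_i\in\{-1,0,1\}$ and $\sum_i c_i=k-j$.

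Specializing to $j=k$, the constraint $\sum c_i=0$ with $c_i\in\{-1,0,1\}$ leaves two possibilities: either all $c_i=0$, or there exist $i\neq j$ with $c_i=1$ and $c_j=-1$. The coefficient of the all-zero term equals the number of orderings of the $k$ single-step reductions $1\to 0$ (one per index), namely $k!$, and each reduction $1\to 0$ carries no sign by the local computation. This identifies the leading term $k!\,\phi^{1,0}(-\tfrac12)\cdots\phi^{k,0}(-\tfrac12)$.

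For any other surviving monomial, I would select $i\neq j$ with $c_i=1,\ c_j=-1$ (both positive) and write the term as $\pm B^{i,1}_{-j,1}(-1)\cdot w$, where $w=\prod_{l\neq i,j}\phi^{l,c_l}(-\tfrac12)\cdot 1$, the sign being the signature of the permutation pulling indices $i,j$ to the front. To justify the identity I expand $B^{i,1}_{-j,1}(-1)=\sum_{m+n=-1}\normalorder{\phi^{i,1}(m)\phi^{j,-1}(n)}$: only the $(m,n)=(-\tfrac12,-\tfrac12)$ contribution survives on $w$, because a positive-mode $\phi^{i,1}(m)$ or $\phi^{j,-1}(n)$ could only be absorbed via an anti-commutator $\{\phi^{i,1}(m),\phi^{l,c_l}(-\tfrac12)\}=\delta_{i+l,0}\delta_{1+c_l,0}\delta_{m,1/2}$ (or the analogous one for the second factor), which vanishes since $i,l>0$. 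Thus the non-principal part of $R^k(B^0_1)v_k$ is a sum of expressions of the required form, with the labels $a=1,\ b=1$ both nonzero and $i\neq j$ positive. The main obstacle is only the sign bookkeeping during the induction, which is routine because no genuine contractions occur among distinct positive-index factors; it does not affect either the multiplicity $k!$ of the leading term or the structural form of the remainder.
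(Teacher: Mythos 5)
Your proof is correct and takes essentially the same route as the paper's bulleted algorithm preceding the proposition: establish the local rules $1\to0$ (no sign), $0\to-1$ (sign $-1$), $-1\to0$, observe that $R(B^0_1)$ acts as a signed fermionic derivation on lexicographically ordered monomials $\phi^{1,c_1}(-\tfrac12)\cdots\phi^{k,c_k}(-\tfrac12)$, track the invariant $\sum c_i=k-j$ under $j$ applications, and count the $k!$ orderings that reach the all-zero profile. You supply slightly more detail on the final step, isolating a pair $c_i=1$, $c_j=-1$ and verifying that $B^{i,1}_{-j,1}(-1)$ applied to the remaining wedge reproduces the monomial because only the $(m,n)=(-\tfrac12,-\tfrac12)$ mode contributes; the paper's worked examples package the same terms as $B^{i,-1}_{-j,-1}(-1)w$, which agree with yours up to the identity $B^{j,-b}_{-i,-a}=-B^{i,a}_{-j,b}$, so the choice of nonzero labels $(a,b)$ is cosmetic. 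The only slip is a typo in the sum range for $R(B^0_1)$: it should run over $p\in\{-r,\dots,r\}$, not $p\in\{-s,\dots,s\}$; this does not affect the argument.
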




\bibliographystyle{amsplain}
\bibliography{./papers}

\end{document}